\DeclareMathAlphabet{\pazocal}{OMS}{zplm}{m}{n}
\newcommand{\Aa}{\mathcal A} %space of annuli
\newcommand{\Ff}{\mathcal F}
\newcommand{\Gg}{\mathcal{G}}
\newcommand{\Reg}{\operatorname{Reg}}
\newcommand{\Rr}{\mathcal{R}}
\newcommand{\Sing}{\operatorname{Sing}}
 \newcommand{\Cone}{\operatorname{Cone}}
 \newcommand{\Cyl}{\operatorname{Cyl}}
 \newcommand{\entropy}{\operatorname{entropy}}
 \newcommand{\genus}{\operatorname{genus}}
 \newcommand{\garea}{\operatorname{area}_{\,\gamma}}
 \newcommand{\length}{\operatorname{length}}
  \newcommand{\MM}{\mathcal{M}}
\newcommand{\Pp}{\mathcal{P}}
  \newcommand{\neck}{\operatorname{neck}}
  \newcommand{\Ss}{\mathcal{S}}
 \newcommand{\RR}{\mathbf{R}}  % reals
 \newcommand{\Tt}{\mathcal{T}}
 \newcommand{\ZZ}{\mathbf{Z}}  % integers
 \renewcommand{\SS}{\mathbf{S}}  %circle, sphere
    \newcommand{\dist}{\operatorname{dist}}
 \newcommand{\area}{\operatorname{area}}
 \newcommand{\eps}{\epsilon}
 \newcommand{\tQ}{\widetilde Q}
\newcommand{\tG}{\widetilde G}
 \newcommand{\Tan}{\operatorname{Tan}}
 \newcommand{\Tpos}{T_\textnormal{pos}}
\newcommand{\ee}{\mathbf e}
\newcommand{\spt}{\operatorname{spt}}
\newcommand{\Hh}{\mathcal{H}}
\newcommand{\interior}{\operatorname{interior}}
\def\begfig {
\begin{figure}
\small }
\def\endfig {
\normalsize
\end{figure}
}
    \newtheorem{theorem}    {Theorem}   %    [section]
    \newtheorem{lemma}      [theorem]       {Lemma}
    \newtheorem{corollary}  [theorem]     {Corollary}
    \newtheorem{proposition}       [theorem]       {Proposition}
    \newtheorem{claim}{Claim}
    \newtheorem*{theorem*}{Theorem}
    \newtheorem*{claim*}{Claim}
    \theoremstyle{definition}
    \newtheorem{definition}  [theorem] {Definition}
     \newtheorem{conjecture}  [theorem] {Conjecture}
    \theoremstyle{definition}
    \newtheorem{remark}   [theorem]       {Remark}
    \newtheorem*{remark*}{Remark}
    \newtheorem*{note*}{Note}
\renewcommand{\thesubsection}{\thetheorem}
 \renewcommand{\SS}{\mathbf{S}}  %circle, sphere
\begin{document}

\renewcommand{\thesubsection}{\thetheorem}
%\renewcommand{\thesubsection}{\thetheorem}{\thedefinition}
   % this is so subsections and theorems, etc will be
   % numbered together.
\title[Generating Shrinkers]{Generating Shrinkers by Mean Curvature Flow}

\author[D. Hoffman]{\textsc{D. Hoffman}}

\address{David Hoffman\newline
 Department of Mathematics\newline
 Stanford University \newline
   Stanford, CA 94305, USA\newline
{\sl E-mail address:} {\bf dhoffman@stanford.edu}}

\author[F. Mart\'in]{\textsc{F. Mart\'in}}

\address{Francisco Mart\'in\newline
Departamento de Geometr\'ia y Topolog\'ia  \newline
Instituto de Matem\'aticas de Granada (IMAG) \newline
Universidad de Granada\newline
18071 Granada, Spain\newline
{\sl E-mail address:} {\bf fmartin@ugr.es}
}
\author[B. White]{\textsc{B. White}}

\address{Brian White\newline
Department of Mathematics \newline
 Stanford University \newline 
  Stanford, CA 94305, USA\newline
{\sl E-mail address:} {\bf bcwhite@stanford.edu}
}
\dedicatory{Dedicated to Tom Ilmanen (1961--2025)}

\begin{abstract}
We prove existence for many examples of shrinkers by
producing compact, smoothly embedded surfaces that, under mean curvature flow, develop singularities at which the 
 shrinkers occur as blowups.
\end{abstract}

\subjclass[2020]{Primary 53E10}
\keywords{Mean curvature flow, shrinkers, 
 self-shrinkers, level set flow}
\thanks{F. Martín's research is partially supported by the
framework of IMAG-Mar\'ia de Maeztu grant CEX2020-001105-M and by the grant PID2024-156031NB-I00 both funded by MCIU/AEI/ 10.13039/501100011033. Some of this work was carried out while the authors were visitors at the Simons Laufer Mathematical Sciences Institute (SLMath, formerly MSRI) in Berkeley, CA during the Fall 2024 semester, in a program supported by National Science Foundation Grant No. DMS-1928930.}

%\subjclass[2010]{53A10 (primary), and 49Q05, 53C42 (secondary)} 

\newcommand{\diag}{\operatorname{diag}}

\newcommand{\Aac}{\mathcal{A}_{\rm con}}

\newcommand{\Aar}{\mathcal{A}_{\rm rect}}

\newcommand{\pdf}[2]{\frac{\partial #1}{\partial #2}}
\newcommand{\pdt}[1]{\frac{\partial #1}{\partial t}}

\newcommand{\mcd}{\operatorname{mcd}}
\newcommand{\mdr}{\operatorname{mdr}}
\newcommand{\tM}{\tilde M}
\newcommand{\tF}{\tilde F}

\newcommand{\tmu}{\tilde \mu}

\newcommand{\Gin}{\Gamma_\textnormal{in}}

\newcommand{\Tfat}{\operatorname{T_\textnormal{fat}}}
\newcommand{\Tdisc}{\operatorname{T_\textnormal{disc}}}
\newcommand{\tTfat}{\operatorname{\tilde T_\textnormal{fat}}}
\date{March 12, 2025. Revised January 20, 2026.}

\maketitle
\tableofcontents

\newcommand{\Mreg}{M_\textnormal{reg}}
\newcommand{\tMreg}{{\tilde M}_\textnormal{reg}}

\section{Introduction}

The formation of singularities in a mean curvature flow $t\mapsto M(t)$ with smooth, compact initial data is a fundamental phenomenon that involves the underlying geometry and topology of the evolving surfaces. As the flow progresses, regions of high curvature develop, leading to singularities where the surface either pinches off, collapses, or forms more complex structures. 
If a singularity occurs at a spacetime point $(p,T)$,  then, as $t\uparrow T$, suitable rescalings of $M(t)$ about $p$ converge (at least weakly and subsequentially) to 
a {\bf shrinker}, i.e., a surface $\Sigma$
such that
\[
   t\in (-\infty,0)\mapsto |t|^{1/2}\Sigma
\]
is a mean curvature flow.
Thus, to understand mean curvature flow singularities,
it is natural to try to understand all
possible shrinkers.  
Many interesting examples of shrinkers have
been constructed, and there are some notable theorems classifying certain kinds of shrinkers.
In this paper, we present a unified approach to producing many of the known examples, as well
as a new family of examples.

Almost all of the known examples of shrinkers have been
produced by one of the following three methods:
\begin{enumerate}
    \item Desingularization.  The idea is that
    one takes two embedded shrinkers that intersect
    along one or more curves, and one tries to replace intersecting ribbons of surface by a surface modelled on Scherks's singly periodic surface.
    See~\cites{kkm, nguyen}
    \item Doubling.  Here one takes a known embedded shrinker, and produces new examples that look like two slighly perturbed copies 
    of that shrinker joined by suitably placed catenoidal necks.  See~\cite{kapouleas-doubling}.
    \item Minimax.  
    See~\cites{ketover-platonic, ketover-fat, buzano}.
\end{enumerate}
Those methods are based on the fact that a surface is a shrinker if and only if it is minimal with
respect to a certain Riemannian metric (the so-called shrinker metric).
All three methods are powerful and have produced many interesting examples, but they have some 
limitations.  In particular, desingularization and doubling only produce examples when some parameter, typically the genus, is very large.
Also, all three methods leave open the question of whether the shrinker actually arises as a blowup
of the mean curvature flow of an initially smooth, compact surface.  
This last defect has been somewhat overcome by recent work of 
 Tang-Kai Lee and Xinrui Zhao\cite{lee-zhao}, who show that
any shrinker that satisfies a natural and rather mild hypothesis does indeed arise as such
 a blowup.
 (For $2$-dimensional shrinkers in $\RR^3$, the only case considered in this paper, the hypothesis is that no end of the shrinker is asymptotically cylindrical.)
In practice, it is typically easy to show that the hypothesis holds for the examples one produces provided the genus is sufficiently large. 
But for low genus, one generally does not know whether the hypothesis holds.

(Of course, if a shrinker is itself a compact, embedded surface, then it trivially arises as such blowup.
This is the case for the examples constructed
in~\cite{kapouleas-doubling}.)

In this paper, we produce shrinkers by creating  initial surfaces that, under mean curvature flow, develop singularities at which the desired shrinkers occur as blowups.  Since we are interested in shrinkers because of mean curvature flow, it is perhaps more natural to produce them by mean curvature flow than by other methods.
And, of course, we then know that the resulting shrinkers do arise
as blowups since that is how they are produced.
In particular, the Platonic shrinkers in  
Section~\ref{platonic-section} were first produced by Ketover~\cite{ketover-platonic}
using minimax, but it was not known until now that they arise as blowups.  Similarly, 
 the genus-$g$ examples in Section~\ref{desing-section} were produced for high genus by
 desingularization~\cites{kkm, nguyen}
 and
 for every genus by minimax~\cite{buzano}, but,
   except in the case of high genus, it was not known until now that they arise as blowups.
   (Indeed, until the recent 
   result of Lee-Zhao~\cite{lee-zhao} mentioned above, it 
   was not known whether any of the desingularized
   examples arose as blowups.)

 We now briefly describe the shrinkers
 produced in this paper.
 Section~\ref{platonic-section} produces shrinkers that resemble
 skeletons of the Platonic solids.  
 See Figure~\ref{fig:cube}.
 The proof we give also gives shrinkers with
 the symmetries of a regular $n$-gon.
 Section~\ref{desing-section} produces shrinkers that, for large genus,  look like  the union
 of a sphere and a bisecting plane, with the circle
 of intersection replaced by necks.
  See Figures~\ref{fig:one-ended-section}
 and~\ref{fig:one-ended}.
 Section~\ref{new-examples-section} produces 
 new examples that are 
 shrinker analogs
  of
some well-known, non-existent minimal surfaces.
Specifically, in the 1980s, researchers wondered
whether there existed complete embedded minimal
surfaces that were, roughly speaking, 
  Costa-Hoffman-Meeks surfaces
  with additional handles.
  In particular, for each $g$, the new surface
  (if it existed) would have the same symmetries
  as the genus-$g$ Costa-Hoffman-Meeks surface
  and the same number of ends ($3$),
   but it would have
  twice the genus, namely $2g$.
Mart\'in and Weber~\cite{martin-weber}
proved that such minimal surfaces do not exist.
By contrast,  we show that (for large $g$)
there are shrinkers that have the defining properties
of those non-existent minimal surfaces.
See Figure~\ref{fig:horgan}.

 We are aware of two other papers that use mean curvature flow to prove existence of 
shrinkers:
 \cite{chu-sun}
 proves existence of shrinkers of genus one with less entropy than the Angenent tori, 
 and~\cite{iw-fat} proves existence of certain genus-$g$ shrinkers that
  cause fattening when $g$ is large.
 On the other hand, Drugan and 
  Nguyen~\cite{drugan-nguyen} used a flow
  other than mean curvature flow to prove existence
 of rotationally invariant shrinking tori. Their examples presumably coincide with the Angenent Tori, whose existence was established in \cite{angenent}  by a shooting method.

\begin{figure}
  \begin{center}
  \includegraphics[height=2.3cm]{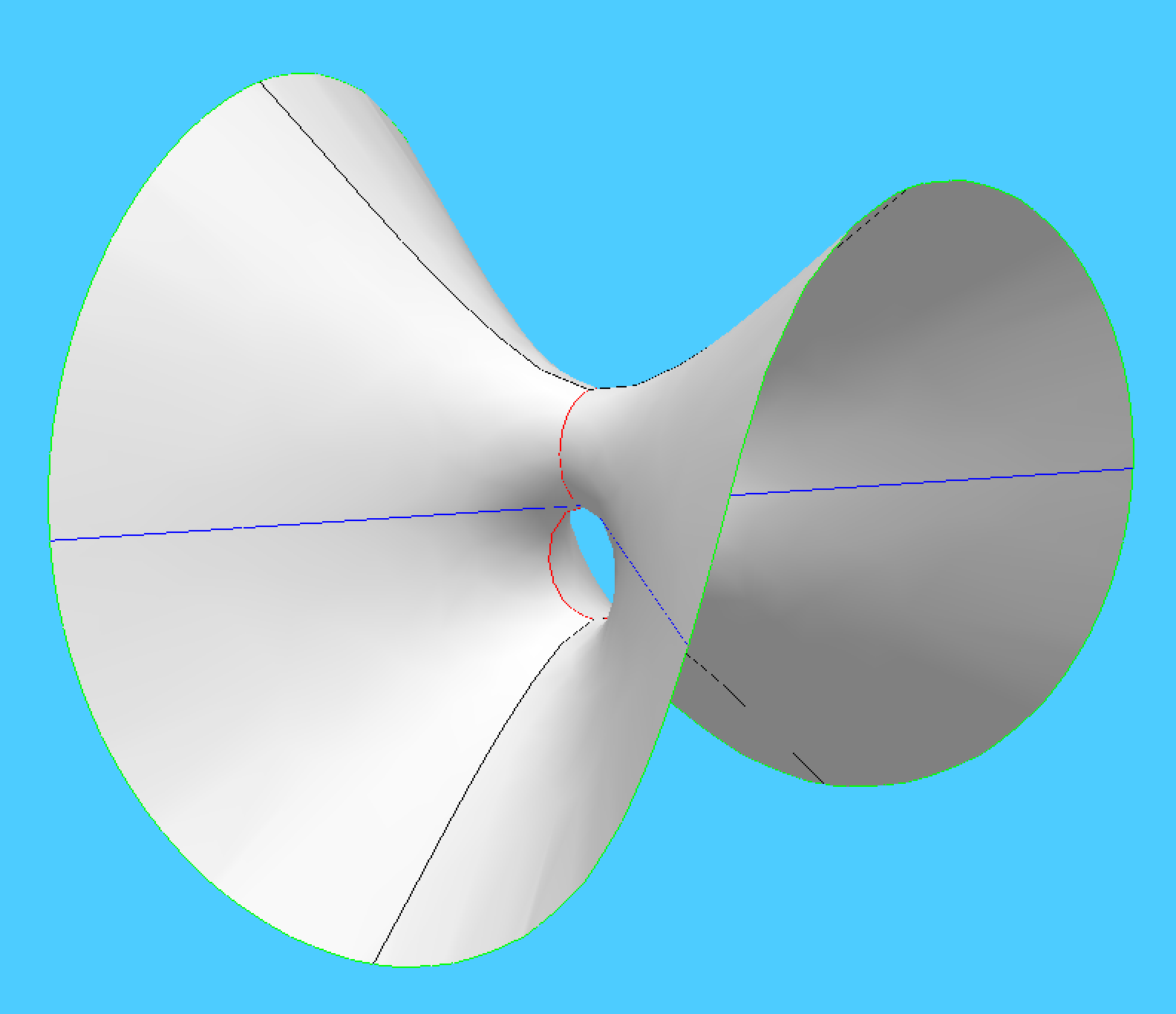} \includegraphics[height=2.3cm]{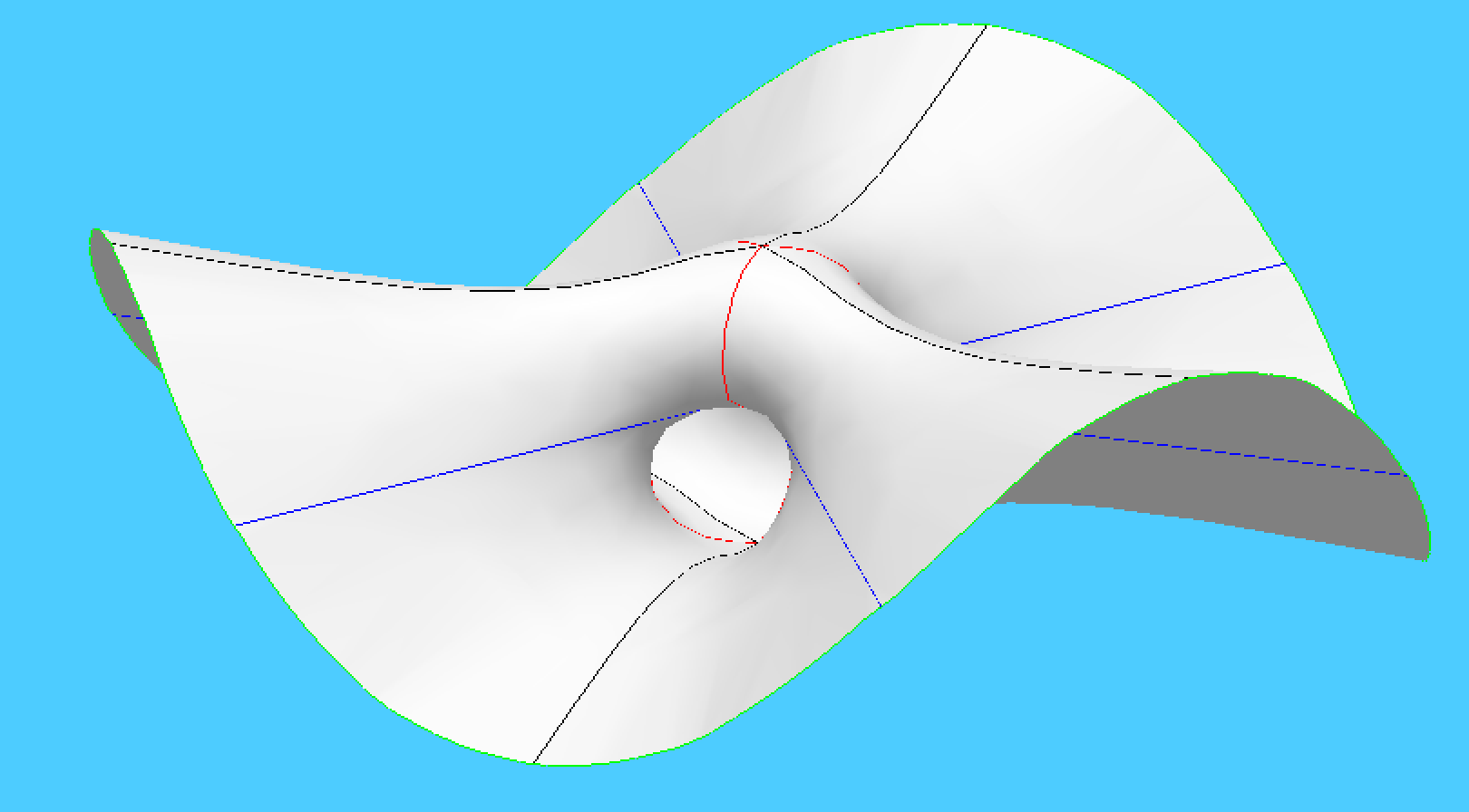} \includegraphics[height=2.3cm]{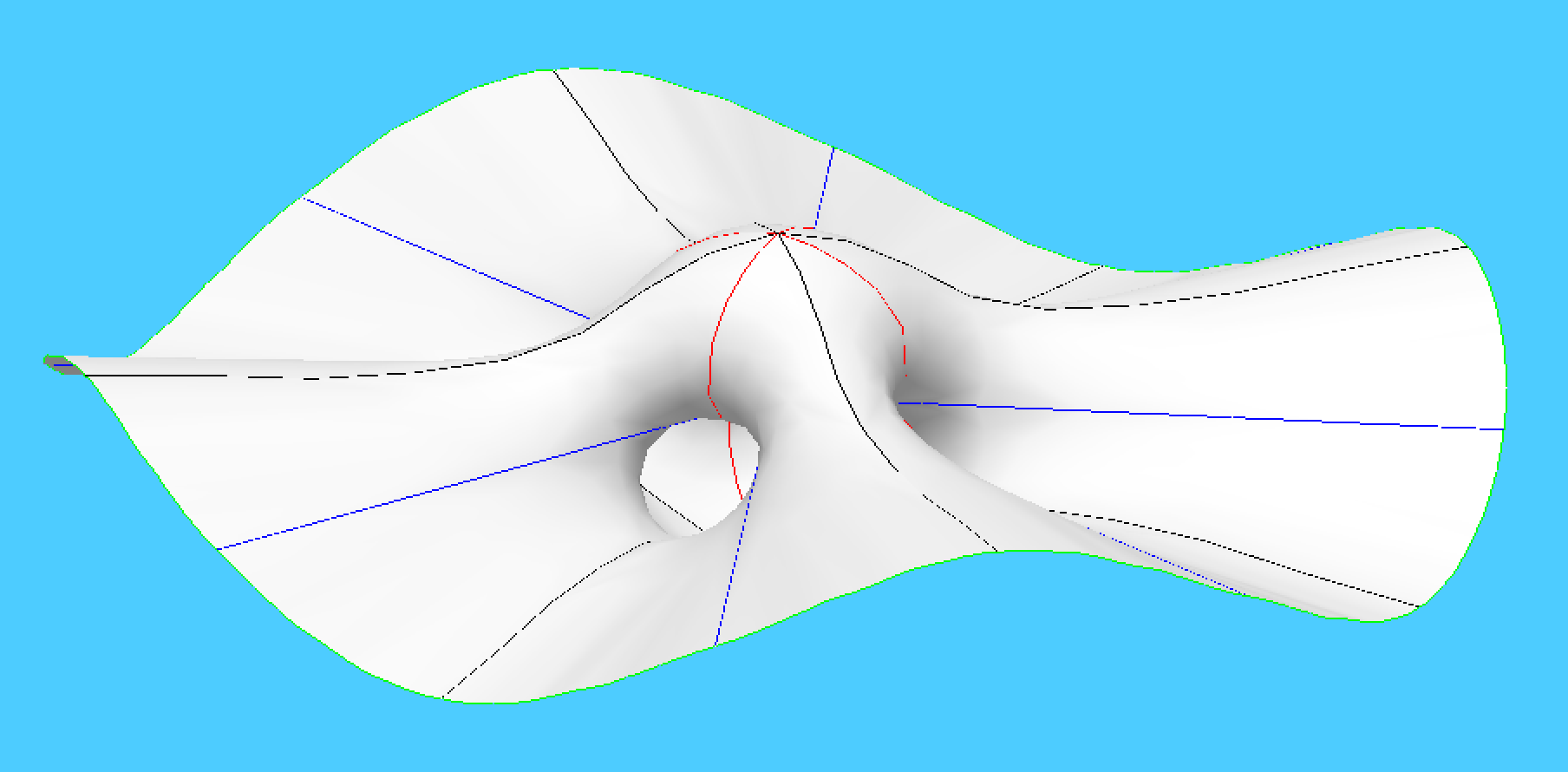} 
  \end{center}
  \begin{center}\includegraphics[height=1.8cm]{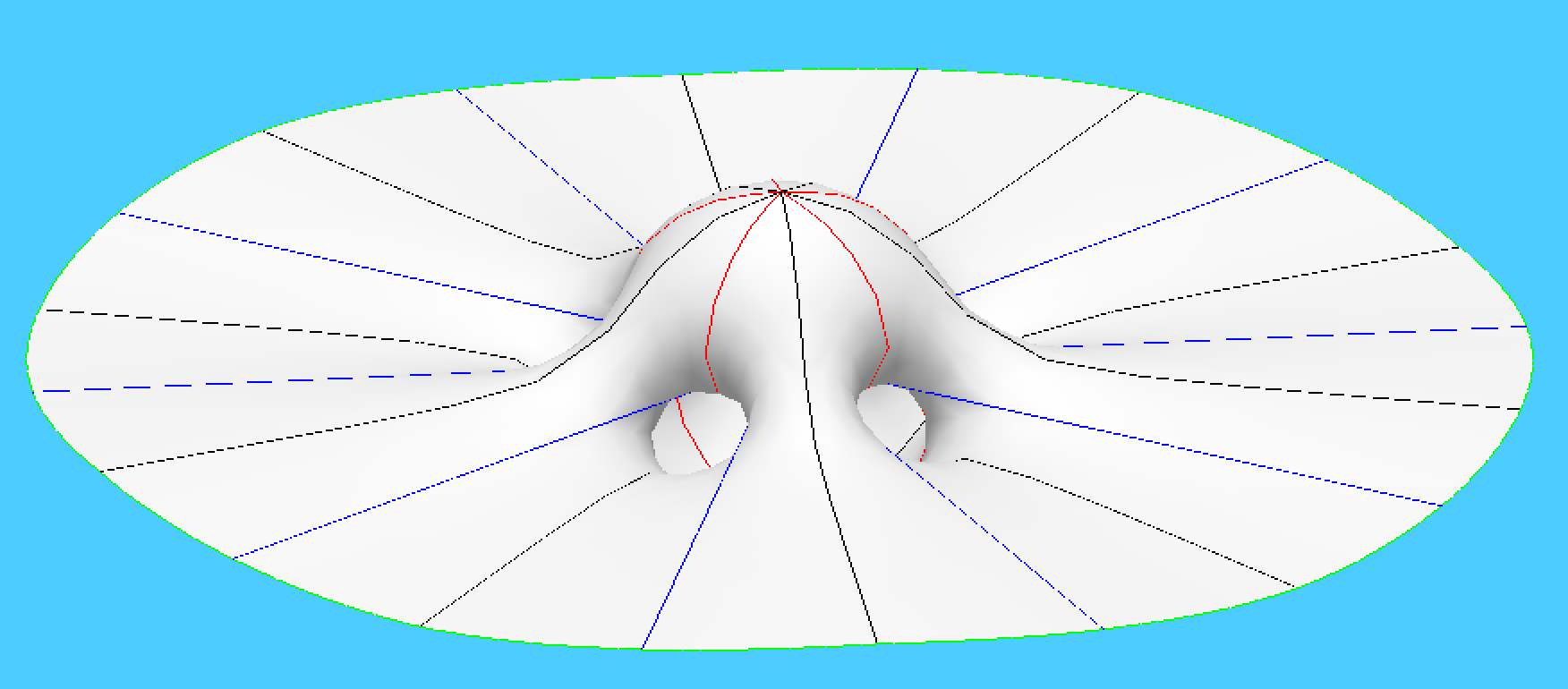} \includegraphics[height=1.8cm]{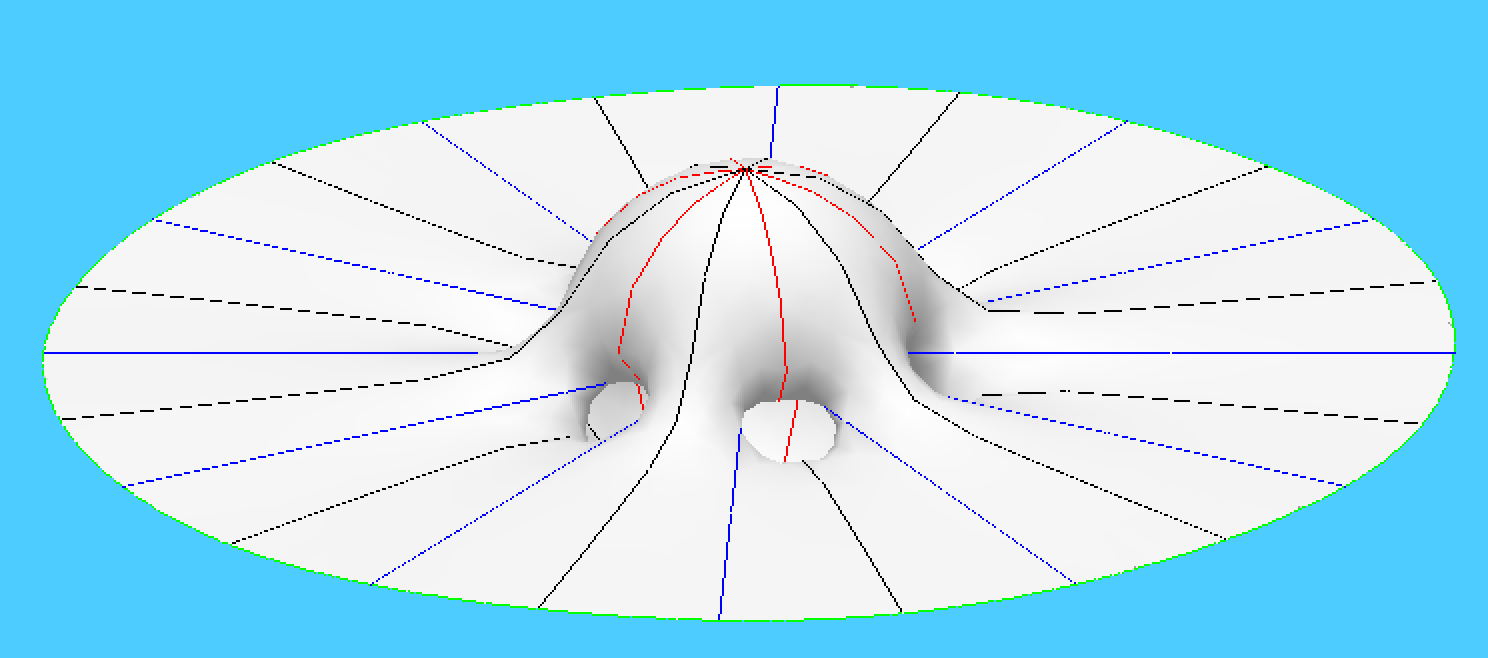} \includegraphics[height=1.8cm]{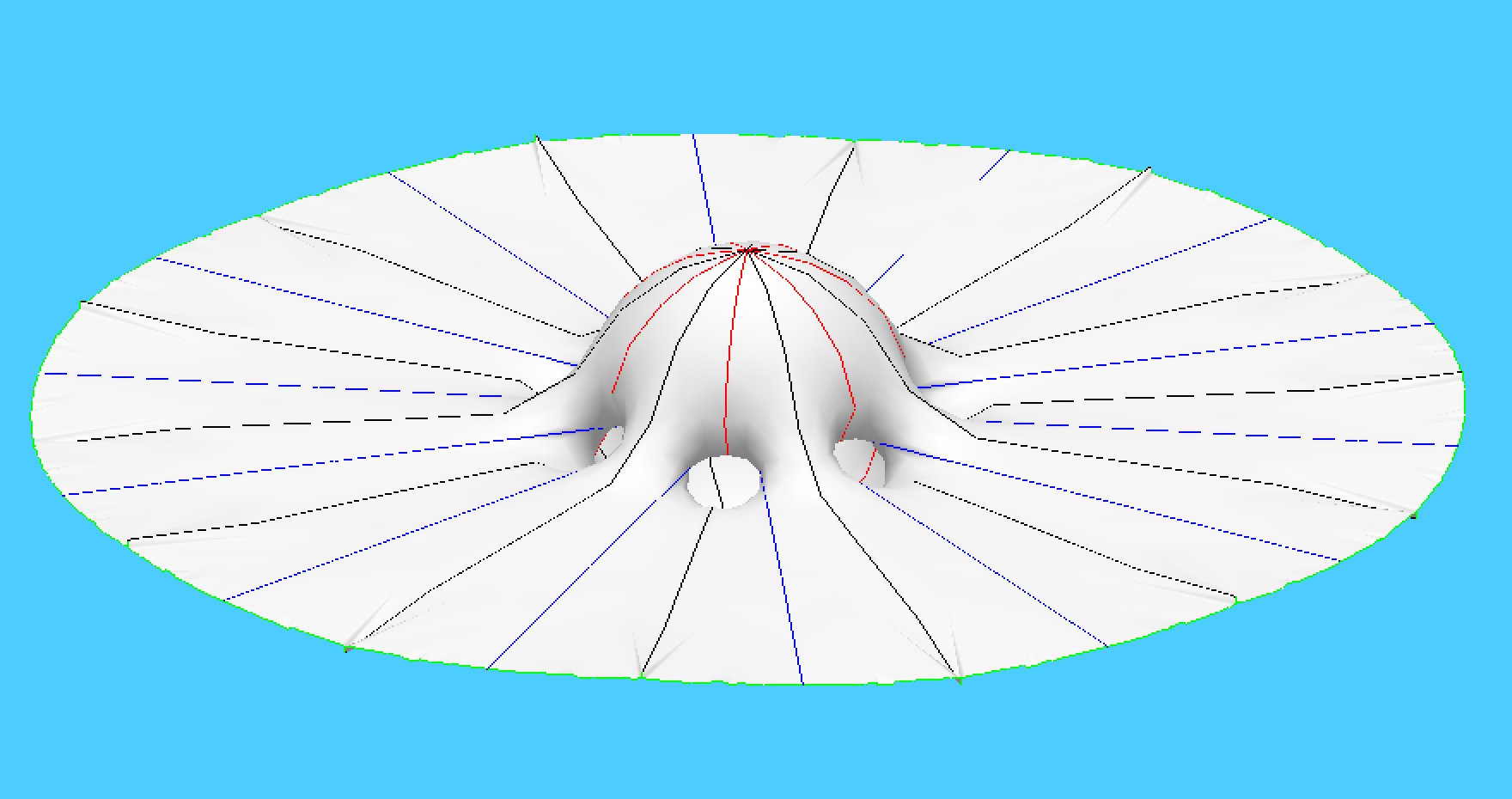}
    \end{center}

    \caption{A  
    numerical simulation of
     the shrinkers constructed in Section \ref{desing-section}, for genus $g=1, \ldots,6$. Figures courtesy of Francisco Torralbo (UGR).}
    \label{fig:one-ended-section}
\end{figure}

\section{Preliminaries}

We now recall the main
facts about mean curvature flow that we need.
To avoid pathologies,
we will assume throughout this section that the initial surface $M$ is compact,  smoothly embedded, and orientable, and that 
the ambient space is 
a complete, smooth, orientable  Riemannian  $3$-manifold  with Ricci curvature
bounded below.
The only ambient spaces needed  in this paper
are $\RR^3$ and $\SS^2\times\RR$, with the usual metrics.

If $K$ is a closed subset of $\RR^3$, or, more generally, of a smooth Riemannian $3$-manifold $N$, we let $K(t)$ denote the result of flowing $K$ for time $t$ by the level set flow.
Let $\Tfat(K)$ be the fattening time, i.e, the infimum of times $t$ such that $K(t)$ has nonempty interior.

If (as we are assuming in this section) $M$ is compact and smoothly embedded, then $\Tfat(M)$ is greater than or equal to the first time when classical mean curvature starting
from $M$ develops a singularity.
In particular, $\Tfat(M)>0$.

  By~\cite{bamler-kleiner},
$M(t)$ is a compact, smoothly embedded surface for almost 
every $t\in [0,\Tfat]$,
and the set of such times is relatively open in $[0,\Tfat]$.
Furthermore, 
\begin{equation}
\label{mu-flow}
  t\in [0,\Tfat]\mapsto 
     \mu(t):=\Hh^2\llcorner M(t)
\tag{$\star$}
\end{equation}
is a unit-regular Brakke flow.

(Roughly speaking,
a unit-regular Brakke flow is 
an integral Brakke flow that
is smooth (without sudden vanishing)
in a spacetime neighborhood of each point of Gauss density~$1$.
See~\cite{white-boundary}*{Definition~11.1}????
for the precise definition.
Weak limits of unit-regular flows are
also 
unit regular~\cite{white-local}.)

By slight abuse of terminology,
we will refer to 
\[
   t\in [0,\Tfat]\mapsto M(t)
\]
as a Brakke flow, when
we mean the Brakke flow~\eqref{mu-flow}.

By~\cite{bamler-kleiner}, if $0\le t_1<t_2\le \Tfat(M)$ are times at which $M(t)$ is smooth, then 
\begin{equation}\label{genus-decreases}
   \genus(M(t_2))\le \genus(M(t_1)).
\end{equation}

By~\cite{bamler-kleiner}, 
all tangent flows
at times $t\in (0,\Tfat]$ are given by smoothly embedded shrinkers of multiplicity $1$.  
Thus if $p\in N$ and $T\in (0,\Tfat(M)]$, then
every sequence $t_i\uparrow T$ has a subsequence such that
\begin{equation} \label{rescaled}
       (T-t_i)^{-1/2}( M(t_i) - p )
\end{equation}
converges smoothly and with multiplicity $1$ on compact subsets 
to a shrinker $\Sigma$.  
  If $\tau_i/t_i$ is bounded away from $0$ and away
  from $\infty$, then 
\[
     (T-\tau_i)^{-1/2} (M(\tau_i) - p) 
\]
also converges to the same shrinker $\Sigma$.
The shrinker $\Sigma$ is a plane if and only if $(p,T)$ is a regular point
of the flow.

(If we are working in a general ambient manifold~$N$,
then the expression~\eqref{rescaled} makes sense provided we regard $N$
as isometrically embedded in a Euclidean space.)

\begin{definition}
For $t\in (0,\Tfat(M)]$, we say $\Sigma$ is a {\bf shrinker} to $M(\cdot)$ at the spacetime
point $(p,t)$ if there exist a sequence $t_i\uparrow t$, such that
the surfaces \eqref{rescaled} converge smoothly to $\Sigma$.
\end{definition}

We expect there to be only one shrinker at each spacetime point, but that
is not known in all cases.  
(See~\cite{chodosh-schulze} for some cases in which uniqueness 
 is known.)
 However, all the shrinkers at $(p,T)$ have the same
genus (by Theorem~\ref{shrinker-genus} in
 the appendix), so the following definition makes sense.

\begin{definition}
The {\bf genus} of a spacetime point $(p,t)$ with $t\in (0,\Tfat]$
is the number $g$ such that each shrinker at $(p,t)$ has genus~$g$.
\end{definition}

At a  singular point of genus~$0$, 
 the shrinker is  either a sphere or a cylinder~\cite{brendle}. (Earlier, Kleene and M\o{}ller
proved the result assuming an axis of
rotational symmetry~\cite{kleene-moller}.)
Furthermore, the shrinker is
unique
by~\cite{colding-m}.
(See also~\cite{zhu}.)

The following result is trivial, but important.

\begin{theorem}\label{trivial-theorem}
If $(p,t)$ is a singular point with $t\le \Tfat$, and if there is an $\eps>0$ such 
that
\[
  \genus(M(\tau)\cap B(p,\eps)) \le g
\]
for every regular time $\tau \in (t-\eps,t)$, then $(p,t)$ has genus~$\le g$.
\end{theorem}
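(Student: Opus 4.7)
The plan is a short argument from the definitions, using that genus is lower semicontinuous under smooth multiplicity-one convergence on compact subsets. By~\cite{bamler-kleiner}, almost every time in $[0,\Tfat(M)]$ is regular, so I may choose regular times $t_i\uparrow t$ with each $t_i\in(t-\eps,t)$. After passing to a subsequence, the rescaled surfaces
\[
    \Sigma_i:=(t-t_i)^{-1/2}\bigl(M(t_i)-p\bigr)
\]
converge smoothly and with multiplicity one on compact subsets of $\RR^3$ to some shrinker $\Sigma$ at $(p,t)$. Since Theorem~\ref{shrinker-genus} says all shrinkers at $(p,t)$ share a common genus, it suffices to prove $\genus(\Sigma)\le g$.

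For any fixed $R>0$ one has $B_R(0)\subset(t-t_i)^{-1/2}\bigl(B(p,\eps)-p\bigr)$ as soon as $(t-t_i)^{1/2}R<\eps$. For such $i$, the rescaling is a similarity mapping $M(t_i)\cap B(p,(t-t_i)^{1/2}R)$ onto $\Sigma_i\cap B_R(0)$, so $\Sigma_i\cap B_R(0)$ is diffeomorphic to a subsurface of $M(t_i)\cap B(p,\eps)$ and hence has genus at most $g$ by hypothesis. Choosing $R$ with $\partial B_R(0)$ transverse to $\Sigma$, the smooth multiplicity-one convergence $\Sigma_i\to\Sigma$ implies that any compact, genus-$g_R$ subsurface of $\Sigma\cap B_R(0)$ is diffeomorphic to a subsurface of $\Sigma_i$ for all large~$i$, forcing $g_R\le g$. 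Therefore $\genus(\Sigma\cap B_R(0))\le g$, and letting $R\to\infty$ yields $\genus(\Sigma)\le g$.

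No single step is difficult, as the author's label ``trivial'' reflects; the one point calling for attention is the final exhaustion that converts the uniform bound $\genus(\Sigma\cap B_R(0))\le g$ into a bound on $\genus(\Sigma)$ itself. This uses only that $\genus(\Sigma\cap B_R(0))$ is monotone nondecreasing for transverse~$R$ and has supremum equal to $\genus(\Sigma)$, once one fixes a standard convention for the genus of a bordered surface.
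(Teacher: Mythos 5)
Your proof is correct and follows essentially the same route as the paper's: rescale about $(p,t)$, observe that the rescaled surfaces inside the rescaled $\eps$-ball all have genus $\le g$, and conclude from smooth multiplicity-one convergence that the limit shrinker has genus $\le g$. The paper's version is terser and leaves the lower-semicontinuity-of-genus step implicit, whereas you spell out the exhaustion by balls $B_R(0)$; the content is the same.
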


\begin{proof}
Let $t_i<t$ be regular times converging to $t$.
By passing to a subsequence, we can assume that
\[
   (t-t_i)^{-1/2} (M(t_i) - p)
\]
converges smoothly to a shrinker $\Sigma$.
Hence
\[
  (t-t_i)^{-1/2}(M(t_i)\cap B(p,\eps) - p)  \tag{*}
\]
also converges smoothly to $\Sigma$.
Since the surfaces~\thetag{*} have genus~$\le g$, the surface
 $\Sigma$ also has genus~$\le g$.
\end{proof}

\begin{corollary}\label{genera-corollary}
Suppose $T\le \Tfat$.  Then the sum of the genuses
of the singularities at time $T $ is less than or equal to the 
genus of $M$.
\end{corollary}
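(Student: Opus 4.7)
The plan is to approach time $T$ through a sequence of regular times, apply the argument of Theorem~\ref{trivial-theorem} at every singular point simultaneously, and close with the genus monotonicity~\eqref{genus-decreases}. First I would argue that only finitely many singular points lie at time $T$: by Bamler--Kleiner every tangent flow at a time in $(0,\Tfat]$ is a multiplicity-one shrinker, so the Gauss density is uniformly bounded away from $1$ at every singular point, and together with upper semicontinuity of density and compactness of the spacetime support this forces the time-$T$ singular set to be finite. (If one prefers, the argument below in fact establishes the inequality for every finite subcollection of time-$T$ singular points, so that finiteness can even be deduced from the conclusion.) Label these points $(p_1,T),\dots,(p_k,T)$, with genera $g_1,\dots,g_k$, and choose $\eps>0$ small enough that the balls $B(p_i,\eps)$ are pairwise disjoint.

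Next, since a.e.\ time is regular, I pick any sequence of regular times $\tau_n\uparrow T$. For each $i$, by the definition of the genus of $(p_i,T)$, some subsequence of
\[
   (T-\tau_n)^{-1/2}(M(\tau_n)-p_i)
\]
converges smoothly and with multiplicity $1$ to a shrinker of genus $g_i$. Applying this successively for $i=1,\dots,k$ and diagonalizing, I obtain a single subsequence (still called $\tau_n$) along which all $k$ rescalings converge simultaneously. Because $(T-\tau_n)^{-1/2}B(p_i,\eps)$ is a ball of radius $\eps(T-\tau_n)^{-1/2}\to\infty$ in the rescaled picture, it eventually contains any fixed compact set, in particular one that carries all handles of the limiting shrinker. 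Hence, for every $i$ and every sufficiently large $n$,
\[
   \genus\bigl(M(\tau_n)\cap B(p_i,\eps)\bigr)\ge g_i.
\]

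A standard Euler characteristic / dual graph argument applied to the decomposition of the closed surface $M(\tau_n)$ by the pairwise disjoint subsurfaces $M(\tau_n)\cap B(p_i,\eps)$ and their complement then yields
\[
   \genus(M(\tau_n))\;\ge\;\sum_{i=1}^k \genus\bigl(M(\tau_n)\cap B(p_i,\eps)\bigr).
\]
On the other hand, $0$ is a regular time because $M$ is smooth and compact, and $0<\tau_n<T\le\Tfat$, so~\eqref{genus-decreases} gives $\genus(M(\tau_n))\le\genus(M)$. Chaining the three inequalities yields $\sum_i g_i\le\genus(M)$.

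The one step that requires genuine care is the diagonal extraction producing a single subsequence along which the genus lower bound holds at every singular point at once; finiteness of the time-$T$ singular set and the subadditivity of genus under disjoint decomposition into subsurfaces are standard bookkeeping, and the monotonicity~\eqref{genus-decreases} is quoted directly.
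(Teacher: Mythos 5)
Your proof is correct and takes the approach the paper clearly has in mind: the corollary is stated without proof, and what is needed is exactly what you do, namely run the subsequential smooth-convergence argument from the proof of Theorem~\ref{trivial-theorem} simultaneously at all positive-genus singular points along a single diagonal subsequence of regular times, then combine the resulting per-ball genus lower bounds with superadditivity of genus over pairwise disjoint compact subsurfaces of a closed surface and with the monotonicity~\eqref{genus-decreases}. One small caveat: the opening argument that the full time-$T$ singular set is finite is incomplete as written --- a uniform density gap above $1$, upper semicontinuity of density, and compactness of the spacetime support are together compatible with infinitely many singular points --- but, as your parenthetical observes, this is not actually needed: the estimate you prove for an arbitrary finite subcollection already bounds the number of positive-genus singular points at time $T$ by $\genus(M)$, which is all that the statement of the corollary requires.
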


\begin{theorem}
\label{fat-genus-theorem}
If $\Tfat<\infty$, then there is shrinker at time $\Tfat$ with genus $\ge 1$.
\end{theorem}

\begin{proof}
By \cite{chh}*{Theorem~1.9}, there is a shrinker $\Sigma$ at time $\Tdisc\le \Tfat$ that is not a plane, a shrinking sphere, or a cylinder.
(See \cite{hw}*{Definition 2} or \cite{chh}*{p.~227} for the definition of the discrepancy time
$\Tdisc=\Tdisc(M)$.)
Since planes, spheres, and cylinders are the only smooth genus $0$ shrinkers, $\Sigma$ must have genus $\ge 1$.
Finally, by~\cite{bamler-kleiner}*{Theorem 1.8}, $\Tdisc=\Tfat$.
\end{proof}

\begin{definition}\label{Tpos} The time $\Tpos(M)$ is the infimum
of times $t\le \Tfat$ at which there is a singularity
of positive genus. \end{definition}
\noindent In some papers, $\Tpos(M)$ is written 
  $T_\textnormal{gen}(M)$. 
  
\begin{theorem}
If $\Tpos<\infty$, then 
there is a shrinker at time $\Tpos$
of genus $\ge 1$.
\end{theorem}

\begin{proof}
Note that, by definition, 
\begin{equation*} 
      \Tpos(M) \leq \Tfat(M).
  \end{equation*}
If $\Tpos=\Tfat$,
then there is a positive-genus
shrinker at time $\Tpos$ by 
  Theorem~\ref{fat-genus-theorem}.
If $\Tpos<\Tfat$, then there is a positive-genus
shrinker at time $\Tpos$ because the set of spacetime points
at which the genus is positive
is a closed subset of spacetime.
See, for example, \cite{white-genus}*{Theorem~25}.
\end{proof}

\begin{remark*}
In this paper, we only need to flow up to time $\Tpos$.
\end{remark*}

\begin{theorem}[Local Regularity Theorem]\label{local-theorem}\cite{white-local}
Suppose $U$ and $W$ are open subset of a smooth
Riemannian manifold and that $\overline{U}$ is  a compact
subset of $W$.  Suppose that
\[
  t\in (a,b]\mapsto M(t),
\]
is a mean curvature flow in which each $M(t)$ 
is a smooth, properly embedded submanifold of $W$.
Suppose for each $n$ that 
\[
  t\in (a,b]\mapsto M_n(t)
\]
a unit-regular Brakke flow in $W$
such that $M_n(t)$ converges weakly to $M(t)$
for almost every $t\in (a,b)$.
Let $a'\in (a,b)$.
Then, for large $n$, $M_n(t)\cap U$ is smooth 
and converges smoothly to $M(t)$
for all $t\in [a',b]$.
\end{theorem}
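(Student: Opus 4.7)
The plan is to reduce the theorem to an application of the Brakke--White $\varepsilon$-regularity theorem, using Huisken's monotonicity to control Gaussian density under weak limits. Throughout I write $\Theta(\mu,p,t)$ for the Gaussian density of a Brakke flow $\mu$ at a spacetime point.

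First, I would observe that on the compact spacetime region $\overline{U}\times[a',b]$, the smooth limit flow $M$ has density everywhere at most $1$: if $p\in M(t)$ then smoothness gives $\Theta(M,p,t)=1$, while if $p\notin M(t)$ then smoothness of the flow (together with local bounded curvature up to $t=b$) provides a parabolic neighborhood of $(p,t)$ disjoint from the flow, so the density is $0$. Next, Gaussian density is upper semi-continuous under weak convergence of Brakke flows in both the spacetime variable and the sequence: this is immediate from Huisken's monotonicity formula, since $\Theta(\mu,p,t)$ equals the monotone limit of backwards-in-time Gaussian integrals against smooth weights, and weak convergence of $M_n(\tau)$ passes to these integrals for a.e.\ $\tau<t$. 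Consequently, for every $\eta>0$ and every $(p_0,t_0)\in\overline{U}\times[a',b]$, there exist a parabolic neighborhood $P(p_0,t_0)$ in $W\times(a,b]$ and an $n_0$ such that $\Theta(M_n,p,t)<1+\eta$ for all $n\ge n_0$ and $(p,t)\in P(p_0,t_0)$.

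Second, I would invoke the Brakke--White local regularity theorem: there exists a universal $\eta_0>0$ such that if a unit-regular integral Brakke flow has $\Theta<1+\eta_0$ throughout a parabolic ball, then in a smaller concentric parabolic ball the flow is a smooth properly embedded mean curvature flow of multiplicity one with polynomial curvature bounds depending only on the ambient geometry. Choosing $\eta<\eta_0$ above and extracting a finite subcover of the compact set $\overline{U}\times[a',b]$, I obtain a single $n_1$ and an open set $V\supset\overline{U}$ with $\overline{V}\subset W$ such that for all $n\ge n_1$, each $M_n(t)\cap V$ is smooth for $t\in[a',b]$, with uniform bounds on curvature and all its derivatives.

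Third, once uniform smoothness is in hand, smooth convergence on $\overline{U}\times[a',b]$ follows from standard Arzel\`a--Ascoli plus parabolic interior estimates: any subsequential smooth limit $M_\infty$ on $\overline{U}\times[a',b]$ is a smooth mean curvature flow, and the a.e.\ weak convergence of $M_n$ to $M$ forces $M_\infty=M$ on a dense set of times, hence everywhere by smoothness. Unit-regularity of the $M_n$ is essential here: without it, a smooth sheet could vanish abruptly while remaining a Brakke flow, and the density bound alone would not rule this out. The main obstacle, in my view, is organizing the density upper semi-continuity so that it holds uniformly over $(n,p,t)$ simultaneously, including at the endpoint $t=b$ where the monotonicity argument must take backwards-in-time limits carefully; this is precisely the content that makes the Brakke--White framework the right tool, and once one has it, the compactness-plus-$\varepsilon$-regularity scheme above produces the conclusion.
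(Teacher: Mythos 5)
The paper does not prove this statement: it cites White's local regularity paper \cite{white-local}, so there is no internal proof to compare against. Your sketch is the standard derivation of this compactness-plus-smooth-convergence statement from the $\epsilon$-regularity theorem in that reference, and the scheme is correct: upper semi-continuity of Gaussian density via Huisken monotonicity, $\epsilon$-regularity to get uniform curvature and derivative bounds, and then parabolic compactness to identify the limit as $M$. You also correctly flag unit-regularity as the hypothesis that rules out sudden vanishing and lets you conclude smoothness up to and including $t = b$: for an integral Brakke flow, density $< 1 + \eta_0$ forces density equal to $1$, and the unit-regular condition then gives a smooth spacetime neighborhood of each such point. Two details that a full write-up would need to address but that you elide: the paper explicitly allows the metrics $\gamma_n \to \gamma$ to vary with $n$, so the monotonicity and regularity estimates must be stated uniformly in the background geometry; and in a Riemannian manifold the Huisken quantity is only almost monotone, so the density must be defined via a localized Gaussian weight with a small additive error that vanishes at parabolic scale zero. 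Neither changes the structure of your argument, and both are handled in the cited reference.
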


Here, the $M_n(\cdot)$ and $M(\cdot)$ can be Brakke
flows with respect to Riemannian metrics $\gamma_n$
and $\gamma$ on $W$, where $\gamma_n$ converges
smoothly to $\gamma$.

\begin{theorem}[Initial Regularity Theorem]
\label{initial-theorem}
Suppose $t\in [0,T)\mapsto M(t)$
is a mean curvature flow in an open set $U$
such that $M(0)$ is a smooth, properly embedded,
multiplicity~$1$ submanifold of $U$.
Let $K$ be a compact region in $U$ with smooth
boundary transverse to $M(0)$. 
There is 
an $\eps>0$ with the following properties.
\begin{enumerate}
\item\label{item:initial-1}
The flow
 $t\in [0,\eps]\mapsto M(t)\cap K$ 
 is smooth.
 \item\label{item:initial-2} if $t\in [0,T)\mapsto M_i(t)$
   are mean curvature flows that converge
   weakly to $M(\cdot)$, then for $0<\eps'<\eps$,
   \[
      t \in [\eps',\eps] \mapsto M_i(t)\cap K
    \]
    converges smoothly to
    \[
      t\in [\eps',\eps] \mapsto M(t)\cap K.
    \]
\item\label{item:initial-3} 
Suppose in Assertion~\eqref{item:initial-2} that $M_i(0)$ converges
   smoothly to $M(0)$.  Then
   \[
      t\in [0,\eps] \mapsto M_i(t)\cap K
   \]
   converges smoothly to
   \[
      t\in [0,\eps] \mapsto M(t)\cap K.
   \]
\end{enumerate}
\end{theorem}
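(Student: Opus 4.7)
The three assertions are of decreasing difficulty once the smoothness of $M(\cdot)$ near $K$ is established, so I would prove them in the stated order, deriving (2) and (3) from (1) plus the Local Regularity Theorem and standard parabolic theory.

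For Assertion~\eqref{item:initial-1}, the strategy is to produce a classical, smooth mean curvature flow starting from $M(0)$ in a neighborhood of $K$ and then to identify it with the given unit-regular Brakke flow $M(\cdot)$ using the density~$1$ characterization of regular points. Since $M(0)$ is properly embedded in $U$ and $K$ is compact, $M(0)\cap K$ is compact; using the transverse intersection of $M(0)$ with $\partial K$, I would choose a relatively compact open set $W\subset U$ with $K\subset W$ and extend $M(0)\cap \overline{W}$ to a smooth, compact surface $\Sigma_0$ properly embedded in a slightly larger open set. Classical short-time existence (Ecker--Huisken) yields a smooth mean curvature flow $t\in[0,\eps_0]\mapsto \Sigma(t)$ with $\Sigma(0)=\Sigma_0$. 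For each $x\in M(0)\cap K$, Huisken monotonicity applied to the smooth surface $M(0)$ gives Gauss density $\Theta(M,(x,0))=1$, so by unit-regularity $M(\cdot)$ is smooth in a spacetime neighborhood of $(x,0)$; short-time uniqueness of smooth mean curvature flow then forces it to agree with $\Sigma(\cdot)$ there. A compactness argument over $M(0)\cap K$ produces a uniform $\eps>0$ on which $M(\cdot)$ coincides with $\Sigma(\cdot)$ in a neighborhood of $K$ and is therefore smooth.

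For Assertion~\eqref{item:initial-2}, I would apply Theorem~\ref{local-theorem} directly, with the role of $M(\cdot)$ there played by the smooth flow from~\eqref{item:initial-1}, the $M_n(\cdot)$ being the given $M_i(\cdot)$, and $(a,b)=(0,\eps)$. Choosing the inner and outer open sets of Theorem~\ref{local-theorem} to be thin neighborhoods of $K$ inside the neighborhood on which smoothness holds, one concludes that for every $\eps'>0$ and every large $i$, $M_i(t)$ is smooth near $K$ and converges smoothly to $M(t)$ for $t\in[\eps',\eps]$. Because $M(t)$ is transverse to $\partial K$ on $[0,\eps]$ (by continuity from the assumption at $t=0$, after possibly shrinking $\eps$), smooth convergence of the surfaces forces smooth convergence of the intersections with $K$.

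For Assertion~\eqref{item:initial-3}, the extra ingredient needed to reach $t=0$ is continuous dependence of classical mean curvature flow on smooth initial data. Since $M_i(0)\to M(0)$ smoothly near $K$, for large $i$ one may write $M_i(0)$ as a small $C^k$-graph over $M(0)$ in a tubular neighborhood; parabolic regularity for the graphical mean curvature flow equation then yields smooth classical solutions on a common short time interval $[0,\eps_1]$ depending only on bounds for $M(0)$ near $K$, and these solutions converge smoothly to the classical solution $\Sigma(\cdot)$ on $[0,\eps_1]$. The unit-regularity argument of~\eqref{item:initial-1}, applied to each $M_i(\cdot)$ with the same initial tangent plane argument (now uniformly in $i$), identifies $M_i(\cdot)$ with these classical solutions near $K$. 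Combining this smooth convergence on $[0,\eps_1]$ with Assertion~\eqref{item:initial-2} applied on $[\eps_1/2,\eps]$ yields smooth convergence on all of $[0,\eps]$.

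The main technical obstacle I foresee is the uniformity issue in Assertion~\eqref{item:initial-3}: one must ensure that the time of existence of the classical graphical flows for $M_i(\cdot)$, and the identification of each Brakke flow $M_i(\cdot)$ with its classical companion via unit-regularity, can be carried out with constants independent of $i$. This is handled by observing that the relevant smooth geometry of $M_i(0)$ near $K$ converges to that of $M(0)$, so all constants (reach of the tubular neighborhood, $C^k$-norms of the graph function, local density bounds, transversality with $\partial K$) can be chosen uniformly for $i$ large.
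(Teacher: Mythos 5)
Your overall plan follows the paper's outline (establish smoothness of the flow near $K$ for short time, then derive~(2) and~(3) from the Local Regularity Theorem plus uniform estimates), but the key step of Assertion~\eqref{item:initial-1} is where your argument has a genuine gap.

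You write that for each $x\in M(0)\cap K$, ``Huisken monotonicity applied to the smooth surface $M(0)$ gives Gauss density $\Theta(M,(x,0))=1$, so by unit-regularity $M(\cdot)$ is smooth in a spacetime neighborhood of $(x,0)$.'' Gauss density at $(p,t)$ is a backward quantity (a limit of Gaussian integrals at times $<t$); at the initial time $t=0$ there is no past of the flow, so the Gauss density at $(x,0)$ is not defined by the usual formula, and unit-regularity is a statement about interior spacetime points. What is true is that the density ratios of $M(0)$ at $x$ tend to~$1$, and Huisken monotonicity then controls the Gaussian density at points $(q,t)$ with $t>0$ close to $(x,0)$. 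But passing from this to smoothness on a uniform spacetime window $\overline{B(x,\delta)}\times[0,\delta]$ is precisely the content one must prove. The paper does this by its Claim~\ref{initial-claim-1}: a parabolic blow-up argument showing that rescalings about nearby $(q_i,t_i)$ with $t_i\downarrow 0$ converge to a multiplicity-$1$ plane, followed by the Local Regularity Theorem and an appeal to~\cite{sw-local}*{Corollary~A.3}. Your fallback --- identify $M(\cdot)$ with a classical flow $\Sigma(\cdot)$ started from a compact extension $\Sigma_0$ using ``short-time uniqueness of smooth mean curvature flow'' --- is circular: to invoke classical uniqueness you already need to know that $M(\cdot)$ is smooth near $K$ for a short time, and $\Sigma_0\neq M(0)$, so one would additionally need a locality/finite-propagation argument, which you do not supply.

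The same gap recurs, amplified, in your Assertion~\eqref{item:initial-3}. You want to identify each Brakke flow $M_i(\cdot)$ with a classical graphical solution near $K$ and then pass to the limit; but the identification step for each $M_i$ has exactly the issue above, now with the additional burden of making all constants uniform in $i$. You acknowledge this uniformity problem but do not resolve it; asserting that ``all constants can be chosen uniformly for $i$ large'' is what needs proof. The paper avoids this route entirely: its Claim~\ref{initial-claim-2} establishes uniform $C^{j,\alpha}$ parabolic estimates for the flows $M_n(\cdot)$ in a fixed spacetime cylinder $\Omega$, following the proof of~\cite{white-local}*{Theorem~3.1}, and Assertion~\eqref{item:initial-3} is then immediate by Arzel\`a--Ascoli. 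Your Assertion~\eqref{item:initial-2}, by contrast, is fine and matches the paper: once smoothness of $M(\cdot)$ near $K$ on $[0,2\eps]$ is known, it is a direct application of Theorem~\ref{local-theorem}.
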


\begin{proof}
For notational simplicity, we give the proof
in Euclidean space.
Let
\begin{align*}
    K'&=\{\dist(\cdot,K)\le \delta\}, 
\end{align*}
where $\delta>0$ is small enough that $K'$ is compact subset of $U$ and that $\partial K'$ 
 is
transverse to $M(0)$.

\begin{claim}\label{initial-claim-1}
Suppose $p_i$ converges to~$p\in M(0)$
and that $t_i>0$ converges to~$0$.
Let $q_i$ be the point in $M(0)$ closest to $p_i$.
Let 
\[
   r_i = \max \{|p_i-q_i|, t_i^{1/2}\}.
\]
Translate the flow $M(\cdot)$ in spacetime
by $(-q_i,0)$ and the dilate parabolically
by 
\[
  (x,t)\mapsto (x/r_i, t/r_i^2)
\]
to get a flow $M_i'(\cdot)$.  Then the 
flows $M_i'(\cdot)$ converge to the flow
\[
  t\in [0,\infty) \mapsto P,
\]
where $P$ is the plane $\Tan(M,p)$ with
multiplicity~$1$, and the convergence
is smooth on compact subsets of 
  $\RR^3\times (0,\infty)$.
\end{claim}

To prove the claim, note that, by passing to a 
 subsequence, we can assume that
the $M_i'(\cdot)$ converge weakly to 
a mean curvature flow 
\[
 t\in [0,\infty) \mapsto M'(t).
\]
Note that $M'(0)$ is $\Tan(M(0),p)$ with multiplicity~$1$.  It follows (by Huisken monotonicity) that $M'(t)\equiv M'(0)$ for
all $t\ge 0$.
By the Local Regularity Theorem
 (Theorem~\ref{local-theorem}),
the convergence of $M_i(\cdot)$ to $M'(\cdot)$
is smooth on
compact subsets of $\RR^3\times (0,\infty)$.
This completes the proof of 
 Claim~\ref{initial-claim-1}.

It follows from Claim~\ref{initial-claim-1} and 
 from~\cite{sw-local}*{Corollary~A.3}
 that there is an $\eps>0$ such that
\begin{equation}\label{smoothy}
\text{The flow
$t\in [0,2\eps] \mapsto M(t)\cap K'$ is smooth.}
\tag{$\dagger$}
\end{equation}
Assertion~\eqref{item:initial-1} follows immediately.
Assertion~\eqref{item:initial-2} follows from~\eqref{smoothy}
 and
 the Local Regularity
 Theorem (Theorem~\ref{local-theorem}).

Now let 
\[
\Omega
 = (K'\setminus \partial K')\times (-2\eps,2\eps).
\]

\begin{claim}\label{initial-claim-2}
    For $j\ge 2$ and $\alpha\in (0,1)$,
    there exists $C_{j,\alpha}<\infty$ such that
    \[
    K_{j,\alpha; \Omega}(M_n(\cdot)) \le C_{j,\alpha}
    \]
    for all sufficiently large $n$.
\end{claim}
See~\cite{white-local} for the definition of 
 $K_{2,\alpha; U(R)}(M(\cdot))$.
The proof of the claim is almost identical
to the proof
 of~\cite{white-local}*{Theorem~3.1}, so we omit it.
 Assertion~\eqref{item:initial-3} follows immediatetly from
 Claim~\ref{initial-claim-2}.
\end{proof}

\section{Oriented Flows}\label{orientation-section}

Suppose that $t\in I\mapsto M(t)$
is a unit-regular Brakke flow.
We let $\Reg(t)$ be the set of points $p$
such that the flow is smooth and with multiplicity~$1$
in a spacetime neighborhood of $(p,t)$.
Equivalently, $\Reg(t)$ is the set of points $p$
such that the Gauss Density at $(p,t)$ is $1$.
We let $\Sing(t)$ be the set of $p$ such that
the Gauss Density of at $(p,t)$ is $>1$.

\begin{definition}
Suppose $t\in I \mapsto M(t)$
is an integral Brakke flow of $m$-dimensional
surfaces in an oriented Riemannian $(m+1)$-manifold.  An {\bf orientation} of $M(\cdot)$
is a continuous function that assigns to 
$p\in \Reg(t)$ a unit normal $\nu(p,t)$
to $p$ at $M(t)$.

A {\bf strong orientation} is an orientation  $\nu$ 
with the following property.
For each time $t$, there is a closed region
$K(t)$ of $N$ such that $K(t)\setminus \Sing(t)$
is a smooth manifold-with-boundary, the boundary
being $\Reg(t)$, and such that $\nu(\cdot,t)$
is the unit normal that points out of $K(t)$.
\end{definition}

Suppose $M$ is a compact, connected embedded, oriented 
surface in $\RR^3$ or, more generally,
in a simply connected Riemannian $3$-manifold
with Ricci curvature bounded below.
Let $\nu$ be the unit normal to $M$ given
by the orientation.
Now $M$ divides the ambient space into
two regions.  We let $K$ be the closure
of  the region such that $\nu$ points out of $K$.
Then $\nu$ extends to an orientation
of the level set flow
\[
  t\in [0,\Tfat] \mapsto M(t)
\]
as follows.  As usual, we let $K(t)$ be
the result of flowing $K$ for time $t$ by level flow.  If $(p,t)$ is a regular spacetime
point of $M(\cdot)$, then there is a neighborhood
$B$ of $p$ such that $K(t)\cap B$
is a smooth manifold-with-boundary, the
boundary being $M(t)\cap B$.  We let 
  $\nu(p,t)$ be the unit normal to 
  $M(t)$ at $p$ that points out of $K(t)$.
Note that the orientation is strong.
We say that $\nu(\cdot)$ is the 
 {\bf standard orientation} induced
 by the given orientation of $M$.

\begin{theorem}
Suppose $M_i(\cdot)$ is a sequence
of unit-regular integral Brakke flows that converge
to a unit-regular Brakke flow $M(\cdot)$.  Suppose $\nu_i$
is an orientation of $M_i(\cdot)$.
Then, after passing to a subsequence, the $\nu_i(\cdot)$ converge to an orientation
  $\nu(\cdot)$
of $M(\cdot)$.

If the $\nu_i$ are strong, then so is $\nu$.
\end{theorem}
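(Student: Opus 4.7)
The plan is to extract a subsequence in two stages: first to obtain convergence of the normals $\nu_i$ on the spacetime regular set of $M(\cdot)$, and then, under the strong hypothesis, to arrange a usable limit of the closed regions $K_i(t)$.

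For the orientation claim, the main tool is the Local Regularity Theorem~\ref{local-theorem}: on compact subsets of the spacetime regular set of $M(\cdot)$, the flows $M_i(\cdot)$ converge smoothly to $M(\cdot)$. That regular set is an open subset of spacetime, hence has countably many connected components $U_\alpha$. Fix a smooth unit normal $\nu_\alpha$ on each $U_\alpha$; by smooth convergence, for $i$ large and near compact pieces of $U_\alpha$, the unit normal to $M_i(\cdot)$ is well-defined up to sign, and the orientation $\nu_i$ selects one of the two signs. Continuity of $\nu_i$ forces this sign to be constant on each compact piece of $U_\alpha$, and exhausting $U_\alpha$ by compacts shows the sign is constant on all of $U_\alpha$. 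A Cantor diagonal over the countable family $\{U_\alpha\}$ produces a subsequence along which the sign is eventually fixed in every $U_\alpha$, and the resulting pointwise limit $\nu$ is a continuous unit normal on the regular set of $M(\cdot)$, i.e., an orientation.

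For the strong claim, I would construct $K(t)$ as follows. For each $t$, the complement $N\setminus\spt(M(t))$ has countably many connected components $C$. Pass to a further subsequence so that, for each such $C$ at each $t$ in a countable dense set of times, either every compact subset of $C$ is eventually contained in $K_i(t)$, or every compact subset of $C$ is eventually disjoint from $K_i(t)$. Define $K(t)$ to be the closure of the union of the ``inside'' $C$'s. Near any regular spacetime point, smooth convergence of $M_i(\cdot)$ transfers the smooth manifold-with-boundary structure from $K_i(t)$ to $K(t)$: the boundary is $\Reg(t)$ and $\nu(\cdot,t)$ points out of $K(t)$ because $\nu_i(\cdot,t)$ pointed out of $K_i(t)$. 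Times not in the dense subset are handled by passing to a spacetime Hausdorff (Kuratowski) subsequential limit of the closed sets $\{(x,t):x\in K_i(t)\}$ and slicing.

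The main obstacle, in my view, is justifying the inside/outside dichotomy and its consistency across different regular boundary points of the same $C$. A priori, $C$ could meet $\spt(M_i(t))$ for every $i$, so the dichotomy is not automatic. However, unit-regularity of the $M_i(\cdot)$ supplies uniform density lower bounds on $\spt(M_i(t))$; combined with weak convergence at almost every $t$ and Brakke-type regularity, this forces $\spt(M_i(t))\to \spt(M(t))$ in Hausdorff distance on compacta for almost every $t$. Consistency is then obtained by joining any two regular boundary points $p,p'\in\partial C\cap\Reg(t)$ by a path in $\overline C$ that is eventually (along the subsequence) disjoint from $\spt(M_i(t))$, and propagating the outward direction of $\nu_i$ along the corresponding nearby path on $M_i(t)$ using continuity of $\nu_i$ and the fact that $K_i(t)$ is a closed region.
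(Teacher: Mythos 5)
Your first paragraph has the right mechanism --- local regularity gives smooth graphical convergence of $M_i(\cdot)$ to $M(\cdot)$ on compact subsets of the spacetime regular set, and a diagonal argument over the countably many components $U_\alpha$ produces a subsequential limit --- but the step ``fix a smooth unit normal $\nu_\alpha$ on each $U_\alpha$'' begs the question: two-sidedness of the surface swept out over $U_\alpha$ is exactly the orientability that the theorem must produce, since $M(\cdot)$ was not assumed oriented. The fix is to drop the auxiliary $\nu_\alpha$: on a compact piece $\Omega\subset U_\alpha$ the surface $M_i(\cdot)$ is eventually a small normal graph over $M(\cdot)$, so $\nu_i$ directly determines a continuous unit normal to $M(\cdot)$ over $\Omega$, and these agree as $\Omega$ exhausts $U_\alpha$. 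Orientability of $U_\alpha$ then comes out as a by-product rather than going in as a hypothesis.

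The strong part is the delicate step, and the patch you propose does not close the gap you correctly identify. The claim ``$\spt(M_i(t))\to\spt(M(t))$ in Hausdorff distance for a.e.\ $t$'' is not the statement you need: the set that controls where $\partial K_i(t)$ can accumulate is $\{p:\Theta_M(p,t)\ge 1\}=\Reg(t)\cup\Sing(t)$, and for a Brakke flow this can strictly contain $\spt(M(t))$, so a component $C$ of $N\setminus\spt(M(t))$ need not stay clear of $\spt(M_i(t))$, and your per-component dichotomy is not justified as stated. What does hold, via Huisken monotonicity and upper semicontinuity of Gauss density under weak convergence, is the weaker fact that if $\Theta_M(p,t)=0$ then a ball about $p$ is eventually disjoint from $\spt(M_i(t))$. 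With this in hand a single spacetime Kuratowski subsequential limit of the closed sets $\{(x,t):x\in K_i(t)\}$ already does everything: near a regular spacetime point the smooth graphical convergence shows the $t$-slice of the limit is a manifold-with-boundary with $\nu$ pointing outward, and near a point of zero Gauss density the slice is locally a full ball or locally empty. The detour through a countable dense set of times with per-component stabilization is both unnecessary and, as written, unsupported. For comparison, the paper supplies no details at all --- it simply cites the Local Regularity Theorem --- so what is at issue is only whether your fleshing-out is sound, and the strong half currently is not.
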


\begin{proof}
This follows easily from the Local
Regularity Theorem (Theorem~\ref{local-theorem}).
\end{proof}

\section{Platonic
and Regular Polygon Shrinkers}\label{platonic-section}

In this section, we produce shrinkers that have the
symmetries of a Platonic solid and that are disjoint
from the rays through the centers of the faces of that  solid.  
For each $n\ge 3$, we also produce a
shrinker that has the symmetries
of a regular $n$-gon in the $xy$-plane and that
is disjoint from the rays through the vertices of the $n$-gon.
Specifically, we prove

\begin{theorem}\label{plato-n-gon-theorem}
Consider a set $Q$ consisting of a finite collection of rays in $\RR^3$ from the origin.
We assume that the rays intersect the unit sphere in one of the following:
\begin{enumerate}[\upshape(i)]
\item\label{plato-case} 
The centers of the faces of a Platonic solid, or
\item\label{n-gon-case}
Three or more equally spaced points in  circle $\SS^2\cap\{z=0\}$.
\end{enumerate}
Let $g$ be number of rays minus $1$.
There exists a smoothly embedded shrinker $\Sigma$ such that
\begin{enumerate}
\item\label{symmetry-item} $\Sigma$ has the symmetries of $Q$ and is disjoint from $Q$.
\item\label{genus-item} $\Sigma$ has genus~$g$.
\item\label{blowup-item} $\Sigma$ occurs as a shrinker  in 
   a mean curvature flow $M(\cdot)$ 
   in $\RR^3$ for which
   $M(0)$ is a compact, smoothly embedded surface
   in $Q^c$.
\end{enumerate}
\end{theorem}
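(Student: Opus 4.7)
The plan is to realize $\Sigma$ as a tangent shrinker at the first positive-genus singularity of the mean curvature flow starting from a carefully chosen $Q$-symmetric compact smooth surface $M \subset Q^c$. For the initial surface, take $M$ of genus exactly $g$: in the Platonic case as the boundary of a thin tubular neighborhood of the $1$-skeleton of the corresponding Platonic solid (an Euler-characteristic count gives $b_1 = (\text{number of faces})-1 = g$); in the polygon case as a similar tubular neighborhood of an appropriate $D_n$-symmetric planar graph of first Betti number $n-1 = g$. Arrange $M$ to lie in a small ball $B(0,r_0)$ and to have its topology concentrated near the origin, so that for every $q \in \RR^3 \setminus \{0\}$ and small $r>0$, $M \cap B(q,r)$ has small genus; by Theorem~\ref{trivial-theorem} this bounds the genus of any singularity that forms away from the origin.

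Consider the flow $t\mapsto M(t)$. By uniqueness and the Local Regularity Theorem (Theorem~\ref{local-theorem}), both the flow and any tangent shrinker inherit the $G$-symmetry of $Q$. Since $M$ is compact with positive genus, $\Tpos(M) < \infty$; let $T := \Tpos(M)$ and let $(p,T)$ be a positive-genus singular point. The $G$-orbit of $p$ consists of positive-genus singular points whose total genus is at most $g$ by Corollary~\ref{genera-corollary}. In the Platonic cases every non-trivial $G$-orbit in $\RR^3 \setminus \{0\}$ has size strictly greater than $g$ (for the cube $g=5$, but every non-origin orbit has $\ge 6$ points; analogous counts for the other Platonic solids), forcing $p=0$. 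In the polygon case the only non-origin orbit of size $\le g$ is the pair $\{(0,0,\pm z_0)\}$ on the rotation axis; this is ruled out because the localization of the topology of $M$ near the origin, via Theorem~\ref{trivial-theorem}, forces the genus at such points to be $0$.

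Let $\Sigma$ be a shrinker to $M(\cdot)$ at $(0,T)$. Theorem~\ref{trivial-theorem} gives $\genus(\Sigma) \le g$. For equality, the orbit analysis above forces every positive-genus singularity at time $T$ to sit at the origin; since the full genus of $M$ must be accounted for (via Corollary~\ref{genera-corollary}, with $M$ chosen so that no genus leaks away via premature genus-$0$ neck pinches), we get $\genus(\Sigma) = g$. The full $G$-symmetry of $\Sigma$ is inherited from the $G$-equivariance of the rescalings $(T-t_i)^{-1/2}(M(t_i)-0)$. Disjointness of $\Sigma$ from $Q$ follows from an avoidance argument: small shrinking spheres centered at points of $Q \setminus \{0\}$ are MCF barriers, so $M(t)$ is uniformly bounded away from $Q \cap \{|x|\ge \delta\}$ for each $\delta>0$; choosing barrier radii comparable to the parabolic scale $\sqrt{T-t}$ and taking limits yields $\Sigma \cap Q = \emptyset$.

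The principal obstacle is the \emph{genus-concentration} step: showing that the full $g$ units of genus in $M$ actually concentrate at the origin at time $T$, rather than dissipating through non-origin orbits of positive-genus singular points or decreasing via genus-$0$ neck pinches before $T$. The first issue is handled by the orbit-size arithmetic above; the second requires choosing $M$ both small and with its topology tightly localized near the origin, so that the entire flow takes place in a single small spacetime collapse at $(0,T)$. A secondary subtlety is the uniform disjointness of $\Sigma$ from $Q$, which requires the avoidance-principle barriers around each ray to track the parabolic rescaling closely enough to persist in the limit — a quantitative form of the standard shrinking-sphere avoidance principle.
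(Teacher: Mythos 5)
Your approach — pick a single $Q$-symmetric initial surface, flow it, and read the shrinker off at the first positive-genus singularity — diverges significantly from the paper's, and unfortunately it has several genuine gaps, two of which are fatal.

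\textbf{The orbit-counting argument fails for the octahedron and icosahedron.} You claim ``every non-trivial $G$-orbit in $\RR^3\setminus\{0\}$ has size strictly greater than $g$'' and verify it for the cube. But for the octahedron, $g=7$ (8 faces) while the vertex orbit has only 6 points; for the icosahedron, $g=19$ while the vertex orbit has 12 points. Corollary~\ref{genera-corollary} then does \emph{not} rule out a positive-genus singular orbit there. The paper sidesteps this entirely: rather than studying arbitrary singular points at time $\Tpos$, it studies the first time $T_Q$ that the flow touches $Q$, and first establishes (via Theorem~\ref{geometry-theorem}) that any singularity forming while $M(t)$ is still disjoint from $Q$ must have genus $0$. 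That forces $T_Q \le \Tpos$, and the touch point at time $T_Q$ lies on $Q$, whose non-origin orbits always have exactly $g+1$ points. Your argument needs a replacement for Theorem~\ref{geometry-theorem}.

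\textbf{There is no reason $\Tpos(M)$ is finite for your chosen $M$.} For a thin tubular neighborhood of the $1$-skeleton, the flow can, and typically will, neck-pinch the tubes in genus-$0$ (cylindrical) singularities, after which the genus drops; the inequality $\genus(M(t_2))\le\genus(M(t_1))$ permits all $g$ units of genus to dissipate before any positive-genus singularity forms. Corollary~\ref{genera-corollary} gives only an upper bound, not a conservation law. The paper resolves this by embedding $M$ in a connected one-parameter family $\Ff$ interpolating between surfaces with $T_Q=\infty$ and surfaces with $T_Q<\infty$ and extracting a critical surface on the boundary (Theorems~\ref{basic-plato-theorem} and the construction following it); it is precisely this continuity argument that guarantees the desired singularity occurs at the origin. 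Your single-surface setup has no mechanism to guarantee this.

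\textbf{Two further weaknesses.} (a) Your use of Theorem~\ref{trivial-theorem} requires bounding $\genus(M(\tau)\cap B(p,\eps))$ for regular times $\tau$ close to $T$, which is a statement about the evolved surfaces, not about $M(0)$; ``topology concentrated near the origin'' at $t=0$ does not propagate to later times without something like Theorem~\ref{geometry-theorem}. (b) Your barrier argument for $\Sigma\cap Q=\emptyset$ is not complete: shrinking spheres disjoint from $M(0)$ vanish in finite time and so give no information at times close to $T$; disjointness in the paper is instead argued for the blowup limit directly, using smooth convergence, $G$-invariance of $\Sigma$, and the stabilizer of a point $q\in Q$ (Theorem~\ref{shrinker}, Case~(\ref{case2})).

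In short, the paper's proof hinges on the topological rigidity of $G$-invariant surfaces in $Q^c$ whose convex hull meets $Q$ (Theorem~\ref{geometry-theorem}), on tracking $T_Q$ rather than $\Tpos$, and on a connectedness/criticality argument in a family of initial surfaces. Your proposal replaces all three with orbit counting, localization heuristics, and direct barrier estimates, each of which breaks down in at least one case.
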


We do not know whether any of the shrinkers in 
Theorem~\ref{plato-n-gon-theorem} are compact.
In the cases of the cube and the octahedron,
shrinkers as in Theorem~\ref{plato-n-gon-theorem} were found numerically by
 Chopp~\cite{chopp},
and they are compact.  
See Figure~\ref{fig:cube}.
 We would guess that all the Platonic solid shrinkers are compact.
In the regular $n$-gon case, 
one can show that for all sufficiently large $n$, the shrinker must be non-compact.
Indeed, one can show that as $n\to\infty$,
the shrinker converges to a multiplicity $2$ plane.  

\begin{figure}
    \centering
    \includegraphics[width=0.4\linewidth]{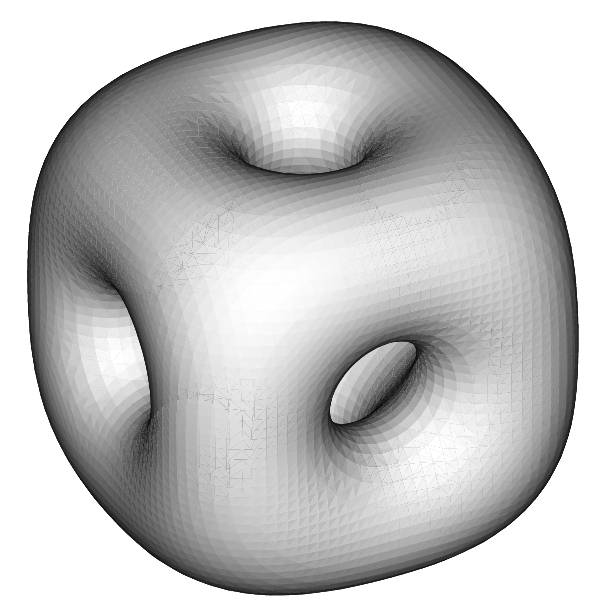}
    \caption{A cubical shrinker, found numerically by 
    David Chopp. Figure courtesy of David Chopp.}
    \label{fig:cube}
\end{figure}

Using minimax methods, Ketover proved existence
 of shrinkers
with properties~\eqref{symmetry-item} 
 and~\eqref{genus-item}
in~\cite{ketover-platonic}
 (the Platonic case~\eqref{plato-case})
and in~\cite{ketover-fat} (the regular polygon case~\eqref{n-gon-case}).
Assertion~\eqref{blowup-item} in the Platonic solid case is new.

In the regular $n$-gon case (Case [ii]),
Theorem~\ref{plato-n-gon-theorem} was proved, by similar methods to those
in this paper, by Ilmanen and White~\cite{iw-fat}.  However, 
 the surface $M$ in that paper is quite different
 than the surface $M$ in our proof of
  Theorem~\ref{plato-n-gon-theorem}:
 the $M$ in this proof is disjoint from $Q$ and intersects $Z$ in $4$ points, whereas the 
 corresponding $M$
  in~\cite{iw-fat} intersects every horizontal
 ray from the origin, and it intersects $Z$ in a pair of points.  The shrinkers $\Sigma$ produced here may well be different from the corresponding shrinkers produced in~\cite{iw-fat}.
 In particular, the shrinkers produced in \cite{iw-fat}
 are all non-compact, whereas 
 the shrinkers produced here might be compact
 when $g$ is small.

Note that $Q^c$ is diffeomorphic to $(\SS^2\setminus \{q_1, \dots, q_{g+1}\})\times \RR$
and therefore homotopy equivalent to $\SS^2\setminus\{q_1,\dots, q_{g+1}\}$.  Thus
\[
H_1(Q^c; \ZZ_2) \cong \ZZ_2{}^g.
\]

We let $G$ be the group of symmetries of $Q$:
\[
 G = \{\sigma\in O(3): \sigma(Q)=Q \}.
\]

If $K$ is a compact set in $\RR^3$, we let 
\[
  T_Q = T_Q(K) = \inf \{t\ge 0: Q\cap K(t)\ne \emptyset\}.
\]
If $T_Q<\infty$, then $K(T_Q)\cap Q$ is nonempty.

The following lemma says that if $M$ separates $K$ from $Q$, then $M$ has to bump into $Q$ before $K$ can:

\begin{lemma}\label{between}
Suppose $K$ and $M$ are disjoint compact sets,
and that no connected component of $M^c$ contains points
of $K$ and of $Q$.  Then $T_Q(M)< T_Q(K)$.
\end{lemma}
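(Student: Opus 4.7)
The plan is to first establish the non-strict inequality $T_Q(K)\ge T_Q(M)$ via the inclusion property of the level-set flow combined with a topological separation argument, and then upgrade to strict inequality at $t_0:=T_Q(M)$ using the avoidance principle and time-continuity.

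First I would reduce to the case $M\cap Q=\emptyset$: if $M$ meets $Q$ then $T_Q(M)=0$, and the hypothesis forces $K\cap Q=\emptyset$ (else the component of $M^c$ containing a common point would contain both $K$- and $Q$-points), so by compactness of $K$ and closedness of $Q$, $T_Q(K)>0$. Let $\Omega$ be the union of the connected components of $M^c$ that meet $K$. The hypothesis gives $\Omega\cap Q=\emptyset$; and since $Q$ is connected (every ray passes through the origin) and unbounded while $\Omega$ avoids $Q$, no component of $\Omega$ can be unbounded. Hence $\overline{\Omega}$ is compact, with $K\subset\Omega$, $\overline{\Omega}\cap Q=\emptyset$, and $\partial\overline{\Omega}\subset M$.

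For the non-strict inequality I would apply the two standard level-set flow properties $A\subset B\Rightarrow A(t)\subset B(t)$ and $\partial(A(t))\subset(\partial A)(t)$. These give $K(t)\subset\overline{\Omega}(t)$ and $\partial\overline{\Omega}(t)\subset M(t)$ for all $t$. For $t<T_Q(M)$, $M(t)\cap Q=\emptyset$ forces $\partial\overline{\Omega}(t)\cap Q=\emptyset$; since $Q$ is connected with points arbitrarily far from the origin while $\overline{\Omega}(t)$ is bounded, $Q$ must lie entirely in the unbounded component of $\RR^3\setminus\partial\overline{\Omega}(t)$, which is disjoint from $\overline{\Omega}(t)$. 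Thus $K(t)\cap Q=\emptyset$ for $t<T_Q(M)$, i.e., $T_Q(K)\ge T_Q(M)$.

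To upgrade to strict inequality, set $t_0:=T_Q(M)$ and assume $K(t_0)\ne\emptyset$ (else $T_Q(K)=\infty$ and we are done). The avoidance principle gives $K(t_0)\cap M(t_0)=\emptyset$; combined with $K(t_0)\subset\overline{\Omega}(t_0)$ and $\partial\overline{\Omega}(t_0)\subset M(t_0)$, this forces $K(t_0)\subset\operatorname{int}(\overline{\Omega}(t_0))$. The step I expect to be the main obstacle is showing $Q\cap\operatorname{int}(\overline{\Omega}(t_0))=\emptyset$: naively, a $q\in Q$ in this interior would lie in a small ball contained in $\overline{\Omega}(t_0)$, and by time-continuity of the flow one should obtain $q\in\overline{\Omega}(t)$ for some $t<t_0$, contradicting the non-strict step; but since Hausdorff continuity alone does not automatically preserve interior points, the clean version of the argument probably requires replacing $K$ throughout by a slightly thickened compact neighborhood $\overline{U}$ with $K\subset U$, $\overline{U}\subset\Omega$, and $\partial U$ a smooth compact surface, so that the strict avoidance principle keeps $\partial U(t)$ at positive Hausdorff distance from $M(t)$ on an open time-interval around $t_0$. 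Once $K(t_0)\cap Q=\emptyset$ is established, compactness of $K(t_0)$ and closedness of $Q$ give $d(K(t_0),Q)>0$, and time-continuity of the flow yields $K(t)\cap Q=\emptyset$ for $t$ slightly greater than $t_0$, proving $T_Q(K)>T_Q(M)$.
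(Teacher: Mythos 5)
Your overall strategy -- pass to the union $\Omega$ of components of $M^c$ that meet $K$, observe $K\subset\overline\Omega$ and $\partial\overline\Omega\subset M$, and then use monotonicity of the level-set flow plus a topological separation argument -- is sound and is morally the same as the paper's. But the paper's proof consists of a single sentence citing Theorem~5.2 of \cite{white-95}, which is precisely a separation/"you must hit the barrier first" theorem for weak set flows, and that is where the real content lives. Your proof attempts to rederive that content, and it has a genuine gap.

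The gap is the assertion that $\partial\bigl(A(t)\bigr)\subset(\partial A)(t)$ is a ``standard level-set flow property.'' It is not. The inclusion monotonicity $A\subset B\Rightarrow A(t)\subset B(t)$ is indeed basic (it follows from comparison for the level-set PDE), but a statement that the topological boundary of the flow of a compact region is swept out by the flow of its topological boundary is a nontrivial theorem. Tracing through the level-set functions: if $u_0$ is a Lipschitz function with $\{u_0=0\}=\partial A$, negative on $A^\circ$ and positive on $A^c$, comparison readily gives $A(t)\subset\{u(\cdot,t)\le 0\}$ (so $\partial(A(t))\subset\{u(\cdot,t)\le 0\}$), but the inclusion you actually need, $\{u(\cdot,t)<0\}\subset A(t)$ (so that $\partial(A(t))$ avoids $\{u<0\}$ and lands on $\{u=0\}=(\partial A)(t)$), goes the wrong way for a naive comparison argument ($\max(u,0)$ is a sub-, not super-, solution), and in general it can fail for compact sets with sudden vanishing of content. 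Making it work for the sets that arise here is essentially what White's Theorem~5.2 establishes, so you are implicitly assuming the hard step. You should either cite that theorem directly (as the paper does) or give an honest proof of the boundary inclusion, e.g.\ by running the connectedness argument through the level-set function $u$ itself: $u_0>0$ on $Q$ (since $Q\cap\Omega=\emptyset$), $u(\cdot,t)\ne 0$ on $Q$ for $t<T_Q(M)$, $u(\cdot,t)>0$ far out on $Q$, hence $u(\cdot,t)>0$ on all of $Q$ by connectedness of $Q$, while $K(t)\subset\{u(\cdot,t)\le 0\}$.

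Two smaller points. First, your reduction to $M\cap Q=\emptyset$ and the compactness argument for $\overline\Omega$ are fine. Second, your treatment of the strict inequality is appropriately cautious -- replacing $K$ by a slightly larger compact neighborhood $\overline U$ with $\overline U\subset\Omega$ and invoking the strict avoidance principle between $\partial U$ and $M$ is the right idea -- but as written it is a sketch with the hard step left to the reader. In the paper this is subsumed by the citation: White's Theorem~5.2 already gives the strict inequality.
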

\begin{proof}
    The proof of this lemma is an immediate consequence of Theorem 5.2 in \cite{white-95}.
\end{proof}

\begin{definition}\label{A-definition}
Let $\Aa$ be the class of compact, smoothly embedded surfaces $M$ in $Q^c$
such that 
\begin{enumerate}
\item $\sigma(M)=M$ for every $\sigma\in G$.
\item $\genus(M)\le g$.
\item $T_Q(M)<\infty$.
\end{enumerate}
\end{definition}

\begin{theorem}\label{preservation}
If $M\in \Aa$ and if $t<\min\{T_Q, \Tpos\}$ is a regular time of the  flow $M(\cdot)$,
then $M(t)\in \Aa$.
\end{theorem}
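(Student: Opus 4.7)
The plan is to verify, one at a time, the three defining properties of $\Aa$ for the surface $M(t)$, using standard properties of the level set flow together with results already recorded in the preliminaries.

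First I would note that $M(t)$ is compact and smoothly embedded. Compactness follows because the level set flow of a compact set is contained in any large enclosing sphere evolving by MCF, and smoothness follows directly from the definition of a regular time and the hypothesis $t<\Tfat$ (so $M(t)$ has not fattened). I would also observe at this point that $M(t)\subset Q^c$: indeed, the hypothesis $t<T_Q(M)$ together with the definition of $T_Q$ says precisely that $Q\cap M(t)=\emptyset$.

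Next I would check the three bulleted conditions defining $\Aa$:
\begin{enumerate}
\item[(1)] \emph{Symmetry.} Each $\sigma\in G\subset O(3)$ is an ambient isometry of $\RR^3$, and the level set flow commutes with ambient isometries. Hence $\sigma(M(t))=(\sigma(M))(t)=M(t)$ for every $\sigma\in G$.
\item[(2)] \emph{Genus.} Since $0$ and $t$ are both regular times with $0<t<\Tfat(M)$, inequality~\eqref{genus-decreases} gives $\genus(M(t))\le \genus(M(0))\le g$.
\item[(3)] \emph{Finiteness of $T_Q$.} By the semigroup property of the level set flow, $(M(t))(s)=M(t+s)$ for all $s\ge 0$. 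Therefore
\[
T_Q(M(t)) \;=\; T_Q(M)-t \;<\;\infty,
\]
since $T_Q(M)<\infty$ by the assumption $M\in\Aa$.
\end{enumerate}

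No step presents a real obstacle here: each condition follows from a formal property of the level set flow (isometry equivariance, the semigroup property, and the definition of a regular time), together with the genus monotonicity of~\cite{white-genus} recorded as~\eqref{genus-decreases}. The only place one needs to pay a little attention is in confirming that at the regular time $t$ the level set flow agrees with the classical smooth mean curvature flow and that compactness is preserved, both of which are standard.
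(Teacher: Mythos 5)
Your proposal is correct and follows essentially the same approach as the paper's proof, which invokes the same two facts (isometry-equivariance of the level set flow and the genus monotonicity \eqref{genus-decreases}) and treats the remaining properties (compactness, smoothness, disjointness from $Q$, finiteness of $T_Q$ via the semigroup property) as immediate. Your version simply spells out these implicit steps more explicitly.
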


Later we will see that $T_Q=\Tpos$.

\begin{proof}
The theorem is a trivial consequence of the facts that (1) level set flow
preserves symmetries, and (2) for any compact smooth embedded surface, $\genus(M(t))$ is a decreasing function of regular times in $[0,\Tpos]$.  
(See~\eqref{genus-decreases}.)
\end{proof}

\begin{theorem}\label{convex-hull}
If $M\in \Aa$, then there is a connected component $\Sigma$ of $M$
such that the convex hull of $\Sigma$ contains points of $Q$.
\end{theorem}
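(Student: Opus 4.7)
The plan is to argue by contradiction. Suppose that every connected component $\Sigma$ of $M$ satisfies $\operatorname{conv}(\Sigma)\cap Q=\emptyset$; we will derive $T_Q(M)=\infty$, contradicting $M\in\Aa$.

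The key ingredient is the convex-hull property of the level set flow. Label the components of $M$ as $\Sigma_1,\dots,\Sigma_N$ and set $C_j:=\operatorname{conv}(\Sigma_j)$. Because $C_j$ is an intersection of closed half-spaces, and each closed half-space is static under mean curvature flow (its boundary being a plane, a trivial MCF), applying the avoidance principle one half-space at a time shows that the level set flow $\Sigma_j(\cdot)$ of the single initial datum $\Sigma_j$ remains inside $C_j$ for every $t\ge 0$. Moreover, since the $\Sigma_j$ are pairwise disjoint compact surfaces, the avoidance principle for disjoint closed sets gives $\Sigma_j(t)\cap\Sigma_k(t)=\emptyset$ for all $t\ge 0$ whenever $j\ne k$.

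Now invoke the compatibility of the level set flow with disjoint unions: since the individual flows $\Sigma_j(\cdot)$ remain pairwise disjoint, we get $M(t)=\bigcup_{j=1}^N\Sigma_j(t)\subseteq\bigcup_{j=1}^N C_j$. The inclusion $\bigcup_j\Sigma_j(t)\subseteq M(t)$ is monotonicity applied to each component, while the reverse inclusion holds because $\bigcup_j\Sigma_j(\cdot)$ itself obeys avoidance against every smooth test flow initially disjoint from $M(0)$, so the maximality characterization of level set flow forces $M(t)\subseteq\bigcup_j\Sigma_j(t)$. By hypothesis each $C_j$ is disjoint from $Q$, so $M(t)\cap Q=\emptyset$ for all $t\ge 0$, forcing $T_Q(M)=\infty$, the desired contradiction. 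The most delicate point is the decomposition $M(t)=\bigcup_j\Sigma_j(t)$: before the first singular time of any component this is immediate from classical mean curvature flow, but past singularities one must appeal to the maximality characterization of level set flow to push the identity through. The symmetry group $G$ and the genus bound play no role in this lemma --- only the compactness of each $\Sigma_j$ and the finiteness of $T_Q(M)$ coming from $M\in\Aa$ are used.
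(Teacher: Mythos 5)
Your overall strategy is the contrapositive of the paper's direct argument, and the two ingredients you use — the convex-hull property of the level set flow, and the decomposition of the level set flow of a disjoint union into the individual flows — are exactly the ones the paper uses implicitly. The paper argues directly: $M(T_Q)$ contains a point $p\in Q$, so $p\in\Sigma_j(T_Q)$ for some component $\Sigma_j$, and $\Sigma_j(T_Q)\subset\operatorname{conv}(\Sigma_j)$. You argue instead that if every $\operatorname{conv}(\Sigma_j)$ misses $Q$, then $M(t)\subseteq\bigcup_j\operatorname{conv}(\Sigma_j)$ never meets $Q$, so $T_Q=\infty$. These are logically the same and both hinge on $M(t)\subseteq\bigcup_j\Sigma_j(t)$.

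However, your justification of that inclusion is backwards. The maximality characterization says that the level set flow $M(\cdot)$ is the \emph{largest} weak set flow with initial data $M$. Since $\bigcup_j\Sigma_j(\cdot)$ is a weak set flow with the same initial data (as you correctly observe), maximality yields $\bigcup_j\Sigma_j(t)\subseteq M(t)$ — which is precisely the ``easy'' inclusion you already obtained from monotonicity. It does \emph{not} give $M(t)\subseteq\bigcup_j\Sigma_j(t)$. That reverse inclusion asserts that the level set flow does not spontaneously generate material away from the flows of the individual components, and this requires a separate argument: e.g.\ a barrier/separation argument using that $\Sigma_j(t)$ and $\Sigma_k(t)$ remain at positive distance and localizing the viscosity solution, or the standard disjoint-union lemma in Ilmanen's treatment of set flows. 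The fact itself is true and standard, and the paper invokes it without proof in the line ``there must be a connected component $\Sigma$ of $M$ such that $p\in\Sigma(T_Q)$,'' so your proof is morally fine, but the line attributing the reverse inclusion to maximality is a genuine logical error: maximality proves the inclusion you already have, not the one you need. Replace it with a reference to the locality/disjoint-union lemma for the level set flow and the argument is sound.
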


\begin{proof} 
Since $M(T_Q)$ contains a point $p$ of $Q$, there must be a connected
component $\Sigma$ of $M$ such that $p\in \Sigma(T_Q)$.
Now $\Sigma(t)$ lies in the convex hull $H$ of $\Sigma$ for all $t$, 
so $p\in \Sigma(T_Q)\subset H$.
\end{proof}

Thus every $M\in \Aa$ has the properties described by the following theorem:

\begin{theorem}\label{geometry-theorem}
Let $M$ be a compact,
 $G$-invariant surface embedded in $\RR^3$,
  disjoint from $Q$, and of genus~$\le g$.
Suppose $M$ has a connected component $\Sigma$ whose convex hull
contains points of $Q$.
Then $\Sigma$ is a $G$-invariant, genus~$g$ surface, and if $\Omega$ is a  
  simply connected
open subset of $Q^c$, then $\genus(M\cap\Omega)=0$.
\end{theorem}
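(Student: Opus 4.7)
The upper bound $\genus(\Sigma)\le\genus(M)\le g$ is immediate, so the task is to prove the lower bound $\genus(\Sigma)\ge g$; from this the other components of $M$ must be genus-zero spheres (since their genera sum to at most $g-\genus(\Sigma)=0$). My approach is topological: identify $Q^c$ via radial projection with $\SS^2$ minus the $g+1$ punctures $q_j/|q_j|$, so $H_1(Q^c;\ZZ_2)\cong\ZZ_2^g$, with generators $[\ell_j]$ (loops linking each ray) subject to the lone relation $\sum_j[\ell_j]=0$. The classical ``half-lives, half-dies'' principle, applied to $K_\Sigma$ with tubular neighborhoods of the arcs $Q\cap K_\Sigma$ removed, yields a Lagrangian subspace of $H_1(\Sigma;\ZZ_2)$ inside the kernel of the inclusion-induced map $H_1(\Sigma;\ZZ_2)\to H_1(Q^c;\ZZ_2)$, so that the image of this map has $\ZZ_2$-rank at most $\genus(\Sigma)$. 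It therefore suffices to show the image equals $\ZZ_2^g$.

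For surjectivity I would exploit the $G$-symmetry. Let $\Sigma':=\bigcup_{\sigma\in G}\sigma\Sigma\subset M$ be the $G$-orbit union, a $G$-invariant disjoint union of $k:=|G/\operatorname{Stab}_G(\Sigma)|$ translates of $\Sigma$ of total genus $k\genus(\Sigma)\le g$. Because $G$ acts transitively on the $g+1$ rays of $Q$ in both cases of Theorem~\ref{plato-n-gon-theorem} and $\operatorname{conv}(\sigma\Sigma)\ni\sigma(p)$, the convex hull of $\Sigma'$ meets every ray. The core is then the following topological lemma: \emph{if $S\subset Q^c$ is a properly embedded compact orientable surface whose convex hull meets every ray of $Q$, then $H_1(S;\ZZ_2)\to H_1(Q^c;\ZZ_2)$ is surjective.} Applied to $\Sigma'$ it yields $\genus(\Sigma')\ge g$ and hence $k\genus(\Sigma)=g$. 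A case analysis on the orbit structure (using that $\operatorname{Stab}_G(\Sigma)$ preserves $\operatorname{conv}(\Sigma)$, hence the set $Q_\Sigma$ of rays it meets, and re-applying the lemma to $\Sigma$ alone with the smaller target coming from $Q_\Sigma$), together with the integer-valuedness of $\genus(\Sigma)=g/k$, forces $k=1$, so $\Sigma$ is $G$-invariant and $\genus(\Sigma)=g$.

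For the simply-connected claim, let $\Omega\subset Q^c$ be open and simply connected, and suppose some component $S$ of $M\cap\Omega$ has genus $\ge 1$; necessarily $S\subset\Sigma$, the unique positive-genus component of $M$. Then $S$ contains simple closed curves $\alpha,\beta$ with geometric intersection number $1$ in $\Sigma$. Both $\alpha$ and $\beta$ bound disks in $\Omega$ (simply connected), so are null-homologous in $Q^c$; hence $[\alpha],[\beta]\in\ker(H_1(\Sigma;\ZZ_2)\to H_1(Q^c;\ZZ_2))$. By the surjectivity just established this kernel has rank $2g-g=g$; since it contains a Lagrangian it \emph{equals} that Lagrangian, and is therefore isotropic for the intersection form on $\Sigma$. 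This contradicts $\alpha\cdot\beta=1$, so no such $S$ exists and $\genus(M\cap\Omega)=0$.

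The main obstacle is the topological lemma, which converts the geometric ``convex-hull meets every ray'' condition into $H_1$-surjectivity. I would approach it by Alexander duality in $\SS^3$: one-point compactification makes $Q\cup\{\infty\}$ a theta graph with $g+1$ edges joining $0$ to $\infty$, and its 1-cycles $c_{ij}=q_i-q_j$ are dual to $H_1(Q^c;\ZZ_2)$ under the linking pairing. Using the $G$-invariant half-plane $H_{ij}$ spanned by $q_i\cup q_j$ as a Seifert 2-chain for $c_{ij}$, generic intersection $H_{ij}\cap S$ is a 1-cycle whose linking with $c_{ij}$ is governed by the way $\operatorname{conv}(S)$ straddles $q_i$ and $q_j$; the hypothesis must force this linking to be nonzero for enough pairs $(i,j)$ to exhaust $H_1(Q^c;\ZZ_2)$, and turning this geometric picture into a clean algebraic-topological count is the chief technical difficulty.
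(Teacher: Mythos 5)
Your proposal has a genuine gap at the central step: the claimed topological lemma (``if $S\subset Q^c$ is a properly embedded compact orientable surface whose convex hull meets every ray of $Q$, then $H_1(S;\ZZ_2)\to H_1(Q^c;\ZZ_2)$ is surjective'') is false. Take $S$ to be a pair of small round spheres centered at $(0,0,\pm R)$ for large $R$ (which is $G$-invariant in the regular-polygon case, and a $G$-invariant analogue exists in the Platonic cases). Then $S\subset Q^c$ and $H_1(S;\ZZ_2)=0$, yet $\operatorname{conv}(S)$ contains the segment joining the two centers, hence the origin, hence a point of every ray. The hypothesis ``convex hull meets every ray'' is vacuous whenever the surface straddles the origin; it carries no linking information, and your Alexander-duality/Seifert-chain sketch cannot close this gap because there is nothing to close. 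You also noticed the difficulty yourself (``turning this geometric picture into a clean algebraic-topological count is the chief technical difficulty'').

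The underlying issue is that you replaced the theorem's hypothesis --- that the convex hull of the \emph{individual} connected component $\Sigma$ meets $Q$ --- by the much weaker statement that the convex hull of the whole $G$-orbit $\Sigma'$ meets every ray. That weakening loses exactly the leverage the paper needs. The paper's proof cuts $\RR^3$ along the reflection planes of $G$ into congruent wedges $\Delta$, and uses the convex-hull hypothesis on $\Sigma$ itself to show that a component $\Psi$ of $\Sigma\cap\Delta$ must meet both non-$Q$ faces of $\Delta$: if $\Psi$ missed one face, iterated reflection would trap $\Sigma$ in a convex open cone $\operatorname{interior}(K_i)$ disjoint from $Q$, making $\operatorname{conv}(\Sigma)$ disjoint from $Q$. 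Reflecting a curve in $\Psi$ around each ray then produces $g+1$ explicit simple closed curves in $M'=\bigcup_{\sigma\in G}\sigma\Sigma$ that generate $H_1(Q^c;\ZZ_2)$; this gives the surjectivity directly and, combined with $\genus(M')\le g$, forces everything to be an equality and $M'$ (hence $\Sigma$) to be connected of genus exactly $g$. Your ``half-lives, half-dies'' bookkeeping and the intersection-form argument for the simply-connected assertion are both sound once surjectivity is in hand, and closely parallel the paper's use of Lemma~\ref{local-lemma}; but the surjectivity itself cannot be obtained by your route.
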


\begin{proof}[Proof of
   Theorem~\ref{geometry-theorem}]
Let $\Pp$ be the collection of planes of reflectional symmetry of $Q$.
Note that $\SS^2\setminus \cup \Pp$ is a union of open, congruent, disjoint, 
 triangular regions in $\SS^2$,
 as in Figure~\ref{icosahedron-figure}.
Likewise, $\RR^3\setminus \cup \Pp$
is a union of regions. 
Each region in $\RR^3\setminus \cup\Pp$ corresponds to one of the triangles 
in $\SS^2\setminus \cup\Pp$; the region has $3$ edges, corresponding to the $3$ vertices of the triangle, and three faces, corresponding to the three
edges of the triangle.

Let
\[
  \Tt = \{ \overline{W}: 
  \text{$W$ is a component of $\RR^3\setminus \cup \Pp$}  \}.
\]
Let $\Delta\in \Tt$ be such that the interior of 
 $\Delta$ contains a point
of $\Sigma$. 
Exactly one of the edges of $\Delta$ is a ray of $Q$.
Label it $E_1$.
Let $E_2$ and $E_3$ be the other two edges of $\Delta$.
For $i\ne j$ in $\{1,2,3\}$, let $F_{ij}$ the face bounded by rays $E_i$ and $E_j$.
\begin{figure}
    \centering
    \includegraphics[width=0.5\linewidth]{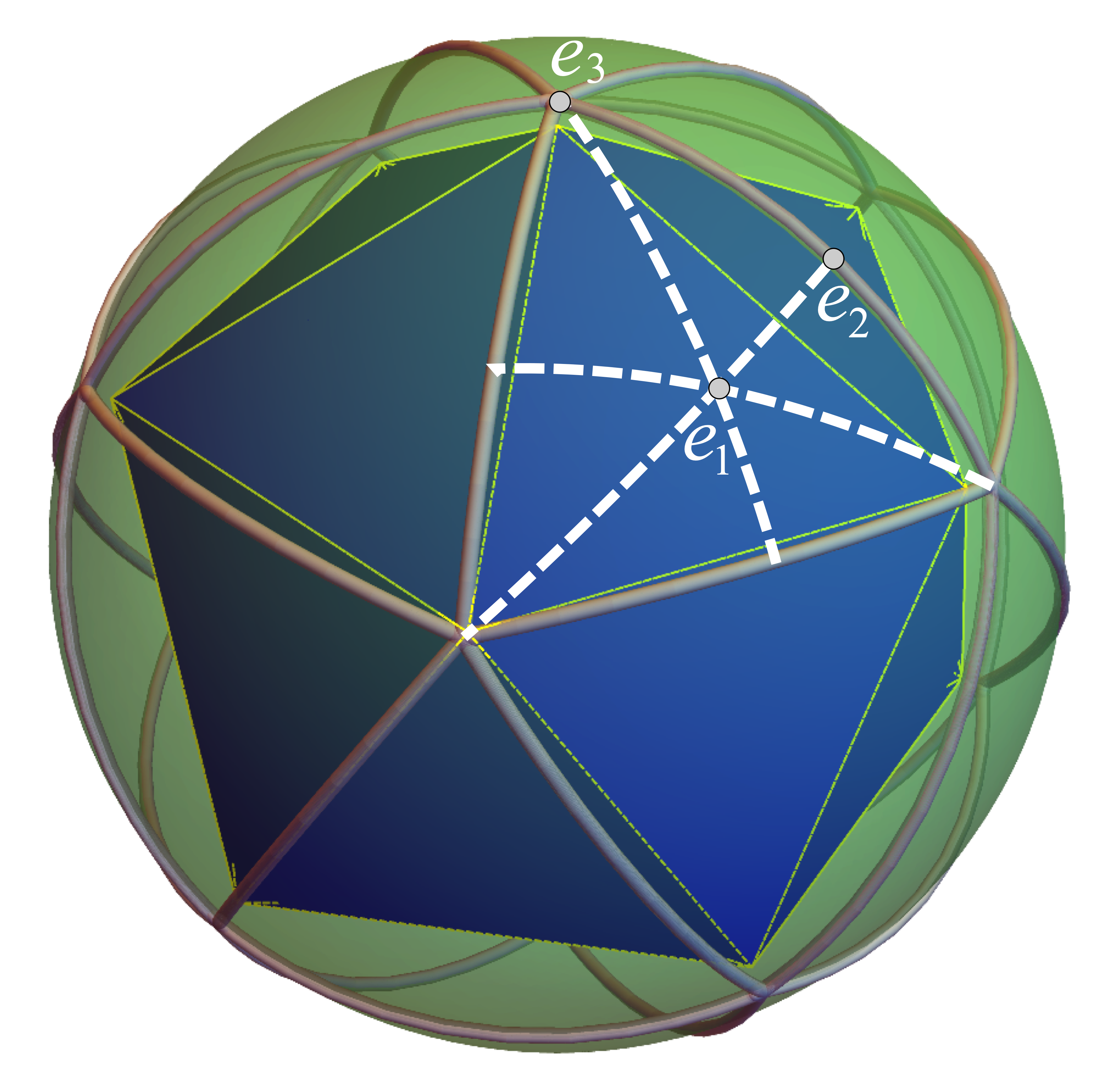}
    \caption{
    $\SS^2\setminus \cup \Pp$  is a union of spherical triangles. 
    In the case of the icosahedron, the figure shows the spherical triangles in the region corresponding to one face.  The points $e_i$ are defined by  $e_i:= E_i \cap \SS^2$, $i=1,2,3$.  Here, the $E_i$ are the rays defined at the beginning of the proof of Theorem~\ref{geometry-theorem}.}
    \label{icosahedron-figure}
\end{figure}

For $i\in \{1,2,3\}$, let $K_i$ be the union of the $\Delta'\in \Tt$
such that one edge of $\Delta'$ is $E_i$.   Note that  $K_i$ is a convex cone.
Also,
\begin{align*}
Q\cap K_1 &= E_1, \\
Q\cap \interior(K_i) &= \varnothing \quad
 (i=2, 3).
\end{align*}

Let $\Psi$ be a connected component of $\Sigma\cap \Delta$.

\begin{claim} 
$\Psi$ contains points in $F_{12}$ and points in $F_{13}$.  
\end{claim}

Suppose not. Then (by relabelling) we may assume that $\Psi$ contains
no points in the face $F_{12}$.  It follows, by iterated reflections 
in the planes in $\Pp$ containing $E_3$, that $\Sigma$ lies
in $\interior(K_3)$, a convex open set disjoint from $Q$.
But that contradicts the hypothesis that the convex hull of $\Sigma$ contains points of $Q$.
Thus the claim is proved.

Let $\gamma$ be an embedded curve in $\Psi$ such that one of its endpoints is
in $F_{12}$ and the other endpoint is in $F_{13}$.
  Note that $\gamma$ is disjoint 
 from~$E_1$ since $M\subset Q^c$.
Iterated reflections in the planes of $\Pp$ containing $E_1$ produces
a simple closed curve $C$ in
$\Sigma^*\cap \interior(K_1)$ that winds once around $E_1$ in $K_1\setminus E_1$.
In particular, 
\begin{equation}\label{winding}
\begin{gathered}
\text{$C$ is homotopic in $Q^c$ to an arbitrarily}
\\
\text{small circle in $\SS^2$ centered at $e_1:=E_1\cap \SS^2$.}
\end{gathered}
\end{equation}
Since $C\subset \Sigma$ is homologically nontrivial in $Q^c$,
it is homologically nontrivial in $\Sigma$, and thus
\[
  \genus(\Sigma)\ge 1.
\]

\begin{claim}
    $\Psi$ contains points in $F_{23}$.
\end{claim}

Suppose not.  
It follows, by iterated reflections
 in the planes in $\Pp$ containing $E_1$, 
 that $\Sigma$ lies in $K_1$.  For each ray $E_i'$ of $Q$,
 $M$ contains a copy of $\Sigma$.  There are $g+1$ such
 rays, so $M$ contains $g+1$ such copies.  Since each copy has positive genus, $M$ has genus $\ge g+1$, 
 a contradiction.  Thus the claim is proved.

Since $\Sigma$ contains points in each $F_{ij}$, it follows
that $\Sigma$ is invariant under reflection in each of the corresponding three planes.  Those reflections generate $G$, so $\Sigma$ is invariant under $G$.

Now $E_1$ is one of the rays of $Q$.  Let $E_1'$
be another ray of $Q$, and let $\sigma\in G$ map
$E_1$ to $E_1'$.  Then $C':=\sigma(C)$ is a simple closed
curve in $\Sigma\cap \interior(K_1')$, where 
$K_1'=\sigma(K_1)$, and $C'$ winds once around 
 $E_1'$ in $K_1'\setminus E_1'$.
Note that $C$ and $C'$ are disjoint since 
  $\interior(K_1)$ and $\interior(K_1')$
are disjoint.

We get such a curve $C'$ for each of the $(g+1)$ rays $E_1'$
 of $Q$.
 Let $\Gg$ be the set of  those $g+1$ curves,
 and let $\Gg'$ be a subset consisting of $g$ of those curves.
 From~\eqref{winding}, we see that the curves in $\Gg'$
 form a basis for 
  $H_1(Q^c)\cong H_1(\SS^2\setminus Q)$.
 We claim that $\Sigma\setminus \cup \Gg'$
 is connected. For if not, let $\Rr$ be one of the
 components of $\Sigma\setminus \cup\Gg'$.
 Then $\partial \Rr$ is consists of one or more of the curves in $\Gg'$, which is impossible since those curves are linearly
 independent in $H_1(Q^c)$.

Since $\Sigma\setminus \cup\Gg'$ is connected,
\[
  0 \le \genus(\Sigma\setminus \cup\Gg') 
  = \genus(\Sigma) - g
  \le
  \genus(M)-g \le 0.
\]  
Thus
\begin{equation}\label{gee}
   \genus(\Sigma) = \genus(M) =g,
\end{equation}
and
\begin{equation*}
 \genus(\Sigma\setminus \Gg') = 0.
\end{equation*}
By~\eqref{gee}, $M\setminus\Sigma$
is a union of spheres, so
\begin{equation}\label{more-zero}
  \genus(M\setminus \Gg') 
  =
  \genus(\Sigma\setminus \Gg')
  = 0.
\end{equation}

We have shown that $\Sigma$ is a $G$-invariant
 surface of genus $g$, 
 and also that the genus of $M$ is equal to $g$. It remains to prove the last assertion of the theorem.

Let $U$ be the region bounded
by $M$ that is disjoint from $Q$. (Thus, a point in $M^c$ is in $U$ if and only if
there is a path from that point to a point in $Q$ that is transverse to $M$ and that
intersects $M$ in an odd number of points.) Let 
 $K=\overline{U} = U\cup M$. Thus $K$ is smooth
3-manifold-with-boundary, the boundary being $M$.
  Since $K$ is a  smooth, compact $3$-manifold-with-boundary in $\RR^3$, we can use \cite{hatcher}*{Lemma 3.5} and Poincaré duality to get that
\[
   \dim H_1(K) \geq \frac12 \dim H_1(M) = g.
   \]
On the other hand, one can easily get the reverse inequality using the Mayer-Vietoris Theorem. Hence, we deduce that:
\[
   \dim H_1(K) = \frac12 \dim H_1(M) = g.
\]

Now consider the inclusions
\[
   \Gg' \subset K \subset Q^c.
\]
Since $\Gg'$ is a basis for $H_1(Q^c)$,
and since $H_1(\Gg)$, $H_1(K)$, and $H_1(Q^c)$ each
have dimension~$g$, the inclusion~$\iota$ of $K$ into $Q^c$
induces an isomorphism on $H_1$:
\begin{equation}\label{iota}
   \iota_\# : H_1(K) \xrightarrow{\cong} H_1(Q^c).
\end{equation}

It remains to show that if $\Omega$ is a simply connected open subset of $Q^c$,
then $M\cap \Omega$ has genus~$0$.
Suppose, to the contrary, that $M\cap \Omega$
has positive genus.
Then, by elementary topology
 (see Lemma~\ref{local-lemma} below),
$M\cap \Omega$ contains a simple closed curve
  $\alpha$
  that does not bound a surface
in $K$.  Therefore $\alpha$ does not bound a surface in $Q^c$ since the map~\eqref{iota}
is an isomorphism, and therefore $\alpha$ does not bound a surface in $\Omega$.
But that contradicts the assumption that 
 $\Omega$ is simply connected.
\end{proof}

The following theorem gives additional information about the surface
 in Theorem~\ref{geometry-theorem}.

 \begin{theorem}\label{more-info-theorem}
Let $M$ be as in Theorem~\ref{geometry-theorem}
and let $R>0$.
Suppose the convex hull of a connected
component of $M\cap B(0,R)$
contains points in $Q$.  Then for every $r>R$, $M\cap B(0,r)$ has genus~$0$ or genus~$g$.
\end{theorem}

\begin{proof}
It suffices to prove
 Theorem~\ref{more-info-theorem} in the case when $\partial B(0,r)$ intersects $M$
transversely.  

{\bf Case 1}: The convex hull of one of the components of $M\cap\partial B(0,r)$
contains a point of $Q$.
Let $C$ be such a component, and let $\Gg$ be the union of its images under 
 the group~$G$. Exactly as in the proof of 
  Theorem~\ref{geometry-theorem}
  (see~\eqref{more-zero}),
  $M\setminus \cup\Gg$ has genus~$0$.
Thus $M\cap B(0,r)$ has genus~$0$.
 
{\bf Case 2}: The convex hull of each component of $M\cap \partial B(0,r)$ is disjoint from $Q$.  Then each such component bounds an embedded disk 
  in
\[
 \RR^3\setminus (B(0,r)\cup Q).
\]
We can choose those disks so that the union $M'$ of those disks and
  $M\cap B(0,r)$ is a smoothly embedded,
  $G$-invariant surface.
By Theorem~\ref{geometry-theorem},
 $M'$ has genus $g$. Thus
\[
   \genus(M\cap B(0,r)) = \genus(M')=g.
\]
\end{proof}

\begin{lemma}\label{local-lemma}
Suppose that  $K$ is a smooth, $3$-dimensional 
  manifold-with-boundary. Let $M=\partial K$.
Suppose that $W$ is a relatively open subset of $K$ such that
 $M\cap W$ has positive genus.
Then $M\cap W$ contains a simple closed curve that is homologically nontrivial in $K$.
\end{lemma}

\begin{proof}
Let $\alpha_1$ and $\alpha_2$ be a pair of simple closed curves in $M\cap W$ that intersect transversely
and at exactly one point.  We claim that at most one of $\alpha_1$ and $\alpha_2$ can be homologically trivial in $K$.
For otherwise, there would be smooth, compact embedded surfaces $\sigma_i$ in $K$ such that $\partial \sigma_i = \alpha_i$.
We can choose the $\sigma_i$ so that they intersect transversely and so that they are never tangent to 
  $M$. 
Thus $\sigma_1\cap\sigma_2$ is a finite collection of curves, and so they have an even number of endpoints.
But the set of their endpoints is $\alpha_1\cap\alpha_2$, which has only one point, a contradiction.
\end{proof}

\begin{theorem}\label{Q-pos}
Suppose $M\in \Aa$.
(See Definition~\ref{A-definition}.)
Then
$T_Q\le \Tpos$.
\end{theorem}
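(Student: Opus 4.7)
The plan is to argue by contradiction: suppose $\Tpos < T_Q$, and then produce a singular spacetime point of positive genus in a region where earlier theorems of the section force the genus to be zero. This amounts to combining the global topological constraint from Theorem~\ref{geometry-theorem} with the local genus-upper-semicontinuity packaged in Theorem~\ref{trivial-theorem}.

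Concretely, I would first note that $\Tpos\le \Tfat$ always, so if $\Tpos<T_Q$ then in particular $\Tpos<\infty$, and by the discussion after Definition~\ref{Tpos} there is a spacetime singular point $(p_0,\Tpos)$ of positive genus. Since $\Tpos<T_Q$, we have $M(\Tpos)\cap Q=\varnothing$; in particular $p_0\notin Q$, so $d:=\dist(p_0,Q)>0$. Fix any $\eps\in(0,d)$, so that $B(p_0,\eps)$ is a simply connected open subset of $Q^c$.

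Next, for every regular time $\tau<\Tpos$, we have $\tau<\min\{T_Q,\Tfat\}$, so Theorem~\ref{preservation} gives $M(\tau)\in\Aa$. Theorem~\ref{convex-hull} then provides a component of $M(\tau)$ whose convex hull contains points of $Q$, so Theorem~\ref{geometry-theorem} applies with $M=M(\tau)$ and $\Omega=B(p_0,\eps)$, yielding
\[
\genus\bigl(M(\tau)\cap B(p_0,\eps)\bigr)=0.
\]
Applying Theorem~\ref{trivial-theorem} with $g=0$ at the spacetime point $(p_0,\Tpos)$ then gives that $(p_0,\Tpos)$ has genus $\le 0$, contradicting the choice of $(p_0,\Tpos)$ as a point of positive genus.

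The only real content beyond bookkeeping is the verification that $p_0\notin Q$ and that a ball around $p_0$ is contained in $Q^c$: this is the step that uses the standing hypothesis $\Tpos<T_Q$ to convert a purely temporal statement ($T_Q\le\Tpos$) into the spatial separation needed to invoke the simply connected hypothesis of Theorem~\ref{geometry-theorem}. I do not anticipate any serious obstacle, since all the heavy lifting (positive genus singularities exist before $\Tfat$ when $\Tfat<\infty$, the flow preserves the class $\Aa$, and a localized genus bound at regular times propagates to singular times) has already been established earlier in the section.
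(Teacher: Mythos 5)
Your proof is correct and follows essentially the same route as the paper's: localize near a putative positive-genus singularity in $Q^c$, use Theorems~\ref{preservation}, \ref{convex-hull}, and~\ref{geometry-theorem} to conclude the local genus is $0$ at all earlier regular times, and then invoke Theorem~\ref{trivial-theorem}. The only stylistic difference is that you set it up as a direct contradiction from $\Tpos<T_Q$ and work at the singular point $(p_0,\Tpos)$ itself, whereas the paper phrases the argument in terms of arbitrary $T<\min\{T_Q,\Tpos\}$; your version is, if anything, a bit more streamlined.
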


\begin{proof}
Let $0<T< \min\{T_Q,\Tpos\}$.
Let $(p,T)$ be a spacetime singular point.  Since $T<T_Q$,  
$p\in Q^c$.  Let $W$ be an open ball in $Q^c$ centered at $p$.
By Theorems~\ref{preservation}, \ref{convex-hull}, and~\ref{geometry-theorem},  $\genus(M(t)\cap W)=0$ for all regular times $t<T$.
Thus $(p,T)$ must be a genus~$0$ singularity of the flow.
Thus $T<\Tpos$, since at time $\Tpos$, there must be a 
singularity of positive genus.

We have proved that $T<\Tpos$ for every $T< \min\{T_Q, \Tpos\}$.
Hence $T_Q \le \Tpos$.
\end{proof}

\begin{theorem}\label{shrinker}
Suppose that $M\in \Aa$, and, therefore, that $T:=T_Q(M)< \infty$.
Let $p\in M(T)\cap Q$ and let
  $\Sigma$
be a shrinker at the spacetime point $(p,T)$.
\begin{enumerate}
\item\label{case1}
If $p\ne 0$, then $\Sigma$ is a cylinder whose axis is the line $\{rp: r\in \RR\}$.
\item\label{case2} If $p=0$, then $\Sigma$ is a connected, $G$-invariant, genus-$g$ surface disjoint from $Q$.
\end{enumerate}
\end{theorem}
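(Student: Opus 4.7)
For Case 1 ($p\ne0$), the plan is to force the singularity at $(p,T)$ to have genus zero and then to constrain the resulting shrinker by the disjointness of the flow from $Q$. By $G$-equivariance, the $G$-orbit of $p$ consists of $g+1$ spacetime points at time $T$ (one per ray of $Q$; one checks that $|G|/|G_p|=g+1$ in both the Platonic and polygonal cases), each with a shrinker of the same genus $g_0$. Corollary~\ref{genera-corollary} then gives $(g+1)g_0\le g$, forcing $g_0=0$. By the classification of genus-zero shrinkers recalled in Section~2, $\Sigma$ is a sphere, a cylinder, or a plane. Let $\tilde L$ denote the line through the origin (in rescaled coordinates) in the direction $p/|p|$; this is the smooth limit of the rescalings of the ray $E\subset Q$ through $p$. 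Since the rescaled $M(t_i)$ are disjoint from rescaled $E$ and converge smoothly with multiplicity one to $\Sigma$, any intersection $\Sigma\cap\tilde L$ must be tangential. A shrinker sphere (centered at the origin) meets $\tilde L$ transversely in two points, ruling it out; a shrinker cylinder about the origin is disjoint from $\tilde L$ when its axis is $\tilde L$ and meets $\tilde L$ transversely otherwise, giving the asserted cylinder axis.

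It remains to rule out the shrinker plane. Such a plane passes through the origin, so meets $\tilde L$ transversely (at the origin) unless the plane contains $\tilde L$. In the latter subcase, $(p,T)$ is a regular spacetime point with $T_pM(T)\supset L$. For the Platonic case, $G_p$ contains a rotation about $L$ of order $\ge3$, and no plane merely containing $L$ is preserved by such a rotation. For the polygonal case, $G_p$-invariance forces $T_pM(T)$ to be either the vertical plane $V$ through $L$ and the $z$-axis, or the horizontal plane $H$; in either subcase, reflection symmetry of $M(T)$ across this plane together with tangency forces $M(T)$ to coincide with this plane in a neighborhood of $p$ (a smooth graph over a plane fixed by the reflection through that plane must be identically zero). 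Smooth MCF at the regular point $(p,T)$ then propagates this coincidence to $M(t)$ for $t$ slightly less than $T$, contradicting $M(t)\cap E=\varnothing$ since the plane contains $E$.

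For Case 2 ($p=0$), the origin is fixed by $G$ and $Q$ is a $G$-invariant cone through the origin, so the rescaled surfaces $(T-t_i)^{-1/2}M(t_i)$ are $G$-invariant and disjoint from $Q$; in the smooth limit $\Sigma$ is $G$-invariant. Strict disjointness $\Sigma\cap Q=\varnothing$ follows from weak disjointness together with a maximum-principle / rigidity argument (if $\Sigma$ were tangent to a ray of $Q$ at a point $q\ne0$, or passed through $0$, then reflection symmetry of $\Sigma$ would force local coincidence with a plane containing pieces of $Q$, as in the last step of Case~1). For each connected component $\Sigma'$ of $\Sigma$, the self-similar MCF $\sqrt{1-t}\,\Sigma'$ stays in the convex hull $H(\Sigma')$ and collapses to $\{0\}$ as $t\to1$, so $0\in H(\Sigma')\cap Q$. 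Theorem~\ref{geometry-theorem} then shows each component of $\Sigma$ has genus $g$; since the total genus of $\Sigma$ is at most $g$ (Corollary~\ref{genera-corollary}, together with Case~1 giving genus zero for non-origin singularities and Theorem~\ref{geometry-theorem} giving genus zero at any point of $Q^c$ with a simply connected neighborhood), $\Sigma$ has exactly one component, of genus $g$.

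The main technical obstacle I anticipate is the strict-disjointness step in Case~2: ruling out tangential contact of $\Sigma$ with a ray of $Q$ by a maximum-principle / shrinker-rigidity argument. A secondary obstacle is the planar subcase of Case~1 in the polygonal setting, where the combination of reflection-symmetry rigidity and smooth-MCF continuation must be applied with care.
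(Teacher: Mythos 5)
Your overall strategy (genus-zero via the $G$-orbit-and-Corollary~\ref{genera-corollary} count, then using the $G$-symmetry of tangent data to constrain $\Sigma$) matches the paper's, but the two proofs diverge in the secondary steps, and your version has a step that needs correction.

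In Case~1, the paper first shows directly that $(p,T)$ must be a \emph{singular} spacetime point: if it were regular, the closest point $p(t)\in M(t)$ to $p$ would be unique for $t$ near $T$, yet the nontrivial stabilizer $G_p$ (a rotation about $L$) would move $p(t)$ (since $p(t)\notin Q$, hence $p(t)\notin L$) to another closest point. Once singularity is established, Brendle's theorem reduces the genus-zero shrinker to a sphere or cylinder and the plane never enters the discussion. Your approach instead keeps the plane as a candidate and rules it out via a tangent-plane $G_p$-invariance argument followed by reflection-rigidity in the polygonal case. That route can be made to work, but your final sentence --- ``Smooth MCF at the regular point $(p,T)$ then propagates this coincidence to $M(t)$ for $t$ slightly less than $T$'' --- is not a valid step as phrased: parabolic flows do not propagate data backward in time, and no backward-uniqueness theorem is being invoked. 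What saves the argument is not propagation at all: since $(p,T)$ is regular and $M(T)$ coincides with the reflecting plane $P\in\{H,V\}$ near $p$, for $t$ slightly less than $T$ the surface $M(t)$ is itself a $C^1$-small graph over $P$ near $p$; being invariant under reflection through $P$ at time $t$ \emph{already}, the graph function must vanish, so $M(t)$ contains a neighborhood of $p$ in $P\supset E$, contradicting $M(t)\cap Q=\varnothing$. You should rephrase accordingly; as written this is a genuine gap.

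In Case~2, the paper's disjointness argument is shorter than yours: if $q\in\Sigma\cap Q$ with $q\neq 0$, the \emph{oriented} tangent plane $T_q\Sigma$ must be $G_q$-invariant, and the only $G_q$-invariant oriented plane is the one orthogonal to the ray; that forces a transverse crossing of $Q$, impossible for a smooth limit of surfaces disjoint from $Q$. (Similarly, if $0\in\Sigma$, the oriented $T_0\Sigma$ would have to be $G$-invariant, and no oriented plane is.) Your version goes through ``tangential contact forces local coincidence with a plane,'' which is both more work and not quite the right dichotomy: $G_q$-invariance of the oriented tangent plane \emph{rules out} tangential contact directly, so there is nothing left to rigidify. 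You also flag this step as your main technical obstacle; the paper's oriented-tangent-plane argument is exactly the clean way to close it. Finally, for connectedness the paper simply cites the fact that a smooth embedded shrinker in $\RR^3$ is connected, whereas you derive it from the genus budget; your route is a valid alternative (and is essentially how Theorem~\ref{geometry-theorem}'s proof goes internally), though you should be careful with the claim that each component's self-similar flow ``collapses to $\{0\}$,'' which is literally true only for compact components --- for noncompact ones you need the separate (standard) fact that no shrinker lies in a half-space missing the origin.
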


\begin{proof}[Proof in Case~\eqref{case1}]
If $(p,T)$ were a regular point of the flow, 
then for all $t<T$ sufficiently close to $T$,
there would be a unique point $p(t)$ in $M(t)$
closest to $p$. 
 Now the group $G$ contains a nontrivial group  of rotations about the line
 $\{sp: s\in\RR\}$.   Since $p(t)\notin Q$,
the image of $p(t)$ under those rotations would be other points in $M(t)$
closest to $p$,
a contradiction.
Hence $(p,T)$ is a singular spacetime point
of the flow.

Let $\sigma$ be the genus of the singular point $(p,T_Q)$.
Let $S$ be the set consisting of the $g+1$ points in $Q$ at distance $|p|$ from $0$, and let 
\[
 S' = \{(p,T_Q): p\in S\}.
\]
Then all of the $(g+1)$ points in $S'$ have the same genus,
so
\[
   (g+1)\sigma \le \genus(M) = g
\]
by Corollary~\ref{genera-corollary}.
Thus $\sigma=0$, so $\Sigma$ is a sphere or cylinder.
If it were not the cylinder with the indicated axis, then $\Sigma$ would intersect the line $\{sp: s\in \RR\}$
transversely in a nonempty set.  But then $M(t)$ would intersect $Q$ for all $t<T$ sufficiently
close to $T$, which is impossible.   This completes the proof in Case~\eqref{case1}.
\end{proof}

\begin{proof}[Proof in Case~\eqref{case2}]
 First, we show that $\Sigma$ is disjoint from~$Q$.
Note that there exist $t_i\uparrow T$ such that the surfaces
\[
   M_i' := (T-t_i)^{-1/2}M(t_i)
\]
converge smoothly and with
 multiplicity one to~$\Sigma$.
 Thus if $q\in \Sigma$, then 
 (for large~$i$) there is a unique point $q_i$ in $M_i'$ closest to $0$.  But 
  if $q$ were in $Q$, then, since $M_i'$ is disjoint from $Q$,  the symmetries of $M_i'$ imply that there are multiple points in $M_i'$ closest to $q$,
 a contradiction.
Hence $q\notin Q$.
Thus $Q$ and $\Sigma$ are disjoint.

By {\color{black}Theorem~\ref{more-info-theorem},} for each $i$ and each $r>0$,
\[
   M_i'\cap B(0,r)
\]
has genus~$0$ or genus~$g$.
Thus, by smooth convergence of $M_i'$ to $\Sigma$,
 $\Sigma$ has genus~$0$ or genus~$g$.

If $\Sigma$ had genus~$0$, then,
by Brendle's Theorem \cite{brendle}. it would be a plane through the origin,
a cylinder whose axis contains the origin, 
or a sphere centered at the origin, all of which intersect~$Q$.
But $\Sigma$ is disjoint from~$Q$.
Thus,  $\Sigma$
has genus~$g$.
\end{proof}

\begin{theorem}\label{crunch-theorem}
Suppose $M_n$ are compact, smoothly embedded surfaces in $Q^c$ such that 
$T_Q(M_n)=\infty$.  Suppose the $M_n$ converge smoothly to a surface $M$ in $\Aa$.
(See Definition~\ref{A-definition}.)
Then 
\[
   M(T_Q) \cap Q = \{0\},
\]
where $T_Q=T_Q(M)$.
\end{theorem}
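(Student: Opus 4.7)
I plan to argue by contradiction. By definition of $T_Q$, $M(T_Q)\cap Q\ne\emptyset$, so it suffices to rule out a point $p\in M(T_Q)\cap Q$ with $p\ne 0$. Suppose such a $p$ exists. By Theorem~\ref{shrinker}(\ref{case1}), the shrinker at $(p,T_Q)$ is the round cylinder with axis $L=\{sp:s\in\RR\}$; by uniqueness of cylindrical tangent flows~\cite{colding-m}, $M(\cdot)$ is smoothly asymptotic near $(p,T_Q)$ to the shrinking cylinder of radius $\sqrt{2(T_Q-t)}$ about $L$. Choose $\eta>0$ small enough that no other part of the spacetime support of $M(\cdot)$ enters $B(p,2\eta)$ for $t$ near $T_Q$, and fix $t_0<T_Q$ close enough to $T_Q$ that $M(t_0)\cap B(p,2\eta)$ is a smooth cylindrical annulus of radius $\approx\sqrt{2(T_Q-t_0)}$ about $L$.

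By the Local and Initial Regularity Theorems (Theorems~\ref{local-theorem} and~\ref{initial-theorem}), the smooth convergence $M_n\to M$ propagates up to time $t_0$: for $n$ large, $M_n(t_0)\cap B(p,2\eta)$ is a smooth annulus $C^\infty$-close to the corresponding annulus of $M(t_0)$. Let $\Pi$ be the plane through $p$ perpendicular to $L$; then the cross-section $\gamma_n(t_0):=M_n(t_0)\cap\Pi\cap B(p,\eta)$ is a smooth Jordan curve $C^\infty$-close to the round circle of radius $\sqrt{2(T_Q-t_0)}$ about $p$, so its winding number about $p$ equals $\pm 1$. Moreover, by the stability of cylindrical singularities, the neck of $M_n(\cdot)$ in $B(p,2\eta)$ must develop a cylindrical neck-pinch at some time $T_n$ with $T_n\to T_Q$ and pinch point close to (but in general distinct from) $p$; on $[t_0,T_n)$ the cross-section $\gamma_n(t)$ remains a smooth Jordan curve whose bounded component in $\Pi$ shrinks continuously in diameter to $0$.

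Consequently, as $t\uparrow T_n$, the bounded component of $\Pi\setminus\gamma_n(t)$ eventually has diameter less than the distance from $p$ to its (moving) center, so the winding number of $\gamma_n(t)$ about $p$ drops to $0$. Since the winding number is integer-valued and can change only when the curve passes through $p$, there exists $t^*\in(t_0,T_n)$ with $p\in\gamma_n(t^*)\subseteq M_n(t^*)$. Combined with $p\in Q$, this gives $T_Q(M_n)\le t^*<\infty$, contradicting the hypothesis $T_Q(M_n)=\infty$. We conclude $M(T_Q)\cap Q=\{0\}$. The main technical obstacle is this last step: showing rigorously that the shrinking Jordan cross-section sweeps through the fixed point $p$ rather than evolving so as to avoid it. This relies on the uniqueness and stability of the cylindrical singularity of $M$ at $(p,T_Q)$, together with the local regularity transfer under $M_n\to M$ afforded by Theorems~\ref{local-theorem} and~\ref{initial-theorem}.
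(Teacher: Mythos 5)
Your strategy---tracking the winding number of the cross-section $\gamma_n(t)$ about $p$---is genuinely different from the paper's proof, which uses a barrier argument instead: it places a scaled Angenent torus $\tilde A := (T_Q-t)A + p$ in the region that $M(t)$ separates from $p$ for $t$ near $T_Q$, observes via Theorem~\ref{local-theorem} that $M_n(t)$ achieves the same separation at a fixed regular time $t<T_Q$ once $n$ is large, and then applies Lemma~\ref{between} together with the fact that $\tilde A$ shrinks self-similarly to $p\in Q$ at time $T_Q$ to force $T_Q(M_n)<T_Q<\infty$, a contradiction. (The paper also first reduces to the case that each $M_n$ is nonfattening, by replacing $M_n$ with a nearby non-fattening level set enclosed by $M_n$; this is needed to speak of the smooth convergence $M_n(t)\to M(t)$ at a regular time, and your argument implicitly relies on the same reduction without making it.)

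The genuine gap---which you flag yourself---is the step asserting that $M_n(\cdot)$ develops a cylindrical neck-pinch at some $T_n\to T_Q$ near $p$, with $\gamma_n(t)$ remaining a single smooth Jordan curve whose bounded disk shrinks continuously to a point on $[t_0,T_n)$. Theorems~\ref{local-theorem} and~\ref{initial-theorem} give smooth convergence of $M_n(\cdot)$ to $M(\cdot)$ only on compact subsets of spacetime away from the singularity of $M(\cdot)$; they give no control on $M_n(\cdot)$ as $t\uparrow T_Q$. ``Stability of cylindrical singularities'' in the form you need---a neck-pinch persisting for all nearby initial data, with the cross-section remaining embedded and single-component all the way up to the pinch, and the pinch location/time depending continuously---is not an off-the-shelf result, and would in effect require proving something at least as hard as the theorem at hand; a priori the cross-section could change topology, become non-transverse, or acquire additional components, and the neck of $M_n$ could degenerate in some other way before a clean pinch occurs. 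The Angenent-torus barrier is precisely the device the paper uses to sidestep this difficulty: it localizes all the analysis at a single fixed regular time $t<T_Q$, where the Local Regularity Theorem does apply, and delegates the task of ``sweeping through $Q$'' to the self-similar barrier $\tilde A$, whose behavior up to and at its extinction time is known exactly.
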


\begin{proof}
We can assume that each $M_n$ is nonfattening.  
For let $f_n:\RR^3\to \RR$
be a smooth function such that $f_n^{-1}(0)=M_n$, such that $0$ is not a critical value 
of $f$, and such that $f>0$ at all points of $Q$.  
Note that there are $\eps_n\uparrow 0$ such that if $\eps_n<s_n<0$,
then the surfaces $M_n'=f^{-1}(s_n)$ will converge smoothly to $M$.
We can choose the $s_n$ so that each $M_n'$ is nonfattening (since $f^{-1}(s)$ fattens for at most  countable set of $s$.)   Finally, note that $M_n'(t)$ never 
intersects $Q$, since $M_n'$ lies in the region enclosed by $M_n$.
Thus we can assume $M_n$ is nonfattening (by replacing $M_n$ by $M_n'$, if necessary.)

Suppose the theorem is false. Then there exists a point $p\ne 0$ in $M(T_Q)\cap Q$.

By Theorem \ref{shrinker},
\begin{equation}\label{angy}
    (T_Q- t)^{-1/2}(M(t)-p)
\end{equation}
converges smoothly and with multiplicity $1$ to a cylinder $\Sigma$
whose axis is the line $\RR p:= \{sp: s\in \RR\}$.
Let $A$ be a scaled Angenent torus that lies in the non-simply connected component of  the complement of
$\Sigma$, and that collapses to the origin.

Then, for all $t<T_Q$ sufficiently close to $T_Q$, $A$ and $0$ will lie in different connected
components of  the complement of the surface~\eqref{angy}. 
 Fix such a time~$t$ that is regular for flow~$M(\cdot)$ and also for each of the 
 flows~$M_n(\cdot)$.  Note that the
 scaled Angenent torus
\begin{equation}\label{tilde-A}
  \tilde A := (T_Q-t)^{1/2}A + p
\end{equation}
and the point $p$ lie in different components of {\color{black} the complement} of $M(t)$ {\color{black}in $\RR^3$}.

(The reader may notice that the 
  right-hand side of~\eqref{tilde-A}
  would not be a mean curvature flow
  if $t$ were varying.
But that is irrelevant: we have fixed a time~$t$ and thus are defining a single surface $\tilde A$, not a one-parameter
family of surfaces.)

By the smooth convergence of 
 $M_n(t)$ to $M(t)$, 
$\tilde A$ and $p$ lie in different components of $M_n(t){\color{black} ^c}$ for all sufficiently large~$n$.

But then, by Lemma~\ref{between}, $p\in M_n(\tau)$ for some $\tau>t$ and thus
  \[ 
     T_Q(M_n) < \infty,
\]
a contradiction.
\end{proof}

\begin{theorem}\label{basic-plato-theorem}
Suppose $\Ff$ is a family
of compact, connected, smoothly embedded, $G$-invariant surfaces in $\RR^3\setminus Q$
of genus~$g$.
Suppose also that $\Ff$
is connected
with respect to smooth convergence,
and that all the $M\in \Ff$ lie in some large ball $B(0,R)$.
Let $\Ss$ be the subfamily of $M\in \Ff$ for which $T_Q(M)< \infty$.
In other words, let $\Ss=\Ff\cap\Aa$.
(See 
Definition~\ref{A-definition}.)
If $\Ss$ and $\Ss^c:=\Ff\setminus \Ss$ are both nonempty, then there is an 
  $M\in \Ss$ for which
\[
M(T_Q)\cap Q = \{0\}
\]
(where $T_Q=T_Q(M)$).
If $\Sigma$ is a shrinker at the origin at
time $T_Q$ to the flow $M(\cdot)$, 
then $\Sigma$ has genus~$g$ and is disjoint
from $Q$.
\end{theorem}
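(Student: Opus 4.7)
The strategy is to use the connectedness of $\Ff$ to extract a \emph{critical} surface $M \in \Ss$ that arises as a smooth limit of a sequence $M_n \in \Ss^c$, and then apply Theorem~\ref{crunch-theorem} followed by Theorem~\ref{shrinker}.

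The central technical point is that $\Ss^c$ is \emph{open} in $\Ff$ with respect to smooth convergence. Fix $M \in \Ss^c$. Since $M$ is compact, its level set flow becomes extinct at some finite time $T_e$, and by hypothesis $M(t) \cap Q = \emptyset$ for every $t \in [0, T_e]$. Consequently the compact spacetime track $\bigcup_{t \in [0, T_e+1]} M(t) \times \{t\}$ lies at positive distance $\delta > 0$ from the closed set $Q \times [0, T_e+1]$. By the standard continuity of the level set flow under smooth convergence of nonfattening initial data (which we may arrange by an arbitrarily small smooth perturbation, as in the proof of Theorem~\ref{crunch-theorem}), every $M' \in \Ff$ sufficiently close to $M$ becomes extinct before time $T_e + 1$ and has spacetime track on $[0, T_e + 1]$ within Hausdorff distance $\delta/2$ of that of $M$; hence $M'(t) \cap Q = \emptyset$ for every $t$, so $M' \in \Ss^c$.

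With this openness in hand, the connectedness of $\Ff$ together with the nonemptiness of both $\Ss$ and $\Ss^c$ forces $\Ss$ not to be open, so there exists $M \in \Ss$ and a sequence $M_n \in \Ss^c$ converging smoothly to $M$. Theorem~\ref{crunch-theorem} applied to this pair yields $M(T_Q) \cap Q = \{0\}$ for $T_Q := T_Q(M)$. Any shrinker $\Sigma$ of $M(\cdot)$ at the spacetime point $(0, T_Q)$ then falls under case~\eqref{case2} of Theorem~\ref{shrinker}, and is therefore a connected, $G$-invariant, genus-$g$ smoothly embedded surface disjoint from $Q$. (Invoking Theorem~\ref{crunch-theorem} implicitly requires $\genus(M) \le g$, i.e.\ $M \in \Aa$; this bound is presumed to be part of the standing hypothesis on $\Ff$ or to be verified in each concrete application.)

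The principal obstacle is the openness of $\Ss^c$: it rests on continuity of the level set flow under smooth perturbation of nonfattening compact initial data on compact time intervals, together with upper semicontinuity of the extinction time. The possibility that a given member of $\Ff$ fattens is handled by replacing it with an arbitrarily close nonfattening smooth perturbation, which preserves smooth convergence and membership in $\Ss^c$.
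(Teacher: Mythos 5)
Your approach is the same as the paper's: show that $\Ss$ and $\Ss^c$ partition $\Ff$ into a closed and an open set, use connectedness to find an $M\in\Ss$ arising as a smooth limit of some $M_n\in\Ss^c$, and then quote Theorems~\ref{crunch-theorem} and~\ref{shrinker}. The only real difference is the packaging of the topological step. The paper proves closedness of $\Ss$ directly: if $M_n\in\Ss$ converge smoothly to $M$, pick $p_n\in Q\cap M_n(t_n)$; since $p_n\in B(0,R)$ and $t_n\le (R/2)^{1/2}$ (all the surfaces live in $B(0,R)$ and hence vanish by time $(R/2)^{1/2}$), extract convergent subsequences $p_n\to p$, $t_n\to t$ and conclude $p\in Q\cap M(t)$. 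This needs only the upper semi-continuity of the level set flow in the initial data, which is standard.

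Your openness argument for $\Ss^c$ is the contrapositive of this, but two details are weaker than stated. First, the assertion that the spacetime track of a nearby $M'$ lies within \emph{Hausdorff distance} $\delta/2$ of that of $M$ is more than you need and more than is readily available: the level set flow is upper semi-continuous in the initial data, but lower semi-continuity (which the Hausdorff bound requires) is delicate when fattening may occur. You only use the containment of the $M'$-track in a $\delta/2$-neighborhood of the $M$-track, so you should state only that. Second, the parenthetical suggestion to perturb $M'$ to a nonfattening surface does not help here, since the perturbed surface is not the one whose membership in $\Ss^c$ you must establish; the nonfattening reduction belongs inside the proof of Theorem~\ref{crunch-theorem}, where it is used for a different purpose. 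Your observation that the theorem as stated does not explicitly include the hypothesis $\genus(M)\le g$, which is needed for $M\in\Aa$ and hence for Theorems~\ref{crunch-theorem} and~\ref{shrinker} to apply, is a fair catch; the paper's concrete family in the subsequent theorem satisfies it, but it should be listed among the hypotheses on $\Ff$.
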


\begin{proof}
Under mean curvature flow, the
 sphere $\partial B(0,R)$
shrinks to a point in time $R^2/4$.
Thus, if $M\in \Ff$, then $M(t)$ is empty for all 
  $t> R^2/4$.
We claim that $\Ss$ is a closed subset of $\Ff$.
To see that, suppose that $M_n\in \Ss$ converges
to $M\in \Ff$. 
By hypothesis, there exist $t_n$ and $p_n$ such that
\[
   p_n \in Q\cap M_n(t_n).
\]
Since $p_n\in B(0,R)$ and since $t_n\le R^2/4$,
we can assume that $p_n$ and $t_n$ converge
so to finite limits $p$ and $t$.  Thus
\[
  p\in Q\cap M(t),
\]
so $M\in \Ss$.
Thus $\Ss$ is a closed subset of $\Ff$.

By connectedness, $\Ss^c$ cannot be closed: there must be $M_n\in \Ss^c$
converging to an $M\in \Ss$. 
 By Theorem~\ref{crunch-theorem}, $M(T_Q)\cap Q = \{0\}$.
By Theorem~\ref{shrinker}, any shrinker at $(0,T_Q)$ has genus~$g$ and is disjoint from $Q$.
\end{proof}

\begin{theorem}
\label{family-theorem}
For each $Q$ as in Theorem~\ref{plato-n-gon-theorem},
there exists a family $\Ff$
satisfying the hypotheses of
 Theorem~\ref{basic-plato-theorem}.
\end{theorem}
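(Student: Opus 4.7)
The plan is to construct $\Ff$ by fattening a carefully chosen $G$-invariant graph and letting the tube radius play the role of the parameter that interpolates between the two MCF behaviors.

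First, I would choose a $G$-invariant connected graph $P\subset\RR^3\setminus Q$ with first Betti number $g$. In the Platonic case, take $P$ to be the $1$-skeleton of the associated Platonic solid (vertices at the Platonic vertex directions, edges between adjacent vertices); since $Q$ consists of rays through the face centers, a case-by-case check verifies $P\cap Q=\varnothing$, and the Euler-characteristic count gives $\#\mathrm{edges}-\#\mathrm{vertices}+1=g$. In the regular $n$-gon case, take $P$ to be the $1$-skeleton of a bipyramid over an $n$-gon whose equatorial vertices are placed at the bisectors between consecutive rays of $Q$; again $P\cap Q=\varnothing$ and $\beta_1(P)=g$. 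In both cases let $d_0:=\dist(P,Q)>0$.

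For $0<\rho<d_0$, let $M_\rho$ be a smooth $G$-equivariant approximation of the boundary of the $\rho$-tubular neighborhood of $P$ (smoothed out at the vertices of $P$). One checks that $M_\rho$ is a compact connected smoothly embedded $G$-invariant surface of genus $g$, disjoint from $Q$, and contained in a fixed ball $B(0,R)$ independent of $\rho$. Taking $\Ff:=\{M_\rho:\rho\in[\rho_1,\rho_2]\}$ for $0<\rho_1<\rho_2<d_0$ gives a continuous $1$-parameter path in the space of smooth embedded surfaces, hence a family that is connected with respect to smooth convergence.

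It remains to pick $\rho_1$ and $\rho_2$ so that $\Ss$ and $\Ss^c$ are both nonempty.

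For $\rho=\rho_1$ small: each edge-tube of $M_{\rho_1}$ has aspect ratio $\sim L/\rho_1\gg 1$, where $L$ is the (fixed) edge length. By the dumbbell analysis of Huisken the tubes undergo simultaneous neckpinch singularities at their midpoints, which lie at distance $\geq d_0/2$ from $Q$. After the pinches the flow consists of a $G$-orbit of mean-convex sphere-like pieces localized near the vertices of $P$; each piece shrinks to its corresponding vertex by the avoidance principle against a slightly larger round sphere. Since none of these vertices lies on $Q$, we conclude $T_Q(M_{\rho_1})=\infty$, so $\Ss^c\neq\varnothing$.

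For $\rho=\rho_2$ close to $d_0$: the tubes are too short and fat to support a neckpinch, so $M_{\rho_2}(t)$ remains a single smoothly embedded connected $G$-invariant surface up to extinction. The tangent shrinker at the extinction point must be $G$-invariant, and the only $G$-fixed point is the origin, so extinction occurs at $0$. Since $0\in Q$, we get $T_Q(M_{\rho_2})=T_{\textrm{ext}}<\infty$, so $\Ss\neq\varnothing$.

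The main obstacle is the analysis at the two ends of the family, especially the claim that for $\rho=\rho_2$ the flow really does extinct at the origin rather than collapsing along some $G$-invariant $1$-dimensional set off $Q$. The most natural way to handle this is to sandwich $M_{\rho_2}$ between two $G$-invariant round spheres centered at the origin whose MCFs are explicitly known to shrink to the origin, and invoke the avoidance principle to force the extinction point to be the origin. Dually, the neckpinch analysis at $\rho=\rho_1$ is classical, but one has to verify $G$-equivariance of the pinching (so that all tubes pinch simultaneously and the resulting pieces are a $G$-orbit confined to small balls around the vertices); this is where the care in choosing $P$ to have its vertices on a single $G$-orbit off $Q$ pays off.
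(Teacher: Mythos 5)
Your approach is genuinely different from the paper's: the paper defines $M^s$ as level sets of a function $\Phi(p)=f(p/|p|)+h(|p|)$ on the shell $\{\tfrac12\le|p|\le 2\}$, with $f$ vanishing on the projected edge set and blowing up at the face centers, whereas you fatten a $G$-invariant graph $P$ and let the tube radius be the parameter. The tubular-neighborhood idea is natural, but both boundary cases are handled incorrectly.

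\textbf{Genus in the $n$-gon case.} Theorems~\ref{crunch-theorem} and \ref{shrinker}, which the proof of Theorem~\ref{basic-plato-theorem} invokes, require $M\in\Aa$, hence $\genus(M)\le g$. The 1-skeleton of the Platonic solid has $\beta_1=E-V+1=F-1=g$, so the tube has the right genus; but the 1-skeleton of a bipyramid over an $n$-gon has $\beta_1=3n-(n+2)+1=2n-1$, while $g=n-1$. Your surfaces would have genus $2n-1>g$, so the family does not lie in $\Aa$ and the machinery does not apply. (A suspension graph without the equatorial cycle would have $\beta_1=n-1$, but that is not what you wrote.)

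\textbf{Large $\rho_2$.} The claim that ``$M_{\rho_2}(t)$ remains a single smoothly embedded connected $G$-invariant surface up to extinction'' cannot hold for genus $g>0$: by~\eqref{genus-decreases} the genus is nonincreasing, and a smooth compact surface shrinking to a single point has a compact tangent shrinker; for your surfaces the genus must drop through singularities strictly before point-extinction. Moreover, even if it did shrink to a point, a tangent shrinker at $(p,T)$ is invariant only under the stabilizer $G_p$, not all of $G$, so ``the only $G$-fixed point is the origin'' does not force $p=0$ unless you already know $p$ is $G$-fixed. The paper sidesteps all of this: for large $s$ the region $\{\Phi<s\}$ encloses an Angenent torus centered at a point $p\in C\subset Q$, and Lemma~\ref{between} then yields $T_Q(M^s)<T_Q(A)<\infty$ directly. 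If you want to keep your tube construction, the right move is to observe that for $\rho$ near $d_0$ the solid tube $V_\rho$ contains a (scaled) Angenent torus encircling a ray of $Q$, and invoke Lemma~\ref{between}; the extinction argument should be dropped.

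\textbf{Small $\rho_1$.} Invoking Huisken's rotationally symmetric dumbbell analysis is not justified here (the junctions at vertices destroy rotational symmetry), and the assertion that after the pinches the flow consists of mean-convex, sphere-like pieces that shrink to the vertices is left unsupported. The clean argument, which is essentially what the paper does by degenerating $M^s$ to the one-dimensional edge set as $s\to 0$, is simply barrier confinement: for $\rho$ small the surface sits inside a smoothed $\delta$-tubular neighborhood $V_\delta$ of $P$ that is mean-convex and disjoint from $Q$; mean-convexity makes $\partial V_\delta$ move inward, so $M_{\rho_1}(t)\subset V_\delta(t)\subset V_\delta$ for all $t$, whence $T_Q=\infty$. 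No neckpinch analysis is needed.

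In short: the graph-fattening construction is a reasonable alternative to the paper's level-set family, but the $n$-gon graph has the wrong Betti number, the small-parameter argument is needlessly intricate and unjustified as stated, and the large-parameter extinction argument is incompatible with genus monotonicity. The paper's two ends are handled by (i) collapsing the family to a curve of zero area and (ii) enclosing an Angenent torus and applying Lemma~\ref{between}; both of these tools are available and should replace the dumbbell and extinction arguments.
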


The reader may regard 
Theorem~\ref{family-theorem} as obviously true.  For completeness, we give a proof.

\begin{proof}
First, we give the proof in the case of a Platonic solid.
  Here, $g+1$ equals the number of faces. Let $E$, $U$, and $C$ be the radial projections to $\SS^2(1)$ of the
closed edges, the open faces, and the centers of the faces, respectively, of the 
solid.  Note that $C= Q\cap \SS^2(1)$.

We will produce a continuous, one-parameter family of smooth surfaces that degenerate at one end to $E$. At the other end, they degenerate to the union of $\SS^2(1/2)$ and 
$\SS^2(2)$, together with the the intervals in $Q$ that connect the two spheres. They will be level sets of a function whose gradient is nonvanishing, away from the limiting sets.

 Let $f:\SS^2\to [0,\infty]$ be a continuous, $G$-invariant function such that
\begin{enumerate}
\item $f^{-1}(0) = E$, 
\item $f^{-1}(\infty) = C$,
\item $f|\SS^2\setminus C$ is smooth and has as no critical points in $U\setminus C$.
\item For each $s\in (0,\infty)$, each component of $\{f\ge s\}$  is 
geodesically convex. (There is one component in each face of $U$.)
\end{enumerate}

Let 
\[
  h: (1/2, 2)\to [0, \infty)
\]
be a smooth function such that 
\begin{enumerate}
\item $h(1)=0$.
\item $h'<0$ on $(1/2,1)$.
\item $h'>0$ on  $(1,2)$.
\item $h(r)\to\infty$ as $r\to 1/2$ or as $r\to 2$.
\end{enumerate}
For example, we could let
\[
   h(r) = 
   \left(\log \frac{(2r-1)}{2-r} \right)^2.
\]
Extend $h$ to the  closed interval by setting $h(1/2) = h(2)=\infty$.

Now let  $${\bf W}:=\{p\in\RR^3: 1/2\leq |p|\leq 2\}, $$ and define
$
\Phi: {\bf W} \to [0,\infty]
$
by
\[ 
\Phi(p) = f(p/|p|) + h(|p|).
\]
For $s\in [0,\infty]$, let 
$ M^s:=\Phi^{-1}(s)$.
By construction, $\Phi$ is $G$-invariant and has nonvanishing gradient on 
$$
\interior({\bf W}) \setminus(Q\cup E) =\bigcup_{0<s<\infty}M^s.
$$ 
Hence the surfaces $M^s$ are $G$-invariant, smooth, and vary smoothly for $s\in(0,\infty)$. Also, for $s>0$ small, the genus of $M^s$ is the same as the genus of the boundary of a tubular neighborhood of $E=M^0$, which is equal to $g$. It follows that the genus of $M^s$ equals $g$ for all $s\in(0,\infty)$.

Now,  $M^0 = E$  vanishes instantly under level set flow,
so $T_Q(M^0)=\infty$ and therefore $T_Q(M^s)=\infty$ for all sufficiently small  $s$.

Moreover,  as $s\rightarrow\infty$, 
\[
M^s\rightarrow M^\infty 
= \SS^2(1/2)\cup\SS^2(2)\cup ({\bf W}\cap Q). 
\]

Thus, for all sufficiently large $s$, the region $\{\Phi<s\}\subset ({\bf W}\setminus Q)$
enclosed by $M^s$ contains a  suitably scaled Angenent torus centered at a point 
$p\in {\bf W}\cap Q$.
It follows from Lemma~\ref{between} that $T_Q(M^s) < \infty$. 
Therefore, the family
\[
   \Ff= \{ M^s:  0 < s <\infty\}
\]
satisfies the hypotheses of
 Theorem~\ref{basic-plato-theorem}.
This completes the proof in the case of Platonic solids.

Now consider the case when $Q$ consists of the rays
\[
\{ (r\cos\theta, r\sin\theta, 0): r\geq 0\}
\]
such that $\theta$ is  multiple of $2\pi/(g+1)$. 
In this case, let $E$ be the set of points
 $(r\cos\theta,r\sin\theta, z)$
  in $\SS^2$
 such that $\theta$ is an odd multiple
  of $\pi/(g+1)$.
  Thus $E$ consists of semicircles from the North Pole to the South Pole.
Let $U= \SS^2\setminus E$ and let
 $C=Q\cap \SS^2$.
The rest of the proof is exactly as in the Platonic solid case.
\end{proof}

\section{A Topological Lemma}
\label{topology-section}

In \S\ref{platonic-section}, we produced shrinkers that are disjoint
from certain collections of rays from the origin.
In the remainder of the paper, 
we will produce
shrinkers that contain certain collections of lines
through the origin.
This section proves basic facts about the topology
of such surfaces.

Let $g\ge 1$ be an integer.
Let 
 $Q=Q_g$ be the union of the $g+1$ lines
\[
  \{ (r\cos\theta, r\sin\theta,0): r\in \RR\},
\]
  such that $\theta$ is a multiple of $\pi/(g+1)$.
For $\theta\in \RR$, let $P_\theta$ be the vertical
plane spanned by $(\cos\theta,\sin\theta,0)$ and $\ee_3$.
Let $G=G_g$ be the group of isometries of $\RR^3$
generated by:
\begin{enumerate}
    \item Reflections in the planes $P_\theta$
    for which $\theta$ is an odd multiple of 
    $\pi/(2(g+1))$,
    \item Rotations about $Z$ by multiples of $2\pi/(g+1)$.
    \item Rotation by $\pi$ about each of lines
       in $Q_g$.
\end{enumerate}

(Equivalently, $G_g$ is the group of symmetries of the surface given in cylindrical coordinates by 
\[
  z = r^{g+1}\sin( (g+1)\theta).
\]
The group $G_g$ is also the group of symmetries of the Costa-Hoffman-Meeks surface of genus~$g$.)

In \S\ref{desing-section}, we will produce (for each $g\ge 1$)
a $G_g$-invariant shrinker
$\Sigma$ of genus~$g$ such that
\begin{equation}\label{slicer}
  \Sigma\cap\{z=0\} = Q_g.
\end{equation}
In \S\ref{new-examples-section}, we will produce (for all sufficiently large $g$)
a $G_g$-invariant shrinker $\Sigma$
of genus~$2g$ such that~\eqref{slicer} holds.
In this section, we prove some theorems that 
apply
in both of those cases.

\begin{lemma}[Topology Lemma]\label{topology-lemma}
Let $U$ be a $G$-invariant  region in  $\RR^3$ 
such that $\partial U$ is smooth and such that $U$ is diffeomorphic to an open ball.
Suppose that $M\subset \overline{U}$ is a smoothly embedded,
 $G$-invariant, compact, connected
  $2$-manifold-with-boundary 
 such that 
\begin{gather*}
\partial M = M\cap\partial U,
\\
M\cap \{z=0\}\cap U = Q\cap U, \,\text{and}
\\
\genus(M)\le 2g.
\end{gather*}
Then $M$ has genus~$0$, $g$, or~$2g$.

Let $c$ be the number of components 
of $\partial M$ 
that lie in $\{z>0\}$ and that
wind once around $Z$.
Let $d$ be the number of points in
\[
   M\cap Z^+ = M\cap Z\cap \{z>0\}.
\]
Then
\[
   \genus(M) = (c+d)g.
\]
\end{lemma}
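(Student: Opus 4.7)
\medskip

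\noindent\textbf{Proof proposal.}
The plan is to exploit the cyclic rotational symmetry about $Z$ together with a Riemann--Hurwitz/Euler characteristic calculation, and then read off the genus of $M$ from the combinatorics of $c$ and $d$.

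First, let $C \le G$ be the cyclic subgroup of rotations about $Z$ generated by rotation through $2\pi/(g+1)$, so $|C|=g+1$. The fixed point set of $C$ acting on $M$ is exactly $M\cap Z$, which by hypothesis consists of the origin (a smooth point of $M$ whose tangent plane is the $G$-invariant plane $\{z=0\}$), together with $d$ points in $Z^+$ and, by symmetry, $d$ points in $Z^-$, for a total of $2d+1$ branch points, each of full ramification index $g+1$. The Riemann--Hurwitz formula (valid for compact surfaces with boundary, since $C$ acts freely off $M\cap Z$) then gives
\[
\chi(M) \;=\; (g+1)\,\chi(M/C) \;-\; g\,(2d+1).
\]

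Next I would classify boundary components of $M$ by their behavior under $C$. Each of the $c$ upper components winding once around $Z$ is $C$-invariant as a set and $C$ acts freely on it by rotation, so it contributes exactly one boundary circle to $M/C$; the same holds for the $c$ lower winding components. Any remaining boundary components (non-winding upper/lower circles and equatorial components passing through $Q\cap\partial U$) fall into $C$-orbits whose sizes divide $g+1$, and these contribute equally to $b(M)$ and $(g+1)\,b(M/C)$ in a way that cancels. A direct bookkeeping argument gives the key identity
\[
(g+1)\,b(M/C) - b(M) \;=\; 2cg,
\]
so that combining with the Riemann--Hurwitz formula and the relation $\chi=2-2\gamma-b$ on both $M$ and $M/C$ yields
\[
2\gamma(M) \;=\; 2(g+1)\,\gamma(M/C) \;+\; 2(c+d)\,g.
\]

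The crux is to show that $\gamma(M/C) = 0$, i.e.\ $M/C$ is a planar surface. Here I would use the extra symmetry: the rotations by $\pi$ in $G$ about the lines of $Q$ normalize $C$ and descend to a $\mathbf{Z}_2$-action on $M/C$ whose fixed set contains the image of $B = Q\cap\overline U$; together with the hypothesis that $M$ is connected and $M\cap\{z=0\}\cap U=Q\cap U$, this forces the quotient to be topologically a planar domain. Alternatively, one may bound $\gamma(M/C)$ from above using the fact that the tree $B\subset M$ projects to a tree in $M/C$ that meets each boundary component of $M/C$ in a controlled way, and then invoke $\gamma(M)\le 2g$ to rule out positive genus of the quotient. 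Granting $\gamma(M/C)=0$, the identity above reduces to $\gamma(M)=(c+d)g$, and the hypothesis $\gamma(M)\le 2g$ immediately forces $c+d\in\{0,1,2\}$, whence $\gamma(M)\in\{0,g,2g\}$.

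The main obstacle I expect is the planarity of $M/C$: the boundary-orbit bookkeeping is routine, but showing $\gamma(M/C)=0$ seems to require a careful use of the full $G$-symmetry (not just $C$), of the fact that $M$ literally contains the star $B$ and is connected, and of the upper bound $\gamma(M)\le 2g$. Once that is in hand, the rest of the lemma is a short computation.
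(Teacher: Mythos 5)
Your overall strategy -- quotient by the cyclic rotation group $C$ of order $g+1$, do a Riemann--Hurwitz/Euler characteristic count, and invoke the extra $\rho_X$-symmetry -- is essentially the same as the paper's. (The paper first caps off all boundary components of $M$ other than the one equatorial circle through $Q\cap\partial U$, which is a cleaner way to handle the boundary bookkeeping than tracking $C$-orbits of circles directly, and avoids the need to argue that non-winding boundary components lie in free $C$-orbits.) However, there is a genuine gap at what you yourself identify as the crux: you do not prove $\gamma(M/C)=0$, and neither of your two suggested routes works as stated.

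Your ``alternative'' route fails outright. From your formula $\gamma(M)=(g+1)\gamma(M/C)+(c+d)g$ (which is correct -- note you have a minor off-by-$g$ slip in the boundary identity, which should read $(g+1)b(M/C)-b(M)=2cg+g$ because the single equatorial boundary circle also contributes $g$, but this cancels against the $-g$ coming from the origin being an odd-numbered branch point, so your final identity is right), the hypothesis $\gamma(M)\le 2g$ does \emph{not} rule out $\gamma(M/C)\ge 1$. For example $\gamma(M/C)=1$ and $c+d=0$ gives $\gamma(M)=g+1\le 2g$, yet $(c+d)g=0\ne g+1$; nothing in a pure entropy-of-genus bound excludes this. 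Your primary route (the $\mathbf{Z}_2$-action coming from a half-turn in $G$) is the right idea but is asserted, not proved, and the conclusion you state (``this forces the quotient to be topologically a planar domain'') is stronger than what that symmetry actually yields. What the $\rho_X$-symmetry gives -- and this is exactly the paper's key step -- is that the image of $Q\cap U$ in $M/C$ is a single embedded arc $\Gamma$ with both endpoints on $\partial(M/C)$ which separates $M/C$ into two surfaces swapped by the induced $\rho_X$-action; hence $\gamma(M/C)=2\gamma\bigl((M/C)^+\bigr)$ is \emph{even}. It is only this parity, \emph{combined} with the constraint $\gamma(M)\le 2g$ via $\gamma(M)=(g+1)\gamma(M/C)+(c+d)g$, that forces $\gamma(M/C)=0$ (since $\gamma(M/C)\ge 2$ would already give $\gamma(M)\ge 2g+2$). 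You need to carry out the bisection argument for $\Gamma$ and then make this combination explicit; without it the proof does not close.
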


\begin{remark} For surfaces of genus $\le g$, a result
similar to Lemma~\ref{topology-lemma} was proved,
using similar methods, by 
 Buzano, Nguyen, and Schulz.
 See Lemma~2.10 and Corollary~2.11 of~\cite{buzano}.\end{remark}

\begin{proof}[Proof of Lemma~\ref{topology-lemma}]
 Let $W$ be the wedge given in cylindrical 
coordinates by
\[
 \frac{-\pi}{2(g+1)} < \theta < \frac{\pi}{2(g+1)}.
\]
Note that
\[
    \{x: (x,0,0)\in M\}
\]
is a compact interval $[-R,R]$.

Consider the component $J$ of 
\[
    (\partial M)\cap W = M\cap \partial U\cap W
\]
that contains the point $(R,0,0)$.
 Because $J$ is an embedded curve in $\partial U$ and is invariant  under rotation by $\pi$ about the $x$-axis, it cannot be a closed curve.  Thus, $J$ is a non-closed curve
with $\theta=\pi/(g+1)$ on one endpoint and 
  $\theta= -\pi/(g+1)$ on the other endpoint.
 It follows
  from the symmetries of $M$ that the component $J'$ of 
  $\partial M$ containing $J$ is a simple
  closed curve that winds once around the $z$-axis and that passes through all the points of
   $Q\cap  \partial U$.

Attach disks symmetrically to each of the other 
connected
components of $\partial M$ to get a surface  $M'$ with
only one boundary component, namely $J'$.
Note that
\begin{equation}\label{same-genus}
  \genus(M')=\genus(M).
\end{equation}

Let $k$ be the number of points in $M'\cap Z^+$.
Note that
\begin{equation}\label{kcd}
   k = c + d.
\end{equation}

By symmetry, the number of points of $M'\cap Z$ is 
 $2k+1$.  Let $M''$ be the quotient of $M'$
under the group consisting of rotations about $Z$
by angles that are multiples of $2\pi/(g+1)$.
Then
\[
  \chi(M'\setminus Z) = (g+1)\chi(M''\setminus Z)
\]
so
\begin{equation}\label{eulerish}
  \chi(M') - (2k+1) = (g+1)(\chi(M'') - (2k+1)).
\end{equation}
Now $M'$ is connected and has exactly one boundary
component (namely $J'$), and likewise for $M''$, so
\begin{align*}
\chi(M') &= 1 - 2\genus(M'), \\
\chi(M'') &= 1 - 2\genus(M'').
\end{align*}
Thus, multiplying~\eqref{eulerish} by $-1$ gives
\[
 2\genus(M') +  2k
 =  
 (g+1)(2\genus(M'')  + 2k),
\]
so
\begin{equation}\label{genus-formula}
  \genus(M') = gk + (g+1)\genus(M'').
\end{equation}
The image of $Q\cap U$ under the 
branched covering map
\[
  \pi: M' \to M''
\]
is an embedded curve  $\Gamma$ whose endpoints are both on $\partial M''$.   Now $\Gamma$ divides
 $M''$ into two congruent surfaces
 \[
 (M'')^+ = M''\cap \{z>0\}
 \]
 and
 \[
 (M'')^- = M'' \cap \{z<0\}.
 \]
 Thus
 \begin{align*}
    \genus(M'')
    &=
    \genus((M'')^+) + \genus((M'')^-)
    \\
    &= 2\genus((M'')^+).
\end{align*}

Therefore, by~\eqref{genus-formula},
\[
\genus(M') = gk + 2(g+1)\genus((M'')^+).
\]
Since $\genus(M')\le 2g$, we see that $(M'')^+$ has
genus~$0$, that $k$ is $0$, $1$, or $2$, and thus that
\begin{align*}
\genus(M)
&=
\genus(M')
\\
&=
gk
\\
&=
g(c+d).
\end{align*}
by~\eqref{same-genus} and~\eqref{kcd}.
\end{proof}

\begin{corollary} \label{topology-corollary}
Suppose $M$ is a $G_g$-invariant embedded shrinker in $\RR^3$ such that
\[
  M\cap \{z=0\} = Q_g
\]
and such that $\genus(M)\le 2g$.
Then $M$ has genus~$g$ or $2g$.
\end{corollary}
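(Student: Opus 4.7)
The plan is to apply Lemma~\ref{topology-lemma} to a suitable compact truncation of $M$, and then to exclude the genus-$0$ possibility using the classification of genus-$0$ self-shrinkers in $\RR^3$.

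First I would pick $R>0$ large enough that $\partial B(0,R)$ is transverse to $M$, and set $U:=B(0,R)$, $M_R:=M\cap\overline U$. Since $M$ is properly embedded and of finite genus ($\le 2g$), for $R$ sufficiently large the set $M\setminus B(0,R)$ is a finite disjoint union of topological half-open annuli (one per end), each meeting $\partial B(0,R)$ transversely in a single simple closed curve. It follows that $M_R$ is a smooth, compact, $G_g$-invariant $2$-manifold-with-boundary with $\partial M_R=M_R\cap\partial U$ and $\genus(M_R)=\genus(M)\le 2g$. Since $M$ is connected (every smoothly embedded self-shrinker in $\RR^3$ is connected) and each removed end is attached to $M_R$ along a single circle, $M_R$ is connected as well. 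Finally, the hypothesis $M\cap\{z=0\}=Q_g$ gives $M_R\cap\{z=0\}\cap U=Q\cap U$ immediately. Thus all hypotheses of Lemma~\ref{topology-lemma} are satisfied with $U$ as above, and the lemma yields
\[
\genus(M)=\genus(M_R)\in\{0,g,2g\}.
\]

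It remains to exclude $\genus(M)=0$. By Brendle's classification~\cite{brendle}, the only smoothly embedded genus-$0$ self-shrinkers in $\RR^3$ are the planes through the origin, the round shrinking sphere, and the round shrinking cylinder. For each of these surfaces, $M\cap\{z=0\}$ is empty, a single point, a single line, a single circle, a pair of parallel lines, or the whole plane $\{z=0\}$. Since $g\ge 1$, none of these sets coincides with $Q_g$, which consists of $g+1\ge 2$ distinct lines meeting only at the origin. Therefore $\genus(M)\ne 0$, and $\genus(M)\in\{g,2g\}$ as claimed.

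The only step that needs real care is the truncation, where one must verify that $M$ has enough asymptotic regularity that $M\cap\overline{B(0,R)}$ is connected and has the same genus as $M$ for $R$ large. This is a standard consequence of $M$ being properly embedded with finite genus, but it should be justified explicitly. Once this is in hand, the conclusion is an immediate combination of Lemma~\ref{topology-lemma} and the classification of genus-$0$ shrinkers.
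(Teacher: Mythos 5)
Your overall strategy — truncate $M$ inside a large ball, apply Lemma~\ref{topology-lemma}, then rule out genus~$0$ — matches the paper's, but the two steps differ in ways worth noting.

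For the truncation, the paper simply takes $M_R$ to be the connected component of $M\cap B(0,R)$ containing the origin. This is automatically compact, connected, $G_g$-invariant, and contains $Q\cap B(0,R)$; and since $M$ is connected with $\genus(M)\le 2g<\infty$, one can fit a compact connected subsurface carrying all the genus and containing the origin inside $B(0,R)$ once $R$ is large, giving $\genus(\overline{M_R})=\genus(M)$. No end structure is needed. By contrast, you take all of $M\cap\overline{B(0,R)}$ and justify its compactness and connectedness by asserting that $M\setminus B(0,R)$ is a finite disjoint union of half-open annuli, attributing this to proper embedding plus finite genus. That inference is not quite right as stated: finite genus alone does not bound the number of ends, and controlling ends of a shrinker (finiteness, eventual annular structure) typically requires an entropy or area-growth input that the corollary does not explicitly assume. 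You do flag the truncation as the delicate step, which is good judgment, but the cleaner route is the paper's: pass to the connected component containing the origin rather than to the full slab $M\cap\overline{B(0,R)}$.

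On the other hand, your argument excluding genus~$0$ is a real improvement. Lemma~\ref{topology-lemma} only gives $\genus(M)\in\{0,g,2g\}$, and the paper's proof of the corollary, as written, stops after invoking the lemma and never rules out the value~$0$. Your use of Brendle's classification of embedded genus-$0$ shrinkers in $\RR^3$ (plane, sphere, cylinder), together with the observation that none of these can intersect $\{z=0\}$ in $g+1\ge 2$ concurrent lines, cleanly closes that gap and is exactly what is needed.
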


\begin{proof}
For $R>0$, let $M_R$ be the connected component of
  $M\cap B(0,R)$ that contains the origin.
Since $M$ is a shrinker, it is connected.
Thus
\[
   \genus(M_R)= \genus(M)
\]
for all sufficiently large $R$.
Choose such an $R$ for which $\partial B(0,R)$
is transverse to $M$.
Now apply Lemma~\ref{topology-lemma} to $U=B(0,R)$ and to $\overline{M_R}$.
\end{proof}

\begin{corollary}\label{r-R-corollary}
Let $M$ be as in Corollary~\ref{topology-corollary}.
Suppose also that $M\setminus\{z=0\}$
 has genus~$0$.
\begin{enumerate}
\item If $M$ has genus~$g$, then there is an $r\in(0, \infty)$ such that 
\[
    \genus(M\setminus \SS(r)) = 0,
\]
where $\SS(r)=\{p \in  \RR^3 \; : \; |p|=r\}.$ 
\item If $M$ has genus~$2g$ and if 
\[
   \genus(M\setminus\{z=0\})= 0,
\]
    then
    there exist $r$ and 
   $R$ with $0< r\le R < \infty$ such that 
   \[
      \genus(M\setminus(\SS(r)\cup \SS(R)) = 0.
   \]
\end{enumerate}
\end{corollary}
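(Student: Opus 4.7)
The plan is to apply Lemma \ref{topology-lemma} with $U = B(0, r)$ to the compact surface $\hat M_r := M \cap \overline{B(0, r)}$ for regular values $r > 0$ of $|p|$ on $M$. Since $\hat M_r$ inherits the $G_g$-invariance, satisfies $\hat M_r \cap \{z=0\} \cap B(0,r) = Q_g \cap B(0,r)$, and has $\genus(\hat M_r) \le 2g$, the lemma yields $\genus(\hat M_r) = (c(r) + d(r))g \in \{0, g, 2g\}$. The function $r \mapsto \genus(\hat M_r)$ is non-decreasing by subsurface inclusion, equals $0$ for $r$ small (where $\hat M_r$ is a topological disk around the origin, since $M$ is smooth and tangent to $\{z=0\}$ there), and equals $\genus(M)$ once $r$ exceeds all critical values of $|p|^2$ on $M$. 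Symmetrically, $\genus(\tilde M_r)$ for $\tilde M_r := M \setminus B(0, r)$ is non-increasing and runs from $\genus(M)$ down to $0$.

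For Case (1) ($\genus(M) = g$), the two genera lie in $\{0, g\}$. Let $r_* := \sup\{r : \genus(\hat M_r) = 0\}$ and $r_{**} := \inf\{r : \genus(\tilde M_r) = 0\}$. I would show $r_{**} < r_*$, so that any regular $r \in (r_{**}, r_*)$ gives $\genus(\hat M_r) = \genus(\tilde M_r) = 0$ and hence $\genus(M \setminus \SS(r)) = 0$. The strict inequality follows from the Euler-characteristic additivity
\[
   \chi(M) = \chi(\hat M_r) + \chi(\tilde M_r),
\]
combined with the decomposition $\genus(M) = \genus(\hat M_r) + \genus(\tilde M_r) + \gamma_r$, where $\gamma_r \ge 0$ is the gluing contribution determined by the number of components of $M \cap \SS(r)$ and of the two pieces. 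The $G_g$-symmetry of $M \cap \SS(r)$ dictates that just below $r_*$ the boundary consists of $g+1$ circles (one per rotational sector), forcing $\gamma_r = g$, and hence $\genus(\tilde M_r) = 0$ whenever $\genus(\hat M_r) = 0$.

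For Case (2) ($\genus(M) = 2g$), two cutting spheres are required. If $\genus(\hat M_r)$ transitions at distinct radii $r_1 < r_2$ (going $0 \to g \to 2g$), I would pick regular $r$ just below $r_1$ and $R$ just above $r_2$; then $\hat M_r$ has genus $0$, $M \setminus \overline{B(0, R)}$ has genus $0$ (being just the annular ends of $M$), and the annular piece $M \cap \{r < |p| < R\}$ has genus $0$ by an analogous Euler-characteristic/gluing-contribution analysis applied at both $\SS(r)$ and $\SS(R)$. If instead the transition is direct $0 \to 2g$ at a single radius $r_*$, one takes regular $r < r_* < R$ both very close to $r_*$.

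The main obstacle is verifying that the gluing contribution $\gamma_r$ attains the full value $\genus(M)$ at the chosen radius (and similarly for the second sphere in Case~2). This reduces to counting the components of $M \cap \SS(r)$ at a regular value straddling a genus-transition, using the $G_g$-symmetric Morse structure of $|p|^2$ on $M$: the transition is produced by a $G_g$-orbit of saddle critical points, whose orbit size (imposed by the $(g{+}1)$-fold rotational symmetry) forces the boundary circle count to be exactly $g+1$ in Case~(1), and to distribute appropriately across the two sphere cuts in Case~(2).
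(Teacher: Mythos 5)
Your overall strategy of tracking the genus of the inner piece as a function of the radius is on the right track, but there is a genuine gap in the step where you try to show $r_{**}<r_*$. You assert that just below $r_*$ the gluing contribution $\gamma_r$ equals $g$ exactly, which (using the identity $\gamma_r = k - a - b + 1$ with $k$ boundary circles and $a,b$ components of the two pieces) requires both that $M\cap\SS(r)$ has exactly $g+1$ circles and that each piece is connected; you offer no argument for either, and the appeal to ``the $G_g$-symmetric Morse structure of $|p|^2$'' is not enough, since $G_g$-invariance constrains orbit sizes but does not by itself determine the number of boundary circles or the connectivity of the pieces. The paper sidesteps this entirely: it only uses the one-sided inequality
\[
\genus\bigl(M\cap B(0,\rho)\bigr)+\genus\bigl(M\setminus \overline{B(0,\rho)}\bigr)\le \genus(M),
\]
which is just the statement $\gamma_\rho\ge 0$ and requires no circle count. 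Once $\Gamma(\rho):=\genus(M\cap B(0,\rho))$ (which by Lemma~\ref{topology-lemma} takes only values in $\{0,g\}$, respectively $\{g,2g\}$) jumps to its maximal value, the inequality forces the exterior genus to vanish; a limiting argument as $\rho\downarrow r$ then yields $\genus(M\setminus\overline{B(0,r)})=0$ at the transition radius $r$ itself.

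A second, smaller point: you insist on a regular value $r$ with $\SS(r)$ transverse to $M$, which is why you need the open interval $(r_{**},r_*)$ to be nonempty. But the statement only asks about $\genus(M\setminus\SS(r))$, which is the sum of the genera of the two open subsurfaces $M\cap B(0,r)$ and $M\setminus\overline{B(0,r)}$ and makes sense without transversality. The paper chooses $r$ to be the transition radius directly and never needs $\SS(r)$ to be transverse, so it does not need any separation between two thresholds. If you replace your exact computation of $\gamma_r$ by the inequality $\gamma_r\ge 0$ and drop the transversality requirement, your argument collapses to the paper's and the gap disappears.
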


\begin{proof}
Let $\Gamma(r)=\genus(M\cap B(0,r))$.
Trivially, $\Gamma$ is an increasing function
of $r$,  and  $\Gamma(r)=0$ for all sufficiently small $r$.
Let $M'_r$ be the connected component
of $M\cap B(0,r)$ containing the origin.
The portion of $M\cap B(0,R)$
not in $M'_r$ lies in
\[
  M\setminus\{z=0\},
\]
and therefore has genus~$0$,
 so
\begin{equation}\label{Gamma(r)}
   \Gamma(r)= \genus(M_r').
\end{equation}

Now suppose that $M$ has genus~$g$.
By Lemma~\ref{topology-lemma}, The right hand
 side of~\eqref{Gamma(r)} is either $0$ or $g$.
Thus there is an $r$ such that
\[
   \genus(M\cap B(0,\rho)) =
   \begin{cases}  
   0 &(\rho\le r), \\
   g &(r  < \rho).
   \end{cases}.
\]
Thus if $\rho>r$,
\[
  \genus(M\setminus \overline{B(0,\rho)}
  \le \genus(M) - \genus(M\cap B(0,\rho) 
  = g -g = 0.
\]
Letting $\rho\to r$ gives 
$\genus(M\setminus \overline{B(0,r)})= 0$.
Thus
\begin{align*}
\genus(M\setminus \SS(r))
&=
\genus(M\cap B(0,r)) 
+ \genus(M\setminus \overline{B(0,r)})
\\
&=0,
\end{align*}
(Recall that  
$\SS(r)
=
\{p\in \RR^3: |p|=r\}$.)
 This completes the proof
 when $M$ has genus~$g$. 
The case when $M$ has genus~$2g$ is essentially the 
same; the function $\Gamma(\cdot)$ only takes
the values $g$ and $2g$.  We can let 
  $r$ be the largest $r$ for which 
   $\Gamma(r)<g$ and $R$ be the largest $R$
   for which $\Gamma(r)<2g$.
\end{proof}

\section{Surfaces in \texorpdfstring{$\SS^2\times \RR$}{S2 x R}}
\label{surfaces-section}

In the next three sections, we find it useful to introduce a vertical line at infinity in $\RR^3$ (as explained below), allowing us to identify the resulting manifold $N$ with $\SS^2 \times \RR$.  
 This gives rise to compact embedded surfaces (serving as initial surfaces for mean curvature flow) with greater symmetry than is  possible in $\RR^3$.  
 
We add a line at infinity  as follows.
Let $\RR^2\cup\{\infty\}$ be the conformal compactification
of $\RR^2$, and let
\[
  N = (\RR^2\cup \infty)\times \RR.
\]
Thus $N$ is $\RR^3$ with a vertical line $Z_\infty$
at infinity attached:
\[
   Z_\infty= \{\infty\}\times\RR.
\]
We let $O$ be the origin, and we let
\[
  O_\infty= (\infty,0)= Z_\infty\cap\{z=0\}.
\]

For $r>0$, let $\eta_r$ be the metric
on $\RR^2\cup\{\infty\}$ obtained by stereographic
projection from the north pole of a sphere of radius $r$
to the tangent plane at the south pole.
In this metric, the circle
\[
  (x^2+y^2)^{1/2} = 2r
\]
is a geodesic (i.e., a great circle).
Let $\gamma_r$ be the corresponding product metric on
 $N$.
That is, $\gamma_r$ is the product 
of the metric $\eta_r$ on $\RR^2\cup\{\infty\}$
and the standard metric on $\RR$.
Thus $(N,\gamma_r)$ is isometric to $\SS^2(r)\times\RR$.
As $r\to\infty$, $\gamma_r$ converges  to the Euclidean metric
on $\RR^3$.

Except where otherwise specified,  $N$ will refer
the manifold $N$ with the Riemannian metric 
 $\gamma:=\gamma_1$.
 One easily checks that
 \begin{equation}\label{gamma-delta}
   \gamma_{ij}\le \delta_{ij}
 \end{equation}
at all points.

Note that the circle 
\begin{equation}\label{circle-C}
C:=\{(x,y,0): (x^2+y^2)^{1/2} = 2\}
\end{equation}
and the completed line
\[
   \{(r\cos\theta, r\sin\theta, 0): r\in \RR\}
   \cup \{O_\infty\}
\]
are  totally geodesic great circles with respect to $\gamma$.
Thus the cylinder
\begin{equation}\label{cylinder}
    \Cyl: = \{(x,y,z): (x^2+y^2)^{1/2} = 2\}
\end{equation}
is also totally geodesic in $N$.

If $g$ is a positive integer, we let 
$Q=Q_g$ be the set of $g+1$ horizontal lines
and we let $G=G_g$
be the group of isometries of $\RR^3$
that were defined at the beginning of
  Section~\ref{topology-section}.
Note that each $\sigma \in G$ extends
to an isometry of $N:=\RR^3\cup Z_\infty$.

\begin{definition}\label{definitions}
If $\Gamma$ is a great circle in $\{z=0\}$, let
$\rho_\Gamma:N\to N$ denote rotation by $\pi$ about
$\Gamma$. 
Let $\tG=\tG_g$  
denote the group of isometries of $N$
generated by $G$ together with $\rho_C$. 
Let
\begin{align*}
\tQ
&=
\tQ_g 
\\
&=
\overline{Q} \cup C \\
&= Q\cup \{ O_\infty\} \cup C.
\end{align*}
\end{definition}
Note that $\sigma(\tQ_g)=\tQ_g$
for each $\sigma\in \tG_g$.

The theorems in this section and in the following
two sections will all be about the following
class of surfaces:

\begin{definition}\label{MM-definition}
We let $\MM_g$ be the collection of surfaces $M$
with the
 following 
properties:
\begin{enumerate}
    \item $M$ is a compact, smoothly embedded,
      $\tG_g$-invariant surface in $N$.
\item $M$ intersects the cylinder $\Cyl$ 
          (see~\eqref{cylinder}) transversely, 
       and the intersection is the circle
        $C$ (see~\eqref{circle-C}).  
\item $M\cap\{z=0\} = \tQ_g$.
 
    \item $M\setminus\{z= 0\}$ has genus~$0$.
    \item $M$ has genus~$\le 4g$.
    \item $\garea(M) <  2\garea(\{z=0\}) = 8\pi$.
\end{enumerate}
(Recall that $\{z=0\}$ is isometric to the sphere of radius~$1$.)
\end{definition}

Most of this section is dedicated to establishing that if \( M \in \MM_g \), then the evolving surface \( M(t) \) remains in \( \MM_g \) for all regular times \( t \) in the interval \( [0, \Tpos(M)] \). See Theorem~\ref{preservation-theorem}. 
 We also analyze the behavior of \( M(t) \) as $t$ approaches \( \Tpos(M) \). Specifically, we demonstrate that if \( \Tpos(M) \) is finite, then the origin is a singular point at 
  time \(\Tpos(M) \). 
Additionally, we show that each shrinker $\Sigma$   at \( (O, \Tpos) \) has genus~$g$ or $2g$, and
that its intersection with $\{z=0\}$ is $Q$. 
See Lemma~\ref{origin-lemma} and Theorem~\ref{shrinker-good-theorem}.
     
\begin{proposition}\label{topology-proposition}
Suppose that $M$ is a surface in $\MM_g$
and let $M'$ be the connected component of $M$ 
containing $\tQ$.  Then
\[
  \genus(M)=\genus(M')= 2dg,
\]
where $d$ is the number of points
 in $M'\cap Z^+$.
\end{proposition}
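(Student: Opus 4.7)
The plan is to cut $M'$ by the totally geodesic cylinder $\Cyl$ and apply the Topology Lemma (Lemma~\ref{topology-lemma}) to one of the two resulting halves. Let $U_1=\{x^2+y^2<4\}\subset N$ be the interior of $\Cyl$ and $U_2=N\setminus\overline{U_1}$ its exterior (containing $Z_\infty$). Both $U_1$ and $U_2$ are $G$-invariant open sets diffeomorphic to $\RR^3$, and the involution $\rho_C\in\tG$ swaps them while preserving $M$; since $\rho_C$ preserves $\tQ$, it preserves the component $M'$. Setting $M_0^\pm=M'\cap\overline{U_\pm}$ and using that $M$ meets $\Cyl$ transversely along $C\subset\tQ$, each $M_0^\pm$ is a smooth compact $2$-manifold whose boundary is exactly $C$, and $\rho_C$ interchanges $M_0^+$ and $M_0^-$.

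The first substantive step is to show $M_0^+$ is connected. Any component $A$ of $M_0^+$ has $\partial A$ a disjoint union of circles contained in the single circle $C$, so $\partial A$ is either $C$ or $\emptyset$. A component with $\partial A=\emptyset$ would be clopen in the connected surface $M'$, forcing $A=M'$, which contradicts $\rho_C(M_0^+)=M_0^-$; and transversality of $M$ with $\Cyl$ means that near each $p\in C$ the set $M_0^+$ is a single local half-disk meeting $C$ in an arc, so at most one component can have boundary $C$. Hence $M_0^+$ is connected with $\partial M_0^+=C$, and the same holds for $M_0^-$ by symmetry.

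A Mayer--Vietoris computation using $\chi(M_0^+)=\chi(M_0^-)$ and $\chi(C)=0$ gives $\chi(M')=2\chi(M_0^+)$. Writing $h_+:=\genus(M_0^+)$, the identity $\chi(M_0^+)=1-2h_+$ (one boundary component) combined with $\chi(M')=2-2\genus(M')$ yields $\genus(M')=2h_+$, so in particular $h_+\le\genus(M)/2\le 2g$. Lemma~\ref{topology-lemma} then applies to $U=U_1$ and $M_0^+$: the remaining hypotheses hold because $\partial M_0^+=M_0^+\cap\partial U_1=C$ and $M_0^+\cap\{z=0\}\cap U_1=\tQ\cap U_1=Q\cap U_1$ (since $O_\infty$ and $C$ lie outside $U_1$). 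The lemma's integer $c$ vanishes, since the unique boundary component $C$ lies in $\{z=0\}$ and not in $\{z>0\}$; and because $Z\subset\overline{U_1}$, the lemma's integer $d$ coincides with the proposition's $d=|M'\cap Z^+|$. The lemma thus gives $h_+=dg$, and hence $\genus(M')=2dg$.

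Finally, any connected component of $M$ other than $M'$ is disjoint from $\tQ$, hence from $\{z=0\}$, and so is a component of $M\setminus\{z=0\}$. Since the latter has genus $0$ by hypothesis, each such component is a sphere contributing nothing to the total genus; thus $\genus(M)=\genus(M')=2dg$. The main obstacle is the topological bookkeeping in the first two paragraphs: correctly identifying $M_0^+$ as a single connected surface with boundary exactly $C$, so that the Topology Lemma yields the precise value $(0+d)g$ from its trichotomy $\{0,g,2g\}$.
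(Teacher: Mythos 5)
Your proof is correct and follows essentially the same route as the paper: decompose $M'$ along the totally geodesic cylinder $\Cyl$, identify $M'\cap\overline{U}$ (your $M_0^+$) as a connected compact surface-with-boundary whose boundary is exactly $C$, and apply Lemma~\ref{topology-lemma} to the inner half, with the factor of $2$ coming from the $\rho_C$ symmetry. The paper simply asserts $\genus(M')=2\genus(M'\cap U)$ and that $M'\cap\overline{U}$ has one boundary component, leaving implicit the connectedness verification, the Euler-characteristic bookkeeping, and the hypothesis-checking for Lemma~\ref{topology-lemma} that you carry out explicitly.
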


\begin{proof} 
Note that
\begin{equation*}\label{winken}
  \genus(M)=\genus(M')
\end{equation*}
since 
\[
  \genus(M\setminus M') 
  \le \genus(M\setminus\tQ)
  = \genus(M\setminus\{z=0\})
  = 0.
\]
Let $ U = \{(x,y,z): (x^2+y^2)^{1/2}< 2\}$, and note that  $\overline{U} = U \cup \Cyl$, where $\Cyl$ is the cylinder defined in  \eqref{cylinder}. 
Now $M'\cap \overline{U}$ has exactly one boundary component,
namely the circle~$C$ $\subset \{z=0\}$.  
Thus, by 
 Lemma~\ref{topology-lemma},
\[
  \genus(M'\cap U) = dg.
\]
But $M'\setminus C$ is the disjoint union of 
$M'\cap {U}$ and $\rho_C(M'\cap {U})$. Hence 
\begin{equation*}\label{blinken}
\genus(M')
=
2 \genus(M'\cap U).
\end{equation*}
From the three displayed equations above,  we conclude that 
\[
\genus(M)=\genus(M')
=2dg.
\]
\end{proof}

For $\Lambda\ge 0$, let
$\Cone(\Lambda)$ be the set of points~$p$ 
in
\begin{equation}\label{upper-cylinder}
  \{(x^2+y^2)^{1/2}\le 2,\, z\ge 0\}
\end{equation}
such that
\begin{equation}\label{cone-def}
|z|\le \Lambda \dist(\pi(p),C),
\end{equation}
where $\pi$ is the projection $\pi(x,y,z)=(x,y,0)$
and where $\dist$ is geodesic distance with repect
to the metric $\gamma$.
We let $\Cone'(\Lambda)$ be the set
of all $p$ satisfying~\eqref{cone-def}.
Thus $\Cone(\Lambda)$ is the portion 
of $\Cone'(\Lambda)$ in the
 set~\eqref{upper-cylinder}.
Conversely, $\Cone'(\Lambda)$ consists
of $\Cone(\Lambda)$ together with its images
under $\rho_X$, $\rho_C$,
 and $\rho_C\circ \rho_X$.

Note that if $\Lambda$ is sufficiently large, then 
\begin{equation}
\label{Lambda}
M\subset \Cone'(\Lambda).
\end{equation}

\begin{lemma}\label{intersection-lemma}
For $t\in [0,\Tpos ]$,
\[
   M(t)\cap \{z=0\} = \tQ.
\]
Also, if $M\in \Cone'(\Lambda)$,
then $M(t)\in \Cone'(\Lambda)$ for all
$t\in [0,\Tpos]$.
\end{lemma}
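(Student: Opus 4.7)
The plan is to prove both assertions via avoidance principles for the level set flow, leveraging the $\tG_g$-invariance of $M$ and the totally geodesic structure of $\{z=0\}$ in $N$.

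For the inclusion $\tQ\subset M(t)$, every great circle $\Gamma$ of $\tQ$ is the pointwise fixed set of an involution $\sigma_\Gamma\in \tG_g$: take $\sigma_\Gamma=\rho_L$ when $\Gamma = L\cup\{O_\infty\}$ for $L\in Q_g$, and $\sigma_\Gamma=\rho_C$ when $\Gamma = C$. The level set flow preserves ambient isometries, so $M(t)$ is $\sigma_\Gamma$-invariant. At any regular time $t$, $M(t)$ is smooth, and in local coordinates $(s,u,v)$ adapted to $\Gamma$ (where $\Gamma = \{u=v=0\}$ and $d\sigma_\Gamma = \operatorname{diag}(1,-1,-1)$), writing $M(t)$ as a graph $v = \psi(s,u)$, the $\sigma_\Gamma$-invariance forces $\psi(s,-u) = -\psi(s,u)$; so $\psi(s,0)\equiv 0$ and $\Gamma\subset M(t)$. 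Regular times form a full-measure subset of $[0,\Tfat]$ by \cite{bamler-kleiner}, and upper semi-continuity of the flow's support at singular times extends the inclusion to all $t\in[0,\Tfat]$.

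For $M(t)\cap\{z=0\}\subset \tQ$, note first that at $t=0$, $M$ is transverse to $\{z=0\}$ at each smooth point of $\tQ$ (otherwise $M\cap\{z=0\}$ would fail to be $1$-dimensional there), and this transversality persists at regular times by continuity combined with the fixed-set argument above, so $M(t)\cap\{z=0\}$ locally coincides with $\tQ$ in a neighborhood of $\tQ$. To exclude new components appearing away from $\tQ$, I would invoke a Sturmian-type monotonicity for the number of connected components of $M(t)\cap\{z=0\}$ under transverse intersection with the stationary minimal $2$-sphere $\{z=0\}$. For the cone preservation, each component $U$ of $\Cone'(\Lambda)^c$ is an open cap around $Z$ or $Z_\infty$; since $\overline U\cap M = \varnothing$ by hypothesis, avoidance gives $\overline U(t)\cap M(t) = \varnothing$, and the fact that $\partial\Cone'(\Lambda)$ has mean curvature pointing into $\Cone'(\Lambda)$ (equivalently, $U$ can be foliated by inward-moving $\tG_g$-invariant barrier surfaces) shows $U\subset \overline U(t)$ for all $t\in[0,\Tfat]$, so $M(t)\subset\Cone'(\Lambda)$.

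The main obstacle is making the Sturmian-type argument for the reverse inclusion fully rigorous for surfaces in a $3$-manifold, especially at the singular vertices of $\tQ$ (the points $O$, $O_\infty$, and the points of $C\cap Q$) where $M$ may be tangent to $\{z=0\}$; a careful case-by-case local analysis at each vertex type, possibly combined with the topology and area constraints encoded in the class $\MM_g$, will likely be required.
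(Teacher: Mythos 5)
Your proposal takes a genuinely different route from the paper. The paper proves this lemma by elliptic regularization: it sets $\Sigma_0$ to be the closure of $M\cap\{(x^2+y^2)^{1/2}<2\}\cap\{z>0\}$, constructs $g_\lambda$-area minimizers with boundary $\Sigma_0\times\{0\}\cup\Gamma\times[0,\infty)$ constrained to lie in $\Cone(\Lambda)\times[0,\infty)$, passes to the $\lambda\to\infty$ limit to get a mean curvature flow $\Sigma(\cdot)$ with fixed boundary $\Gamma$, kills intersections of $\Sigma(t)$ with $\{z=0\}\setminus\Gamma$ via the strong maximum principle, assembles the $\tG$-invariant extension $\Sigma'(\cdot)$, and then identifies $\Sigma'(\cdot)$ with $M(\cdot)$ using unit-regularity and cyclicity mod 2 (\cite{cms}*{Lemma~9.3}). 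Both conclusions of the lemma (the intersection identity and the cone preservation) fall out of the construction at once. You instead try to argue directly on the level set flow of $M$ via symmetry and avoidance.

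There are three gaps in your argument, in increasing order of seriousness.

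\emph{Forward inclusion $\tQ\subset M(t)$.} Your local graph argument ``writing $M(t)$ as a graph $v=\psi(s,u)$'' already presupposes that $M(t)$ passes through a neighborhood of $\Gamma$ with the right tangent plane at positive time, which is exactly what needs to be shown. The correct version of this idea uses the level set function: the involution $\sigma_\Gamma$ swaps the two sides of $M$ locally near $\Gamma$ (since $M$ is transverse to the $-1$-eigenspace of $d\sigma_\Gamma$ along $\Gamma$), so the level set function satisfies $u(\cdot,0)\circ\sigma_\Gamma=-u(\cdot,0)$; uniqueness of viscosity solutions then propagates this anti-symmetry, forcing $u(\cdot,t)$ to vanish on $\Gamma$ for all $t$. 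That fix is not what you wrote.

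\emph{Reverse inclusion $M(t)\cap\{z=0\}\subset\tQ$.} This is the real content of the lemma and your ``Sturmian-type monotonicity for the number of components of $M(t)\cap\{z=0\}$'' is not an established result for surfaces in a $3$-manifold. You correctly identify this as an obstacle, but it is not a loose end --- it is the heart of the proof, and I see no straightforward way to supply it. The paper's trick is precisely to avoid arguing about $M(t)\cap\{z=0\}$ directly: it flows a half-surface with fixed boundary $\Gamma$, applies the strong maximum principle against the static minimal surface $\{z=0\}$ to keep the flow in $\{z>0\}\cup\Gamma$, and only afterwards identifies the symmetric extension with $M(\cdot)$. Your approach would need a substitute for this entire mechanism.

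\emph{Cone preservation.} The hypothesis is $M\subset\Cone'(\Lambda)$, which allows $M$ to touch $\partial\Cone'(\Lambda)$, so $\overline{U}\cap M=\varnothing$ is not given and the avoidance principle does not apply as stated. Even granting initial disjointness, your claim $U\subset\overline{U}(t)$ --- that the complementary regions do not shrink --- is not automatic for level set flow; it needs a barrier argument, and $\partial\Cone'(\Lambda)$ is not smooth (it has a conical ridge along $C$ and closes off at finite $|z|$). The paper avoids all of this by building the cone constraint into the variational problem defining $A_\lambda$, so the flow $\Sigma(t)$ is inside $\Cone(\Lambda)$ by construction.

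In short: your symmetry observation for the forward inclusion is salvageable, but the reverse inclusion and the cone preservation as you sketch them do not close, and these are exactly the parts the elliptic regularization machinery was introduced to handle.
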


\begin{proof}
Let $\Sigma_0$
be the closure of 
\[
   M\cap \{(x^2+y^2)^{1/2} < 2\}\cap \{z>0\},
\]

Note that
\begin{equation}\label{Gamma}
\begin{aligned}
\Gamma & := \partial \, \Sigma_0\\
&=
\Sigma_0\cap\{z=0\} 
\\
&=
(Q\cap \{(x^2+y^2)^{1/2} < 2\}) \cup S,
\end{aligned}
\end{equation}
where $S$ consists of alternate components
of $C\setminus Q$ (see Fig. \ref{fig:gamma}.)
\begin{figure}[htpb]
    \centering    \includegraphics[width=0.5\linewidth]{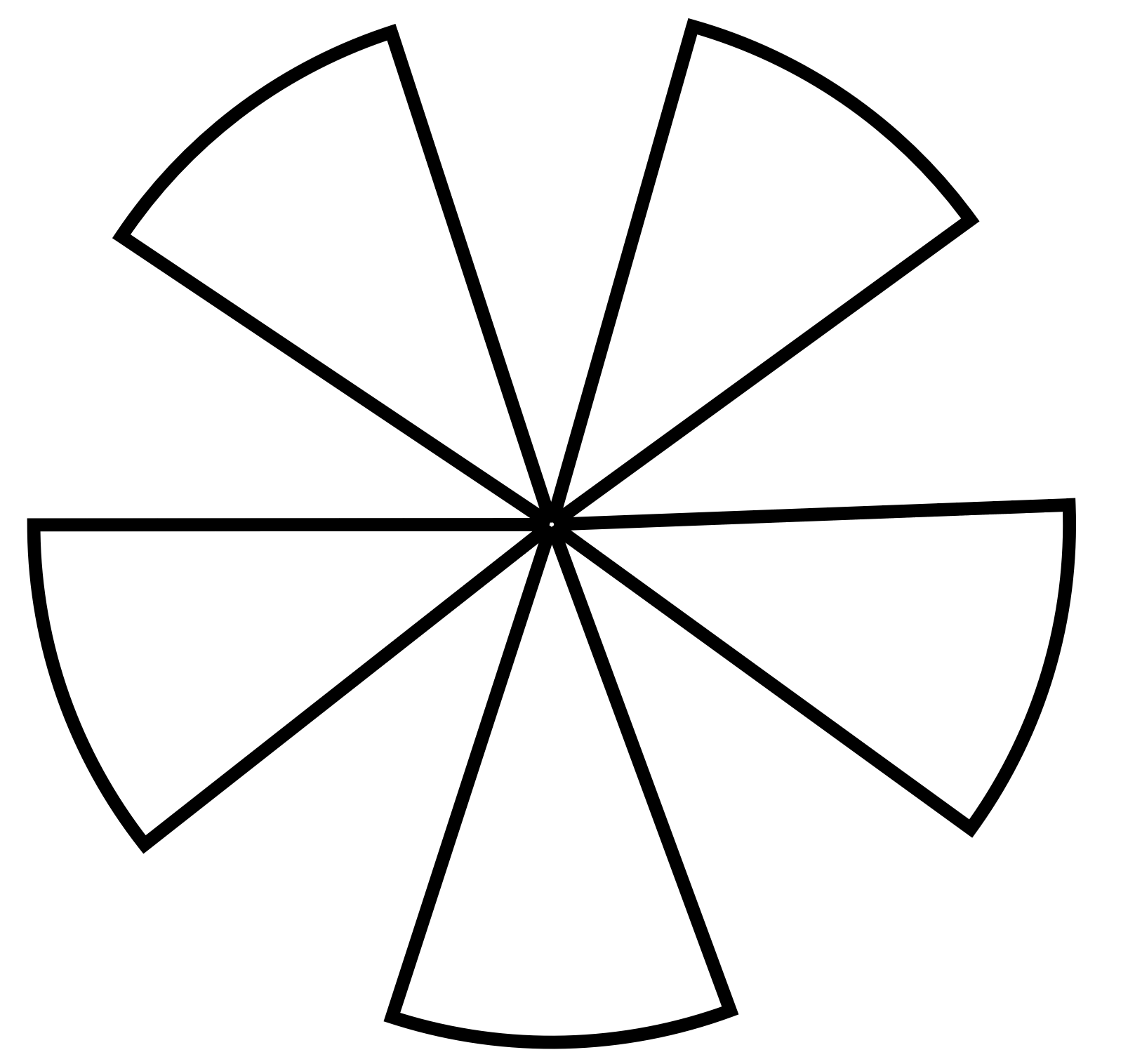}
    \caption{The curve $\Gamma$ in the case $g=4$.}
    \label{fig:gamma}
\end{figure}

We prove the lemma by elliptic regularization.
We work in $N\times\RR$.
We let $\tG$  act on $N\times \RR$ by acting on the first factor.
Recall that $\tG^+$ is the subgroup of $\tG$ that fixes the positive $z$-axis of $N$.
Let $w:N\times \RR \to \RR$ be the
projection onto the second factor.
For $\lambda\ge 0$, let $g_\lambda$ be 
$e^{-(2/3)\lambda  w}$ times the product
metric  on $N\times\RR$. 

Let $A_\lambda$ be a $3$-dimensional surface
(flat chain mod $2$)
that minimizes $g_\lambda$-area among
surfaces in
\[
   \Cone(\Lambda)  \times [0,\infty)
\]
whose boundary consists of
the surface $\Sigma_0\times \{0\}$
  together with
the surface 
  $\Gamma\times [0,\infty)$.

Standard cut-and-paste arguments
 (as in~\cite{morgan}*{Lemma~7.3})
show that $A_\lambda$ is $\tG^+$-invariant.
Alternatively, if one does not wish to invoke those arguments, one can simply let $A_\lambda$
minimize $g_\lambda$-area in the class
of $\tG^+$-invariant surfaces.

We remark that, 
by standard GMT regularity, $A_\lambda$
is smooth except possibly at those points
where its boundary is not smooth.
However, we do not need those regularity results here.

Now let $A'_\lambda$ be the surface
consisting of $A_\lambda$ together with
its images under $\rho_X$, $\rho_C$,
and $\rho_C\circ \rho_X$.
Thus $A_\lambda'$
 is contained in 
 $\Cone'(\Lambda)\times\RR$.
Note that the boundary of $A'_\lambda$
is $M\times \{0\}$, and that 
the integral varifold
associated to $A'_\lambda$
is stationary in the set~$\{w>0\}$
with respect to the 
 metric~$g_\lambda$.

It follows that
\begin{equation}\label{A-flows}
\begin{aligned}
  &t\in (0,\infty) \mapsto 
  A_\lambda(t):=A_\lambda - t\lambda\ee_4,
\\
  &t\in (0,\infty) \mapsto 
  A'_\lambda(t):=A'_\lambda - t\lambda\ee_4
\end{aligned}
\end{equation}
are mean curvature flows
(with moving boundaries).

By the theory of elliptic regularization,
there is a sequence of $\lambda$ tending
to infinity for which the flows~\eqref{A-flows}
converge to integral Brakke flows
\begin{equation}\label{A-limits}
\begin{aligned}
  &t\in (0,\infty) \mapsto A(t),
\\
  &t\in (0,\infty) \mapsto A'(t)
\end{aligned}
\end{equation}
Furthermore, these flows have a special form:
there exist mean curvature flows 
\begin{align*}
  &t\in [0,\infty)\mapsto \Sigma(t), \\
  &t\in [0,\infty)\mapsto \Sigma'(t).
\end{align*}
in $N$ with the following properties:
\begin{enumerate}[\upshape(1)]
\item $A(t)=\Sigma(t)\times \RR$ 
   and $A'(t)=\Sigma'(t)\times\RR$
   for all $t>0$.
\item $\Sigma(0)=\Sigma_0$ and 
   $\Sigma'(0)=M$.
\item $\Sigma(\cdot)$ is an mean curvature
flow with (fixed) boundary, the boundary being 
  $\Gamma=\partial \Sigma_0$.
\item $\Sigma'(\cdot)$ is an integral
   Brakke flow (without boundary).
\end{enumerate}

By construction, 
\[
  \Sigma(t)\subset \Cone(\Lambda) \quad (t\ge 0)
\]
and
\begin{equation}\label{in-cone}
  \Sigma'(t)\subset \Cone'(\Lambda) \quad (t\ge 0).
\end{equation}

Let $U$ be a connected component of 
  $\{z=0\}\setminus \Gamma$.
If $\Sigma(t)$ contained a point of $U$,
then, by the strong maximum principle,
$\Sigma(\tau)$ would contain $U$ for all 
  $\tau\in [0,t]$.
But $\Sigma(0)=\Sigma_0$ is disjoint from $U$.
Thus
\[
   \Sigma(t)\cap U =\emptyset \quad (t\ge 0).
\]
Since $U$ can be any component of 
 $\{z=0\}\setminus \Gamma$,
\[
   \Sigma(t)\cap\{z=0\} = \Gamma.
\]
for all $t\ge 0$.
Now $\Sigma'(t)$ consists of $\Sigma$ 
together with its images under $\rho_X$,
$\rho_C$, and $\rho_C\circ \rho_X$.
Thus
\begin{equation}\label{Sigma-prime-good}
  \Sigma'(t)\cap \{z=0\}= \tQ
  \qquad
  (t\ge 0).
\end{equation}

Now the flow $\Sigma'(\cdot)$ is unit-regular 
and cyclic mod $2$.  (This 
is true for any flow constructed by elliptic
regularization.  The unit-regularity 
follows from the fact
that the flows~\eqref{A-flows} are unit-regular, which
is an immediate consequence of Allard's 
Regularity Theorem.)

Thus 
\begin{equation}\label{the-same}
  \Sigma'(t) = M(t) \quad\text{for all $t\in [0,\Tfat(M))$}.
\end{equation}
(See~\cite{ccs}*{Lemma~9.3}.)

The assertions of the lemma now
follow immediately
from~\eqref{in-cone}, \eqref{Sigma-prime-good}, and~\eqref{the-same}.
\end{proof}

\begin{lemma}\label{C-regular-lemma}
The points in $C$ are regular points
of the flow at all times 
   $t\in [0,\Tpos]$.
\end{lemma}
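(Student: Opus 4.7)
The plan is to show that every tangent shrinker at a spacetime point $(p,t)$ with $p\in C$ must be a flat plane. Combined with the Bamler--Kleiner result (all tangent flows at times $t\in(0,\Tfat]$ are smoothly embedded multiplicity-one shrinkers) and the equivalence ``planar tangent flow $\iff$ regular point'' quoted in the preliminaries, this will yield the lemma. The case $t=0$ is immediate since $M=M(0)$ is smooth, so assume $t\in(0,\Tfat]$.

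Let $\Sigma$ be any shrinker at $(p,t)$, obtained as a smooth, multiplicity-one limit on compact subsets of the rescalings $(t-\tau_i)^{-1/2}(M(\tau_i)-p)$ for some sequence $\tau_i\uparrow t$ (viewing $N$ as isometrically embedded in a Euclidean space, as indicated in the preliminaries). The key input is Lemma~\ref{intersection-lemma}: since $C\subset M(\tau)\cap\{z=0\}=\tQ$ for every $\tau\in[0,\Tfat]$, the rescaled curves $(t-\tau_i)^{-1/2}(C-p)$ are all contained in the rescaled surfaces. Since $C$ is a smooth embedded curve through $p$ and the dilation factor $(t-\tau_i)^{-1/2}$ tends to infinity, these rescaled curves converge smoothly on compact subsets of $T_pN$ to the tangent line $L:=T_pC$ at the origin. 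Passing to the limit, we obtain $L\subset\Sigma$.

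I would then invoke the standard splitting theorem for self-shrinkers: any smooth, complete, embedded self-shrinker $\Sigma$ in $\RR^3$ containing a line through the origin splits as a product $\Sigma=L\times\Sigma'$, where $\Sigma'=\Sigma\cap L^\perp$ is a smooth embedded self-shrinking curve in $L^\perp\cong\RR^2$. The inclusion $L\subset\Sigma$ forces $0\in\Sigma'$. Since the only smooth, complete, embedded self-shrinking curves in $\RR^2$ are the lines through the origin and the circle of radius $\sqrt 2$ centered at the origin, and the latter does not pass through $0$, we conclude $\Sigma'$ must be a line through the origin. Hence $\Sigma$ is a plane through the origin containing $L$, and consequently $(p,t)$ is a regular point of the flow.

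The main step warranting care is the invocation of the splitting theorem, which is a standard rigidity fact for self-shrinkers. Briefly, along $L\subset\Sigma$ the unit normal $\nu$ is automatically orthogonal to $L$, so the shrinker equation $H=\langle x,\nu\rangle/2$ forces $H\equiv 0$ along $L$; analyticity of self-shrinkers together with a unique-continuation/translation-invariance argument along $L$ then promotes this local vanishing to global translational symmetry in the $L$-direction, which yields the splitting. Once one accepts this classical input, the remaining steps (uniform smooth convergence of the rescaled circle to its tangent line, and the classification of embedded shrinking curves in the plane) are routine.
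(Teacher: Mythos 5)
Your proposal has a fatal gap: the ``splitting theorem'' you invoke is false.  You claim that any smooth, complete, embedded self-shrinker in $\RR^3$ containing a line through the origin must split as a cylinder $L\times\Sigma'$.  But the shrinkers constructed in this very paper (and by Kapouleas--Kleene--M\o ller, Nguyen, Buzano--Nguyen--Schulz) are direct counterexamples: the genus-$g$ shrinker $\Sigma_g$ of Theorem~\ref{desing-theorem} satisfies $\Sigma_g\cap\{z=0\}=Q_g$, so it contains $g+1$ distinct lines through the origin, yet it has positive genus and is certainly not a cylinder over a curve.  The sketch you give for the splitting also does not hold up: from $L\subset\Sigma$ one does get $H\equiv 0$ along $L$ (since the position vector is tangent there), but vanishing of $H$ along a one-dimensional curve is not enough to apply unique continuation, and it certainly does not imply translation invariance in the $L$-direction.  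So ``shrinker contains a line through $0$'' is far too weak a hypothesis to force $\Sigma$ to be a plane.

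The paper's proof uses a genuinely stronger piece of information from Lemma~\ref{intersection-lemma} that your argument discards.  That lemma shows $M(t)\subset\Cone'(\Lambda)$ for all $t$, i.e.\ the entire flow stays inside a fixed double cone of aperture determined by $\Lambda$ around $\{z=0\}\cup C$.  Rescaling at $p\in C$ and rotating, any tangent shrinker $\Sigma$ is therefore contained in a wedge of the form $\{|z|\le|x|\}$ --- not merely ``containing the line $L$'' but confined to a thin solid region around the plane spanned by $L$ and the radial direction.  This wedge containment is exactly the hypothesis of \cite{white-boundary}*{Theorem~15.1}, which then forces $\Sigma$ to be a union of halfplanes bounded by the $y$-axis; since $\Sigma$ is smooth and multiplicity one, it is a single plane, and $(p,t)$ is regular.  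If you want to salvage your line of attack, you would need to carry the quantitative cone bound along under rescaling, not just the inclusion $C\subset M(\tau)$.
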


\begin{proof}
Let $\Sigma$ be a shrinker to the flow
at a spacetime point $(p,t)$, where 
 $p\in C$.
By~\eqref{Lambda} and Lemma~\ref{intersection-lemma}, $\Sigma$ (after a rotation)
is contained in the set
\[
 \{(x,y,z): |z|\le \Lambda |x| \}.
\] 
By \cite{white-boundary}*{Theorem~15.1}, 
$\Sigma$ is a union of halfplanes bounded by
the $y$-axis.  
But $\Sigma$ is smooth with multiplicity $1$,
so it is a single multiplicty~$1$ plane,
and therefore $(p,T)$ is a regular point.
\end{proof}

\begin{lemma}\label{genus-lemma}
Suppose $M\in \MM_g$.  If $t<\Tpos(M)$ is a regular time,
then
\begin{gather*}
\genus(M(t))\le 4g, \\
\genus(M(t)\setminus\tQ) = 0.
\end{gather*}
for all regular times $t<\Tpos(M)$.
\end{lemma}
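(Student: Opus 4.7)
The first inequality $\genus(M(t))\le 4g$ is immediate from \eqref{genus-decreases} together with the bound $\genus(M)\le 4g$ built into Definition \ref{MM-definition}. For the second, by Lemma \ref{intersection-lemma} we have $M(\tau)\cap\{z=0\}=\tQ$ for every $\tau\in[0,\Tfat]$, so $M(\tau)\setminus\tQ$ splits as the disjoint union of $M(\tau)\cap\{z>0\}$ and $M(\tau)\cap\{z<0\}$. Since the rotation $\rho_X$ by $\pi$ about the $x$-axis lies in $\tG_g$ (it is the element from item (3) of the definition of $G_g$ corresponding to the $x$-axis) and interchanges the two halves,
\[
\genus(M(\tau)\setminus\tQ)=2\,\genus(M(\tau)\cap\{z>0\}),
\]
so it suffices to show $\genus(M(t)\cap\{z>0\})=0$. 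At $\tau=0$, this genus vanishes by the defining property $\genus(M\setminus\{z=0\})=0$ of $\MM_g$.

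The core claim is that $\tau\mapsto\genus(M(\tau)\cap\{z>0\})$ is non-increasing on regular times in $[0,t]$. Between consecutive regular times the topology is constant by smooth deformation under MCF, so only singular times matter. Any singular spacetime point $(p,\tau^*)$ with $\tau^*\le t$ satisfies $\tau^*\le t<\Tpos(M)$ and therefore has genus~$0$; by \cite{brendle} the associated shrinker is a sphere or cylinder. For $p\in\{z>0\}$ the local topology change is either the disappearance of a sphere component or the pinching of a cylindrical neck, neither of which creates a handle in $\{z>0\}$. For $p\in\{z=0\}$, Lemma \ref{C-regular-lemma} forces $p\in\tQ\setminus C$, the $\tG_g$-invariance forces the shrinker to be $\rho_X$-symmetric, and the sphere/cylinder nature again ensures that any resulting local handle must cross $\tQ$ and hence cannot contribute to $\genus(M(\tau)\setminus\tQ)$. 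Iterating the inequality $\genus(M(\tau_2)\cap\{z>0\})\le\genus(M(\tau_1)\cap\{z>0\})$ across the countably many singular times in $[0,t]$ yields $\genus(M(t)\cap\{z>0\})\le 0$, which completes the proof.

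The main obstacle is making the ``local'' genus monotonicity used above rigorous: the cited result \eqref{genus-decreases} is a global statement for closed evolving surfaces, whereas we need the corresponding statement for the genus of $M(\tau)$ restricted to the open set $\{z>0\}$. One clean route is to use the $\rho_X$-invariance of the flow to view $M(\tau)\cap\{z\ge 0\}$ as a free-boundary MCF meeting the totally geodesic sphere $\{z=0\}$ orthogonally along the fixed curve $\tQ\cap\{z\ge 0\}$, and to invoke the analogous genus monotonicity for such free-boundary flows. Alternatively, one can argue pointwise: at each interior singularity apply Theorem \ref{trivial-theorem} on a small ball $B(p,\varepsilon)\subset\{z>0\}$ so that the local genus drop is controlled by the (vanishing) genus of the singularity, while at each boundary singularity use the explicit $\rho_X$-symmetric sphere/cylinder classification to check that the associated topological change is compatible with $M(\tau)\setminus\tQ$ remaining of genus~$0$.
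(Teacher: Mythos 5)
The paper's proof is a single citation: it invokes a relative genus-monotonicity theorem from~\cite{white-genus} stating that if $K$ is a closed set with $K\subset M(\tau)$ for all $\tau\in[0,\Tpos]$, then $\genus(M(t)\setminus K)\le\genus(M\setminus K)$ for every regular time $t<\Tpos$. Taking $K=\tQ$ (valid because $\tQ\subset M(\tau)$ for all $\tau\le\Tfat$ by Lemma~\ref{intersection-lemma}) gives $\genus(M(t)\setminus\tQ)\le\genus(M\setminus\tQ)=0$ at once, and taking $K=\emptyset$ (equivalently, \eqref{genus-decreases}) gives $\genus(M(t))\le\genus(M)\le 4g$. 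Your reduction via the $\rho_X$-symmetry to $\genus(M(\tau)\cap\{z>0\})$ is correct but unnecessary once that theorem is in hand.

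The gap in your argument is the one you flag yourself, and it is a genuine gap, not a detail. You cannot make genus monotone by ``iterating across the countably many singular times'': the set of non-regular times is closed and of measure zero, but it need not be countable, let alone a discrete sequence you can telescope over. More importantly, the statement that a spherical collapse or a cylindrical pinch ``does not create a handle in $\{z>0\}$'' is precisely the kind of local-to-global topological assertion that needs a proof; knowing the shrinker is a sphere or a cylinder does not by itself control how $\genus(M(\tau)\cap\{z>0\})$ can change as $\tau$ crosses the singular time. Your suggestion to apply Theorem~\ref{trivial-theorem} on a small ball likewise does not close the gap: that theorem bounds the genus of the blow-up $\Sigma$ in terms of the genus of $M(\tau)\cap B(p,\eps)$ for $\tau<t$, i.e.\ it controls the singularity by the pre-singular local genus, whereas what you need is control of the post-singular genus of $M(\tau)\cap\{z>0\}$ in terms of the pre-singular one. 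The free-boundary route you mention is closer in spirit to what is actually needed, but it is itself a nontrivial theorem; that content is exactly what the paper delegates to~\cite{white-genus}.
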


\begin{proof}
This is a special case of the following
fact (see~\cite{white-genus}*{Corollary~22}):
if $M$ is a smoothly embedded, compact surface,
in a complete $3$-manifold $N$ of bounded Ricci
curvature, if $K\subset N$ is the union of finitely many smooth, simple closed curves, no two of which meet tangentially,
and if 
 $K\subset \Reg M(t)$
for all $t\in [0,\Tpos)$, then
for every regular time $t<\Tpos$,
\[  \genus(M(t)\setminus K) 
\le \genus(M\setminus K).
\]
\end{proof}

\begin{theorem}\label{preservation-theorem}
Suppose $M\in \MM_g$.
Then $M(t)\in \MM_g$ for every regular time $t<\Tpos(M)$.
\end{theorem}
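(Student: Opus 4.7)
The plan is to verify, for each regular time $t<\Tpos(M)$, that $M(t)$ satisfies all six defining conditions of $\MM_g$. Five of the conditions follow quickly from the lemmas already assembled in this section; the remaining condition (transverse intersection with $\Cyl$ along $C$) is the main obstacle and requires a strong maximum principle argument.

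For the routine conditions I would argue as follows. Condition (1) is immediate: smoothness, embeddedness, and compactness are part of the definition of a regular time, and $\tG_g$-invariance is preserved because each isometry in $\tG_g$ commutes with level set flow. Condition (3) is exactly Lemma~\ref{intersection-lemma}. Condition (4) follows from Lemma~\ref{genus-lemma}: since $\tQ\subset\{z=0\}$, one has $M(t)\setminus\{z=0\}\subset M(t)\setminus\tQ$, and genus is monotone under inclusion of open subsurfaces, so $\genus(M(t)\setminus\{z=0\})\le\genus(M(t)\setminus\tQ)=0$. Condition (5) is the other assertion of Lemma~\ref{genus-lemma}. Condition (6) follows from monotonicity of mass in Brakke flow: $\garea(M(t))\le\garea(M(0))<8\pi$.

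The central step is condition (2). One inclusion, $C\subset M(t)\cap\Cyl$, is automatic from (3), since $C\subset\tQ$. For the reverse inclusion and transversality, I plan to exploit the fact that $\Cyl$ is totally geodesic in $(N,\gamma)$, hence a stationary solution to mean curvature flow. Consider the set $\mathcal{E}$ of regular times $s\in[0,\Tpos(M))$ for which $M(s)\cap\Cyl=C$ transversely; it contains $0$, and smooth dependence together with structural stability of transverse intersection show that $\mathcal{E}$ is relatively open in the regular times. For closedness, given regular times $t_n\nearrow t^*$ with $t_n\in\mathcal{E}$ and $t^*$ regular, smooth convergence $M(t_n)\to M(t^*)$ implies that $M(t^*)\cap\Cyl$ is close to $C$ and transverse along $C$; any extra component would force $M(t^*)$ to be tangent to $\Cyl$ at some point $p$ at which the slices $M(t_n)$ had been lying on one side of $\Cyl$ in a neighborhood of $p$. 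The parabolic strong maximum principle, applied to the smooth flow $M(\cdot)$ and the stationary minimal surface $\Cyl$, then forces $M(t)$ to coincide with $\Cyl$ in a spacetime neighborhood of $(p,t^*)$, contradicting $M(t_n)\cap\Cyl=C\subsetneq\Cyl$. Hence $\mathcal{E}$ equals the full set of regular times.

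The principal difficulty is the maximum-principle step excluding the formation of new intersections of $M(t)$ with $\Cyl$; the applicability of the strong maximum principle along $C$ itself is guaranteed by Lemma~\ref{C-regular-lemma}, which ensures there are no singular points on $C$. Everything else is bookkeeping against the lemmas already in hand.
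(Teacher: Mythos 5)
Your handling of conditions (1), (3), (4), (5), and (6) agrees with the paper and is fine. The deviation — and the gap — is in your treatment of condition (2).

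The paper proves condition (2) using the second assertion of Lemma~\ref{intersection-lemma}, namely the cone containment $M(t)\subset\Cone'(\Lambda)$ for all $t\in[0,\Tfat]$, where $\Cone'(\Lambda)=\{p:|z(p)|\le\Lambda\,\dist(\pi(p),C)\}$. For $p\in\Cyl$ one has $\pi(p)\in C$, so the defining inequality forces $z(p)=0$; thus $\Cone'(\Lambda)\cap\Cyl=C$ and hence $M(t)\cap\Cyl\subset C$. The reverse inclusion is clear from $C\subset\tQ\subset M(t)$ (the first assertion of Lemma~\ref{intersection-lemma}). Transversality along $C$ also comes from the cone containment: by Lemma~\ref{C-regular-lemma} the flow is smooth at points of $C$, and if $M(t)$ were tangent to $\Cyl$ at $p\in C$, the tangent plane $T_pM(t)=T_p\Cyl$ would be vertical, so nearby points $q\in M(t)$ would satisfy $|z(q)|$ of linear order in $|q-p|$ while $\dist(\pi(q),C)$ is of quadratic order, violating $M(t)\subset\Cone'(\Lambda)$. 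This is why the paper can call the theorem an ``immediate consequence'': the containment is already established uniformly in $t$.

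Your open-closed argument has a real gap: the set of regular times in $[0,\Tpos)$ is open but need not be an interval, and a priori there can be singular times between $0$ and the regular time $t$ you care about. Your continuity-plus-maximum-principle argument only propagates condition (2) through the connected component of regular times that contains $0$; it says nothing about a regular time lying beyond the first singular time. Separately, the step in which you pass from ``extra component of $M(t^*)\cap\Cyl$'' to a tangency point at which $M(t_n)$ lay on one side of $\Cyl$, and then invoke the parabolic strong maximum principle, is plausible but is stated too loosely: you need to rule out other degenerate limiting intersection patterns, and you need to justify that the hypotheses of the strong maximum principle are met near the putative tangency. The paper's cone-containment route avoids both difficulties entirely, and I would recommend using it rather than repairing the open-closed argument.
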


\begin{proof}
This is an immediate consequence
of Lemmas~\ref{intersection-lemma}--\ref{genus-lemma}, and the fact that $\area(M(t))$ is  
a decreasing function of $t$. 
\end{proof}

\begin{theorem}
Suppose $M\in \MM_g$.
\label{graph-theorem}
If
 $\Tpos=\infty$, then
$M(t)$ converges smoothly to $\{z=0\}$ with
multiplicity~$1$.  In particular, there
is a time $T<\infty$ such that 
$M(t)$ is a smooth graph (i.e., such that
$M(t)$ projects diffeomorphically onto $\{z=0\}$)
for all $t\ge T$.
\end{theorem}

\begin{proof}
Since the area of $M(t)$ is a positive, decreasing function of $t$, it has a limit $A$ as $t\to\infty$.
Let $t_n\to\infty$.
After passing to a subsequence,
the flows
\[
 t\in [-t_n,\infty)\mapsto
   M_n(t):= M(t+t_n)
\]
converge to an integral
Brakke flow $t\in \RR\mapsto M'(t)$.
Note that the area of $M'(t)$ is $A$ for every~$t$.
Since the area is independent of $t$,
the flow is static: 
there is a stationary integral varifold
$V$ such that $M'(t)$ is the Radon measure   associated to $V$ for $t$.

Now $V$ is contained in $\Cone'(\Lambda)$
by~\eqref{Lambda}
and Lemma~\ref{intersection-lemma},  so the support of $V$ is compact. In particular,  $z(\cdot)$ attains a maximum
$z_\textnormal{max}$ on the support of $V$.  It follows from the maximum principle 
  (\cite{sw}, for example)
that $\spt(V)$ contains all
 of $\{z=z_\textnormal{max}\}$.
Since $V$ is contained in $\Cone'(\Lambda)$,
it follows that $z_\textnormal{max}=0$.
Likewise, the minimum of $z(\cdot)$
 on $\spt(V)$ is $0$.
  Thus $V$ is $\{z=0\}$
with some integer multiplicity $k$.
Since $M_n(t)$ contains $\tQ$ for all $t$,
we see that $\tQ$ is in the support of $V$,
and thus that 
\[
  k\ge 1.
\]
Also,
\[
k\area(\{z=0\}) 
 =     \area(V) 
 \le    \area(M)
 <     2\area(\{z=0\}).
\]
Thus $k=1$.

We have shown that
the flows $M_n(\cdot)$ converge to the constant
flow given by $\{z=0\}$ with multiplicity~$1$.
By the Local Regularity Theorem
 (Theorem~\ref{local-theorem}), the convergence is smooth
with multiplicity $1$.
\end{proof}

\begin{theorem}\label{tQ-regularity}
 Suppose $M\in \MM_g$.
Let $p$ be a point in $\tQ \setminus \{O, O_\infty\}$.
Then for all $T\in (0,\Tpos(M)]$,
$(p,T)$ is a regular point of the flow $M(\cdot)$.
\end{theorem}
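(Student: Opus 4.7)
The strategy is to show that every shrinker $\Sigma$ at $(p,T)$ must be a multiplicity-$1$ plane, whence $(p,T)$ is regular by unit-regularity. Points on the circle $C \subset \tQ \setminus \{O,O_\infty\}$ are already handled by Lemma~\ref{C-regular-lemma}, so I focus on $p \in Q \setminus \{O,O_\infty\}$ with $p \notin C$; by $\tG_g$-invariance I may assume $p$ lies on the horizontal line $L \subset Q$ along the $x$-axis.

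By Lemma~\ref{intersection-lemma}, $L \subset \tQ \subset M(t)$ for every $t \in [0, \Tfat]$. Parabolic rescaling around $p$ preserves $L$ (since $L$ passes through $p$), so the blowup $\Sigma$ contains the line $L' := L - p$ through the origin; the other components of $\tQ$ lie at positive Euclidean distance from $p$ and escape to infinity under the rescaling, giving
\[
  \Sigma \cap \{z=0\} = L'.
\]

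Next I would argue that $\Sigma$ has genus $0$. By Lemma~\ref{genus-lemma}, for every regular $t < \Tpos(M)$ we have $\genus(M(t) \setminus \tQ) = 0$, and choosing $\eps > 0$ small enough that $B(p,\eps) \cap \tQ = L \cap B(p,\eps)$ (an arc of $L$), the local surface $M(t) \cap B(p,\eps)$ then has genus $0$ as well, since removing a $1$-dimensional arc does not alter genus. Theorem~\ref{trivial-theorem} then yields $\genus(\Sigma) = 0$ whenever $T \le \Tpos(M)$. For $T > \Tpos(M)$, I would let $T^{\star} \in (0, \Tfat]$ be the infimum of singular times at $p$; by openness of the regular set $(p, T^\star)$ is itself singular while $(p,t)$ is regular (hence $M(\cdot)$ is smooth near $p$) for every $t < T^\star$. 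Choosing $\eps$ so that $\partial B(p,\eps)$ is transverse to $M(t)$ for $t$ in an open dense subset, the topology of $M(t) \cap B(p,\eps)$ is locally constant through regular times, so the genus remains $0$ as $t \uparrow T^\star$ and Theorem~\ref{trivial-theorem} again gives $\genus(\Sigma) = 0$.

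A smooth embedded genus-$0$ shrinker in $\RR^3$ is a plane, a sphere, or a cylinder by Brendle's classification~\cite{brendle}. A round sphere or cylinder contains no line through its center or axis, so the inclusion $L' \subset \Sigma$ leaves only the plane. Hence $\Sigma$ is a multiplicity-$1$ plane and $(p, T)$ is regular. The main obstacle is the propagation of the local genus bound past $\Tpos(M)$: Lemma~\ref{genus-lemma} is stated only for $t < \Tpos$, so the $T > \Tpos$ case requires a continuity argument showing that the local genus of $M(t) \cap B(p,\eps)$ does not change across $\Tpos$, which in turn relies on the fact that, as long as $(p, t)$ remains regular, the flow is smooth in a spacetime neighborhood of $(p, t)$ and local topology is preserved.
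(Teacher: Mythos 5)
Your route differs substantially from the paper's and contains a genuine gap at the step where you try to show the local genus is zero. You claim that since $\genus(M(t)\setminus\tQ)=0$ and $\tQ\cap B(p,\eps)$ is an arc, the surface $M(t)\cap B(p,\eps)$ must also have genus~$0$ ``since removing a $1$-dimensional arc does not alter genus.'' That topological claim is false: removing (equivalently, cutting along) a simple arc with endpoints on the boundary of a compact surface-with-boundary raises $\chi$ by $1$, and depending on how the boundary circles reconnect this can lower the genus by~$1$. Concretely, $M(t)\cap B(p,\eps)$ is precisely two genus-$0$ pieces $M(t)\cap B(p,\eps)\cap\{z>0\}$ and $M(t)\cap B(p,\eps)\cap\{z<0\}$ glued along the arc $A=L\cap B(p,\eps)$, and gluing two planar surfaces along an arc can produce a surface of positive genus (the Euler characteristic computation $\chi(\Sigma^+\cup_A\Sigma^-)=\chi(\Sigma^+)+\chi(\Sigma^-)-1$ leaves room for handles once each side has several boundary circles). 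So you cannot conclude $\genus(M(t)\cap B(p,\eps))=0$, and hence you cannot invoke Theorem~\ref{trivial-theorem} to get a genus-$0$ shrinker. The continuation for $T>\Tpos$ is also not rigorous: ``the topology of $M(t)\cap B(p,\eps)$ is locally constant through regular times'' needs to be established (it is not automatic, since material can move across $\partial B(p,\eps)$), and in any case it leans on the already-flawed base claim.

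The paper avoids all of this with a symmetry and counting argument. After reducing, via the $\tG_g$-symmetry and Lemma~\ref{C-regular-lemma}, to $p=(r,0,0)$ with $0<r<2$, one observes that the orbit of $p$ under $\tG_g$ is the set $S$ of $4(g+1)$ points of $(\SS(r)\cap\tQ)\cup\rho_C(\SS(r)\cap\tQ)$, and that by symmetry every point of $S$ has the same singularity genus $\tilde g$. Corollary~\ref{genera-corollary} (the sum of the genuses of all singularities at a fixed time is at most $\genus(M)$) then gives $4(g+1)\tilde g\le\genus(M)\le 4g$, forcing $\tilde g=0$. Since the shrinker at $(p,T)$ contains the origin (because $p\in M(t)$ for all $t$), it cannot be the round sphere or cylinder, so by Brendle's classification it is a plane and $(p,T)$ is regular. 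Note this argument applies directly for all $T\le\Tfat$ because $\genus(M)$ refers to the initial surface, so there is no need for a separate $T>\Tpos$ case. The key idea you are missing is to exploit the full orbit of $p$ rather than trying to control the local genus near a single point.
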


\begin{proof}
By the $\rho_C$ symmetry, we can assume
that $0<|p|\le 2$.
By Lemma~\ref{C-regular-lemma}, the points of $C$ are regular points, so we
can assume that $0<|p|<2$.  
By symmetry, we can assume that $p=(r,0,0)$.
Thus $0<r<2$.
Let $S$ be the points
of $\SS(r)\cap \tQ$, together with their images
under $\rho_C$.  Then $S$ is a set of $4(g+1)$ points,
all of the same genus~$\tilde g$.
Thus
\[
  4(g+1) \tilde g \le \genus(M) \le 4g,
\]
so $\tilde g=0$.  Thus if $(p,T)$ were a singular point,
the shrinker would be sphere or a cylinder, which
is impossible since $p\in M(t)$ for all $t\in [0,T]$.
Thus $(p,T)$ is a regular point.
\end{proof}

\begin{lemma}\label{origin-lemma}
The origin is a regular point of the flow 
 $M(\cdot)$
at all times $<\Tpos$.
If $\Tpos<\infty$, then the origin
is singular at time $\Tpos$,
and if $\Sigma$ is a shrinker at $(O,\Tpos)$, then
$\Sigma$ has genus~$>0$.
\end{lemma}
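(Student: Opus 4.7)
The plan is to split the proof into three assertions---regularity before $\Tpos$, singularity at $\Tpos$, and positive genus of any shrinker at $(O,\Tpos)$---the key observation being that any shrinker at the origin must contain the union of lines $Q$.

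First I would observe that, for every $t\in(0,\Tfat]$, any shrinker $\Sigma$ at $(O,t)$ contains $Q$. Indeed, Lemma~\ref{intersection-lemma} gives $Q\subset\tQ=M(\tau)\cap\{z=0\}\subset M(\tau)$ for all $\tau\in[0,\Tfat]$, and $Q$ is invariant under parabolic rescaling about $O$; since the rescaled flows converge to $\Sigma$ smoothly with multiplicity $1$ on compact sets, one concludes $Q\subset\Sigma$. Now suppose for contradiction that $t<\Tpos$ and $(O,t)$ is singular. By the definition of $\Tpos$, the genus of $(O,t)$ is $0$, so $\Sigma$ is either a round sphere or a round cylinder by Brendle's theorem. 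But $Q$ contains $g+1\ge 2$ non-parallel lines through $O$; a round sphere is compact and contains no line, while a round cylinder contains only lines parallel to its single axis. This contradiction establishes the first assertion.

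For the second assertion, assume $\Tpos<\infty$. By Definition~\ref{Tpos} together with the closedness of the positive-genus singular set in spacetime, there is a spacetime singular point $(p,\Tpos)$ of positive genus. I claim $p\in\{O,O_\infty\}$. By Theorem~\ref{tQ-regularity}, every point of $\tQ\setminus\{O,O_\infty\}$ is regular at time $\Tpos$ (note $C\subset\tQ$, so Lemma~\ref{C-regular-lemma} is already subsumed). If on the other hand $p\in N\setminus\tQ$, pick $\eps>0$ so small that $B(p,\eps)\cap\tQ=\varnothing$. Lemma~\ref{genus-lemma} gives $\genus(M(\tau)\setminus\tQ)=0$ for every regular $\tau<\Tpos$, and hence the open subset $M(\tau)\cap B(p,\eps)\subset M(\tau)\setminus\tQ$ also has genus~$0$. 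Theorem~\ref{trivial-theorem} then forces the genus of $(p,\Tpos)$ to be $0$, a contradiction. Thus $p\in\{O,O_\infty\}$, and the $\rho_C$-symmetry of the flow (inherited from the $\tG$-invariance of $M$) interchanges these two spacetime points, giving them the same genus. In particular, $(O,\Tpos)$ is singular and has positive genus.

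The third assertion is then immediate from the definition preceding the lemma: the genus of a spacetime singular point equals the genus of every shrinker there, so any shrinker at $(O,\Tpos)$ has positive genus. The main obstacle in this plan is the case analysis in the second step, specifically excluding positive-genus singularities at points $p\notin\tQ$; this reduction crucially relies on Lemma~\ref{genus-lemma} (the genus of $M(\tau)$ is concentrated along $\tQ$) together with the semicontinuity statement of Theorem~\ref{trivial-theorem}. The remaining pieces are bookkeeping combining ingredients already in hand.
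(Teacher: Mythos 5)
Your proof is correct and follows essentially the same route as the paper's: for $t<\Tpos$, combine $Q\subset\Sigma$ with Brendle's classification of genus-$0$ shrinkers to conclude $\Sigma$ is a plane; then locate the positive-genus singularity at $\Tpos$ inside $\{O,O_\infty\}$ using Lemma~\ref{genus-lemma}, Theorem~\ref{trivial-theorem}, and Theorem~\ref{tQ-regularity}, and finish by $\rho_C$-symmetry. You are slightly more explicit than the paper (e.g.\ in spelling out the localization step via Theorem~\ref{trivial-theorem} and in recording that $Q\subset\Sigma$ from scaling invariance), but the underlying argument is the same.
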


\begin{proof}
Let $0< T < \Tpos$
and let $\Sigma$ be a shrinker
at the spacetime point $(0,T)$.
Then $\Sigma$ is a plane, a sphere, or 
a cylinder.
Since $\Sigma$ contains $Q$, it must be
a plane, and thus $(O,T)$ is a regular point.

Now suppose $T=\Tpos<\infty$.
Then there is a spacetime point $(p,T)$
of positive genus.
Now $p$ must lie in $\{z=0\}$,
since
\[
  \genus(M(t)\setminus \{z= 0\}) = 0
\]
for all regular times $t\in [0,\Tpos]$.
Thus $p\in \tQ$.  
By Theorem~\ref{tQ-regularity}, $p$ is either the origin or the
point $O_\infty$.
By symmetry, we can assume it is the origin.
\end{proof}

\begin{theorem}\label{shrinker-good-theorem}
 Suppose that $M\in \MM_g$ and that $\Tpos<\infty$.
Let $\Sigma$ be a shrinker at
the origin  at time $\Tpos$.
Then $\Sigma\cap\{z=0\}=Q$.
Furthermore,
\[
\genus(\Sigma)\le \genus(M)/2,
\]
and  $\Sigma$ has genus~$g$ or genus~$2g$.
\end{theorem}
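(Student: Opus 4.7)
The plan is to establish four separate facts in order: (i) $\Sigma$ is $G_g$-invariant; (ii) $Q\subset\Sigma$; (iii) $\Sigma\cap\{z=0\}\subset Q$; (iv) $\genus(\Sigma)\le\genus(M)/2$; and then to combine these via Corollary~\ref{topology-corollary} together with Lemma~\ref{origin-lemma} to get the dichotomy. Fix a sequence $t_i\uparrow\Tpos$ for which $\Sigma_i:=(\Tpos-t_i)^{-1/2}M(t_i)$ converges smoothly on compacta of $\RR^3$ to $\Sigma$. By Theorem~\ref{preservation-theorem}, each $M(t_i)\in\MM_g$ is $G_g$-invariant; since every element of $G_g$ fixes the origin, each $\Sigma_i$ is $G_g$-invariant, and so is the smooth limit $\Sigma$. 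Because $Q$ is a union of lines through the origin, it is fixed setwise by every positive dilation, and $Q\subset M(t_i)\cap\{z=0\}=\tQ$ by Lemma~\ref{intersection-lemma}. Hence $Q\subset\Sigma_i$ for every $i$, and $Q\subset\Sigma$ by smooth convergence.

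For (iii), suppose toward a contradiction that $p\in(\Sigma\cap\{z=0\})\setminus Q$, and pick a bounded open neighborhood $V$ of $p$ with $\overline V\cap Q=\varnothing$. Since $(\Tpos-t_i)^{-1/2}\to\infty$, the dilates of $C$ and $O_\infty$ leave $V$ for all large $i$, so
\[
\Sigma_i\cap\{z=0\}\cap V=(\Tpos-t_i)^{-1/2}\tQ\cap V=Q\cap V=\varnothing.
\]
Consequently each connected component of $\Sigma_i\cap V$ lies entirely in $\{z>0\}$ or in $\{z<0\}$. Choosing $p_i\in\Sigma_i$ with $p_i\to p$ and passing to a subsequence, we may assume $p_i\in\{z>0\}$; by smoothness of the convergence, $\Sigma\cap V\subset\{z\ge 0\}$ in a neighborhood of $p$. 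Hence $p$ is a local minimum of $z|_\Sigma$, forcing $T_p\Sigma=\{z=0\}$. So $\Sigma$ and the plane $\{z=0\}$ are smooth self-shrinkers tangent at $p$ with one-sided contact. The strong maximum principle for the shrinker equation (equivalently, minimality in the Gaussian metric $e^{-|x|^2/4}$), combined with real-analytic unique continuation, forces $\Sigma=\{z=0\}$ globally, contradicting $\genus(\Sigma)>0$ given by Lemma~\ref{origin-lemma}.

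For the genus bound (iv), pick any $R>0$ with $\partial B(O,R)$ transverse to $\Sigma$. Smooth convergence yields $\genus(\Sigma\cap B(O,R))=\genus(\Sigma_i\cap B(O,R))$ for all large $i$. Now
\[
\Sigma_i\cap B(O,R)=(\Tpos-t_i)^{-1/2}\bigl(M(t_i)\cap B(O,(\Tpos-t_i)^{1/2}R)\bigr),
\]
and for $i$ large the rescaled ball $B(O,(\Tpos-t_i)^{1/2}R)$ lies inside the open region $U_{\mathrm{cyl}}=\{(x^2+y^2)^{1/2}<2\}$ enclosed by $\Cyl$. Proposition~\ref{topology-proposition} applied to $M(t_i)\in\MM_g$ gives $\genus(M(t_i)\cap U_{\mathrm{cyl}})=\genus(M(t_i))/2\le\genus(M)/2$, so $\genus(\Sigma\cap B(O,R))\le\genus(M)/2$. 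Letting $R\to\infty$ yields $\genus(\Sigma)\le\genus(M)/2\le 2g$. With (i)--(iv) in hand, $\Sigma$ satisfies the hypotheses of Corollary~\ref{topology-corollary}, whence $\genus(\Sigma)\in\{0,g,2g\}$; Lemma~\ref{origin-lemma} rules out $0$, leaving $\genus(\Sigma)\in\{g,2g\}$. The principal obstacle I anticipate is step (iii): one must convert the purely local emptiness $\Sigma_i\cap\{z=0\}\cap V=\varnothing$ into genuine one-sided tangential contact of the limit $\Sigma$ with $\{z=0\}$ at $p$, then deploy the shrinker strong maximum principle plus real-analyticity to globalize the equality $\Sigma=\{z=0\}$ and contradict positive genus.
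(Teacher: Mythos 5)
Your proof is correct and reaches all the stated conclusions, but it takes a genuinely different route from the paper's at the two nontrivial steps.

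For $\Sigma\cap\{z=0\}\subset Q$, you derive one-sided tangential contact of $\Sigma$ with $\{z=0\}$ at a hypothetical point $p\notin Q$ (by showing the rescalings are disjoint from $\{z=0\}$ near $p$, hence $\Sigma$ near $p$ lies on one side of the plane), and then invoke the strong maximum principle plus analytic continuation to force $\Sigma=\{z=0\}$, contradicting $\genus(\Sigma)>0$. The paper instead uses the structure theory for the intersection of two distinct shrinker-minimal surfaces: the intersection is a network of curves with discrete tangency set, so a point of $\Sigma\cap\{z=0\}\setminus Q$ yields a nearby \emph{transverse} intersection point $q\notin Q$, which would be a limit of points of $S_i\cap\{z=0\}\setminus Q$; but $S_i\cap\{z=0\}$ near $q$ is $Q\cap V=\varnothing$, a contradiction. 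Both are valid; the paper's is shorter and avoids the maximum principle, while yours bypasses the intersection-network structure theorem.

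For the genus bound $\genus(\Sigma)\le\genus(M)/2$, you argue topologically inside a fixed Euclidean ball: for large $i$ the rescaled surface $\Sigma_i\cap B(O,R)$ sits inside the rescaled $M(t_i)\cap U_{\mathrm{cyl}}$, whose genus is $\genus(M(t_i))/2\le\genus(M)/2$ by Proposition~\ref{topology-proposition} (together with $\genus(M(t_i)\setminus\{z=0\})=0$ so the other components inside $U_{\mathrm{cyl}}$ contribute nothing), and genus is monotone under passage to compact subsurfaces. The paper instead exploits the $\rho_C$-symmetry to observe that $O_\infty$ is a singularity at time $\Tpos$ with the same genus as the one at $O$, and then quotes Corollary~\ref{genera-corollary}, giving $2\genus(\Sigma)\le\genus(M)$ in one line. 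Your argument is more self-contained (it does not require the sum-of-genera corollary nor any attention to the singularity at $O_\infty$), but it does rely on the monotonicity of genus for subsurfaces, which you should cite or record explicitly. The final dichotomy via Corollary~\ref{topology-corollary} together with Lemma~\ref{origin-lemma} matches the paper's appeal to Lemma~\ref{topology-lemma}.

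One small point to tighten in step (iii): after fixing $p_i\in\{z>0\}$, you conclude ``by smoothness of the convergence, $\Sigma\cap V\subset\{z\ge0\}$ near $p$.'' The reason this works is that smooth multiplicity-one convergence guarantees $\Sigma_i\cap B(p,\eps)$ is, for large $i$, a single connected graph over $\Sigma\cap B(p,\eps)$, so it lies entirely in the component of $\Sigma_i\cap V$ containing $p_i$ and hence in $\{z>0\}$; only then does passage to the limit give the one-sided inclusion. It is worth making that connectedness explicit since it is the crux of converting the disjointness $\Sigma_i\cap\{z=0\}\cap V=\varnothing$ into one-sidedness of $\Sigma$.
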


\begin{proof}
Let $t_i\uparrow T=\Tpos$ be such that
\[
 S_i:=  (T-t_i)^{-1/2}M(t_i)
\]
converges smoothly to $\Sigma$.
Since $\tQ\subset M(t)$ for each $t$,
 $Q\subset \Sigma$.
Since $(O,T)$ is not a regular point
 (by Lemma~\ref{origin-lemma}),
$\Sigma$ is not a plane.  Thus, as $\Sigma$ 
and $\{z=0\}$ are both minimal with
respect to the shrinker metric,
 $\Sigma\cap \{z=0\}$ is a network of curves,
 the vertices of which are the points of tangency.
 Hence if $\Sigma\cap\{z=0\}$ contained
 a point not in $Q$, it would also contain
 such point $q$ where $\Sigma$ and $\{z=0\}$
 intersect transversely.
But then $q$ would be a limit of points
where $S_i$ and $\{z=0\}$ intersect, which
is impossible since $M(t_i)\cap \{z=0\}$
is $\tQ$.

Because the singularities at $O$ and $O_\infty$
at time $T$ have the same genus, namely, $\genus(\Sigma)$,
\[
  2\genus(\Sigma)\le \genus(M).
\]
Finally, $\Sigma$ has genus~$g$ or~$2g$ by 
  Lemma~\ref{topology-lemma}.
\end{proof}

\section{Shrinkers with one end}
\label{desing-section}

In this section, we let $Q_g$
be the set of $g+1$ horizontal lines through the
origin defined in \S\ref{topology-section},
and we let $G_g$ be the group of isometries
of $\RR^3$ defined in \S\ref{topology-section}.
We let $d_k$ denote the entropy of a round $k$-sphere, which
is the same as the entropy of $\SS^k\times \RR^j$ for any~$j$.
Recall that
\[
   2 > d_1 > d_2.
\]

\begin{figure}
    \centering
    \includegraphics[width=0.75\linewidth]{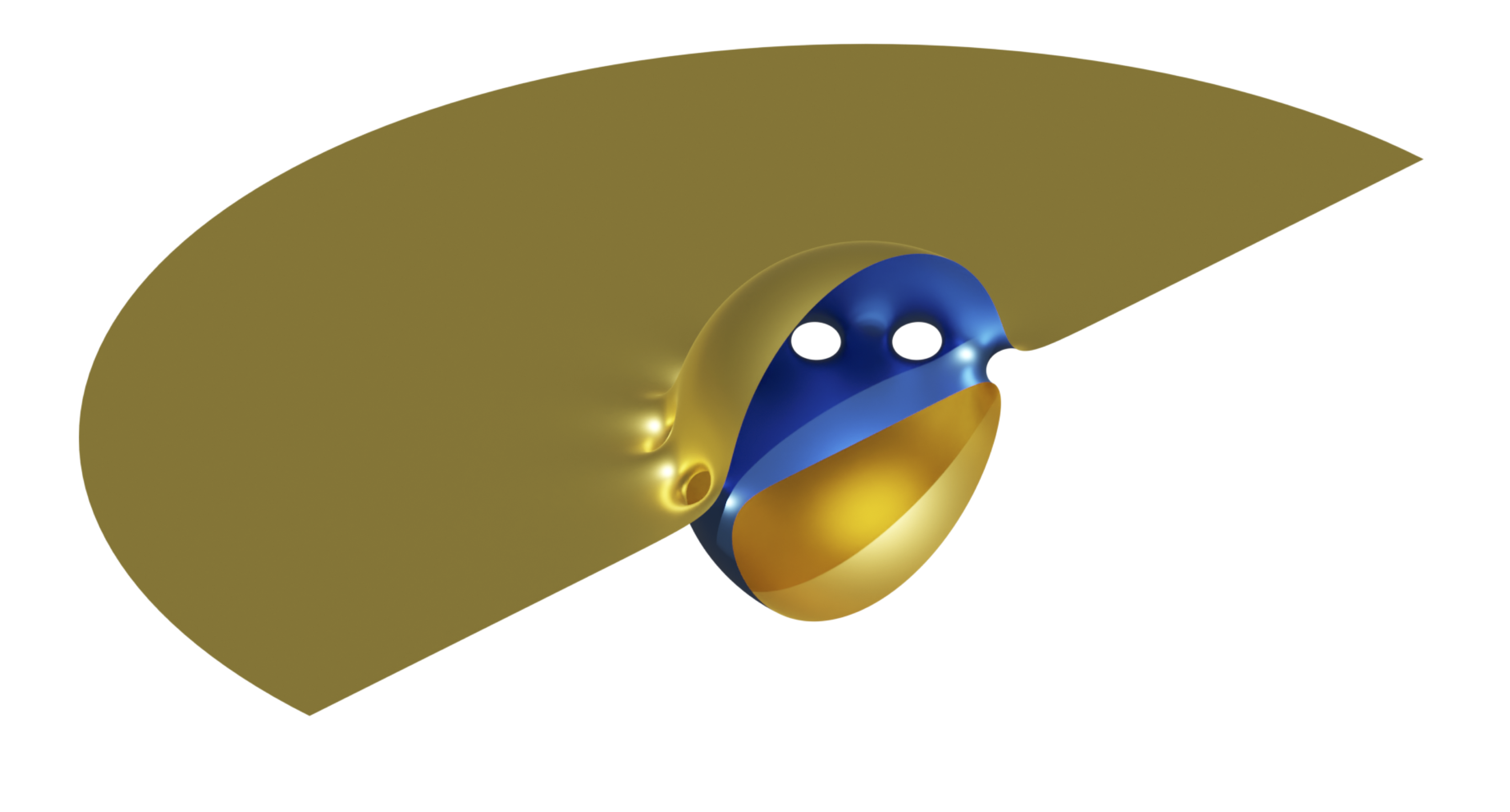}
    \caption{A numerical simulation of the shrinker $\Sigma_5$ in Theorem \ref{desing-theorem}. 
    Figure courtesy of Mario Schulz.}
    \label{fig:one-ended}
\end{figure}

\begin{theorem}\label{desing-theorem}
Suppose that $g\ge 1$ is an integer
and that $1+d_2< \delta < 1+d_1$.
There is a compact, smoothly embedded
surface in $\SS^2\times \RR$
that, under mean curvature flow,
has a singularity at which
every shrinker $\Sigma_g$ has
the following properties:
\begin{enumerate}
\item $\Sigma_g$ is $G_g$-invariant and has 
   genus~$g$.
\item $\Sigma_g\cap\{z=0\}=Q_g$. 
\item $\Sigma_g\setminus\{z= 0\}$ has genus~$0$.
\item $\Sigma_g$ has entropy $\le \delta$.
\end{enumerate}
Furthermore, for large $g$, $\Sigma_g$ has exactly one end, and, as $g\to\infty$, $\Sigma_g$ converges to the union
of the plane $\{z=0\}$ and the sphere $\partial B(0,2)$.  The convergence is smooth with multiplicity $1$ away from the intersection 
 $\{z=0\}\cap\partial B(0,2)$.
\end{theorem}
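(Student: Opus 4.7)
The plan is to produce $\Sigma_g$ as a shrinker at the origin at the first positive-genus singularity of a mean curvature flow starting from a suitably chosen $M\in\MM_g$ with $\entropy(M)<\delta$. Once $M$ is constructed, essentially all the structural work has been carried out in \S\ref{surfaces-section}; the single serious new step is to rule out the genus-$2g$ alternative in Theorem~\ref{shrinker-good-theorem}.

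For the construction of $M$, I would take a $\tG_g$-symmetric, compact, smoothly embedded surface in $N=\SS^2\times\RR$ that crosses $\{z=0\}$ transversely along exactly $\tQ_g$ and has $M\setminus\{z=0\}$ of genus~$0$, with at least one point on the positive $z$-axis. Concretely, pick a small $\tG_g$-invariant nonnegative function $f$ on $\{z=0\}$ vanishing exactly on $\tQ_g$, with a small positive bump over each spherical cell of $\{z=0\}\setminus\tQ_g$, and smooth $\{z^2=f^2\}$ symmetrically along $\tQ_g$; for $f$ small, $\garea(M)$ is well below $8\pi$ and $\entropy(M)$ can be made arbitrarily close to $1<\delta$. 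Proposition~\ref{topology-proposition} then gives $\genus(M)\ge 2g>0$. Positive genus combined with Theorem~\ref{long-time-theorem} and the genus monotonicity~\eqref{genus-decreases} through regular times forces $\Tpos(M)<\infty$: otherwise $M(t)$ would converge smoothly to the genus-$0$ surface $\{z=0\}$, yet the initial positive genus cannot be shed through regular times.

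Theorem~\ref{preservation-theorem} now keeps $M(t)\in\MM_g$ at every regular $t<\Tpos$; Lemma~\ref{origin-lemma} places the first positive-genus singularity at the origin at time $\Tpos$; and Theorem~\ref{shrinker-good-theorem} shows that every shrinker $\Sigma$ at $(O,\Tpos)$ is smooth and $G_g$-invariant, satisfies $\Sigma\cap\{z=0\}=Q_g$, has $\Sigma\setminus\{z=0\}$ of genus~$0$ (by Lemma~\ref{genus-lemma} plus smooth convergence), and has genus $g$ or $2g$; Huisken monotonicity gives $\entropy(\Sigma)\le\entropy(M)<\delta$. To rule out the genus-$2g$ case, I would invoke Corollary~\ref{r-R-corollary}(2): the $2g$ handles then split $G_g$-symmetrically into two annular neck regions near radii $r\le R$. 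A bubbling/blow-up argument at a point of maximum curvature in one of these regions extracts a nontrivial genus-bearing secondary tangent, which, by the classification of low-entropy shrinkers in $\RR^3$~\cites{brendle,colding-m,huisken}, must be a round cylinder; since locally $\Sigma$ also contains the ambient sheet through $\{z=0\}$, the Gaussian density at that point is at least $1+d_1$, contradicting $\entropy(\Sigma)<\delta<1+d_1$. Hence $\genus(\Sigma_g)=g$.

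For the large-$g$ asymptotics, the sequence $\{\Sigma_g\}$ has uniform entropy bound $\delta<1+d_1$, with symmetry group $G_g$ forcing any subsequential limit to be axially symmetric, and $\Sigma_g\cap\{z=0\}=Q_g$ becoming dense in $\{z=0\}$. Colding--Minicozzi compactness extracts a subsequential limit $\Sigma_\infty$ containing every horizontal line through the origin, and the only axially symmetric, multiplicity-$1$ shrinker with this property and entropy $\le 1+d_1$ is $\{z=0\}\cup\partial B(0,2)$; the convergence is smooth away from $\{z=0\}\cap\partial B(0,2)$. The one-ended property of $\Sigma_g$ for $g$ large follows because the $2(g+1)$ small bridges near $Q_g\cap C$ join the regions inside and outside $\partial B(0,2)$ into a single unbounded component above $\{z=0\}$ (together with its $\rho_C$-image below). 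The \emph{principal obstacle} is the entropy step: turning the topological dichotomy of Corollary~\ref{r-R-corollary} into the sharp quantitative bound $\entropy(\Sigma)\ge 1+d_1$ in the genus-$2g$ case, which requires a careful bubbling argument exploiting the $G_g$-symmetry.
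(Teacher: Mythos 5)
Your proposal captures the correct high-level structure, but it contains genuine gaps at three of the four nontrivial steps, and the paper's actual argument differs in essential ways.

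\textbf{The $\Tpos<\infty$ step is incomplete.} You argue that, since $\{z=0\}$ has genus $0$ and genus is non-increasing through regular times, the positive initial genus forces a positive-genus singularity. But \eqref{genus-decreases} only gives monotonicity; the genus may perfectly well drop at a singular time whose shrinker has genus $0$ (a neck pinch, i.e.\ a cylinder shrinker, is the standard example; such a pinch on the $z$-axis away from the origin would disconnect the $Z^+$ intersection points from the component containing $\tQ$ and decrease the genus computed in Proposition~\ref{topology-proposition}). Nothing you have said rules that scenario out. The paper closes this gap with the orientation mechanism of \S\ref{orientation-section}: one observes (Claim~\ref{normals-claim}) that $\nu_{M(t)}(2,0,0)=\ee_3$ and $\nu_{M(t)}(0,0,0)=-\ee_3$ persist for all regular $t<\Tpos$, since both points are regular with horizontal tangent plane and the orientation is continuous; that directly contradicts the smooth graph convergence furnished by Theorem~\ref{long-time-theorem} if $\Tpos=\infty$. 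Your proposal never invokes the orientation structure, and without it the contradiction simply does not follow.

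\textbf{Ruling out genus $2g$ does not need bubbling.} You flag a bubbling/secondary-tangent argument as your ``principal obstacle,'' but the paper's route is both simpler and already at your disposal: Theorem~\ref{shrinker-good-theorem} gives the bound $\genus(\Sigma)\le\genus(M)/2$ (because by $\rho_C$-symmetry the singularities at $O$ and $O_\infty$ have the same genus, and Corollary~\ref{genera-corollary} bounds their sum by $\genus(M)$). If you construct $M$ so that $\genus(M)=2g$ exactly --- the paper does this carefully via the pinwheel/hemisphere gluing, computing $\chi(S)=1-2g$ --- then $\genus(\Sigma)\le g$, which together with the dichotomy ``$g$ or $2g$'' forces $g$. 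Your construction only claims $\genus(M)\ge 2g$, which leaves open $\genus(M)=4g$ (still in $\MM_g$) and hence does not suffice even to start this argument. The bubbling/density-excess argument you sketch is not made precise and, in the paper's scheme, is avoidable entirely.

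\textbf{The entropy bound is not obtained from $\entropy(M)$.} The claim that the initial $M$ can be taken with ``$\entropy(M)$ arbitrarily close to $1$'' is both unjustified and, as stated, not the right quantity: $M$ lives in $\SS^2\times\RR$, not $\RR^3$, and its genus-$2g$ topology forces density concentrations near the necks that are not close to $1$. The paper's Theorem~\ref{desing-entropy-theorem} instead bounds the shrinker-metric area of the parabolically rescaled initial slice $T_i^{-1/2}M_i$, choosing a sequence $M_i$ degenerating to $\{z=0\}\cup\SS(r_i)\cup\rho_C\SS(r_i)$ with $r_i\to 0$, and showing (after passing to a subsequence with $T_i^{-1/2}r_i\to r$) that $T_i^{-1/2}M_i\to P+\SS(r)$ as Radon measures with $\area_s(P+\SS(r))\le 1+d_2<\delta$. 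Huisken monotonicity then controls $\entropy(\Sigma_i)=\area_s(\Sigma_i)$ by this quantity, not by $\entropy(M)$. Your proposal skips the rescaling and the diagonal extraction that make the bound work, and the compensating ``$\entropy(M)\approx1$'' claim is false.

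\textbf{Minor: the limit analysis is incomplete.} In your large-$g$ discussion you assume the subsequential limit has multiplicity $1$. Theorem~\ref{limits-theorem} explicitly allows the multiplicity-$3$ plane as a limit; in the genus-$g$ case that alternative is excluded precisely because it would force $\genus(\Sigma_g)=2g$, not $g$. That step needs to be spelled out rather than assumed via ``multiplicity-$1$.'' The cylinder alternative is excluded by the entropy bound, as you say.

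In short: the outline is in the right spirit, but the normal-vector argument of Claim~\ref{normals-claim} is the load-bearing idea you are missing, the genus-$g$ conclusion is far easier than your bubbling sketch once $\genus(M)=2g$ is fixed exactly, and the entropy bound requires the rescaling argument of Theorem~\ref{desing-entropy-theorem}, not an estimate on $\entropy(M)$.
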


See Figure~\ref{fig:one-ended}.

Existence of shrinkers with properties
(1)--(4) was proved (by surgery),
for all sufficiently large $g$, by
Kapouleas, Kleene, and Moeller~\cite{kkm}, and, independently, by X. H. Nguyen~\cite{nguyen}.
It was subsquently proved for all $g$ 
by Buzano, Huy The Nguyen, and Schulz
using minimax~\cite{buzano}.
Those papers also proved that,
as $g\to\infty$, 
 the shrinkers 
 have the behavior 
stated in Theorem~\ref{desing-theorem}.
However, none of those papers proved that any
of the shrinkers arise
as  blowups of the mean curvature flow
of a compact, smoothly embedded initial 
surface.

\begin{figure}[htpb]
    \centering
    \includegraphics[width=0.75\linewidth]{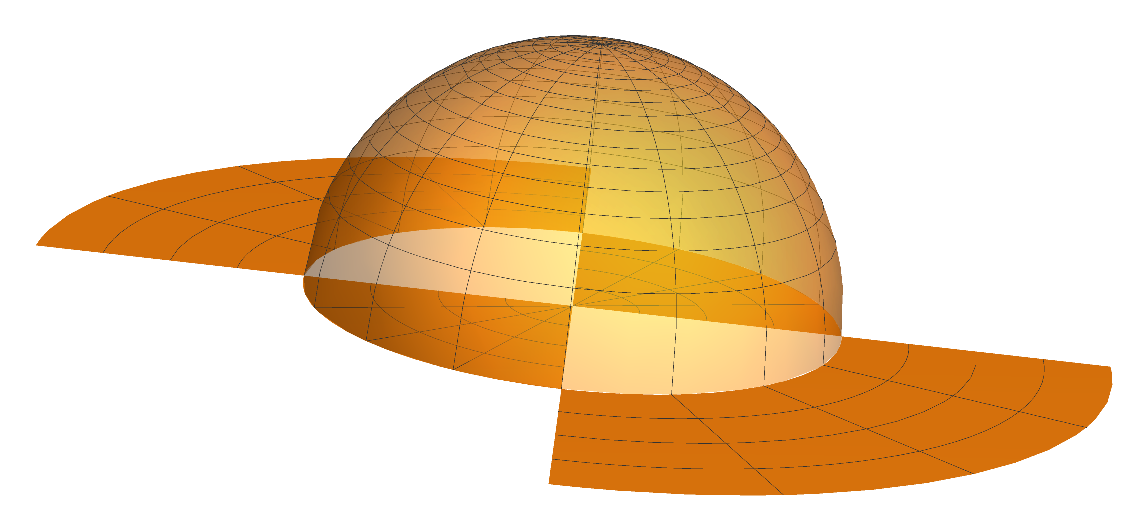}
    \caption{The piecewise smooth surface 
    $$\Delta_R\cup (\{z>0\}\cap \partial B(0,R))$$ in the case $g=1$. 
    Its boundary consists of a pair of diameters,  and a
    pair of circular arcs.}
    \label{fig:gorra}
\end{figure}

\begin{proof}[Proof of Theorem~\ref{desing-theorem}]
As in \S\ref{surfaces-section}, we work in 
\[
 N = \RR^3\cup \ZZ_\infty \cong \SS^2\times\RR,
\]
we let
\[
  \tQ = Q\cup C\cup\{O_\infty\},
\]
and we let $\tG=\tG_g$
be the group of isometries defined
in~\S\ref{surfaces-section}.  
Recall that $C$ is the circle
\[
  C = \{(x,y,0):(x^2+y^2)^{1/2}=2\}.
\]

For $0\le R < 2$, let 
\begin{align*}
\Delta_R
&=
\{ (r\cos\theta,r \sin\theta,0):
  \text{$0\le r\le R$
  and $\sin ((g+1)\theta ) \ge 0$} \}
  \\
&\qquad  \cup
\{ (r\cos\theta,r \sin\theta,0):
  \text{$R\le r\le 2$ and
    $\sin ((g+1)\theta ) \le 0$} \}.  
\end{align*}
See Figure~\ref{fig:gorra}.
Note that $\Delta_0$ is the union of $g+1$ wedge-shaped
regions in the unit disk of the $xy$-plane.
Thus $\partial \Delta_0$ consists of $g+1$ diameters
of the unit disk together with $g+1$ arcs in the unit circle.
Note that
 the surface
\begin{equation}\label{the-surface}
   \Delta_R \cup (\{z>0\}\cap \partial B(0,R))
   \tag{*}
\end{equation}
is a piecewise smooth surface whose boundary is equal
to $\partial \Delta_0$.
(Here, $B(0,R)$ denotes the Euclidean ball of radius $R$ about the origin, not the geodesic
ball with respect to the $\SS^2\times\RR$ metric.)
Perturb the surface~\eqref{the-surface} to get a surface $S^+$
such that $\partial S^+ =\partial \Delta_0$ and such that
 $
S^+\setminus \partial S^+
$
is a smooth, simply connected surface in the upper
halfspace $\{z>0\}$.

Now reflect $S^+$ about the $x$-axis to get $S^-$.
Then $S:= S^+\cup S^-$ will be a $2$-manifold whose boundary is
the circle 
 $C$. 

Now let 
\[
  M = S\cup \rho_C S.
\]
Note that we can choose $S^+$ so that $M$
is smooth and $\tG$-invariant.

Note that $S$ can be obtained by taking the  union of the disk~$D$ of 
 Euclidean radius 2 in the plane $\{z=0\}$
and the sphere 
     $\partial B(0,R)$
and then doing surgery along the circle of intersection.
The Euler characteristic of the disjoint union 
is $1+2=3$.
The surgery does not change the number of edges (in a suitable triangulation); it merely changes the way that faces are attached to edges.  But the  surgery does reduce the number of vertices by $2(g+1)$, since $2(g+1)$ vertices on the equator
of the sphere get identified with the corresponding
points in $D$. Thus
\[
  \chi(S) = 3-2(g+1) = 1 - 2g.
\]
On the other hand, since $S$ is connected and has one
boundary component, $\chi(S)=1 - 2\genus(S)$.
Thus $\genus(S)=g$, and so (since the simple
closed curve $C$ disconnects $M$),
\begin{align*}
    \genus(M)
    &=
    \genus(S)\cup \genus(\rho_C S)
    \\
    &=
    2\genus(S)
    \\
    &=
    2g.
\end{align*}

Note that, by choosing $R$ small, we can
make the area of $M$ arbitrarily close
to the area of $\{z=0\}$ (i.e., 
 the area of $\SS^2\times\{0\}$, or $4\pi$).
 In particular, we can
make
\[
  \area(M) < 2\area(\{z=0\}).
\]

To summarize, the  surface $M$ we constructed has the following
properties:
\begin{enumerate}
\item\label{first-MM-prop} $M$ is compact, connected, smoothly  
   embedded,
       and $\tG_g$ invariant.
\item $M\cap\{z=0\}=\tQ_g$.
\item $M$ has genus~$2g$.
\item $M\setminus \{z = 0\}$ has genus~$0$.
\item $M$ intersects $\{x^2+y^2=2\}$
transversely, and the intersection is $C$.
\item\label{last-MM-prop} $\area(M)< 2\area(\{z=0\})$.
\end{enumerate}
We let $\MM_g'$ be the class of surfaces
having properties~\eqref{first-MM-prop}--\eqref{last-MM-prop}.
Thus $\MM_g'$ is the set of $M\in \MM_g$ 
such that $M$ is connected and has genus~$2g$, 
where $\MM_g$ is the class of surfaces
specified in Definition~\ref{MM-definition}.
In particular, all the theorems
in \S\ref{surfaces-section} apply to the surfaces
in $\MM_g'$.

Now let $M$ be any surface in $\MM_g'$.
We orient $M$ so that the unit normal at $(2,0,0)$ is $\ee_3$.
It follows that the unit normal at the origin is
 $-\ee_3$:
\begin{equation}\label{normals-1}
\begin{aligned}
\nu_M(2,0,0)&=\ee_3, \\
\nu_M(0,0,0)&= - \ee_3.
\end{aligned}
\end{equation}

Let $M(t)$ denote the result of letting $M$ flow for time $t$ by the level set flow.   
Let $\Tpos=\Tpos(M)$ be the first time that there is  singularity of
  positive genus;
  if there is no such time, let
 $\Tpos=\infty$.

\setcounter{claim}{0}
\begin{claim}\label{normals-claim}
For all $t\in [0, \Tpos)$,
\begin{equation}\label{normals}
\begin{aligned}
\nu_{M(t)}(2,0,0)&=\ee_3, \\
\nu_{M(t)}(0,0,0)&= - \ee_3.
\end{aligned}
\end{equation}
\end{claim}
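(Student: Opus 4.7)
My plan is a continuity-plus-symmetry argument. The key observations are that both $(2,0,0)$ and $(0,0,0)$ remain regular points of $M(\cdot)$ throughout $[0,\Tpos)$, that the strong orientation of the level set flow makes the normal depend continuously on time at those points, and that symmetry forces the tangent plane at each of the two points to be horizontal, so the unit normal always lies in the discrete set $\{+\ee_3,-\ee_3\}$.

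First I would note that $(2,0,0)\in C$, so Lemma~\ref{C-regular-lemma} guarantees it is a regular point of the flow for every $t\in[0,\Tfat]\supset[0,\Tpos)$. Likewise, Lemma~\ref{origin-lemma} gives that the origin is a regular point at all times $t<\Tpos$. Because $M$ is a compact, connected, embedded, oriented surface in the simply connected manifold $N\cong\SS^2\times\RR$, the discussion of Section~\ref{orientation-section} equips the level set flow with a strong orientation $\nu$ extending the chosen orientation on $M$. Since $\nu$ is continuous on the regular set of spacetime, the maps $t\mapsto\nu_{M(t)}(2,0,0)$ and $t\mapsto\nu_{M(t)}(0,0,0)$ are continuous on $[0,\Tpos)$.

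Next I would show that both continuous maps take values only in $\{\pm\ee_3\}$. At each time $t\in[0,\Tfat]$, Lemma~\ref{intersection-lemma} gives $M(t)\cap\{z=0\}=\tQ$, so $M(t)$ contains the $x$-axis and the circle $C$ near $(2,0,0)$, and all $g+1$ lines of $Q$ through the origin. The tangent plane to $M(t)$ at $(2,0,0)$ therefore contains both $\ee_1$ and $\ee_2$, while the tangent plane at the origin contains the $g+1$ lines of $Q$ and hence the full $xy$-plane; in both cases the tangent plane equals $\{z=0\}$, forcing $\nu_{M(t)}(p)\in\{\pm\ee_3\}$. A continuous map from the connected interval $[0,\Tpos)$ to the two-point set $\{\pm\ee_3\}$ is constant, so each of the two normals is constant on $[0,\Tpos)$, and the initial values in~\eqref{normals-1} identify these constants, giving~\eqref{normals}.

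The one mild subtlety is continuity of $\nu_{M(t)}(p)$ up to and including $t=0$, since a priori we only have smoothness of the flow in an open spacetime neighborhood of each regular point. This is exactly what the Initial Regularity Theorem (Theorem~\ref{initial-theorem}) delivers: for a neighborhood $K$ of each of the two regular initial points, the flow $t\mapsto M(t)\cap K$ is smooth on $[0,\eps]$, so the strong orientation extends continuously across $t=0$, and the continuous-into-discrete argument then applies on the full interval $[0,\Tpos)$.
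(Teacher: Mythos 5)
Your proof is correct and follows essentially the same approach as the paper's: you establish that both points remain regular throughout $[0,\Tpos)$, use $\tQ\subset M(t)$ (from Lemma~\ref{intersection-lemma}) to force the tangent plane at each point to be horizontal so that the normal lies in $\{\pm\ee_3\}$, and then conclude by continuity of the orientation from the initial value at $t=0$. The only cosmetic difference is that you cite Lemma~\ref{C-regular-lemma} for regularity at $(2,0,0)$, whereas the paper invokes Theorem~\ref{tQ-regularity}; your extra care about continuity across $t=0$ via Theorem~\ref{initial-theorem} is harmless but not strictly necessary, since the paper's notion of orientation is already continuous on the regular set including time~$0$.
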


\begin{proof}
By Theorems~\ref{tQ-regularity}
and~\ref{origin-lemma},
 the flow is regular at all points of
$\tilde Q$ for all times $t\in [0,\Tpos)$.
In particular, the points $(0,0,0)$
and $(2,0,0)$ are such points.
At each of those two points, two or more great circles
in $\tQ$ intersect transversely.
Since $\tQ\subset M(t)$ for all $t\in [0,\Tpos)$,
we see that the tangent plane to $M(t)$ at
each of those two points is horizontal.
 Thus since~\eqref{normals} holds for $t=0$
 (by~\eqref{normals-1}), it holds
for all $t\in [0,\Tpos)$.
\end{proof}

Next, we claim that
\begin{equation}
    \Tpos<\infty.
\end{equation}
For, if not, then 
 (by   
 Theorem~\ref{graph-theorem})
there would be a $t<\infty$
for which $M(t)$ is a graph.
But that is impossible by 
 Claim~\ref{normals-claim}.

By Theorem~\ref{shrinker-good-theorem},
the flow has a singularity at the origin
at time $\Tpos$ of genus~$\tilde g$, 
where $\tilde g$ is either $g$ or $2g$
and 
\[
  \tilde g \le \genus(M)/2= g.
\]
Thus $\tilde g=g$.

Let $\Sigma_g$ be a shrinker at the spacetime
point $(0,\Tpos)$. 
 By Theorem~\ref{shrinker-good-theorem}, it has all the properties
asserted in Theorem~\ref{desing-theorem} except for the property
\[
 \entropy(\Sigma)< \delta,
\]
and the behavior of $\Sigma_g$ as $g\to\infty$.

By Theorem~\ref{desing-entropy-theorem} below,
we can choose $\Sigma_g$ to have entropy $<\delta$.
By Theorem~\ref{desing-limit-theorem}, 
given that entropy bound, $\Sigma_g$ has the behavior asserted in the statement of Theorem~\ref{desing-theorem}, as $g\to\infty$.
\end{proof}

\begin{theorem}\label{desing-entropy-theorem}
In the proof of
 Theorem~\ref{desing-theorem}, 
  we can choose the initial surface $M$ so that $\entropy(\Sigma)< \delta$.
\end{theorem}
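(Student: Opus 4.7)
The plan is to combine Huisken monotonicity with a refined choice of initial surface. The entropy of the shrinker $\Sigma_g$ equals the Gaussian density $\Theta(O,\Tpos)$ of the flow at the singular spacetime point (since for a shrinker, $\entropy(\Sigma_g) = F_{0,1}(\Sigma_g) = \Theta(O,\Tpos)$ by Colding--Minicozzi). By Huisken monotonicity in $\SS^2\times\RR$ (either via its Riemannian-corrected form, or via the isometric embedding $\SS^2\times\RR\hookrightarrow\RR^4$ combined with the Local Regularity Theorem), this Gaussian density is bounded by a backward $F$-functional evaluated on the initial surface:
\[
   \entropy(\Sigma_g)\;\le\;(4\pi\Tpos)^{-1}\int_M e^{-|x|^2/(4\Tpos)}\,d\garea\;+\;o(1),
\]
where the $o(1)$ error reflects the non-Euclidean geometry and can be made as small as desired by localizing in a small neighborhood of $O$. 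Since $\delta>1+d_2$, it suffices to construct $M$ so that this $F$-functional is at most $1+d_2+\eta$ for some $\eta<\delta-1-d_2$.

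To bound the $F$-functional, recall that $M$ is assembled from $\Delta_R\subset\{z=0\}$, the hemisphere $\partial B(0,R)\cap\{z>0\}$, their reflections under $\rho_X$, a smoothing confined to a tubular $\epsilon$-neighborhood of the intersection circle $\{z=0\}\cap\partial B(0,R)$, and the $\rho_C$-image of all of this. Evaluating the $F$-functional in $\RR^3$ term by term: the $\{z=0\}$-piece contributes at most $1$ (since $\{z=0\}$ is a flat plane through $O$); the sphere $\partial B(0,R)$ contributes $(R^2/\Tpos)\,e^{-R^2/(4\Tpos)}\le 4/e=d_2$ uniformly in $\Tpos$ (by the elementary calculus of $u\mapsto u e^{-u/4}$); the smoothing contributes $O(\epsilon)$; and the $\rho_C$-image lies near $O_\infty$, at $\gamma$-geodesic distance bounded below from $O$, so its Gaussian contribution decays exponentially once $\Tpos$ is of order $R^2$ or smaller. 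Taking $\epsilon$ and $R$ sufficiently small therefore yields $F_{O,\Tpos}(M)\le 1+d_2+\eta$, and hence $\entropy(\Sigma_g)<\delta$.

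The main obstacle is quantitative control of the $o(1)$ correction in Huisken monotonicity for the non-Euclidean ambient $\SS^2\times\RR$. One must show this correction can be made arbitrarily small independently of $R$, using the area bound $\garea(M)<8\pi$ (which bounds $\Tpos$) together with the asymptotic flatness of $\SS^2\times\RR$ at any finite point. A secondary technical issue is the $\rho_C$-image contribution, which requires verifying that the $\rho_C$-image of the near-origin region lies at $\gamma$-distance uniformly bounded below from $O$, so that the Gaussian weight $e^{-|x|^2/(4\Tpos)}$ provides genuine exponential decay of that piece's contribution; this follows from the fact that $\rho_C$ swaps the regions $\{(x^2+y^2)^{1/2}<2\}$ and $\{(x^2+y^2)^{1/2}>2\}\cup Z_\infty$ of $N$, and from $R$ being chosen much smaller than $1$.
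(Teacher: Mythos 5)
Your approach is in the same spirit as the paper's---both bound $\entropy(\Sigma)$ by an $F$-functional of the initial surface via Huisken monotonicity, and both exploit the fact that a small sphere contributes at most $d_2$---but you try to carry out the estimate for a single fixed $M$ in the curved ambient $\SS^2\times\RR$, and this is where the argument has a genuine gap. The error term in non-Euclidean monotonicity is controlled by (curvature bound)$\times$(elapsed time), and the curvature of $\gamma$ is of order $1$ no matter how small $R$ is. There is no ``asymptotic flatness of $\SS^2\times\RR$ at any finite point'': the metric has fixed nonzero curvature, and what makes the ambient geometry look flat is passing to small scales, i.e.\ requiring the relevant parabolic scale $\sqrt{\Tpos}$ to be small. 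So you cannot make the $o(1)$ correction small ``independently of $R$''---you need $\Tpos(M_R)\to 0$ as $R\to 0$, which is precisely what your phrasing disclaims. (The area bound $\garea(M)<8\pi$ does not by itself bound $\Tpos$; the mechanism that forces $\Tpos\to 0$ is that $M$ converges weakly with multiplicity one to the smooth static surface $\{z=0\}$, so by the Local Regularity Theorem all singularities are pushed toward time $0$.) The same issue infects your treatment of the $\rho_C$-image: under the isometric embedding $\SS^2\times\RR\hookrightarrow\RR^4$, the point $O_\infty$ lies at \emph{bounded} distance (the diameter of $\SS^2$) from $O$, so the Gaussian weight there is $e^{-c/\Tpos}$, which is small only when $\Tpos$ is small.

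The paper sidesteps all of this by taking $r_i\to 0$, setting $\lambda_i=\Tpos(M_i)^{-1/2}\to\infty$, and parabolically rescaling so that the flow $M_i'(\cdot)$ is a mean curvature flow for the dilated metric $\gamma_{\lambda_i}$, which converges smoothly to the Euclidean metric. Then the monotonicity error $\eps_i\to 0$ automatically, and one computes the limiting $\area_s$ in flat $\RR^3$, where the weak limit of $M_i'(0)$ is $P+\SS(r)$ for some $r=\lim\lambda_ir_i\in[0,\infty]$, giving $\le 1+d_2$. One nice feature your computation has that the paper doesn't need to invoke explicitly is the elementary bound $(R^2/T)e^{-R^2/(4T)}\le 4/e=d_2$ uniformly in the ratio $R^2/T$; the paper instead absorbs this into the subsequential extraction of $r=\lim\lambda_ir_i\in[0,\infty]$ and the observation that $\area_s(\SS(r))\le d_2$ for all $r$. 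If you reframe your argument to use the parabolic rescaling (and let $R\to 0$, rather than fixing $R$ small), your term-by-term estimate would then take place in the flat limit and become rigorous.
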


\begin{proof}
Let $\MM'_g$ be as in the proof of 
 Theorem~\ref{desing-theorem}.

For $0<r<1$, let $\Psi(r)$ be the surface
consisting of $P:=\{z=0\}$, $\SS(r)$, and the image
of $\SS(r)$ under $\rho_C$, where recall that $\SS(r)$
is the sphere 
\(
  \{ p\in \RR^3: |p| = r\}.
\)

Note that there exist $M\in \MM'_g$ that are
arbitrarily close to $\Psi(r)$ in the weak sense.
That is, there exists a sequence $M_i\in \MM'_g$
such that $M_i$ converges weakly to $\Psi(r)$.

Consider $P$ and $\SS(r)$,
both with multiplicity~$1$.
Now let $r_i\to 0$.  Then we can choose $M_i\in \MM'_g$
so that 
\begin{enumerate}
    \item $M_i$ converges weakly with
     multiplicity $1$ to $\{z=0\}$
    \item If $\lambda_i\to\infty$ and if 
       $\lambda_ir_i\to r\in [0,\infty]$, 
       then
    the dilated surfaces
    $\lambda_i M_i$ 
    (in $N$, with the suitably dilated metrics~$\gamma_{\lambda_i}$)
    converge weakly
    (as Radon measures) to $P+\SS(r)$ in $\RR^3$.
\end{enumerate}
(Here $\SS(\infty)$ is the empty set.)

Let $T_i=\Tpos(M_i)$ and, as usual, let
\[
  t\in [0,T_i] \mapsto M_i(t)
\]
be the mean curvature flow with $M_i(0)=M_i$.
Let $\Sigma_i$ be a shrinker to the flow
at the origin at time $T_i$.

We claim that
\begin{equation}\label{Ti-to-zero}
    T_i\to 0.
\end{equation}
For suppose not. Then (after passing to a subsequence) $T_i$ converges to a limit $T\in (0,\infty]$.
Since $M_i$ converges weakly with multiplicity~$1$ to $\{z=0\}$, the flow
 $t\in [0,T_i]\mapsto M_i(t)$ converges
weakly to the multiplicity-one flow
\[
  t\in [0,T] \mapsto \{z=0\}.
\]
By local regularity
theory~\cite{white-local},
the convergence is smooth on $[\eps,T]$
for every $\eps>0$.
In particular, $M_i(\eps)$ is a smooth
graph for all sufficiently large~$i$.
But that contradicts 
  Claim~\ref{normals-claim}
in the proof of
  Theorem~\ref{desing-theorem},
 thus proving~\eqref{Ti-to-zero}.

For $t\in [0,1)$, let
\[
  M_i'(t) =  \frac1{\sqrt{T_i}} M_i( T_it).
\]
Thus
\[ 
 t\in [0,1) \mapsto M_i'(t)
\]
is a mean curvature flow in $\RR^3\cup Z_\infty$,
but with the dilated metric $\gamma_{\lambda_i}$,
 where $\lambda_i:=(T_i)^{-1/2}$.

Let $\area_s$ denote area with respect to the shrinker metric.
(The shrinker metric is the Euclidean metric on $\RR^3$
multiplied by the conformal factor
  $(4\pi)^{-1}\exp(-|p|^2/4)$.)
By monotonicity, 
\[
  \area_s(\Sigma_i) \le \area_s(M_i'(0)) + \eps_i,
\]
where $\eps_i\to 0$.
(The $\eps_i$ is there because $M_i'(\cdot)$
is mean curvature flow for a non-flat metric;
and $\eps_i\to 0$ because 
the metric is converging to smoothly to the flat
metric by~\eqref{Ti-to-zero}.)

By passing to a subsequence, we can assume
that the flows $M'_i(0)$ converge weakly
(in the sense of Radon measures)
to $P+\SS(r)$.
Thus
\begin{align*}
\area_s(\Sigma_i)
&\to
\area_s(P) + \area_s(S)
\\
&\le 1 + d_2
\\
&< \delta.
\end{align*}
Hence 
\[
  \entropy(\Sigma_i) = \area_s(\Sigma_i)< \delta
\]
for all sufficiently large $i$.
\end{proof}

\newcommand{\exc}{\operatorname{exc}}

\begin{theorem}\label{desing-limit-theorem}
    As $g\to\infty$, the $\Sigma_g$
    converge to the union of the plane $\{z=0\}$
    and the sphere $\SS(2)$.  The convergence
    is smooth with multiplicity $1$
    away from the circle
    $\{z=0\}\cap  \SS(2)$.
    For large~$g$, $\Sigma_g$ has exactly one end.
\end{theorem}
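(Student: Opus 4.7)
The plan is to combine entropy control with a compactness--rigidity argument. First, sharpening Theorem~\ref{desing-entropy-theorem} by a diagonal argument over $\delta\downarrow 1+d_2$, I would choose initial surfaces $M_g\in\MM'_g$ whose shrinkers satisfy $\entropy(\Sigma_g)\to 1+d_2$ as $g\to\infty$; indeed, the proof of Theorem~\ref{desing-entropy-theorem} produces surfaces for which the entropy can be made arbitrarily close to $\area_s(\{z=0\})+\area_s(\SS(r))=1+d_2$ by choosing the auxiliary sphere's radius $r$ small. With this uniform entropy bound, Huisken monotonicity together with Brakke compactness yields, after passing to a subsequence, an integer-multiplicity rectifiable varifold limit $V_\infty$ that is itself a weak self-shrinker in $\RR^3$.

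Next, I would identify $V_\infty$. The $G_g$-invariance of $\Sigma_g$ passes to the limit, and since the rotations by $\pi/(g+1)$ about $Z$ become dense in $O(2)$, $V_\infty$ is $O(2)$-rotationally symmetric about $Z$. Because $Q_g\subset\Sigma_g$ and $\bigcup_g Q_g$ is dense in $\{z=0\}$, the plane $\{z=0\}$ lies in the support of $V_\infty$. The uniform entropy bound together with Gaussian decay makes $F_{0,1}$ continuous along the convergence, so
\[
F_{0,1}(V_\infty)=\lim_g F_{0,1}(\Sigma_g)=\lim_g \entropy(\Sigma_g)=1+d_2.
\]
Writing $V_\infty=m\{z=0\}+T$ for an integer $m\ge 1$ (constant by $O(2)$-symmetry and unique continuation for the shrinker equation) and a residual varifold $T$, one has $m+F_{0,1}(T)=1+d_2$. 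The Bernstein--Wang entropy inequality forces $F_{0,1}(T)\ge d_2$ whenever $T$ is non-trivial, with equality only for a round sphere. Since $1+d_2$ is not an integer, $T$ cannot vanish; hence $F_{0,1}(T)\ge d_2$, which combined with $m\ge 1$ forces $m=1$ and $F_{0,1}(T)=d_2$. The rigidity case of Bernstein--Wang then identifies $T=\SS(2)$, necessarily centered at the origin (by the $O(2)$-symmetry and the shrinker equation).

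Smooth, multiplicity-$1$ convergence on compact subsets of $\RR^3\setminus\Gamma$, where $\Gamma=\{z=0\}\cap\SS(2)$, follows from White's local regularity theorem (Theorem~\ref{local-theorem}) applied to the rescaled flows generated by the $\Sigma_g$, since at each point of $\RR^3\setminus\Gamma$ the tangent of $V_\infty$ is a multiplicity-$1$ plane. For the ``one end'' statement, fix any $R>2$; smooth convergence on $\{|x|>R\}$ (which is disjoint from $\SS(2)$) means that for large $g$, $\Sigma_g\cap\{|x|>R\}$ is a small graph over the connected planar annulus $\{z=0\}\cap\{|x|>R\}$, hence is itself connected, so $\Sigma_g$ has exactly one end. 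The principal obstacle is to make the $F_{0,1}$-continuity rigorous (requiring a careful Gaussian-tightness argument for varifolds with bounded entropy) and to pin down the multiplicity of the plane: the decisive step is securing $\entropy(\Sigma_g)\to 1+d_2$ exactly, since only then does the integer-versus-non-integer parity in $m+F_{0,1}(T)=1+d_2$ rule out the degenerate collapse $V_\infty=m\{z=0\}$.
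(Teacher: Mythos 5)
Your approach is genuinely different from the paper's.  The paper's proof of this statement is very short: it cites Theorem~\ref{limits-theorem}, which (after passing to a subsequence) constrains the limit to three alternatives, and then uses the entropy bound $\entropy(\Sigma_g)<\delta<1+d_1$ to rule out the cylindrical alternative (the multiplicity-3 plane alternative is excluded because $\Sigma_g$ here has genus~$g$, not $2g$). Theorem~\ref{limits-theorem} in turn is proved by a topological and geometric analysis (compactness of minimal surfaces with bounded area and genus, the Topology Lemma, connectivity, and a careful study of $\Sigma^+$ near $\{z=0\}$).  Your route instead tries to identify the limit via a soft entropy--rigidity argument, bypassing the geometric/topological machinery.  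That is a reasonable idea, but the two central steps as written do not go through.

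First, the decomposition $V_\infty = m\{z=0\}+T$ with \emph{constant} multiplicity $m$ is not a consequence of ``$O(2)$-symmetry and unique continuation.''  $V_\infty$ is only a stationary integral varifold; unique continuation does not apply where it is singular, and in particular across the circles where $T$ attaches to the plane the integer density on $\{z=0\}$ can jump.  The paper handles exactly this issue in Claim~\ref{orthogonal-claim}: it uses the $\pi$-rotation about the $x$-axis to force the multiplicity at points of the $x$-axis to be odd, and then exploits the stationarity of $V_\infty$ at a junction circle to show the jump in multiplicity has magnitude $|2\cos\theta|<2$, which together with oddness forces equality (and orthogonality $\theta=\pi/2$).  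Without an argument of that kind, there is nothing forcing $m$ to be a single well-defined integer, and your equation $m+F_{0,1}(T)=1+d_2$ loses meaning.

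Second, the Bernstein--Wang entropy lower bound and its rigidity case are stated for smooth, properly embedded self-shrinkers, whereas in your argument $T$ is just a stationary integral varifold (stationary because the first variation is linear and $mP$ and $V_\infty$ are both stationary).  You cannot invoke $F_{0,1}(T)\ge d_2$ with equality iff $T$ is a round sphere until you have shown $T$ is smooth.  Establishing that smoothness is precisely where the paper's hypothesis $\genus(\Sigma_g\cap\{z>0\})=0$ and White's compactness theorem for minimal surfaces of locally bounded area and genus enter: they give smooth convergence of $\Sigma_g\cap\{z>0\}$ (and hence smoothness of $\Sigma^+$) away from a discrete set, which by $O(2)$-symmetry is empty.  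Your proposal drops this input, and the regularity of $T$ is left unaddressed.  Beyond these two points, the plan is sound: the $F_{0,1}$-continuity under varifold convergence with uniform entropy is standard (Gaussian tightness), the diagonal argument to force $\entropy(\Sigma_g)\to 1+d_2$ is legitimate (and in fact you only need $\entropy<2$ to force $m=1$ and $\entropy<1+d_1$ to rule out the cylinder, but the sharper limit does help pin down the sphere by rigidity), and your argument for ``one end'' mirrors Lemma~\ref{ends-lemma}.  To make this route rigorous you would essentially need to reprove Claim~\ref{orthogonal-claim} and the smoothness-of-the-upper-sheet portion of Theorem~\ref{limits-theorem}, after which the entropy rigidity could replace the remaining case analysis.
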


Theorem~\ref{desing-limit-theorem}
 was proved by Buzano, Nguyen, and 
 Schulz~\cite{buzano}*{Theorem~1.3}.
For the reader's convenience, we include
 a somewhat different proof here.

\begin{proof}
By Theorem~\ref{limits-theorem}, we can assume (after passing to a subsequence)
that the  $\Sigma_g$ converge to a limit $\Sigma$  consisting of the plane $\{z=0\}$
with multiplicity one together with one of the following:
\begin{enumerate}
\item\label{cylinder-case} The cylinder $\{x^2+y^2=2\}$ with multiplicity one, or
\item\label{sphere-case} The sphere $\SS(2)$ with multiplicity one.
\end{enumerate}
In the cylindrical case (Case~\ref{cylinder-case}), 
  $\Sigma$ would have entropy $1+d_1$,
which is impossible since 
\[
 \entropy(\Sigma_g) < \delta < 1 + d_1,
\]
by Theorem~\ref{desing-entropy-theorem}.

Thus Case~\ref{sphere-case} holds.  
Since the limit is independent of choice of subsequence,  the original sequence
converges.
By Theorem~\ref{limits-theorem}, the convergence is smooth away
from the circle $\SS(2)\cap\{z=0\}$.

Finally, for large $g$, $\Sigma_g$ has exactly one end by Lemma~\ref{ends-lemma}.
\end{proof}

\section{Shrinkers with three ends}
\label{new-examples-section}

Let $g\ge 1$ be an integer,
  let $Q_g$
be the set of $g+1$ horizontal lines through the
origin defined in \S\ref{topology-section},
and let $G_g$ be the group of isometries
of $\RR^3$ defined in \S\ref{topology-section}.
The goal of this section is to prove the
following theorem.

\begin{theorem}\label{horgan-theorem}
For all sufficiently large $g$, 
there is a compact, smoothly embedded
surface in $\SS^2\times\RR$
that, under mean curvature flow,
has a singularity at which
every shrinker $\Sigma_g$ has
the following properties:
\begin{enumerate}
\item $\Sigma_g$ has genus~$2g$ and $3$ ends.
\item $\Sigma$ is $G_g$-invariant and
 $\Sigma_g\cap\{z=0\}=Q_g$. 
\item $\Sigma_g\setminus\{z= 0\}$ has 
   genus~$0$.
\item The entropy of $\Sigma_g$ is bounded
above by a constant independent of~$g$.
\end{enumerate}
Furthermore, as $g$ tends to infinity, $\Sigma_g$ 
converges to the plane $\{z=0\}$ with multiplicity $3$.  After passing to a subsequence, the convergence is smooth
away from one circle.
\end{theorem}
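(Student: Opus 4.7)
The plan is to adapt the strategy of Theorem \ref{desing-theorem}: construct an initial surface $M_g \in \MM_g$ whose mean curvature flow develops a singularity at the origin, and identify the shrinker there. By Theorem \ref{shrinker-good-theorem}, any shrinker $\Sigma_g$ at $(O, \Tpos(M_g))$ automatically satisfies $\Sigma_g \cap \{z=0\} = Q_g$ and has genus $g$ or $2g$, with $2\genus(\Sigma_g) \le \genus(M_g)$. To realize the genus-$2g$ alternative, I will start from $\genus(M_g) = 4g$ and then argue that the singularity must in fact absorb the maximal allowed genus.

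For the construction, fix two scales $0 < r_1 < r_2 < 2$ and let $\Psi(r_1, r_2)$ be the piecewise smooth surface consisting of the great sphere $P := \{z=0\}$ together with the four Euclidean spheres $\SS(r_1), \SS(r_2), \rho_C \SS(r_1), \rho_C \SS(r_2)$. At each circle $\SS(r_j) \cap P$ (for $j = 1, 2$), perform $\tG_g$-symmetric surgery: remove alternating $(g+1)$-wedge disks from both $P$ and $\SS(r_j)$ and glue in catenoidal necks, reversing wedge parity at $r_2$ relative to $r_1$ so the result is connected. After similar surgery at the $\rho_C$-images, smoothing gives a surface $M_g$. Counting Euler characteristic on the inner half $S$ with $\partial S = C$: starting from $\chi(\text{disk}) + \chi(\SS(r_1)) + \chi(\SS(r_2)) = 5$, each surgery subtracts $2(g+1)$, giving $\chi(S) = 1 - 4g$, hence $\genus(S) = 2g$ and $\genus(M_g) = 2\genus(S) = 4g$. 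The scales $r_1, r_2$ can be chosen small enough that $\area(M_g) < 2\area(\{z=0\})$. After orienting $M_g$ so that $\nu(O) = -\ee_3$ and $\nu(2,0,0) = \ee_3$, one verifies $M_g \in \MM_g$.

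The flow analysis then follows the template of Theorem \ref{desing-theorem}. By Theorem \ref{preservation-theorem}, $M_g(t) \in \MM_g$ for all regular $t < \Tpos$. By Theorems \ref{tQ-regularity} and \ref{origin-lemma}, the normal $\nu_{M_g(t)}$ is pinned to $\mp \ee_3$ at the two intersection points in $\tQ \cap Z$, so $M_g(t)$ cannot become a graph; by Theorem \ref{long-time-theorem}, this forces $\Tpos(M_g) < \infty$. Theorem \ref{shrinker-good-theorem} then produces a shrinker $\Sigma_g$ at $(O, \Tpos)$ of genus $g$ or $2g$ satisfying conditions (2) and (3) of the statement.

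The main obstacle is ruling out the genus-$g$ case and establishing the limit behavior. I expect to exploit Corollary \ref{r-R-corollary}: a genus-$2g$ shrinker has two critical radii $r \le R$ where handles concentrate, while a genus-$g$ shrinker has only one. Since $M_g$ carries topologically essential cycles at both scales $r_1$ and $r_2$, both must contract to the origin, and by tuning $r_j = r_j(g) \to 0$ so that both scales are of order $(\Tpos - t)^{1/2}$ as $t \to \Tpos$, any tangent shrinker should retain both families of handles. A complementary entropy-based argument: by the limit analysis underlying Theorem \ref{desing-limit-theorem}, every genus-$g$ $\tG_g$-invariant shrinker with $\Sigma \cap \{z=0\} = Q_g$ and $\genus(\Sigma \setminus \{z=0\}) = 0$ has entropy approaching $1 + d_2 < 3$ as $g \to \infty$, so a lower entropy bound on $\Sigma_g$ strictly above $1 + d_2$ (obtained by a Gaussian-density computation that reflects the three-sheet structure of $M_g$) forces the genus-$2g$ alternative for sufficiently large $g$. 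Once genus $2g$ is confirmed, the three-end count follows from Lemma \ref{ends-lemma}, the uniform entropy bound follows from Huisken monotonicity applied to $M_g$, and the subsequential smooth convergence of $\Sigma_g$ to the multiplicity-$3$ plane $\{z=0\}$ away from a single circle follows by the same compactness, rotational-symmetry, and classification argument as in Theorem \ref{desing-limit-theorem}, with multiplicity $3$ now forced in place of multiplicity $2$ by the higher entropy and genus.
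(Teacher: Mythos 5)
Your construction of a genus-$4g$ surface $M_g$ is in the right spirit (and close to the paper's $\Ff_g$), but the step where you conclude $\Tpos(M_g) < \infty$ contains a genuine error. The graph obstruction you borrow from Theorem~\ref{desing-theorem} relies on $\nu_M(O)$ and $\nu_M(2,0,0)$ having opposite signs, which by the alternating argument in the proof of Theorem~\ref{parity-theorem} happens exactly when $n(0) := \#\bigl(M \cap Z^+\bigr)$ is odd. For a surface of genus $2g$ one has $n(0)=1$ (odd), so the obstruction applies; but for your $M_g$ of genus $4g$, Lemma~\ref{topology-lemma} forces $n(0)=2$, which is even, so the two normals agree and there is no contradiction with $M_g(t)$ becoming a graph. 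Your statement ``After orienting $M_g$ so that $\nu(O)=-\ee_3$ and $\nu(2,0,0)=\ee_3$'' is therefore not achievable: once you fix the orientation at $(2,0,0)$, the orientation at $O$ is determined and has the same sign. Indeed, Claim~1 of Theorem~\ref{critical-M-theorem} explicitly produces surfaces in $\MM_g''$ with $\Tpos=\infty$, so the conclusion you need is genuinely false for some choices of initial data in this class.

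The paper's way around this is the key idea you are missing: \emph{criticality}. Rather than a single initial surface, one constructs a connected one-parameter family $\Ff_g = \{M'[s]\}$ (Proposition~\ref{nonempty-proposition}), shows that $\Tpos=\infty$ at one end (necks collapsed, Claim~1) and $\Tpos<\infty$ at the other (spheres merged, so $n(\tau)=1$ is odd, contradicting Theorem~\ref{parity-theorem}; Claim~2), and then uses connectedness to extract a critical $M$ lying on the boundary of $\{\Tpos=\infty\}$. Criticality is then leveraged through Lemma~\ref{special-lemma}: the approximating flows with $\Tpos=\infty$ are ``special'' (strongly oriented with pinned normal at $O$), and so is the extended tangent flow $t\in\RR\mapsto M'(t)$ at the singularity. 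It is this specialness, applied to the possible limits from Theorem~\ref{limits-theorem}, that rules out the genus-$g$ cases $P + \SS(2)$ and $P + \{x^2+y^2=2\}$ for large $g$ in Theorem~\ref{large-g-theorem}.

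Your two fallback arguments for forcing genus $2g$ also do not close the gap. ``Tuning $r_j(g)$ so both scales survive to the blowup'' is heuristic and is exactly the scenario that can fail (handles can pinch off before $\Tpos$, which is why some $M\in\MM_g''$ have $\Tpos=\infty$). The entropy argument requires a uniform lower bound on the Gaussian density at the singular spacetime point, which you have not established and which is not controlled simply by the three-sheet structure of $M_g$; moreover, even a bound strictly above $1+d_2$ would not exclude the plane-plus-cylinder limit, whose entropy is $1+d_1$.
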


See~Figure~\ref{fig:horgan}.

\begin{figure}[htpb]
    \centering
    \includegraphics[width=0.75\linewidth]{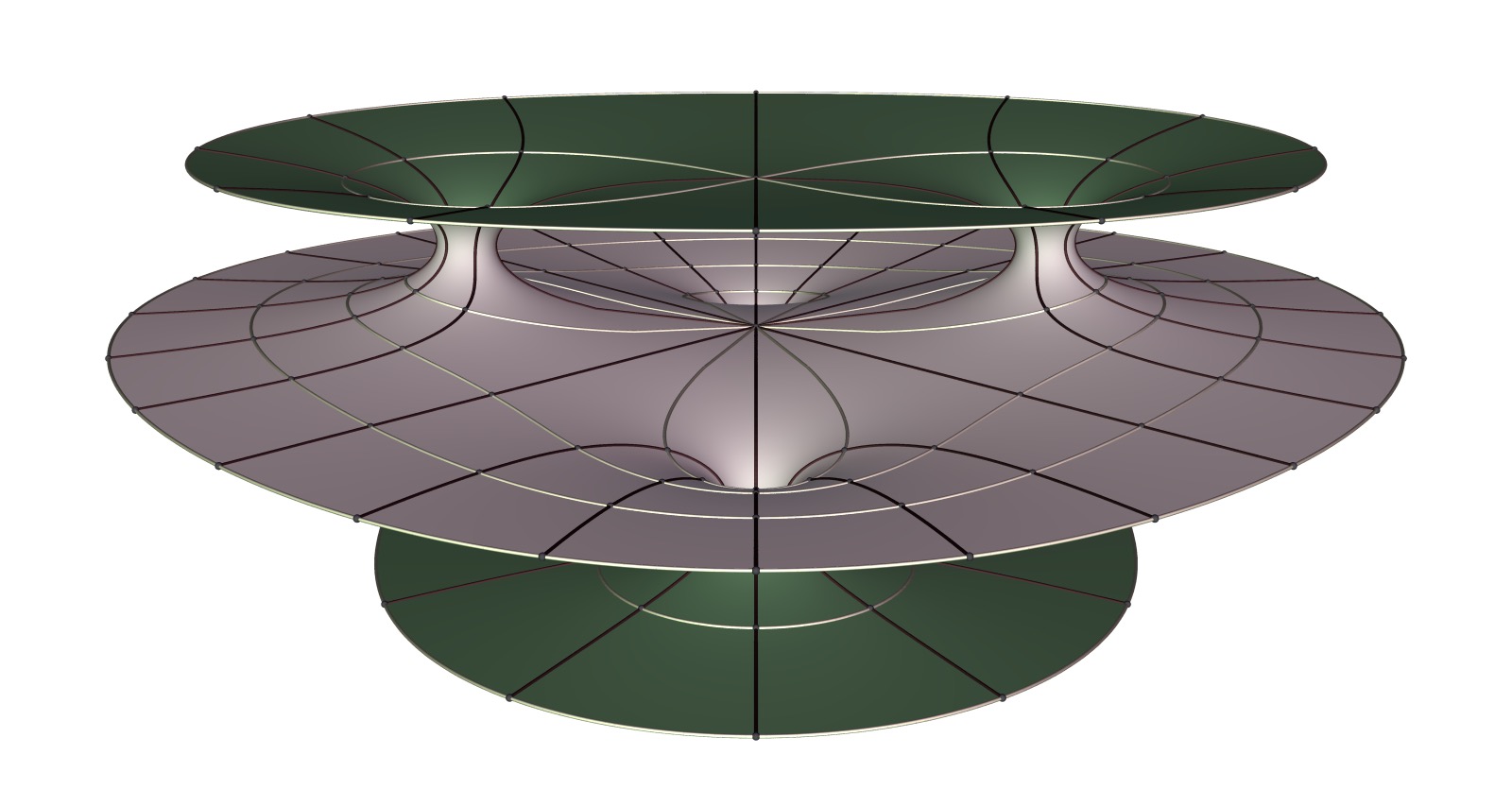}
    \caption{
Conjectured qualitative appearance of the shrinker $\Sigma_g$ 
 in case $g=1$. 
 The surface shown has genus~$2=2g$ and it 
 seems to have $3$ ends.
 However, we know that $\Sigma_g$ has genus~$2g$
 and $3$ ends only when $g$ is large.
 Figure courtesy of Matthias Weber.
 (This figure is not based on a numerical approximation of a shrinker.)}
 \label{fig:horgan}
\end{figure}

Theorem~\ref{horgan-theorem}
is a combination of the following  results:
\begin{enumerate}
    \item In Theorem~\ref{critical-M-theorem}, we prove, for each $g$, existence
    of a initial surface $M_g$ that has various
    nice properties, including a property
    called criticality.
    \item Let $\Sigma_g$ be a shrinker to $M_g(\cdot)$
    at the origin at time $\Tpos$.  
    In Theorem~\ref{critical-Sigma-theorem},
    we show that $\Sigma_g$ has most of
    the properties
     asserted in Theorem~\ref{horgan-theorem}.
    \item In Theorem~\ref{large-g-theorem}
      and Proposition~\ref{3-ends-proposition}, we show, for all sufficiently large~$g$,
    that $\Sigma_g$ has the remaining 
    properties asserted in Theorem~\ref{horgan-theorem}: it has genus~$2g$, 
    it has exactly $3$ 
    ends, and it
    converges as $g\to\infty$ to the plane $\{z=0\}$
    with multiplicity~$3$.
\end{enumerate}

To prove these theorems,
we work in 
\[
\RR^3\cup Z_\infty \cong \SS^2\times\RR
\]
with the metric $\gamma =\gamma_1$,
as defined at the beginning of Section~\ref{surfaces-section}.
We let
 $\MM_g''$ be the set of $M\in \MM_g$ such that
$M$ is a compact, connected surface of 
 genus~$4g$,
 where $\MM_g$ is as in
 Definition~\ref{MM-definition}.

\begin{proposition}\label{nonempty-proposition}
The class $\MM_g''$ is nonempty.
\end{proposition}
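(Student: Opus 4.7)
The plan is to adapt the construction of Section~\ref{desing-section} by replacing its single sphere with two nested spheres, thereby doubling the genus of $S$ from $g$ to $2g$ (and hence the genus of $M$ from $2g$ to $4g$). Fix radii $0<R_1<R_2<2$, to be chosen small. Define the three-layer alternating planar region $\Delta_{R_1,R_2}\subset\{z=0\}$ as the set of points $(r\cos\theta,r\sin\theta,0)$ satisfying one of: (i) $0\le r\le R_1$ and $\sin((g+1)\theta)\ge 0$; (ii) $R_1\le r\le R_2$ and $\sin((g+1)\theta)\le 0$; (iii) $R_2\le r\le 2$ and $\sin((g+1)\theta)\ge 0$.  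Form the piecewise-smooth surface
\[
\Delta_{R_1,R_2}\cup\bigl(\{z>0\}\cap\partial B(0,R_1)\bigr)\cup\bigl(\{z>0\}\cap\partial B(0,R_2)\bigr),
\]
and perturb it along both equators $\{r=R_1\}$ and $\{r=R_2\}$, exactly as in Section~\ref{desing-section}, to obtain a smooth surface $S^+\subset\overline{\{z\ge 0\}}$ with $\partial S^+=\partial\Delta_0$ and with $S^+\setminus\partial S^+$ smooth and simply connected.  Let $S^-$ be the reflection of $S^+$ across the $x$-axis, put $S:=S^+\cup S^-$, and set $M:=S\cup\rho_C S$.

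By construction $M$ is compact, smooth, connected, and $\tG_g$-invariant; $M\cap\Cyl=C$ transversally; $M\cap\{z=0\}=\tQ_g$; and $M\setminus\{z=0\}$ has genus $0$ because $S^\pm\setminus\partial S^\pm$ are simply connected.  For the genus of $M$, the Euler-characteristic computation of Section~\ref{desing-section} extends directly: starting from the disjoint union of a disk and two spheres, $\chi=1+2+2=5$, and each of the two equatorial surgeries reduces $\chi$ by $2(g+1)$ (for exactly the same reason as in the one-sphere case), yielding $\chi(S)=5-4(g+1)=1-4g$, so $\genus(S)=2g$ and $\genus(M)=2\genus(S)=4g$.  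For the area bound, $\garea(\Delta_{R_1,R_2})=\pi$, since $\Delta_{R_1,R_2}$ fills exactly half of the hemisphere $\{z=0\}\cap\{(x^2+y^2)^{1/2}\le 2\}$ (which has $\gamma$-area $2\pi$); the two spherical caps contribute only $O(R_1^2+R_2^2)$, so choosing $R_1,R_2$ sufficiently small makes $\garea(M)=4\,\garea(S^+)$ arbitrarily close to $4\pi$, comfortably below $8\pi$.

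The main point requiring attention beyond a routine adaptation of Section~\ref{desing-section} is the simultaneous $\tG_g$-equivariant smoothing along the two equators. The key observation is that the alternating pattern of $\Delta_{R_1,R_2}$ ensures that at any generic point of $\{r=R_i\}$ only two sheets actually meet: the hemisphere of $\partial B(0,R_i)$ together with exactly one of the two adjacent planar layers (the other being absent in that angular sector). Hence each equator is resolved by precisely the local smoothing used in Section~\ref{desing-section}, applied independently at $r=R_1$ and $r=R_2$; the finitely many angular wedge corners on each equator, and the singular structure of $\partial\Delta_0$ at the origin, are handled by the same local models as before. Thus the resulting $M$ belongs to $\MM_g''$, establishing non-emptiness.
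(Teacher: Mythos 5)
Your construction is valid for the stated nonemptiness claim, but it is genuinely different from the paper's. You double the Section~\ref{desing-section} surface by replacing the single sphere $\partial B(0,R)$ with two nested spheres $\SS(R_1)\subset\SS(R_2)$ centered at the origin and desingularizing (with the alternating angular pattern) along \emph{both} equatorial circles; your Euler-characteristic count $\chi(S)=5-4(g+1)=1-4g$, giving $\genus(M)=4g$, agrees with the paper's method of counting identified vertices. The paper instead takes the sphere-like surface $A^+=\partial\bigl(B(0,R)\cap\{z>0\}\bigr)$ (a hemisphere capped by the flat disk $D(R)$), translates one copy upward by a height $h(s)$ to get $A^+_s$ and a mirror copy $A^-_s$ downward, and connects each to the central disk $D$ by $g+1$ vertical necks placed in alternating angular wedges; the $\rho_C$-doubling then gives $M[s]$ with $4(g+1)$ necks. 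What this more elaborate construction buys is a connected one-parameter family $\Ff_g=\{M'[s]\}_{0<s<1}$ with controlled degenerations as $s\to 0$ (necks pinch to vertical segments) and $s\to 1$ (necks open fully, spheres and disk separate) --- see Remark~\ref{necks-remark} --- together with a $g$-independent area-density bound (Lemma~\ref{density-lemma}, Remark~\ref{density-remark}). These are later used in an essential way: the endpoints of $\Ff_g$ supply examples with $\Tpos=\infty$ and $\Tpos<\infty$ respectively, and connectedness of $\Ff_g$ drives the intermediate-value argument of Theorem~\ref{critical-M-theorem} that locates a critical surface. Your single nested-sphere surface establishes Proposition~\ref{nonempty-proposition} as stated and is a clean alternative, but if you wanted it to play the role of $\Ff_g$ downstream you would have to embed it in its own one-parameter family and reverify the two degenerate limits and the density bound. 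One last small caution: as written, your $\Delta_{R_1,R_2}$ has $\sin((g+1)\theta)\ge 0$ on the outer annulus, whereas $\Delta_0$ in the paper uses $\sin\le 0$; you should flip the alternation (or redefine your reference curve) so that $\partial S^+$ really equals $\partial\Delta_0$, and you should double-check (as you correctly flag) that the simultaneous smoothing at the two equator corner sets and at the origin preserves simple-connectivity of $S^+\setminus\partial S^+$.
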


\begin{proof}
We prove Proposition~\ref{nonempty-proposition} by defining a certain one-parameter
family of surfaces in $\MM_g''$.  We will use the one-parameter family later.

First, we fix a small $R\in (0,2)$.
(How small will be specified later.
The choice of $R$ will not depend on $g$.)
\begin{figure}[htpb]
    \centering
    \includegraphics[width=0.85\linewidth]{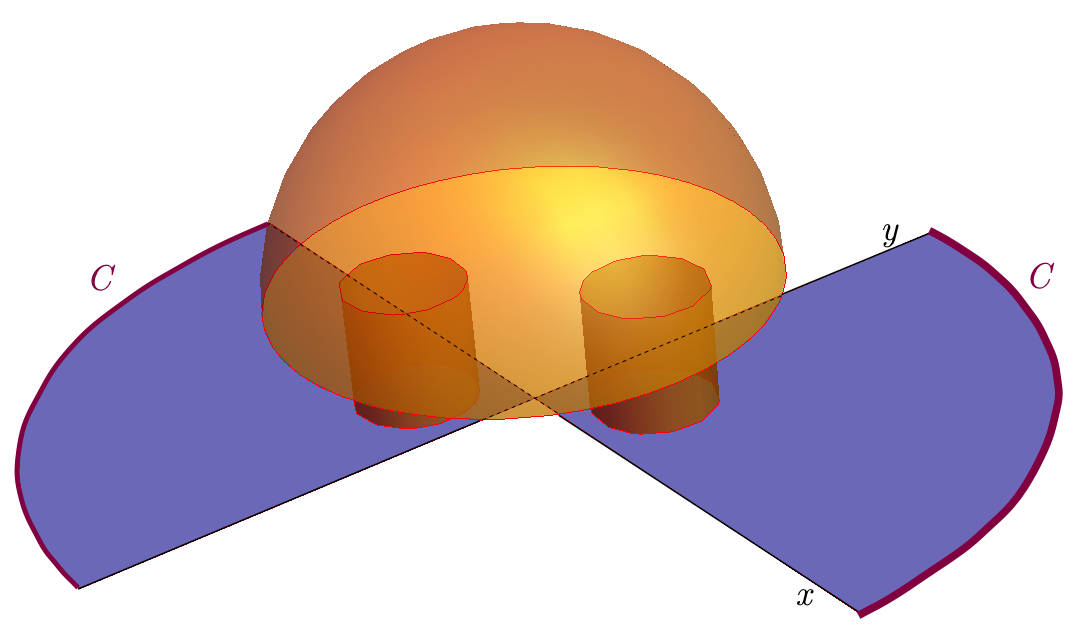}
    \caption{This figure presents a quarter of the surface \( M[s] \) for \( g = 1 \). The complete surface is generated by successively rotating this piece by \( 180^\circ \) around the \( x \)-axis and the geodesic \( C \). The resulting surface has genus~$4$.
    }
    \label{fig:horgan-3}
\end{figure}

Let
\begin{align*}
D &= \{(x,y,0): (x^2+y^2)^{1/2}\le 2\}, \\
D(R)&= \{ (x,y,0): (x^2+y^2)^{1/2} \le R\}, \\
A^+ &= \partial (B(0,R)\cap\{z>0\} ) , \\
A^- &= \partial (B(0,R)\cap\{z<0\} ).
\end{align*}
Thus $A^+$ (or $A^-$) consists of the upper 
(or lower) hemisphere of the
sphere $\SS(R)$ together with the disk $D(R)$.
Its area with respect to the Euclidean metric
 is $3\pi R^2$.

Let $k$ be an integer, and consider the 
 $2(g+1)$ wedge regions
\[
 W_k:=  \frac{\pi k}{g+1} 
   < \theta 
   < \frac{\pi(k+1)}{g+1},
\]
where
$0\le
k\le
2g+1$. 
Let $\Omega(s)$, $0\le s<1$, be a smooth, $1$-parameter
family of nested, closed convex regions in $W_0\cap D(R)$
such that $\Omega(0)$ is a single point, such that
$\cup_s \Omega(s)$ is all of $W_0\cap D(R)$,
and such that each~$\Omega(s)$ is symmetric under reflection in the plane~$P_{\pi/2(g+1)}$.

For $s\in [0,1)$, define $h(s)$ by
\begin{equation}\label{h(s)-formula}
  h(s) =  \frac{\dist(0,\Omega(s))^2}{(g+1)}.
\end{equation}

Translate $A^+$ upward by $h(s)$ 
and translate $A^-$ downward by $h(s)$ to get
\begin{align*}
  A^+_s &:= A^+ + (0,0,h(s)), \\
  A^-_s &:= A^- + (0,0,-h(s)).
\end{align*}

For $0<s<1$, we do surgery on $D\cup A^+_s\cup A^-_s$
as follows.  For each $W_k$ with $k$ even, we connect
$D$ to $A^+_s$ by a neck in the region $W_k$.
Likewise, for each $W_k$ with $k$ odd, we connected
$D$ to $A^-_s$ by a neck in $W_k$.  We do this in a 
  way that preserves the $G_g$ symmetry, so it
  suffices to describe the surgery in $W_0$.
The surgery in $W_0$ is the following:
 remove $\Omega(s)$ from $D$,
remove the corresponding region 
\[
   \Omega(s)+ (0,0,h(s))
\]
from $A^+_s$, and then attach the annular surface
\[
   \{(x,y,z): (x,y,0)\in \partial\Omega(s), 
   \, 0\le z \le h(s)\}.
\]

The resulting surface has $(g+1)$ necks
in $\{0\le z \le h(s)\}$ and $(g+1)$ necks
in $\{-h(s)\le z \le 0\}$.
Now let $M[s]$ be the union of that surface
and its image under rotation by $\pi$ about
  $C=\partial D$.
The resulting surface is $\tG$-invariant.

Let
\[
  \neck[s]:= (\partial\Omega(s))\times [0, h(s)]
\]
We have:
\begin{equation}\label{neck-bound}
\begin{aligned}
\area(\neck[s])
&=
h(s) \length(\partial\Omega(s))
\\
&\le
h(s) \length (\partial (D(R)\cap W_0))
\\
&=
h(s) (2R + (\pi R/(g+1))
\\
&\le h(s) (2\pi R)
\\
&\le (g+1)^{-1} \dist(0,\Omega(s))^2 2\pi R
\\
&\le (g+1)^{-1}2\pi R^3,
\end{aligned}
\end{equation}
where the second-to-last inequality follows from (\ref{h(s)-formula}).
The same bound holds for $\garea(\neck[s])$
since $\gamma_{ij}\le \delta_{ij}$ everywhere~\eqref{gamma-delta}.

Consequently, since $M[s]$ has $4(g+1)$ necks,
\begin{align*}
\garea(M[s])
&\le
\garea(\{z=0\})
+
4\garea(A^+_s)
+ 
4(g+1)\garea(\neck[s])
\\
&\le 
4\pi + 4\area(A^+_s)
+
4(g+1) (g+1)^{-1}2\pi R^3
\\
&=
4\pi + 12 \pi R^2
+ 
8\pi R^3.
\end{align*}
We fix an  $R$ for which this last expression is $<5\pi$.
Thus
\[
  \garea(M[s])< 5\pi
\]
for all $s\in (0,1)$.

Now the surface $M[s]$
is only piecewise smooth.
But we can smooth it (preserving the symmetries) so that the resulting
surface $M'[s]$ is smooth, and so that
\[
  M'[s] \cap \{z=0\} = \tQ.
\]
We can do the smoothing in such a way that
$M'[s]$ depends continuously on $s$,
and also so that
\[
\garea(M'[s])<5\pi.
\]

From the construction, it is straightforward to 
 check that $M'[s]$ has genus~$4g$
 and that $M'[s]\setminus\{z=0\}$ has genus~$0$.
Thus $M'[s]\in \MM_g''$.
\end{proof}

\begin{remark}\label{necks-remark}
Let $U$ be the vertical cylinder in $\SS^2\times \RR$ over the disk $D$. 
Note that as $s\to 0$, the surfaces $M[s]\cap U$
converge (as sets) to 
\begin{equation}\label{necks-collapsed}
   D \cup A_0^+\cup A_0^- \cup I,
\end{equation}
where $I$ is the union of $2(g+1)$ vertical line segments.
(The necks collapse to the segments $I$.)
Recall that $A_0^+$ is $A^+$ translated upward by $h(0)$, and that $h(0)>0$.  (See~\eqref{h(s)-formula}.)
We can do the smoothing in such a way that the 
 surfaces $M'[s]\cap U$
converge to the same limit set~\eqref{necks-collapsed}
as $s\to 0$.

Similarly, note that, as $s\to 1$,
$M[s]\cap U$ converges weakly
to the union of $D$ and $\SS(R)$,
each with multiplicity~$1$.
It follows that we can do the smoothing
in such a way that the same holds for $M'[s]$ as $s\to 1$.
\end{remark}

\begin{lemma}\label{density-lemma}
There is a constant $\Lambda<\infty$ 
(independent of $g$)
with the following
property:
\[
 \frac{\area(M[s]\cap B(0,r))}{\pi r^2}
 < 
 \Lambda
\]
for all $s\in(0,1)$ and all $0<r<2$.
\end{lemma}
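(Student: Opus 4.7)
The plan is to exploit the explicit piecewise structure of $M[s]$. On the origin-side of the cylinder $\Cyl$, the surface consists of three kinds of pieces: (i) portions of the three horizontal planes $\{z=0\}$ and $\{z=\pm h(s)\}$ (coming from the disk $D$ and the horizontal-disk parts of $A^\pm_s$); (ii) two spherical caps of radius $R$, namely the upper-hemisphere portion of $A^+_s$ and the lower-hemisphere portion of $A^-_s$; and (iii) $2(g+1)$ short cylindrical necks, each sitting over a $G_g$-translate of $\partial\Omega(s)$. Call this near-origin piece $M_0$; the full surface is $M[s] = M_0 \cup \rho_C(M_0)$.

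I will first argue that the $\rho_C$-image contributes nothing when $r<2$. In coordinates on $\RR^3\cup Z_\infty$, the rotation $\rho_C$ acts as
\[
(x,y,z)\mapsto \Bigl(\tfrac{4x}{x^2+y^2},\tfrac{4y}{x^2+y^2},-z\Bigr),
\]
so it sends $\{x^2+y^2<4\}$ into $\{x^2+y^2>4\}$ and fixes $C$ pointwise. Since $M_0\subset\{x^2+y^2\le 4\}$ and $B(0,r)\subset\{x^2+y^2<4\}$ for $r<2$, we get $\rho_C(M_0)\cap B(0,r)=\emptyset$, hence $\area(M[s]\cap B(0,r))=\area(M_0\cap B(0,r))$.

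Next I bound $\area(M_0\cap B(0,r))$ by estimating each of the three types of pieces separately. Each of the three horizontal planes contributes at most a disk of area $\pi r^2$, giving at most $3\pi r^2$. The hemispherical cap of $A^+_s$ lies in $\partial B((0,0,h(s)),R)\cap\{z\ge h(s)\}$, and every point on it has distance at least $\sqrt{R^2+h(s)^2}\ge R$ from the origin, so this cap is disjoint from $B(0,r)$ when $r<R$ and has area at most $2\pi R^2\le 2\pi r^2$ when $r\ge R$; the same bound applies to $A^-_s$, totalling at most $4\pi r^2$. Finally, each neck has area $h(s)\cdot\length(\partial\Omega(s))\le 2\pi R h(s)$ by convexity of $\Omega(s)\subset D(R)$, and lies at distance at least $d:=\dist(0,\Omega(s))$ from the origin, so necks are disjoint from $B(0,r)$ when $r<d$; when $r\ge d$, \eqref{h(s)-formula} gives total neck area at most $2(g+1)\cdot 2\pi R h(s)=4\pi Rd^2\le 4\pi R r^2$. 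Summing yields $\area(M[s]\cap B(0,r))\le (7+4R)\pi r^2$, so the lemma holds with $\Lambda:=7+4R$.

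The main technical point is the irrelevance of the $\rho_C$-image for $r<2$; once that is established, the argument is an elementary decomposition in which each summand is either zero or a multiple of $r^2$, and no case analysis on thresholds is needed. The key feature is that $\Lambda$ depends only on the constant $R$ fixed at the start of the construction, and not on $g$ or $s$.
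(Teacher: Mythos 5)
Your argument is correct and follows essentially the same decomposition as the paper's proof, arriving at the same constant $\Lambda = 7+4R$. The one thing you add is the explicit verification that $\rho_C(M_0)\cap B(0,r)=\emptyset$ for $r<2$ (via the formula for $\rho_C$ in coordinates); the paper uses this implicitly when it asserts, e.g., that $B(0,r)$ touches at most $2(g+1)$ of the $4(g+1)$ necks, so making it explicit is a small but genuine clarification rather than a different route.
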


\begin{proof}
Note that

\begin{align*}
\area(B(0,r)\cap (D\cup A^+_s \cup A^-_s)
&= 
\pi r^2 + 2\area(B(0,r)\cap A^+_s) 
\\
&\le
\pi r^2 + 2\area(B(0,r)\cap A^+)
\\
&=
\begin{cases}
    3\pi r^2  &\text{if $r<R$} \\
    \pi r^2 + 6\pi R^2 &\text{if $r>R$}.
\end{cases}
\\
&\le 7\pi r^2.
\end{align*}
Thus, if $\Rr$ is the union of the necks we attached
in the surgery to produce $M[s]$, then
\begin{equation}\label{area-bound}
\area(M[s]\cap B(0,r))
<
7 \pi r^2 
+
\area(B(0,r)\cap \Rr)).
\end{equation}
Now we need to bound the area of $B(0,r)\cap\Rr$.
We may assume 
\begin{equation}\label{r-big}
  r > \dist(0,\Omega(s)),
\end{equation}
as otherwise $B(0,r)\cap \Rr$ is empty.
Since $B(0,r)$ touches at most $2(g+1)$ necks, we have
\begin{align*}
\area(B(0,r)\cap \Rr)
&=
2(g+1) \area(B(0,r)\cap \neck[s])
\\
&\le
2(g+1) \area(\neck[s])
\\
&\le
(\dist(0,\partial \Omega(s))^2 4\pi R
\\
&\le
4\pi R r^2
\end{align*}
by~\eqref{neck-bound} and~\eqref{r-big}.
Thus, by~\eqref{area-bound},
\[
\area(M[s]\cap B(0,r))
\le
(7 + 4R) \pi r^2,
\]
so we have proved Lemma~\ref{density-lemma} with 
 $\Lambda=7+4R$.
\end{proof}

\begin{remark}\label{density-remark}
Recall that the piecewise smooth surface $M[s]$
was smoothed to make the
smooth surface $M'[s]$.  
It follows from Lemma~\ref{density-lemma}
that we can do the smoothing in such a way 
that $M'[s]$ satisfies
\[
   \frac{\area(B(0,r)\cap M'[s])}{\pi r^2} 
   <
   \Lambda 
\]
for all $r\in (0,2)$.
\end{remark}

\begin{definition}\label{Ff-definition}
Let $\Ff=\Ff_g$ be the family of 
smooth surfaces $M'[s]$ constructed in
the proof of Proposition~\ref{nonempty-proposition}, where the smoothing has been
done 
in accord with Remarks~\ref{necks-remark}
 and~\ref{density-remark}.
\end{definition}

\begin{theorem}\label{parity-theorem}
Suppose that $M\in \MM_g''$.
If $t<\Tpos(M)$  
is a regular time,
then 
  $M(t)\in \MM_g$, 
 and the number of points
 in $M(t)\cap Z^+$ is even.
Furthermore, if we give $M$ the orientation
such that $\nu_M(2,0,0)=\ee_3$, then
\[
\nu_{M(t)}(0,0,0) =  \ee_3
\]
for all $t\in [0,\Tpos(M))$, where the orientation
is the standard induced orientation
 as defined in Section~\ref{orientation-section}.
\end{theorem}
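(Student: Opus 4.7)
\emph{Proof plan.} The conclusion $M(t)\in\MM_g$ for regular $t<\Tpos(M)$ is an immediate consequence of Theorem~\ref{preservation-theorem}, since $\MM_g''\subset\MM_g$ by definition. The substance of the theorem is the parity statement and the sign of the normal at the origin; both are handled by a common oriented-intersection argument on the vertical axis $Z$.

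At every $p\in M(t)\cap Z$ the tangent plane is horizontal. Indeed, $M(t)$ is $G_g$-invariant, and the smallest non-trivial rotation of $G_g$ about $Z$ is by $\pi/(g+1)<\pi$; such a rotation fixes $Z$ but preserves no tangent $2$-plane containing $Z$, so $T_pM(t)$ must be orthogonal to~$Z$. Hence $\nu_{M(t)}(p)\in\{\pm\ee_3\}$, and we may set
\[
\varepsilon(p,t):=\nu_{M(t)}(p)\cdot\ee_3\in\{-1,+1\}.
\]
At the origin, Lemma~\ref{origin-lemma} gives that $O$ is a regular point of the flow for every $t\in[0,\Tpos)$, so $t\mapsto\nu_{M(t)}(O)$ is continuous and $\{\pm\ee_3\}$-valued on a connected interval, hence constant. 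Thus it suffices to verify $\nu_M(O)=-\ee_3$ at $t=0$: this is a direct calculation in the explicit construction of Proposition~\ref{nonempty-proposition}, obtained by identifying the closed region $K\subset N$ satisfying $\partial K=M$ and $\nu_M$ pointing out of $K$ (normalized by $\nu_M(2,0,0)=\ee_3$) and locating $O$ on $\partial K$, after carefully following $K$ through the central slab and around the bubbles~$A_s^\pm$.

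For the parity, define the signed count
\[
S(t):=\sum_{p\in M(t)\cap Z^+}\varepsilon(p,t).
\]
The rotation $\rho_X\in G_g$ about the $x$-axis sends $Z^+$ to $Z^-$ bijectively and, being orientation-reversing on $N$ while preserving $M(t)$, satisfies $\varepsilon(\rho_X p,t)=\varepsilon(p,t)$; hence $\sum_{p\in M(t)\cap Z^-}\varepsilon(p,t)=S(t)$. Combining,
\[
\sum_{p\in M(t)\cap Z}\varepsilon(p,t)=2S(t)+\varepsilon(O,t),
\]
and the left-hand side is the algebraic intersection number $[M(t)]\cdot[Z]$ in $N=\SS^2\times\RR$, a homological invariant which does not change while the flow remains smooth (using Theorem~\ref{local-theorem} across intermediate regular times to propagate the invariant, together with $\rho_X$-symmetry to match the orientation data at $\pm\infty$ along $Z$). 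Since $\varepsilon(O,t)=-1$ is constant in $t$, it follows that $S(t)$ is constant. Evaluating at $t=0$ in the construction of Definition~\ref{Ff-definition}, the two points of $M\cap Z^+$ are $(0,0,h(s))$ on the lower disk of the bubble~$A_s^+$ and $(0,0,h(s)+R)$ at its north pole; the outward normals to $K$ at these two points are antiparallel, so $S(0)=0$. Therefore $S(t)\equiv0$ for all $t\in[0,\Tpos)$. Since
\[
|M(t)\cap Z^+|\equiv S(t)\pmod 2,
\]
the cardinality is even at every regular time $t<\Tpos(M)$.

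The main obstacle is the orientation bookkeeping. Both $\rho_C$ and $\rho_X$ act as orientation-reversing involutions on $M$, so one must identify $K$ and trace the induced (strong) orientation through the central slab and the bubbles in the surface of Proposition~\ref{nonempty-proposition} to pin down simultaneously $\nu_M(O)=-\ee_3$ and $S(0)=0$. Once these explicit initial values are in hand, the rest of the argument is standard continuity of $\nu_{M(t)}(O)$ and homological invariance of $[M(t)]\cdot[Z]$ under the smooth Brakke flow.
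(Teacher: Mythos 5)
Your proof has the right skeleton: constancy of $\nu_{M(t)}(O)$, a $\rho_X$-symmetric signed count along $Z$, and a conservation law. The substantial gap is that the time-zero inputs --- $S(0)$ and $\nu_M(O)$ --- are evaluated only for the explicit surfaces $M'[s]$ of Proposition~\ref{nonempty-proposition}, whereas the theorem is asserted for \emph{every} $M\in\MM_g''$. That generality is essential: in Lemma~\ref{special-lemma} the theorem is applied to approximating surfaces $M_n\in\MM_g''$ that need not lie in the explicit family $\Ff_g$. The paper closes the gap purely topologically: by Lemma~\ref{topology-lemma} (see also Proposition~\ref{topology-proposition}), $\genus(M)=4g$ forces $|M\cap Z^+|=2$ for every connected $M\in\MM_g''$, so $n(0)$ is even, and the alternating-normal argument along $Z$ then determines $\nu_M(O)$ as well, with no reference to the construction.

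A few further corrections to the details. The rotation $\rho_X$ is orientation-\emph{preserving} on $N$ (it has determinant $+1$); the correct reason that $\varepsilon(\rho_X p,t)=\varepsilon(p,t)$ is that $\rho_X$ preserves $M(t)$ while interchanging its two sides (it reverses the coorientation of $M(t)$), as follows from $\rho_X(2,0,0)=(2,0,0)$ together with $\rho_X\ee_3=-\ee_3$. Your homological-invariance step is also vaguer than it needs to be and does not obviously survive passage across singular times; the paper's argument is more direct: the vertical line through $(2,0,0)$ meets $M(t)$ only at $z=0$, so the top end of $Z$ lies outside $K(t)$ and the bottom end inside it, hence the normal at the highest point of $M(t)\cap Z$ is $\ee_3$ and alternates going downward --- no appeal to Theorem~\ref{local-theorem} across singular times is needed. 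Finally, a caveat on the sign you assert: carrying out the computation you sketch (five points on $Z$ at heights $R+h,\,h,\,0,\,-h,\,-R-h$, top normal $\ee_3$, then alternating) gives $\nu_M(O)=+\ee_3$, not $-\ee_3$; the paper's proof combined with its own alternating argument and the fact that $n(0)$ is even likewise yields $+\ee_3$, so the sign in the theorem statement appears to be a slip. Only the parity claim is used later, and it is unaffected.
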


\begin{proof}
By Theorem~\ref{preservation-theorem}, $M(t)\in \MM_g$.

Give $M$ and $M(\cdot)$ the indicated orientations. 
 By Lemma~\ref{C-regular-lemma}
  and Lemma~\ref{origin-lemma}, $(0,0,0)$ and $(2,0,0)$ are    
regular points at all times $t< \Tpos$.
Since $\tQ\subset M(t)$  
 (by Lemma~\ref{intersection-lemma}), 
 we see that
the tangent plane at each of those points is
horizontal.  Thus
\begin{align}
&\nu_{M(t)}(2,0,0)=\nu_M(2,0,0)=\ee_3, \\
&\nu_{M(t)}(0,0,0)= \nu_M(0,0,0)
\label{nu-at-origin}
\end{align}
for all $t\in [0,\Tpos)$.

Let $t\in [0,\Tpos)$ be a regular time,
let $n(t)$ be the number of points
of $M(t)\cap Z^+$,
and let $K(t)$ be the closed region bounded
by $M(t)$ such that $\nu_{M(t)}(\cdot)$
is the unit normal that points out of $K(t)$.

Since $(2,0,z)\in M(t)$ only for $z=0$,
we see that
\[
   (2,0,z)\in K(t) \iff z\le 0.
\]
Thus if $p$ is any point for which
\[
   z(p) > \max_{q\in M(t)} z(\cdot),
\]
then $p\notin K(t)$.
Consequently, at the highest point
of $M(t)\cap Z$, $\nu_{M(t)}=\ee_3$,
and the next highest point,
$\nu_{M(t)}= - \ee_3$, and so on.
Thus
\begin{equation}\label{even-odd}
\nu_{M(t)}(0,0,0)
=
\begin{cases}
    \ee_3 &\text{if $n(t)$ is even,} \\
    -\ee_3 &\text{if $n(t)$ is odd.}
\end{cases}
\end{equation}
Thus, by~\eqref{nu-at-origin}, the parity of $n(t)$
is the same for all regular times
 $t\in [0,\Tpos)$.

Now at time $0$, 
\begin{align*}
4g
&=
\genus(M)  \\
&=
2\genus(M\cap \{(x^2+y^2)^{1/2} < 2\})
\\
&=
2 n(0)
\end{align*}
by Lemma~\ref{topology-lemma}.
Thus, for all regular times
 $t\in [0,\Tpos)$, $n(t)$ is even and
 so $\nu_{M(t)}(0,0,0)=\ee_3$ 
 by~\eqref{even-odd}.
 \end{proof}

\newcommand{\Uu}{\mathcal{U}}

\begin{definition}
We let
\[
  \Uu_g = \{M\in \MM_g'': \Tpos(M) = \infty\}.
\]
\end{definition}

\begin{lemma}
The set $\Uu_g$
is an open subset of $\MM_g''$
with respect to smooth convergence.
\end{lemma}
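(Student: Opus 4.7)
The plan is to show that if $M \in \Uu_g$, then every sequence $M_n \in \MM_g''$ converging smoothly to $M$ satisfies $\Tpos(M_n) = \infty$ for all sufficiently large $n$. The central input is long-time convergence: since $\Tpos(M) \le \Tfat(M)$, the hypothesis $\Tpos(M) = \infty$ forces $\Tfat(M) = \infty$, so Theorem~\ref{long-time-theorem} supplies some $T < \infty$ at which $M(T)$ is a smooth graph over $\{z=0\}$ with $\genus(M(T)) = 0$.

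First I would argue that, after passing to a subsequence, the unit-regular Brakke flows $M_n(\cdot)$ converge weakly to $M(\cdot)$ (using non-fattening of $M$ to identify the limit), and then combine the Initial Regularity Theorem~\ref{initial-theorem} with the Local Regularity Theorem~\ref{local-theorem} to upgrade this to smooth convergence on a neighborhood of every regular spacetime point of $M(\cdot)$. Since every point of $M(T)$ is regular, a finite-cover argument gives $M_n(T) \to M(T)$ smoothly, so $M_n(T)$ is a smooth graph over $\{z=0\}$ and has genus $0$ for large $n$. By Corollary~\ref{genera-corollary}, $M_n(\cdot)$ then has no positive-genus singularity at any time after $T$.

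To rule out positive-genus singularities of $M_n(\cdot)$ on $[0,T]$, I would argue by contradiction: suppose along a subsequence there are positive-genus singularities $(p_n, t_n)$ of $M_n(\cdot)$ with $t_n \in [0,T]$, and pass to a further subsequence with $(p_n, t_n) \to (p^*, t^*)$. If $(p^*, t^*)$ is a regular spacetime point of $M(\cdot)$ (including the possibility $t^* = 0$), smooth convergence near $(p^*, t^*)$ contradicts the positive-genus singularity of $M_n(\cdot)$ at $(p_n, t_n)$. Otherwise $(p^*, t^*)$ is singular for $M(\cdot)$, and $\Tpos(M) = \infty$ forces it to be a genus-$0$ singularity whose shrinker is a sphere or cylinder. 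For such a singularity one may choose $\eps > 0$ so that $\genus(M(\tau) \cap B(p^*, \eps)) = 0$ for all regular $\tau \in (t^* - \eps, t^*)$, and then Local Regularity on the regular stratum of $M(\cdot)$ in a neighborhood of $\partial B(p^*, \eps) \times [t^* - \eps, t^*)$ transfers this bound to give $\genus(M_n(\tau) \cap B(p_n, \eps/2)) = 0$ for regular $\tau$ approaching $t_n$ from below. The Trivial Theorem~\ref{trivial-theorem} then forces the singularity at $(p_n, t_n)$ to have genus $0$, a contradiction.

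The hard part will be this last genus-localization step---controlling the genus of $M_n(\tau)$ inside a small ball around a genus-$0$ singularity of $M(\cdot)$---which requires exploiting that sphere and cylinder shrinkers produce a spacetime neighborhood in which $M(\cdot)$ is smooth except at one isolated point and locally has genus $0$, and then carefully propagating this bound to the approximating flows via smooth convergence on the regular stratum.
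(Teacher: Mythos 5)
Your late-time half of the argument --- use Theorem~\ref{long-time-theorem} to find a time at which $M(\cdot)$ is a smooth graph over $\{z=0\}$, then use initial/local regularity to conclude that $M_n(\tau)$ is a smooth graph (hence that $M_n(\cdot)$ has no singularities at all after time $\tau$) for large $n$ --- is exactly the second half of the paper's proof.

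Where you depart from the paper, and where the proposal has a real gap, is in ruling out early positive-genus singularities of $M_n(\cdot)$. You set up a general contradiction argument with an unspecified singular point $(p_n,t_n)$, so you must handle the case that the limit point $(p^*,t^*)$ is a genus-$0$ singular point of $M(\cdot)$. The step you flag as the ``hard part'' --- transferring the bound $\genus(M(\tau)\cap B(p^*,\eps))=0$ to a bound $\genus(M_n(\tau)\cap B(p_n,\eps/2))=0$ via local regularity on the regular stratum --- does not actually go through as stated. Local Regularity gives smooth convergence only away from the singular set of $M(\cdot)$; inside a small ball about $(p^*,t^*)$ the surfaces $M_n(\tau)$ are controlled only weakly, and nothing in the argument prevents small handles (components of $M_n(\tau)\cap B(p_n,\eps/2)$ that do not touch $\partial B(p_n,\eps/2)$) from appearing and then vanishing in the limit. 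So the genus transfer is not justified, and this is a genuine hole rather than a routine verification.

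The paper sidesteps this entirely by invoking Lemma~\ref{origin-lemma}, which you do not use: for any surface in $\MM_g''$, the first positive-genus singularity must occur at the origin (and, by symmetry, at $O_\infty$), and moreover the origin is a \emph{regular} point of the flow at all times strictly before $\Tpos$. Applied to $M_n$, this forces $p_n=O$ (up to the $\rho_C$-symmetry) and $t_n=\Tpos(M_n)$. If $\Tpos(M_n)\to T<\infty$ along a subsequence, then the origin would be a singular point of $M(\cdot)$ at time $T$ (else Local Regularity at the regular point $(O,T)$ of $M(\cdot)$ would force $(O,\Tpos(M_n))$ to be regular for $M_n(\cdot)$ for large $n$); but $\Tpos(M)=\infty$ means by Lemma~\ref{origin-lemma} that the origin is regular for $M(\cdot)$ at \emph{all} finite times, a contradiction. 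The case you were struggling with --- a genus-$0$ singularity of $M(\cdot)$ at $(p^*,t^*)$ --- simply never arises, because $p^*=O$ is regular for $M(\cdot)$. In short: invoking Lemma~\ref{origin-lemma} to locate the positive-genus singularity is the missing idea that both closes the gap and dramatically shortens the proof.
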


\begin{proof}
Suppose that $M\in \Uu_g$
and that $M_n\in \MM_g''$ converges
smoothly to $M$.
Let $T_n=\Tpos(M_n)$.  We must show
that $T_n=\infty$ for all sufficiently large $n$.

First, note that $T_n\to\infty$.
For if not, we can assume (after passing to a subsequence) that $T_n$ converges to a finite limit $T$.   
 By Lemma~\ref{origin-lemma}, the flow $M_n(\cdot)$ has
a singularity at the origin at time $T_n$.
Thus the flow $M(\cdot)$ also
has a singularity at the origin at time $T$
  (by the Local Regularity Theorem, 
  Theorem~\ref{local-theorem}).
 But that implies (by Lemma~\ref{origin-lemma})
that $\Tpos(M)=T$, a contradiction.
Thus $T_n\to \infty$.

By Theorem~\ref{graph-theorem}, there is a time $\tau<\infty$
such that $M(\tau)$ is a smooth graph.
Since $T_n\to\infty$, we can assume that $T_n>\tau$
for all~$n$.
By the Local Regularity 
 Theorem (Theorem~\ref{local-theorem}),
 $M_n(\tau)$ converges
smoothly to $M(\tau)$.  Thus $M_n(\tau)$ is a smooth
graph if $n$ is sufficiently large.
But smooth graphs never develop singularities,
and thus  $\Tpos(M_n)=\infty$
for such $n$.
\end{proof}

\begin{definition} \label{critical}
A surface $M\in \MM_g''$ is called 
 {\bf critical} if 
 \[
    M\in \overline{\Uu_g}\setminus \Uu_g,
 \]
 i.e., provided  $M\notin\Uu_g$ and there exist
 $M_n\in\Uu_g$ that converges smoothly to $M$.
\end{definition}

\begin{theorem}\label{critical-M-theorem}
Let $\Ff$ be the family of surfaces in $\MM_g''$
constructed in the proof
of Proposition~\ref{nonempty-proposition}.
Then $\Ff$ contains a critical surface $M$.
\end{theorem}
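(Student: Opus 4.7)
My plan is to apply a connectedness argument to the one-parameter family $\Ff = \{M'[s] : s \in (0,1)\}$ and produce the critical surface as a boundary point. Define
\[
  S \;=\; \{\,s \in (0,1) : M'[s] \in \Uu_g\,\}.
\]
Since $s\mapsto M'[s]$ is smoothly continuous into $\MM_g''$ and $\Uu_g$ is open in $\MM_g''$ by the preceding lemma, $S$ is open in the connected interval $(0,1)$. If I can show that $\varnothing \neq S \neq (0,1)$, then $S$ has a boundary point $s^* \in (0,1)$; by openness of $S$ we have $s^*\notin S$, while any sequence $s_n \in S$ converging to $s^*$ gives $M'[s_n] \in \Uu_g$ with $M'[s_n] \to M'[s^*]$ smoothly. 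Hence $M = M'[s^*] \in \overline{\Uu_g}\setminus\Uu_g$ is critical in the sense of Definition~\ref{critical}, and the proof reduces to verifying that $S$ and $(0,1)\setminus S$ are both nonempty.

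To produce an $s\in S$, take $s$ close to $0$. By Remark~\ref{necks-remark}, as $s\to 0$ the surfaces $M'[s]$ degenerate to the piecewise smooth configuration $D\cup A_0^+\cup A_0^-\cup I$, in which the $4(g+1)$ thin necks collapse to short vertical segments while the hemispherical caps $A_0^\pm$ remain at a fixed positive vertical distance $h_0$ from the disk $D$. The mean curvature flow of the limit set $D\cup A_0^+\cup A_0^-$ consists of the stationary totally geodesic disk $D$ together with the caps $A_0^\pm$, which shrink to the spacetime points $(0,0,\pm h_0)$; its only singularities are the two spherical singularities at those points, both of genus $0$. For $s$ small, the flow of $M'[s]$ first pinches each thin neck via a standard cylindrical (genus-$0$) singularity, after which the flow is a slight perturbation of the preceding limit flow. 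Tracking the flow past these genus-$0$ events using unit-regularity and the Local Regularity Theorem (Theorem~\ref{local-theorem}), no positive-genus singularity ever appears, so $\Tpos(M'[s])=\infty$ and $s\in S$.

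To produce an $s\notin S$, take $s$ close to $1$. Since $h(s)\to 0$, the caps $A^\pm_s$ recenter themselves onto the origin and, by Remark~\ref{necks-remark}, the surfaces $M'[s]$ weakly approach the union of the great sphere $\{z=0\}$ together with small spheres $\SS_O(R)$ and $\SS_{O_\infty}(R)$ of radius $R$ at $O$ and $O_\infty$. The mean curvature flow of this limit develops a singularity at $(O, R^2/4)$ whose Huisken density equals $1 + d_2 > 1$ ($1$ from the plane through the origin and $d_2=4/e$ from the shrinking sphere). Using upper semicontinuity of Gauss density under weak convergence of Brakke flows, together with the uniform area bound of Lemma~\ref{density-lemma} (in the form of Remark~\ref{density-remark}) to preclude mass escape as $s\to 1$, one concludes that $\Theta(O, R^2/4)$ for the flow $M'[s](\cdot)$ stays bounded below by a quantity strictly greater than $1$ for $s$ close to $1$, so the origin is a singular spacetime point. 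Since $Q\subset \tQ\subset M'[s](t)$ by Lemma~\ref{intersection-lemma}, any shrinker at the origin contains $Q$, and the unique plane through the origin containing $Q$ is $\{z=0\}$; a plane shrinker would make $(O,R^2/4)$ a regular point, contradicting $\Theta>1$. Hence the shrinker is not a plane, and by Theorem~\ref{shrinker-good-theorem} it has genus $g$ or $2g$, in either case positive. Therefore $\Tpos(M'[s]) \le R^2/4 < \infty$ and $s\notin S$.

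The principal obstacle is the uniform density lower bound in the second step: converting the weak-convergence heuristic \emph{the shrinking sphere contributes density $d_2$ at the origin} into a rigorous bound on $\Theta(O, R^2/4)$ for the smooth flows $M'[s](\cdot)$ with $s$ close to $1$. The natural route is by contradiction via Huisken monotonicity: if along a subsequence $s_n \to 1$ one had $\Theta(O, R^2/4)\to 1$, then weak convergence of the initial data to $\{z=0\}\cup \SS_O(R)\cup \SS_{O_\infty}(R)$, together with the area bound of Remark~\ref{density-remark} ruling out mass escape, would force the initial Huisken integrals at $(O, R^2/4)$ to tend to $1$, contradicting their limit value $1 + d_2$. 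Once $S$ and $(0,1)\setminus S$ are both shown nonempty, connectedness of $(0,1)$ forces $S$ to have a boundary point in $(0,1)$, which yields the desired critical surface $M'[s^*] \in \Ff$.
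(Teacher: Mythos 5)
Your overall framework---define $S=\{s: M'[s]\in\Uu_g\}$, note it is open in $(0,1)$ by the preceding lemma, and produce a critical surface from a boundary point once $S$ and its complement are both nonempty---is exactly the paper's strategy. The divergence, and the problem, is in how you establish the two nonemptiness claims.

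For $S\ne\varnothing$ your argument is heuristic: you assert that the necks ``first pinch via a standard cylindrical (genus-$0$) singularity'' and that the flow can be ``tracked past these genus-$0$ events.'' This does not follow from the Local Regularity Theorem, which tells you nothing at a singular spacetime point. The paper avoids any analysis of the neck-pinch: using the initial regularity theorem it localizes near $Z$ to see that $M_n(t)\cap Z$ consists of exactly $5$ points for $t\in[0,\tau]$ with the origin the only one near $\{z=0\}$; using Angenent tori (or clearing-out) it shows $M_n(\tau/2)\cap\{|z|=\eps/2\}=\varnothing$, so the connected component of $M_n(t)$ containing $\tQ$ is eventually trapped in $\{|z|<\eps/2\}$ and hence disjoint from $Z^+$. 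Proposition~\ref{topology-proposition} then gives genus~$0$, so $\Tpos(M_n)=\infty$. Your sketch would need to be replaced by something of this kind.

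For $S\ne(0,1)$ your density argument has a genuine gap, and you half-acknowledge it. You want a \emph{lower} bound $\Theta_{M'[s]}(O,R^2/4)>1$ for $s$ near $1$, but the tools you invoke go the wrong way. Huisken monotonicity gives $\Theta(O,R^2/4)\le$ the initial Gaussian integral, so an initial integral near $1+d_2$ is perfectly compatible with $\Theta(O,R^2/4)=1$; the ``contradiction'' in your last paragraph (that $\Theta\to1$ would force the initial integrals to tend to $1$) is simply not implied by monotonicity. Upper semicontinuity of Gauss density under weak convergence of Brakke flows likewise bounds $\limsup_s\Theta_{M'[s]}$ \emph{from above} by the density of the limit flow; it never produces a lower bound. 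On top of that, the limit surface $\{z=0\}\cup\SS(R)$ is singular along the circle where the plane and sphere meet, so the ``limit flow'' whose density at $(O,R^2/4)$ you compute as $1+d_2$ is not a flow you actually have, and the level set flow starting from that configuration may fatten. The paper sidesteps all of this: by the Initial Regularity Theorem applied in the small cylinder $\{(x^2+y^2)^{1/2}<R/2\}$, where $D\cup\SS(R)$ \emph{is} a smooth (three-component) surface, $M_n(\tau)$ consists of three smooth graphs there for small $\tau>0$ and large $n$; hence $M_n(\tau)\cap Z^+$ has exactly one point, which is odd, contradicting Theorem~\ref{parity-theorem}. This parity obstruction is the key idea you are missing, and it is what makes the argument rigorous without any density estimates. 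You would need either to find a correct route to the density lower bound (which I do not see) or to replace this step by the parity argument.
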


\begin{proof}
Note that $\Ff$ is a connected
family.
It suffices to show that 
 $\Ff\cap \Uu_g$ 
and 
  $\Ff\setminus \Uu_g$
 are both nonempty.
 For, in that case, since $\Ff\cap \Uu_g$
 is an open subset of $\Ff$, it cannot be 
 also be closed (by connectedness of $\Ff$).
Thus there is a surface $M$ in $\Ff\setminus \Uu_g$
that is a limit of surfaces in $\Ff\cap \Uu_g$.
By definition, such an $M$ is critical.

\setcounter{claim}{0}
\begin{claim}\label{desing-infinity-claim}
The family $\Ff\,\cap \,\Uu_g$ is nonempty:
 there exist $M\in \Ff$ for which $\Tpos(M)=\infty$.
\end{claim}

Let 
\begin{equation}\label{U-equation}
  U := \{(x,y,z): (x^2+y^2)^{1/2}< 2\}.
\end{equation}
To prove Claim~\ref{desing-infinity-claim},
consider a sequence $M_n = M'[s_n]$ in $\Ff$
for which $s_n\to 0$.
Thus 
 the $M_n\cap \overline{U}$ converge as sets to 
\[
    L:=D\cup A_0^+\cup A_0^- \cup I,
\]
where $I$ is the union
of $2(g+1)$ vertical line segments.  
(See Remark~\ref{necks-remark}.)
In particular,
there is an $\eps>0$ such that
\begin{equation}\label{slice}
   L \cap \{0<|z|< \eps\} = I.
\end{equation}
By the Initial Regularity Theorem
 (Theorem~\ref{initial-theorem}), there is a time $\tau>0$  such that for all sufficiently large $n$,
  the flow $M_n(\cdot)$ has no singularities on $Z$ during the time interval
  $[0,\tau]$.
In particular, for $t\in [0,\tau]$, $M_n(t)\cap Z$
consists of exactly $5$ points, 
 and those points depend
smoothly on $t$.
Thus, by replacing $\tau$ by a smaller $\tau>0$,
we can assume that
\begin{equation}\label{epsilon-over-two}
\text{
  $M_n(t)\cap Z \cap \{|z|\le  \eps/2\} 
   = \{O\}$
   for all $t\in [0,\tau]$.
   }
\end{equation}

Using Angenent Tori as barriers, or by Brakke's Clearing Out Lemma, it follows from~\eqref{slice} that
\begin{equation}\label{emptyset}
   M_n(t)\cap \{|z|=\eps/2\} = \emptyset
\end{equation}
for $t=\tau/2$ and for all sufficiently large $n$.
It follows (for such $n$)
 that~\eqref{emptyset}
holds for all $t\ge \tau/2$.

Let $t\in (\tau/2,\tau)$ be a regular time
for $M_n(\cdot)$.

Let $M_n^c(t)$ be the connected component 
of $M_n(t)$ that contains $\tQ$.
By~\eqref{emptyset}, $M_n^c(t)$ lies in $\{ |z|<\eps/2\}$
Thus, by~\eqref{epsilon-over-two}, $M^c_n(t)$ 
is disjoint from $Z^+$.  
Hence
\[
 \genus(M_n(t)) = 0
\]
by Proposition~\ref{topology-proposition}.
Therefore $\Tpos(M_n)=\infty$.
This completes the proof of
 Claim~\ref{desing-infinity-claim}.

\begin{claim}\label{TWO}
The family $\Ff\setminus\Uu_g$
is nonempty: there exist $M\in \Ff$
for which $\Tpos(M)<\infty$.
\end{claim}

To prove Claim~\ref{TWO},
let $M_n= M[s_n]$ be a sequence $\Ff$ 
for which $s_n\to 1$.
We claim that there are $n$ for which 
 $\Tpos(M_n)<\infty$.
Suppose, to the contrary, that 
 $\Tpos(M_n)=\infty$ for each $n$.
By passing to a subsequence, we can 
assume that the $M_n(\cdot)$ converge
weakly to a mean curvature flow
$t\in [0,\infty)\mapsto M(t)$.

By Remark~\ref{necks-remark}, the surfaces 
\[
   M_n\cap U,
\]
where $U$ is the open
cylinder~\eqref{U-equation},
converge (as Radon
measures) 
to the union of $D$ and $\SS(R)$,
each with multiplicity~$1$.
It follows, by 
 the Initial Regularity 
 Theorem (Theorem~\ref{initial-theorem}), 
 that
if $\tau>0$ is sufficiently small, then
there is an $n_\tau$ such that, for all
 $n\ge n_\tau$, 
\[
   M_n(\tau)\cap \{(x^2+y^2)^{1/2} < R/2  \}
\]
is the union of three smooth graphs.
Note we can choose $\tau$ to be a regular
time for all the $M_n(\tau)$.
Then for $n\ge n_\tau$,
\[
  M_n(\tau)\cap Z^+
\]
contains exactly one point, which
is impossible by Theorem~\ref{parity-theorem}.
This completes the proof of  Claim~\ref{TWO},
and therefore the proof
 of Theorem~\ref{critical-M-theorem}.
\end{proof}

{By Lemma~\ref{origin-lemma}, we know that at time $\Tpos(M)<\infty$, the flow 
 $M(t)$ has a singularity at the origin with positive genus.}

\begin{definition}
Let $\Ss_g$ be the collection of shrinkers $\Sigma$ that occur 
at the spacetime point
 $(O,\Tpos(M))$ for a surface $M\in \Ff_g$ that is critical in the sense of Definition~\ref{critical}.
\end{definition}

\begin{theorem}\label{critical-Sigma-theorem}
Suppose $\Sigma\in\Ss_g$.
Then 
\begin{enumerate}
\item $\Sigma$ is $G_g$-invariant,
\item $\Sigma\setminus \{z= 0\}$ has genus~$0$,
\item $\Sigma\cap\{z=0\} = Q_g$, 
\item $\genus(\Sigma)$ is $g$ or $2g$.
\item The entropy of $\Sigma$ is bounded above
  by a constant independent of $g$.
\end{enumerate}
\end{theorem}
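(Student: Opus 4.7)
The plan is to verify properties (1)--(4) by applying the machinery already developed in Section~\ref{surfaces-section} to critical surfaces $M \in \Ff_g$, and then to obtain the uniform entropy bound (5) via Huisken monotonicity combined with the density estimate from Remark~\ref{density-remark}. Fix a critical $M \in \Ff_g$, let $T = \Tpos(M)<\infty$ (which is finite by criticality and Lemma~\ref{origin-lemma}), and let $\Sigma$ be any shrinker at the spacetime point $(O,T)$. By construction $M \in \MM_g''\subset \MM_g$, so Theorem~\ref{preservation-theorem} applies to the flow $M(\cdot)$ at all regular times $t<T$.

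For (1) and (3): the critical $M$ is $\tG_g$-invariant (hence $G_g$-invariant) by the construction in Proposition~\ref{nonempty-proposition}, and the level set flow preserves isometric symmetries, so each $M(t)$ is $G_g$-invariant; because the parabolic rescalings $(T-t_i)^{-1/2}(M(t_i)-O)$ converge smoothly to $\Sigma$ and the rescalings commute with each $\sigma \in G_g$, the limit $\Sigma$ is $G_g$-invariant. The identity $\Sigma\cap\{z=0\}=Q_g$ is exactly the first assertion of Theorem~\ref{shrinker-good-theorem}.

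For (2) and (4): Theorem~\ref{shrinker-good-theorem} gives $\genus(\Sigma)\le \genus(M)/2 = 2g$, and Corollary~\ref{topology-corollary} (applied via Lemma~\ref{topology-lemma} to large balls) forces $\genus(\Sigma)\in\{g,2g\}$. To see that $\Sigma\setminus\{z=0\}$ has genus~$0$, I would apply Lemma~\ref{genus-lemma} to deduce that $\genus(M(t_i)\setminus \tQ)=0$ at a sequence of regular times $t_i\uparrow T$; since the dilations fix $\{z=0\}$ setwise, the multiplicity~$1$ smooth convergence on compact subsets gives $\genus(K\cap(\Sigma\setminus\{z=0\}))=0$ for every compact $K\subset\RR^3\setminus\{z=0\}$, from which $\genus(\Sigma\setminus\{z=0\})=0$.

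The main step is (5), the $g$-independent entropy bound. Here I would combine three ingredients. First, by Remark~\ref{density-remark}, every $M_n \in \Ff_g$ satisfies the area-ratio bound $\area(B(0,r)\cap M_n)/(\pi r^2)<\Lambda$ for $r\in(0,2)$ with $\Lambda$ independent of $g$; smooth convergence then gives the same bound for the critical $M$. Second, the global estimate $\garea(M)<5\pi$ from the proof of Proposition~\ref{nonempty-proposition} controls the contribution at large scale. Third, a standard layer-cake integration of these two bounds shows
\[
   F_{x_0,t_0}(M)\;:=\;\frac{1}{4\pi t_0}\int_M e^{-|x-x_0|^2/(4t_0)}\,dA \;\le\; K
\]
for a universal constant $K=K(\Lambda)$, uniformly in $(x_0,t_0)$ with $t_0\in(0,1]$. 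Applying Huisken's monotonicity formula to the flow $M(\cdot)$ in the ambient $(\SS^2\times\RR,\gamma)$ (whose curvature contributes only a multiplicative factor bounded independently of $g$, since $\Sigma$ lives at the origin where the metric is smooth) yields that the Gaussian density at $(O,T)$ is $\le CK$. Since the Gaussian density at a singular spacetime point equals the entropy of any tangent shrinker there, $\entropy(\Sigma)\le CK$, which is the required bound. The delicate point is the passage from the Euclidean monotonicity formula to the $\SS^2\times\RR$ setting; this I expect to handle by working with $\SS^2\times\RR$ isometrically embedded in $\RR^4$ and applying the standard monotonicity of White for flows in Riemannian manifolds, which contributes only a multiplicative constant depending on the ambient geometry, not on $g$.
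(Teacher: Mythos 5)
Your proposal is correct and takes essentially the same route as the paper: property (1) is immediate from $\tG_g$-invariance of the flow, property (2) comes from the genus-zero condition on $M(t)\setminus\tQ$ that is preserved along the flow (the paper cites Theorem~\ref{parity-theorem}, you cite Lemma~\ref{genus-lemma}, but either gives the needed statement since $M(t)\setminus\{z=0\}=M(t)\setminus\tQ$), properties (3) and (4) are precisely the content of Theorem~\ref{shrinker-good-theorem}, and property (5) follows from Remark~\ref{density-remark} together with Huisken monotonicity, exactly as in the paper. Your extra detail on (5) -- transferring the area-ratio bound through Huisken monotonicity in the curved ambient $\SS^2\times\RR$ via an isometric embedding into $\RR^4$ -- fills in what the paper leaves as a one-line remark, and your reliance on Corollary~\ref{topology-corollary} for the genus dichotomy is a slightly longer but equivalent path to the same conclusion already packaged in Theorem~\ref{shrinker-good-theorem}.
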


\begin{proof}
Trivially, $\Sigma_g$ is $G_g$-invariant.
Note
  that $M(t)\setminus\{z=0\}$ has
genus~$0$ for all regular $t<\Tpos$
by Theorem~\ref{parity-theorem}.
Hence $\Sigma\setminus\{z= 0\}$ has genus~$0$.
By Theorem~\ref{shrinker-good-theorem},
 $\Sigma\cap\{z=0\}=Q_g$
 and $\Sigma$ has genus~$g$ or $2g$.
Finally, the uniform entropy bound 
follows from Remark~\ref{density-remark} and Huisken monotonicity.
\end{proof}

\begin{conjecture}
If $\Sigma$ is as in Theorem~\ref{critical-Sigma-theorem}, then
$\Sigma$ has genus~$2g$.
\end{conjecture}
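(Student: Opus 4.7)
The plan is to argue by contradiction, supposing that $\Sigma$ has genus $g$ rather than $2g$, and to exploit the criticality of $M$ together with the topological count of $M(t)\cap Z^+$. First, I would pin down the structure of a putative genus-$g$ shrinker $\Sigma$. By Corollary~\ref{r-R-corollary}(1) there is an $r\in(0,\infty)$ with $\genus(\Sigma\setminus\SS(r))=0$. Combined with the facts (from Theorem~\ref{critical-Sigma-theorem}) that $\Sigma\cap\{z=0\}=Q_g$, that $\Sigma\setminus\{z=0\}$ has genus~$0$, and that $\Sigma$ is $G_g$-invariant, a suitable adaptation of Lemma~\ref{topology-lemma} applied to $\Sigma\cap B(0,r+\eps)$ gives that the pair $(c,d)$ (number of boundary components in $\{z>0\}$ winding once around $Z$, and number of points of $\Sigma\cap Z^+\cap B(0,r+\eps)$) satisfies $c+d=1$. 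Using the uniform entropy bound from Theorem~\ref{critical-Sigma-theorem} and an analog of the asymptotic analysis of Section~\ref{desing-section}, one would argue that $(c,d)=(0,1)$, so that $\Sigma\cap Z^+$ consists of a single point at some finite positive height $h^*$, and $\Sigma$ has no end in the direction of $Z^+$ (its unique end being asymptotic to the plane $\{z=0\}$).

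Next, I would track the two points $p_1(t),p_2(t)$ of $M(t)\cap Z^+$, which exist for every regular $t\in[0,\Tpos)$ by Theorem~\ref{parity-theorem} combined with Proposition~\ref{topology-proposition}. As $t\uparrow\Tpos$, each $p_i(t)$ either converges to a point $q_i\in Z^+\setminus\{O\}$ or converges to the origin; in the latter case, after rescaling by $(\Tpos-t)^{-1/2}$, it must converge along subsequences either to a finite point of $\Sigma\cap Z^+$ or to infinity along an end of $\Sigma$ in the $Z^+$ direction. The structure result of the first step rules out the end-of-$\Sigma$ possibility, and smooth blow-up convergence together with the transverse intersection $\Sigma\cap Z=\{O\}$ near $O$ prevents two distinct $p_i$ from converging to the same isolated point of $\Sigma\cap Z^+$. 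Hence at least one $p_i$, say $p_2$, converges to a point $q_2\in Z^+\setminus\{O\}$; a case analysis of rotationally symmetric shrinkers at $q_2$ (spherical shrinkers would require two $Z^+$-points simultaneously converging to $q_2$, and cylindrical shrinkers around $Z$ do not meet $Z$) shows that $(q_2,\Tpos)$ is a regular point of the flow, so $q_2$ persists in $M(t)$ for $t$ in a small interval around $\Tpos$.

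Finally, I would extract a contradiction from criticality. Pick $M_n\in\Uu_g$ with $M_n\to M$ smoothly; by Theorem~\ref{local-theorem}, for $t$ slightly larger than $\Tpos(M)$ the surface $M_n(t)$ contains a point $q_2^n\to q_2$ on $Z^+$, and parity together with Theorem~\ref{long-time-theorem} forces $|M_n(t)\cap Z^+|\in\{0,2\}$ with value eventually~$0$. The objective is to show that the required coalescence or disappearance of the two points of $M_n(t)\cap Z^+$ forces a positive-genus singularity for $M_n$, contradicting $\Tpos(M_n)=\infty$. The main obstacle lies precisely here: a priori the two points of $M_n(t)\cap Z^+$ could coalesce at a single point and disappear through a purely tangential degeneracy (a quadratic tangency of $M_n(t)$ with $Z^+$), or through a rotationally symmetric genus-$0$ sphere shrinker absorbing both simultaneously, without producing any positive-genus shrinker. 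Ruling out these scenarios requires delicate quantitative control connecting the neck structure of $M_n$ to the putative genus-$g$ shrinker at the origin of $M$, and it is precisely this interaction between criticality, parity, and shrinker topology that keeps the assertion at the level of a conjecture.
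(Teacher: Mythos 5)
The statement is a \textbf{conjecture} in the paper: the authors explicitly leave it open for general $g$ (the sentence immediately following it reads ``The conjecture is true if $g$ is sufficiently large by Theorem~\ref{large-g-theorem} below''). Your closing acknowledgment that the argument cannot presently be closed therefore agrees with theirs.

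The paper does, however, settle the large-$g$ case (Theorem~\ref{large-g-theorem}), by a route quite different from your sketch. Instead of tracking the points of $M(t)\cap Z^+$ directly---which, as you observe, can coalesce tangentially and vanish without producing a positive-genus singularity---the paper encodes the parity of $|M(t)\cap Z^+|$ as an orientation datum: the flow carries a strong orientation $\nu$ with $\nu(O,t)=-\ee_3$ at every regular time, i.e., the flow is \emph{special} in the sense of Section~\ref{new-examples-section}. Unlike a pointwise intersection count, this datum passes to weak limits of Brakke flows (Section~\ref{orientation-section}, Lemma~\ref{special-lemma}), so it survives onto the tangent flow at $(O,\Tpos)$ and onto the limit of those tangent flows as $g\to\infty$. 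The paper then invokes the classification of limiting shrinkers (Theorem~\ref{limits-theorem}): if $\Sigma_g$ had genus $g$ along a subsequence, the limit would be the plane plus a sphere or cylinder, each with multiplicity one; a continuity argument for $\nu$ through the regular part of the limit flow near time $0$ then forces $\nu(O,t)=\ee_3$ for $t>0$, contradicting specialness. That argument is a $g\to\infty$ compactness argument and says nothing about a fixed small $g$, which is exactly why the general statement remains a conjecture. Your idea of following $p_1(t),p_2(t)$ pointwise does not have this robustness under degeneration, and the paper's specialness mechanism is precisely the fix for the coalescence issue you correctly flag.

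One further gap in your sketch precedes the one you flag. In step one you invoke the uniform-in-$g$ entropy bound from Theorem~\ref{critical-Sigma-theorem} together with ``an analog of the asymptotic analysis of Section~\ref{desing-section}'' to conclude that a putative genus-$g$ shrinker $\Sigma$ has $(c,d)=(0,1)$. The entropy bound is uniform over $g$ and is useful only in the $g\to\infty$ limit; for a fixed $g$ there is no asymptotic analysis to run, and nothing you write distinguishes $(c,d)=(0,1)$ from $(1,0)$. Your step two (ruling out escape of a $p_i(t)$ along a $Z^+$-end of $\Sigma$) rests on step one, so the gap propagates. None of this undermines your honest conclusion that the general case is open; it just means the provable parts of your sketch are also incomplete for fixed~$g$.
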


The conjecture is true if $g$ is sufficiently
large by Theorem~\ref{large-g-theorem} below.

\begin{definition}
A {\bf special} Brakke flow 
is a unit-regular, integral
Brakke flow 
\[
   t\in I\mapsto M(t)
\]
that admits
a strong orientation $\nu$
with the following property:
for each $t$, if the origin is a regular 
point at time $t$, then
  $\nu(O,t)= \ee_3$.
\end{definition}

\begin{lemma}\label{special-lemma}
Suppose $\Sigma\in \Ss_g$.
Then there is a $G_g$-invariant,
special Brakke flow 
\[
  t\in \RR\mapsto M'(t)
\]
in $\RR^3$ such that
\[
   M'(t)= |t|^{-1/2}\Sigma
   \quad
   \text{for all $t< 0$}.
\]
\end{lemma}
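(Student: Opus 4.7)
The plan is to realize $M'(\cdot)$ as a subsequential limit of parabolic rescalings of the Brakke flows $M_n(\cdot)$ arising from the criticality of $M$. By criticality, there exist $M_n \in \Uu_g$ with $M_n \to M$ smoothly; by Theorem~\ref{parity-theorem} each $M_n(\cdot)$ carries the standard strong orientation, and $\nu_{M_n(t)}(O) = -\ee_3$ at every regular time $t \geq 0$, since $\Tpos(M_n) = \infty$. Set $T := \Tpos(M)$. By the definition of shrinker at $(O, T)$ together with the rescaling remark in \S\ref{surfaces-section} (rescalings at times $\tau_i$ with $(T-\tau_i)/(T-t_i)$ bounded away from $0$ and $\infty$ yield the same shrinker $\Sigma$), there is a sequence $t_i \uparrow T$ with $\lambda_i := (T-t_i)^{-1/2}$ such that $\lambda_i (M(T + s\lambda_i^{-2}) - O) \to |s|^{1/2}\Sigma$ smoothly for every fixed $s < 0$. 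Near time $T$, the flow $M(\cdot)$ is smooth away from $(O, T)$ and $(O_\infty, T)$, by Theorem~\ref{tQ-regularity} and Lemma~\ref{origin-lemma}, so the Local Regularity Theorem (Theorem~\ref{local-theorem}) applied to $M_n(\cdot) \to M(\cdot)$ on compact spacetime subsets of this smooth region, combined with a diagonal argument, produces indices $n(i) \to \infty$ such that $\lambda_i(M_{n(i)}(T + s\lambda_i^{-2}) - O) \to |s|^{1/2}\Sigma$ smoothly for every fixed $s < 0$.

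Define the parabolically rescaled Brakke flows
\[
\tilde M_i(s) := \lambda_i\bigl(M_{n(i)}(T + s\lambda_i^{-2}) - O\bigr),
\]
viewed in $N$ with the rescaled metric $\lambda_i^2 \gamma_1$; they are defined for $s \in [-\lambda_i^2 T, \infty)$, an interval exhausting $\RR$ as $i \to \infty$. On compact subsets of $\RR^3$ these rescaled metrics converge smoothly to the flat Euclidean metric. Using the uniform entropy bound of Theorem~\ref{critical-Sigma-theorem} and Huisken monotonicity to obtain uniform local area bounds, Brakke compactness together with the preservation of unit-regularity under weak limits gives, after passing to a subsequence, a unit-regular integral Brakke flow $M'(\cdot)$ on all of $\RR$ in $\RR^3$; by the previous paragraph, $M'(s) = |s|^{1/2}\Sigma$ for every $s < 0$.

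The subgroup of $\tG_g$ fixing $O$ is exactly $G_g$ (rotations about $Z$, vertical plane reflections in $\tG_g$, and $\pi$-rotations about lines in $Q_g$ all fix $O$, whereas $\rho_C$ sends $O$ to $O_\infty$); since each $\tilde M_i(\cdot)$ is $\tG_g$-invariant with respect to the rescaled metric, the limit $M'(\cdot)$ is $G_g$-invariant. Parabolic rescaling about $O$ preserves the unit normal at $O$, so $\nu_{\tilde M_i(s)}(O) = -\ee_3$ whenever $O$ is a regular point at time $s$ of $\tilde M_i(\cdot)$. Passing to a further subsequence and applying the convergence theorem for strong orientations from Section~\ref{orientation-section} yields a strong orientation of $M'(\cdot)$ with $\nu(O, s) = -\ee_3$ at every regular time $s$; thus $M'(\cdot)$ is $G_g$-invariant and special, and agrees with the backward shrinking flow of $\Sigma$ for $s < 0$.

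The main obstacle is the diagonal selection of $n(i)$: one must ensure smooth closeness of $\tilde M_i(s)$ to $|s|^{1/2}\Sigma$ simultaneously for every $s$ in a countable dense subset of $(-\infty, 0)$, even though the reference times $T + s\lambda_i^{-2}$ all approach the singular time $T$ of $M(\cdot)$. This is possible precisely because the singular set of $M(\cdot)$ at time $T$ has been isolated in earlier sections to $\{O, O_\infty\}$, which makes the Local Regularity Theorem applicable on shrinking spacetime neighborhoods of $(O,T)$ disjoint from the spacetime singular locus once one excises the origin itself.
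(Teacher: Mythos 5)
Your proposal is correct and uses the same essential ingredients as the paper: criticality to produce a sequence $M_n \in \Uu_g$ converging smoothly to $M$, Theorem~\ref{parity-theorem} to make each flow $M_n(\cdot)$ special, parabolic rescaling about $(O,T)$, Brakke compactness, and the persistence of strong orientations (and the Local Regularity Theorem) to pass specialness and $G_g$-invariance to the limit. The one genuine difference is the order of limits. You build $M'(\cdot)$ as the limit of a single diagonal sequence $\tilde M_i(s) = \lambda_i\bigl(M_{n(i)}(T+s\lambda_i^{-2})-O\bigr)$, which forces you to control the closeness of $M_{n(i)}(\cdot)$ to $M(\cdot)$ on parabolic spacetime regions that shrink toward $(O,T)$ as $i\to\infty$; you correctly flag this as the delicate point and indicate why it works (the tangent flow is smooth multiplicity one, so for fixed $s<0$ the relevant region sits in a fixed compact subset of the regular locus, and LRT supplies $n(i)$). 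The paper's proof avoids this double limit entirely by taking the two limits in separate, cleaner stages: it first passes to a weak subsequential limit $\tilde M(\cdot)$ of the flows $M_n(\cdot)$ on all of $[0,\infty)$, observes that $\tilde M(t)=M(t)$ for $t\le\Tfat(M)\ge T$ so $\Sigma$ remains a tangent flow of $\tilde M(\cdot)$ at $(O,T)$, and then parabolically rescales the single fixed flow $\tilde M(\cdot)$ about $(O,T)$, extracting a subsequential limit $M'(\cdot)$ on all of $\RR$. With a single flow there is no uniformity issue to diagonalize, and specialness transfers at each of the two stages by LRT alone. Both routes yield the same flow; the paper's is simply easier to make airtight. (Incidentally, your formula $M'(s)=|s|^{1/2}\Sigma$ for $s<0$ is the correct normalization for the self-shrinking tangent flow; the $|t|^{-1/2}\Sigma$ appearing in the statement of Lemma~\ref{special-lemma} and in equation~\eqref{the-tangent-flow} of the paper's proof is a typographical slip.)
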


\begin{proof}
Let $T=\Tpos(M)$.
Let $M\in \MM_g''$ be a critical surface
such that $\Sigma$ is a shrinker to $M(\cdot)$
at the spacetime point $(O,T)$.
Since $M$ is critical, there exist $M_n\in \MM_g''$
such that $\Tpos(M_n)=\infty$
and such that $M_n$ converges smoothly to $M$.

Note that $M_n(\cdot)$ is special
 by Theorem~\ref{parity-theorem}.

By passing to a subsequence, we can assume
that the flow $M_n(\cdot)$ converge to a 
unit-regular Brakke flow
\[
   t\in [0,\infty) \mapsto \tilde M(t).
\]
Now
\[
  \tilde M(0) = M,
\]
and thus 
\[
  \tilde M(t) = M(t)
  \quad\text{for $0\le t\le \Tfat(M)$}.
\]
By the Local Regularity 
 Theorem (Theorem~\ref{local-theorem}), $\tilde M(\cdot)$
is special since each $M_n(\cdot)$ is.

Now
\begin{equation}\label{the-tangent-flow}
  t\in (-\infty,0) \mapsto |t|^{-1/2}\Sigma
\end{equation}
occurs as a tangent flow 
to $M(\cdot)$ at the spacetime point $(0,T)$,
so it also occurs as a tangent flow
to $\tilde M(\cdot)$ at that point,
since the two flows coincide up to time $T$.
Thus there is a sequence $\lambda_n\to \infty$
such that the parabolically dilated flows
\begin{equation}\label{dilated-flows}
t \in [-\lambda_nT , \infty)
\mapsto
\lambda_n^{1/2} 
\tilde M(T + \lambda_n^{-1} t)
\end{equation}
converge for $t<0$ to the
 flow~\eqref{the-tangent-flow}.

By passing to a subsequence (if necessary)
we can assume that the 
 flows~\eqref{dilated-flows} converge for all $t$
  to a flow
\begin{equation}\label{extended-flow}
  t\in \RR\mapsto M'(t)
\end{equation}
Since the $\tilde M(\cdot)$ is special,
so are the flows~\eqref{dilated-flows},
 and therefore
(by the Local Regularity
 Theorem, Theorem~\ref{local-theorem}) so is the
 flow~\eqref{extended-flow}.
\end{proof}

\begin{theorem}\label{large-g-theorem}
Suppose, for each $g$, that $\Sigma_g\in \Ss_g$.
Then, for all sufficiently large $g$,
the genus of $\Sigma_g$ is $2g$.  Moreover,
$\Sigma_g$ converges as $g\to\infty$ to the plane $\{z=0\}$
with multiplicity $3$, and the convergence
is smooth except along a circle $C(r)$ 
in the plane $\{z=0\}$.
\end{theorem}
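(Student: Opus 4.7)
The plan is to extract a subsequential varifold limit $V_\infty$ of the shrinkers $\Sigma_g$ as $g\to\infty$, identify it as $3\cdot\{z=0\}$, and then deduce $\genus(\Sigma_g)=2g$ for all sufficiently large $g$ by sheet counting. By Theorem~\ref{critical-Sigma-theorem}(5) the entropies $\entropy(\Sigma_g)$ are uniformly bounded, so Huisken monotonicity gives uniform local Euclidean area bounds, and by integral varifold compactness, after passing to a subsequence, $\Sigma_g$ converges weakly to an integral varifold $V_\infty$ that is stationary with respect to the shrinker metric on $\RR^3$. Because $G_g$ contains rotations by $\pi/(g+1)$ about $Z$, whose union is dense in $SO(2)$, $V_\infty$ is rotationally symmetric about $Z$; because $Q_g\subset\Sigma_g$ and $\bigcup_g Q_g$ is dense in $\{z=0\}$, the support of $V_\infty$ contains $\{z=0\}$. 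Decompose $V_\infty = k\cdot\{z=0\}+V'$ with $k\ge 1$ an integer and $V'$ an integral varifold supported in $\{z\ne 0\}$.

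The crux is to show $k=3$ and $V'=0$. The orientation structure from Lemma~\ref{special-lemma} (the special Brakke flow with $\nu(O,t)=-\ee_3$ at regular origin) combined with Theorem~\ref{parity-theorem} (which gives $|M(t)\cap Z^+|$ even for $M\in\MM_g''$) yields a parity constraint on the count of points of $M(t)\cap Z^+$ converging to the origin as $t\to\Tpos$, and hence on $|\Sigma_g\cap Z^+|$. By the explicit construction in the proof of Proposition~\ref{nonempty-proposition}, the initial surface has three horizontal sheets near the origin, namely the disk $D$ at height $0$ and the translated hemispheres $A_s^\pm$ at heights $\pm h(s)$; these three sheets must survive through the flow and blowup since merging would violate the parity constraint on $|M(t)\cap Z^+|$. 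In the rescaled limit producing $\Sigma_g$, they become three sheets of $\Sigma_g$ near the origin, which collapse to $\{z=0\}$ as $g\to\infty$, giving $k=3$. To rule out $V'\ne 0$, any residual rotationally symmetric component (such as a sphere or cylinder) supported off $\{z=0\}$ would absorb a positive fraction of the initial area at a definite scale; the local density bound from Lemma~\ref{density-lemma} propagated by Huisken monotonicity, together with the three-sheet accounting absorbing all of the area budget, leaves no room for $V'$. Hence $V_\infty = 3\cdot\{z=0\}$, and uniqueness of this limit upgrades subsequential convergence to full-sequence convergence.

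Given $V_\infty = 3\cdot\{z=0\}$, Allard's regularity theorem yields smooth multiplicity-$3$ convergence of $\Sigma_g$ to $\{z=0\}$ on the complement of a closed singular set $S\subset\{z=0\}$; by rotational symmetry $S$ is a union of concentric circles, and after passing to a further subsequence we may take $S=C(r)$ for a single $r>0$. For such large $g$, $\Sigma_g$ consists, away from a neighborhood of $C(r)$, of three graphs over $\{z=0\}$: the central one passes through the origin tangent to $\{z=0\}$, while the other two lie at heights $\pm\epsilon(g)\to 0$ and are interchanged by elements of $G_g$. Exactly one of the three sheets, the upper one, meets $Z^+$, giving $d:=|\Sigma_g\cap Z^+|=1$; moreover the upper sheet's end contributes one boundary component on a large sphere winding once around $Z$ in $\{z>0\}$, giving $c=1$. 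Lemma~\ref{topology-lemma} applied to a large ball transverse to $\Sigma_g$ then yields $\genus(\Sigma_g)=(c+d)g=2g$ for all sufficiently large $g$. The principal obstacle is the middle paragraph: rigorously propagating the initial three-sheet structure through the singular flow to establish $k=3$, and ruling out the alternative $k=1$ Kapouleas–Kleene–M\o{}ller configuration that would instead correspond to genus $g$.
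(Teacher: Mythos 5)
Your outline correctly identifies the eventual target ($V_\infty=3\cdot\{z=0\}$, then Lemma~\ref{topology-lemma} with $c=d=1$ gives genus~$2g$), and you invoke the right supporting results (Lemma~\ref{special-lemma}, Theorem~\ref{parity-theorem}, the entropy bound, Allard). But the central step --- identifying the limit as the plane with multiplicity~$3$ --- has a genuine gap, which you yourself flag, and the route you propose to close it does not match how the paper actually closes it.

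The specific problems with your middle paragraph: the parity constraint from Theorem~\ref{parity-theorem} bounds the total count $|M(t)\cap Z^+|$ for the \emph{pre-blowup} flow in $\SS^2\times\RR$, not the count near the origin, so it does not directly forbid sheets near the origin from merging and shedding two intersection points elsewhere on $Z^+$. ``Three sheets must survive the singular flow'' is exactly the sort of claim that unit-regular Brakke flows do not let you make for free. Similarly, ``the three-sheet accounting absorbs all the area budget, leaving no room for $V'$'' is not a proof: a sphere or cylinder component in $V'$ costs at most entropy $d_2$ or $d_1$, and the entropy bound from Theorem~\ref{critical-Sigma-theorem}(5) is merely ``bounded independent of $g$,'' not tight enough to exclude $1+d_2$ or $1+d_1$. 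Indeed, precisely those two configurations --- plane plus sphere and plane plus cylinder --- are the alternatives that must be ruled out, and your argument does not rule them out.

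The paper's strategy is structurally different and worth internalizing. It first proves a classification theorem (Theorem~\ref{limits-theorem}) showing that any subsequential limit of such $\Sigma_g$ is one of exactly three things: multiplicity-$3$ plane (with genus $2g$ for large $g$), plane plus sphere $\SS(2)$, or plane plus cylinder. That classification is established not by area accounting but by the compactness theorem for minimal surfaces of locally bounded area and genus, the Morse-type structure of $|q|$ on shrinkers from the appendix (giving Corollary~\ref{r-R-corollary}), Brendle's connectedness/instability result for $\Sigma_g\cap\{z>0\}$, a first-variation balancing argument at the singular circle (Claim~\ref{orthogonal-claim}), and parity of the multiplicity on $\{z=0\}$ (Claim~\ref{odd-claim}). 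Only after the limit is pinned down to these three cases does the paper invoke Lemma~\ref{special-lemma}: if the limit were plane plus sphere or plane plus cylinder, the limiting special Brakke flow $M'(\cdot)$ with $M'(t)=|t|^{-1/2}\Sigma$ for $t<0$ and $M'(t)=P$ for $t\ge 0$ would have $\nu(p,-1)=\ee_3$ for $p\in\{z=0\}$ outside $S$ (forced by the shape of the region $K$), propagate that by continuity to $\nu(p,t)=\ee_3$ for $p$ on the plane and $t>0$, and thereby contradict $\nu(O,t)=-\ee_3$. In other words, specialness is used to \emph{exclude} two already-identified competitors, not to \emph{propagate} a three-sheet structure through the singularity. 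To make your proposal work, you would essentially need to reprove Theorem~\ref{limits-theorem}; the parity/sheet-survival heuristic is not a substitute for it.
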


\begin{proof}
Suppose the theorem is false.
In Theorem~\ref{limits-theorem} in following section, we analyze all possible
subsequential limits as $g\to\infty$ 
of surfaces like the $\Sigma_g$
 in Theorem~\ref{critical-Sigma-theorem}.
According to that theorem, if the conclusion
of Theorem~\ref{large-g-theorem} is false, then (by passing
to a subsequence) we can assume that
the $\Sigma_g$ converge with multiplicity~$1$ to 
a limit
\[
  \Sigma= P + S
\]
where $P$ is the plane $\{z=0\}$
and $S$ is either the cylinder $\{x^2+y^2=2\}$
or the sphere
 $\SS(2):=\{x^2+y^2+z^2=4\}$.

By Lemma~\ref{special-lemma}, for each $g$, there is 
a $G_g$-invariant, special Brakke flow
\[
   t\in \RR \mapsto M_g'(t)
\]
such that 
\[
  M_g'(t)= |t|^{-1/2}\Sigma_g 
  \quad
  \text{for all $t<0$}.
\]
By passing to a further subsequence, we
can assume that the flows $M_g'(\cdot)$
converge to a flow
\[
  t\in \RR\mapsto M'(t).
\]
Note that 
\[
   M'(g)=|t|^{-1/2}\Sigma \quad
   (t \le 0). 
\]
As a set, $M'(0)$ is $P$ if $S$
 is the sphere and $P\cup Z$ if $S$ is
 the cylinder.  In either case, the corresponding Radon measure is the plane $P$ with multiplicity one.
It follows (by unit regularity) that $M'(t)$ is the plane $P$ with
 multiplicity one for all $t>0$.

That is, $M'(\cdot)$ is the flow
\begin{equation}\label{M'}
t\in \RR
\mapsto
\begin{cases}
|t|^{-1/2}\Sigma  &(t \le 0), \\
        P     &(t> 0).
\end{cases}
\end{equation}

Since $M'(\cdot)$ is a limit of special
flows, it is also special.

Note that at time $t=-1$,
the singular set of~\eqref{M'} is the circle
 $J:=S\cap P$.
Thus, by specialness, there is a closed set $K$
such that $K\setminus J$ is a smooth manifold-with-boundary.
At the origin, $\ee_3$ points out
of $K$.  It follows that $K$ consists of 
the closure of the 
region below $P$ and   inside $S$,
together with the closure of  region the 
region above $P$ and outside $S$.
See Fig.~\ref{fig:K}.
\begin{figure}[htpb]
    \centering
    \includegraphics[width=0.75\linewidth]{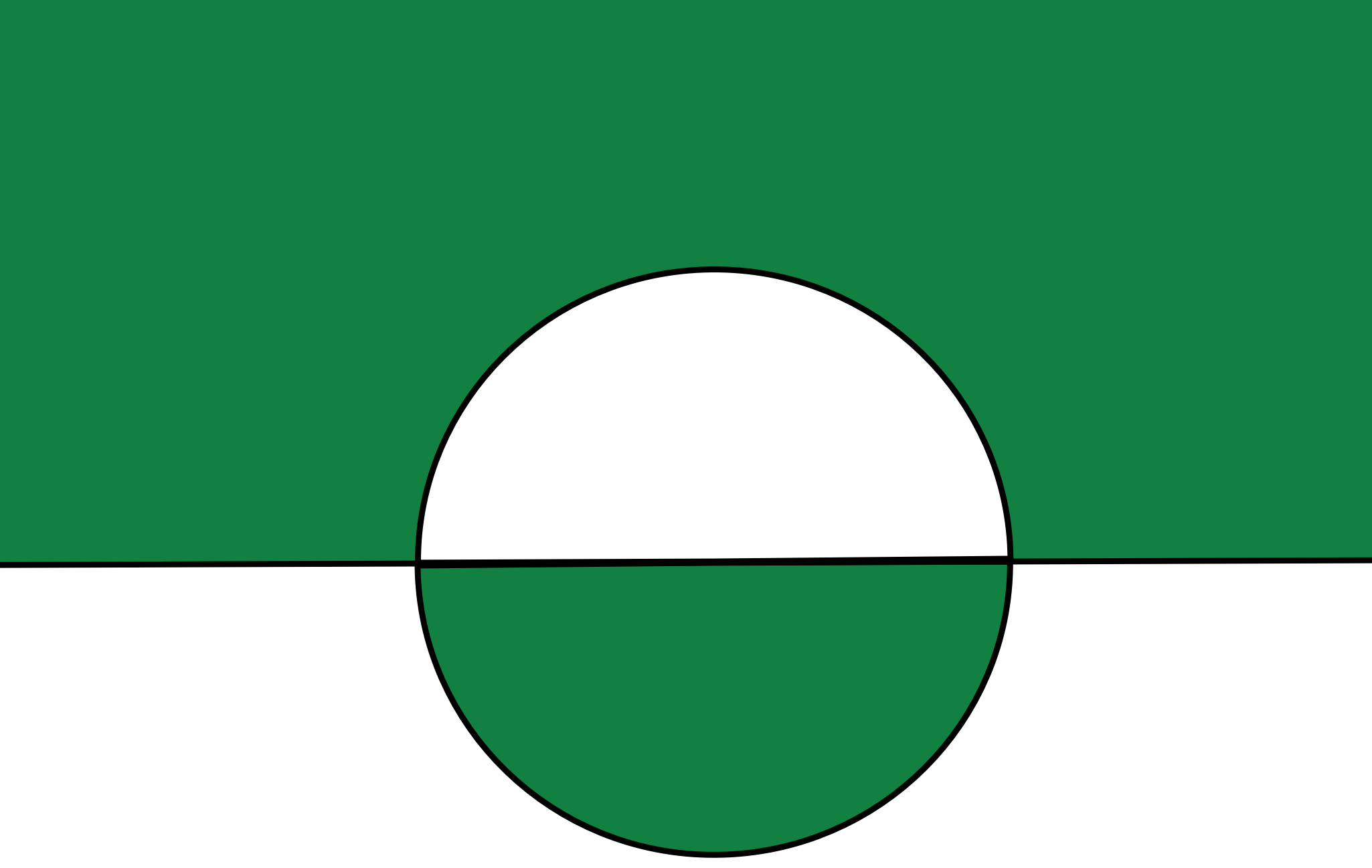}
    \caption{The region $K$
    (in the case that $S$ is a sphere).}
    \label{fig:K}
\end{figure}

Thus $\nu(p,-1)= - \ee_3$
at all points $p$ in $\{z=0\}$ that lie
outside $S$.

By the same argument (or simply by continuity)
\[
  \nu(p,t)= -\ee_3
\]
for all $t<0$ and $p$ in $\{z=0\}$
outside $|t|^{-1/2}S$.

Hence (by continuity) 
\[
   \nu(p,0)=\ee_3
\]
for all $p\ne 0$ in $\{z=0\}$.
Therefore, also by continuity,
\[
  \nu(p,t)= -\ee_3
\]
for all $p$ in $\{z=0\}$ and $t>0$.
In particular, $\nu(0,t)=-\ee_3$ for all $t>0$,
contrary to specialnesss of $M'(\cdot)$.
\end{proof}

\section{Limits of Shrinkers}\label{limits}

In this section, $\Gg$ is an infinite
set of positive integers, and for each $g\in \Gg$, $\Sigma_g$ is a connected, properly embedded, $G_g$-invariant shrinker such that
\begin{enumerate}
\item 
$\Sigma_g\cap \{z=0\}= Q_g$.
\item 
$0<\genus(\Sigma_g)\le 2g$.
\item 
$\genus(\Sigma\cap\{z>0\})=0$.
\item
$\sup_{g\in \Gg}\entropy(\Sigma_g) < \infty$.
\end{enumerate}
(The notation is as in Section~\ref{topology-section}.)
Examples of such shrinkers were produced 
 in~\S\ref{desing-section}
  and~\S\ref{new-examples-section}.
We will analyze the behavior of $\Sigma_g$
as $g\to\infty$.
In this section, ``passing to a subsequence"
means replacing the set $\Gg$ by an infinite subset.

\begin{lemma}\label{end-lemma}
Let $G_g^+$ be the subgroup of $G_g$ consisting of $\sigma$
such that $\sigma(0,0,z)=(0,0,z)$ for all $z$.
Let $\Gamma_g$ be a smooth,
$G^+_g$-invariant, simple closed curve
 in $\Sigma_g^+:=\Sigma_g\cap\{z>0\}$.
Then $\Gamma_g$ divides $\Sigma_g$ into two components.
Let $U_g$ be the  component that
does not contain $Q_g$.  Then 
\[
   E_g:= U_g\cup \Gamma_g
\]
lies in $\{z>0\}$ and is properly embedded in $\RR^3$.
\end{lemma}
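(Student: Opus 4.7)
My approach is to split the proof into three main steps, relying throughout on the $G_g^+$-invariance of $\Gamma_g$ and on the hypothesis that $\Sigma_g\setminus\{z=0\}$ has genus~$0$. First, I claim that $\Gamma_g$ winds exactly once around the $z$-axis. The rotation $R$ by $\pi/(g+1)$ about $Z$ lies in $G_g^+$, and $\Gamma_g$ is a single connected simple closed curve invariant under $R$. Hence $\Gamma_g$ cannot lie inside any proper angular sector about $Z$ (else a non-trivial rotate $R^k\Gamma_g$ would be disjoint from $\Gamma_g$, contradicting $R$-invariance of the connected curve $\Gamma_g$). Thus $\Gamma_g$ encircles $Z^+$, and embeddedness forces its winding number to be $\pm 1$.

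Second, I would show that $\Sigma_g\setminus\Gamma_g$ has exactly two components. Since $\Sigma_g^+$ has genus~$0$, the curve $\Gamma_g$ separates the connected component $\Omega\subset \Sigma_g^+$ containing it into two open pieces $U$ and $V$. Writing
\[
\Sigma_g\setminus\Gamma_g
= U \cup V \cup (\Sigma_g^+\setminus \Omega) \cup Q_g \cup \Sigma_g^-,
\]
everything outside $U\cup V$ joins into one connected piece through $Q_g$, using connectedness of $Q_g$ together with the fact that every component of $\Sigma_g^\pm$ has $Q_g$ in its closure (since $\Sigma_g$ itself is connected). Since $Q_g$ is connected and disjoint from $\Gamma_g$, it lies in a single component of $\Sigma_g\setminus\Gamma_g$, so there are at most two components. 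I would rule out the one-component case by $G_g^+$-equivariance: $G_g^+$ permutes the $g+1$ lines of $Q_g$ transitively, so the set of $Q_g$-ends of $\Sigma_g^+$ breaks into a small number of $G_g^+$-orbits, each of which must lie entirely in $U$ or entirely in $V$; combined with the fact that $\Gamma_g$ winds once around $Z$ (so it separates a neighborhood of $Z^+$ inside $\Sigma_g^+$ from the $Q_g$-ends), all such ends must land in one piece, say $V$. Define $U_g := U$; it is then disjoint from $Q_g$.

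Finally, the closure of $U_g$ in $\Sigma_g$ equals $U_g\cup\Gamma_g$ (no $Q_g$ limit points, by step~2), so $E_g := U_g\cup\Gamma_g$ is closed in $\Sigma_g$ and therefore closed in $\RR^3$ since $\Sigma_g$ is properly embedded; thus $E_g$ is properly embedded. The inclusion $E_g\subset\{z>0\}$ is immediate from $E_g\subset\Sigma_g^+$.

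The main obstacle is the claim in step~2 that all $Q_g$-ends of $\Sigma_g^+$ lie in a single one of $U, V$. The equivariance sketch may need refinement if $\Sigma_g^+$ has several $G_g^+$-orbits of ends along $Q_g$. A cleaner alternative would be to produce a $G_g^+$-invariant Seifert disk $D\subset\{z>0\}$ with $\partial D = \Gamma_g$ meeting $Z^+$ transversely in a single point, and then distinguish $U$ from $V$ by their intersection or linking behavior with respect to $D$ near $Z^+$.
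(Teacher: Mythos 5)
Your proof has a genuine gap in the step you yourself flag as ``the main obstacle,'' and that gap is exactly the heart of the lemma. You need to show that the $Q_g$-ends of $\Sigma_g^+$ all fall on \emph{one} side of $\Gamma_g$, but the equivariance sketch and the winding-number heuristic do not establish this. Two specific problems. First, the claim that $\Gamma_g$ has winding number $\pm1$ around $Z$ is not justified: the sector argument only shows that $\Gamma_g$ is not confined to an angular sector narrower than $\pi/(g+1)$, which does not imply it encircles $Z$, and even ``encircles $Z$'' does not force winding $\pm 1$ for a simple closed curve in $\RR^3\setminus Z$ (one can check, by projecting to the universal cover of $\RR^3\setminus Z$, that $R$-invariance and freeness of the $R$-action only force the winding number to be coprime to $2(g+1)$). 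Second, and more seriously, even granting winding $\pm1$, the phrase ``so it separates a neighborhood of $Z^+$ inside $\Sigma_g^+$ from the $Q_g$-ends'' conflates the ambient topology with the intrinsic topology of the surface: the fact that $\Gamma_g$ links $Z$ in $\RR^3$ does not by itself say anything about how $\Gamma_g$ separates $\Sigma_g^+$, and the Seifert-disk alternative you gesture at would need to be carried out and is not.

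The paper's proof takes a shorter, cleaner route. It first invokes the argument of Brendle's Theorem~2 to conclude that $\Sigma_g^+$ is \emph{connected}, so there is no need for the auxiliary component $\Omega$. Since $\Sigma_g^+$ has genus zero, the simple closed curve $\Gamma_g$ separates it into exactly two pieces $U_g$ and $U_g'$, and---this is the key structural fact---both pieces are $G_g^+$-invariant (e.g.\ because each reflection $\sigma\in G_g^+$ fixes a nonempty open arc of $\Sigma_g^+\cap P_\theta$ disjoint from $\Gamma_g$, so $\sigma$ cannot swap $U_g$ and $U_g'$). Since $\Sigma_g$ is connected, the closure of at least one piece, say $U_g'$, meets $Q_g$; since the collar of each ray of $Q$ inside $\Sigma_g^+$ is connected and disjoint from the compact set $\Gamma_g$ near $\{z=0\}$, $\overline{U_g'}$ contains that entire ray, and by $G_g^+$-invariance and transitivity of $G_g^+$ on the rays of $Q$ it contains all of $Q$. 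The same collar observation shows $\overline{U_g}$ meets no ray, so $U_g$ is a full component of $\Sigma_g\setminus\Gamma_g$, lies in $\{z>0\}$, and $E_g=U_g\cup\Gamma_g$ is closed in $\Sigma_g$ and hence properly embedded. What this buys over your approach: no winding-number or Seifert-disk machinery at all---the whole argument reduces to connectedness of $\Sigma_g^+$, genus zero, and the $G_g^+$-invariance of the two sides, which is elementary once you notice the fixed curves of the reflections.
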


\begin{proof}
The proof of~\cite{brendle}*{Theorem~2}
shows that $\Sigma_g^+$ is connected. 
Since $\Sigma_g^+$ has genus~$0$, $\Gamma_g$ divides $\Sigma_g^+$ into two connected
components $U_g$ and $U_g'$.
Now $Q=Q_g$ is in the closure of
$\Sigma_g\setminus Q$, and, therefore, by symmetry, in the closure of $\Sigma_g^+$.
Thus the closure of one of the components, say $\overline{U_g'}$, of 
   $\Sigma_g\setminus \Gamma_g$,
contains a point of $Q\setminus \{O\}$.
Hence $\overline{U_g'}$ contains the corresponding
connected component of $Q_g\setminus \{O\}$.
By $G_g^+$ symmetry, 
  $\overline{U_g'}$ contains all of
  $Q_g\setminus\{O\}$ 
and therefore all of $Q_g$.
It follows that $\Gamma_g$ divides
$\Sigma_g$ into two components, namely
$U_g$ and
\[
   U_g'':=U_g' \cup (\Sigma_g\cap \{z\le 0\}).
\]
Now let $E_g=U_g\cup\Gamma_g$.
Since $E_g$ contains $\Gamma_g$,
and is disjoint from $Q_g=\Sigma_g\cap\{z=0\}$, we see that $E_g$
lies in $\{z>0\}$.
As $\Sigma_g$ is properly embedded in $\RR^3$ and as $\Gamma_g$ is a curve that divides $\Sigma_g$ into two components,
one of which is $U_g$, then $E_g$ is properly embedded in $\RR^3.$
\end{proof}

\begin{theorem}\label{limits-theorem}
After passing to a subsequence, 
the $\Sigma_g$ converge to a shrinker $\Sigma$,
and 
the convergence is smooth away from a circle
\[
   C(r):= \{(x,y,0): x^2+y^2=r^2\}.
\]
Furthermore, one of the following holds:
\begin{enumerate}
\item\label{limit-plane-case} $\Sigma$ is the plane $\{z=0\}$ with  multiplicity~$3$, and $\Sigma_g$ has genus~$2g$ for
  all sufficiently large $g$.
\item $\Sigma$ is the plane $\{z=0\}$ together with the 
  sphere $\partial B(0,2)$, each with multiplicity $1$.
\item $\Sigma$ is the plane $\{z=0\}$ together with
  the cylinder $\{x^2+y^2=2\}$, each with multiplicity~$1$.
\end{enumerate}
\end{theorem}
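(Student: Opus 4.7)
The plan is to first extract a weak subsequential limit of the $\Sigma_g$ via Huisken monotonicity, upgrade the convergence to smooth away from the critical spheres supplied by Corollary~\ref{r-R-corollary}, and then use the classification of rotationally symmetric shrinkers together with parity and entropy constraints to reach the three alternatives. The final ingredient is a topological computation identifying the genus in the multiplicity-$3$ case.

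First, the uniform entropy bound combined with Huisken monotonicity yields uniform local area bounds on the $\Sigma_g$, so Allard compactness gives a subsequential weak limit $\Sigma$ that is an integer-rectifiable shrinker. Since the subgroup of $O(3)$ generated by $\bigcup_g G_g$ is the full group of rotations about $Z$ together with the reflection $z\mapsto -z$, the limit $\Sigma$ is $O(2)\times\ZZ_2$-invariant, where $O(2)$ acts by rotations about $Z$. Because $Q_g\subset \Sigma_g$ and the rays of $Q_g$ equidistribute on $\{z=0\}$ as $g\to\infty$, the plane $P:=\{z=0\}$ lies in the support of $\Sigma$. The classification of rotationally symmetric shrinkers with finite entropy then writes $\Sigma$ as an integer combination
\[
\Sigma \;=\; m_P\, P \,+\, m_S\, \SS(2) \,+\, m_C\, \{x^2+y^2=2\},
\]
with $m_P\ge 1$ and $m_S,m_C\ge 0$ (the Angenent torus is excluded because its support does not contain $P$).

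Next, Corollary~\ref{r-R-corollary} supplies for each $g$ either one radius $r_g$ or two radii $r_g\le R_g$ such that $\Sigma_g\setminus(\SS(r_g)\cup \SS(R_g))$ has genus zero. After passing to a subsequence, $r_g\to r_\infty$ and $R_g\to R_\infty$ in $[0,\infty]$, so on compact sets avoiding $\SS(r_\infty)\cup \SS(R_\infty)$ the surfaces $\Sigma_g$ are minimal in the shrinker metric with bounded area and vanishing genus. White's compactness theorem for minimal surfaces of locally bounded area and genus then yields smooth convergence with multiplicity on such sets. By rotational symmetry, the residual non-smoothness locus is a union of circles in $\{z=0\}$, namely $C(r_\infty)$ and possibly $C(R_\infty)$. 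A parity argument for a generic vertical line through a point of $P$ off the singular radii---each such line meets $\Sigma_g$ transversally in an odd number of points---forces $m_P$ to be odd, and combined with the entropy bound this leaves $m_P\in\{1,3\}$.

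Finally, I would run the case analysis. The case $m_P=1$ with $m_S=m_C=0$ is ruled out because smooth multiplicity-one convergence to a plane would force $\Sigma_g$ to be a plane for large $g$, contradicting $\genus(\Sigma_g)>0$. The case $m_P=1$ with exactly one of $m_S,m_C$ equal to $1$ produces Cases~(2) and~(3) with single singular circle $C(2)$ or $C(\sqrt{2})$. The case $m_P=3$ with $m_S=m_C=0$ is Case~(1); there I would establish $\genus(\Sigma_g)=2g$ by analyzing the Scherk-type necks joining the three sheets of $\Sigma_g$ near $C(r_\infty)$: the $G_g$-symmetry together with $\Sigma_g\cap\{z=0\}=Q_g$ forces $2(g+1)$ necks arranged in two tiers (upper-to-middle and middle-to-bottom), and an Euler characteristic count in the style of Lemma~\ref{topology-lemma} yields $\genus(\Sigma_g)\ge 2g$, matching the hypothesis $\genus(\Sigma_g)\le 2g$. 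The main obstacle I expect is ruling out the mixed configurations that produce two distinct singular circles---in particular $m_P=1$ with both $m_S=m_C=1$, or a multiplicity-$3$ plane limit with the three sheets joined at two different radii---and I would resolve it by showing that each additional transition circle contributes genus growing linearly in $g$ under the $G_g$-symmetry, pushing the total genus above the budget $2g$ and producing a contradiction.
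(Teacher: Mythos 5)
Your proposal follows the same broad architecture as the paper---extract a rotationally symmetric limit, use Corollary~\ref{r-R-corollary} plus White's compactness theorem for minimal surfaces of bounded area and genus to localize the curvature blowup to one or two circles in $\{z=0\}$, apply a parity argument to make the planar multiplicity odd, then do a case analysis---but there are a few genuine gaps.

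First, you claim that the parity constraint ``combined with the entropy bound'' leaves $m_P\in\{1,3\}$. This step does not work: the hypothesis is only that $\sup_g\entropy(\Sigma_g)<\infty$, which is some unspecified constant, so entropy alone cannot cap the multiplicity at~$3$. The actual constraint is topological. The paper applies Lemma~\ref{topology-lemma} to $\Sigma_g\cap B(0,R)$ for a connected slab (connectivity supplied by Lemma~\ref{connectivity-lemma}): if the plane has multiplicity $2c+1$, then $\genus(\Sigma_g\cap B(0,R))=2cg$, and the budget $\genus(\Sigma_g)\le 2g$ forces $c=1$. Likewise, when the limit includes a sphere or cylinder the count is $(2c+1)g$, forcing $c=0$, which simultaneously gives $m_P=1$ and pins down the genus. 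Your ``expected obstacle'' about mixed configurations contributing linearly in $g$ is the right instinct, but it is exactly this genus count, not entropy, that does the job; leaving it as an obstacle is a real gap since it is where most of the work lies.

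Second, your argument is silent on smoothness of the convergence at the origin, which is essential. The curvature blowup set could a priori contain $\{0\}$; without ruling this out you cannot count the points of $\Sigma_g\cap Z^+$, which feeds into Lemma~\ref{topology-lemma}. The paper's Claim~4 (smooth-claim) handles this delicately: one shows (using Lemma~\ref{end-lemma} together with connectivity of $\Sigma_g\setminus B(0,r)$) that each of the $m$ components of $\Sigma_g\cap\partial B(0,r)$ bounds its own component of $\Sigma_g\cap B(0,r)$, each of which then converges with multiplicity one to a disk, so Allard regularity gives smoothness. You also do not establish the structure of $\Sigma^+$ (disk versus annulus, orthogonal intersection with $\{z=0\}$); the paper does this via Lemma~\ref{end-lemma} (Claim~1), Brendle's instability/connectedness argument, and a stationarity/balancing argument (Claim~3). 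Without these you cannot legitimately write $\Sigma$ in the form $m_PP+m_S\SS(2)+m_C\,\mathrm{Cyl}$; invoking a ``classification of rotationally symmetric shrinkers'' does not apply directly to an integral varifold limit whose regularity at $\{z=0\}$ is precisely what must be shown.

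Third, a minor point: the $\Sigma_g$ are each $G_g$-invariant but not invariant under the group generated by $\bigcup_g G_g$, so the rotational invariance of the limit should be deduced from the density of the rotation angles as $g\to\infty$, not from invariance under a fixed group. The conclusion is correct, but the reasoning as phrased is not.

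In summary, the scaffolding is right and the key ingredients (Corollary~\ref{r-R-corollary}, White's compactness theorem, the odd-multiplicity parity argument) are correctly identified, but the multiplicity bound and the exclusion of mixed/two-circle configurations must come from Lemma~\ref{topology-lemma} rather than entropy, and the smoothness at the origin must be established before the topological count can be carried out.
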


\begin{proof}
By passing to a subsequence, we can assume that the $\Sigma_g$ converge as sets to a limit 
set $\Sigma$ that is rotationally invariant about~$Z$.   We can also assume that there is a closed set $K$ (the curvature blowup set) such
the convergence $\Sigma_g$ to $\Sigma$ is smooth
(perhaps with multiplicity) in compact
subsets of $\RR^3\setminus K$ and such that
\[
    \sup_{B(p,\eps)}|A(\Sigma_g,\cdot)| \to \infty
\]
for every $p\in K$ and every $\eps>0$, where $A(\Sigma_g,\cdot)$
is the second fundamental form of $\Sigma_g$.

Since $Q_g\subset \Sigma_g$, it follows that 
$\{z=0\}$ is contained in $\Sigma$, and thus
that
\begin{equation}\label{M-form}
\begin{aligned}
\Sigma &= \{z=0\} \cup \Sigma^+ \cup \Sigma^-
\\
&= \{z=0\} \cup \Sigma^+ \cup \rho_X \Sigma^+,
\end{aligned}
\end{equation}
where $\Sigma^+=\Sigma\cap\{z>0\}$,
$\Sigma^-=\Sigma\cap \{z<0\}$, and $\rho_X$
denotes rotation by $\pi$ about the $x$-axis, $X$.

By assumption, $\Sigma_g^+:=\Sigma_g\cap \{z>0\}$ has
genus~$0$.
By the compactness 
theorem~\cite{white-compact}*{Theorem~1.1} for 
minimal surfaces of locally bounded area and genus,
  $\Sigma^+$
is smooth
and  $K\cap \{z>0\}$ is a discrete subset
of $\{z>0\}$.
The proof of~\cite{brendle}*{Theorem~2}
shows that $\Sigma^+$ is unstable (as a minimal surface for the shrinker metric) and connected.  Thus (again by the compactness theorem~\cite{white-compact}*{Theorem~1.1}), the convergence 
of $\Sigma_g^+$ to $\Sigma^+$ is smooth and with multiplicity $1$, and therefore
$K\cap\{z>0\}$ is empty.  Therefore, by symmetry,
\begin{equation}\label{in-xy-plane}
   K\subset \{z=0\}.
\end{equation}

\setcounter{claim}{0}
\begin{claim}\label{top-claim}
One of the following holds:
\begin{enumerate}
\item $\Sigma^+$ is empty.
\item $\overline{\Sigma^+}$ is a disk bounded by a circle
  $C(r)$ in the plane $\{z=0\}$.
\item $\overline{\Sigma^+}$ is an annulus
    whose boundary is 
    a circle
   $C(r)$ in the plane $\{z=0\}$.
\end{enumerate}
\end{claim}

\begin{proof}[Proof of Claim~\ref{top-claim}]
Suppose $\Sigma^+$ is nonempty.
Note that
\begin{equation}\label{nonempty}
    \overline{\Sigma^+}\cap \{z=0\} 
    \ne
    \emptyset,
\end{equation}
as otherwise $\Sigma^+$ would be a smooth
 shrinker (without boundary) in $\{z>0\}$.
 Note also that $\overline{\Sigma^+}$ is a surface of revolution. 
Let $C'$ be a horizontal circle in $\Sigma^+$.
Let $C_g'$ be smooth, simple closed curves
in $\Sigma_g$ that converge smoothly to $C'$.
By Lemma~\ref{end-lemma}, $C_g'$ bounds a region $E_g$ in $\Sigma_g$
that is either a disk or an annulus, that lies in $\{z>0\}$,
and that is properly embedded in $\RR^3$. Passing to the limit,
we see that $C'$ bounds a region $E$ in $\Sigma^+$ that is either
a disk or an annulus, and that is properly embedded in $\RR^3$. 

By the strong maximum principle,
 $E$ lies in $\{z>0\}$.
Since $E$ is properly embedded in $\RR^3$,
\begin{equation}\label{empty}
   \overline{E}\cap\{z=0\}
   =
   \emptyset.
\end{equation}
Now consider the other portion $E'$ of $\Sigma^+$ bounded by $C'$.  
By~\eqref{nonempty} and~\eqref{empty},
\begin{equation}\label{nonempty2}
  \overline{E'}\cap\{z=0\}
  \ne
  \emptyset.
\end{equation}
 Note that $E'$ is a surface of revolution generated
 by a curve $e'$ in the plane $y=0$,
 and that $e'$ has one endpoint $p$ in $C'\cap\{y=0\}$.
 By~\eqref{nonempty2}, 
 the closure of $e'$
 contains a point $q$ in $\{z=0\}$.
Since $\Sigma$ has finite entropy, the
curve $e'$ has locally finite length.
Thus $e'$ is a curve joining $p$ to $q$.

Now $q$ cannot be the origin, since otherwise
 $\Sigma^+\cup\{0\}$ and $\{z=0\}$
would violate the strong maximum principle
at the origin.
(One can invoke various versions of the maximum
principle here.  For example, the classical 
strong maximum principle suffices, because
$\Sigma^+\cup\{0\}$
is a smooth minimal surface with respect 
to the shrinker metric 
  by \cite{structure}*{Corollary~2.7}.)

Thus $q=(x,0,0)$ for some $x\ne 0$. 
Therefore, $\overline{\Sigma^+}$ is a properly
embedded disk or annulus whose boundary is
the circle $C(r)$ (where $r=|x|$)
in the plane~$\{z=0\}$.
This completes the proof of Claim~\ref{top-claim}.
\end{proof}

Recall (Corollary~\ref{r-R-corollary}) that there exist $r_g\in (0,\infty)$
and $R_g\in [r_g,\infty]$ such that 
\[
   \genus(\Sigma_g
    \setminus (\SS(r_g) \cup \SS(R_g))
    =
    0,
\]
where  $\SS(r)$ is the sphere of Euclidean radius $r$
centered at the origin.
Passing to a subsequence, we can assume that $r_g$
and $R_g$ converge to limits $r_\infty$ and $R_\infty$
in $[0,\infty]$
with $r_\infty \le R_\infty$.

By the compactness 
theorem~\cite{white-compact}*{Theorem~1.1} for 
minimal surfaces of locally bounded area and genus,
\[
 K':=  K\setminus (\SS(r_\infty)\cup \SS(R_\infty))
\]
is a discrete subset of the complement of
 $\SS(r_\infty)\cup \SS(R_\infty)$.
It follows, by rotational invariance about 
 $Z$, that $K'\subset Z$.
 Thus
 \[
   K\subset Z\cup \SS(r_\infty)\cup \SS(R_\infty).
 \]
Therefore, since $K\subset \{z=0\}$ 
(see~\eqref{in-xy-plane}),
\begin{equation}\label{K-form}
   K \subset \{0\}\cup C(r_\infty)
   \cup C(R_\infty).
\end{equation}
where $C(r)$ is the circle of radius $r$
about the origin in the plane $\{z=0\}$.

\begin{claim}\label{odd-claim}
Suppose that $B$ is a ball centered at a point $p$ in the $x$-axis, $X$, and
 that $\Sigma_g\cap B$ converges
smoothly with some multiplicity $m$
to $\{z=0\}\cap B$.
Then $m$ is odd.
\end{claim}

\begin{proof}
Since $X\subset Q_g\subset \Sigma_g$, 
it follows that $p\in \Sigma_g$.
By rotational symmetry by $\pi$ about $X$,
if $L$ is a closed vertical line segment centered
at $p$, then $L\cap \Sigma_g$ has an odd number of points (namely, the point $p$ and equal numbers of points above and below $p$.)
\end{proof}

\begin{claim}\label{orthogonal-claim}
The multiplicity of $\Sigma$ is the same
in each component of 
\begin{equation}\label{components}
   \{z=0\}\setminus
   ( \{0\}\cup C(r_\infty) \cup C(R_\infty)).
   \tag{*}
\end{equation}
Furthermore, if $\Sigma^+$ is nonempty, then
it meets $\{z=0\}$ orthogonally.
\end{claim}

\begin{proof}
Note that $\Sigma$ with its multiplicities
is a stationary varifold with respect
to the shrinker metric.
Let $\Omega$ and $\Omega'$ be two components
of~\eqref{components}
with a common boundary circle $C(r)$,
where $\Omega$ is the inner component.
(Thus $r$ is $r_\infty$ or $R_\infty$.)
Let $m$ be the multiplicity in $\Omega$
and $m'$ be the multiplicity in $\Omega'$.
It suffice to show that $m=m'$.

Case 1: Either $\Sigma^+$ is empty, or 
 $\Sigma^+$ is nonempty and $r$ is not equal 
 to the radius of $\partial \Sigma^+$.
Then $m=m'$ follows immediately from stationarity.

Case 2: $\Sigma^+$ is nonempty and 
 $r$ is the radius
 of $\partial \Sigma^+$.

Consider the point $p=(r,0,0)$.
Let 
\[
   \nu^+ = (\cos\theta,0,\sin\theta)
\]
be the unit vector that is normal to $C(r)$,
tangent to $\Sigma^+$ and that points into $\Sigma^+$.
Thus we can choose $\theta\in (0,\pi)$.
Then
\[
  \nu^- = (\cos\theta,0,-\sin\theta)
\]
is the corresponding vector that points
into $\Sigma^-$.
By stationarity,
\[
  - m\ee_1 + m'\ee_1 + \nu^+ + \nu^- = 0.
\]
Taking the dot product with $\ee_1$ gives
\[
  m - m' = 2\cos\theta.
\]
and thus
\begin{equation}\label{key-inequality}
  |m - m'| = 2|\cos\theta| < 2.
\end{equation}
Since $m$ and $m'$ are both odd, $m-m'$ is even. 
 Therefore, by~\eqref{key-inequality}, $m=m'$
and $\theta=\pi/2$.
\end{proof}

\begin{claim}\label{smooth-claim}
The convergence $\Sigma_g \to \Sigma$ is smooth
at the origin.
\end{claim}

\begin{proof}
Choose $r>0$, 
$r\ne r_\infty$, $r\ne R_\infty$,
sufficiently small that 
$\overline{B(0,r)}$ is disjoint from $\Sigma^+$
and such that
  $\Sigma_g\setminus B(0,r)$
is connected for all $g\in \Gg$.
(Such an $r$ exists by Lemma~\ref{connectivity-lemma}.)

Now $\Sigma_g\cap \overline{B(0,r)}$
converges smoothly and with multiplicity $m$
away from the origin to $\overline{D(r)}$.

If $m=1$, the convergence is smooth
by Allard's Regularity Theorem.
Thus we may assume that $m>1$.

For all sufficiently large $g$,
$\Sigma_g\cap\partial B(0,r)$ consists of $m$ simple closed curves.  Let $\Gamma_g^0$ be the curve that passes through the points 
of $Q_g\cap \partial B(0,r)$.  Let $\Gamma_g$
be one of the other curves. 
By symmetry, 
we may assume that $\Gamma_g$ lies 
in $\{z>0\}$.  
Let $\Sigma_g'$ be the component of 
  $\Sigma_g\setminus \Gamma_g$ that
lies in $\{z>0\}$.
(The component exists by 
  Lemma~\ref{end-lemma}.)
Since $\Sigma_g\setminus B(0,r)$ is connected
and contains points in $\{z\le 0\}$,
we see that $\Sigma_g'$ is contained in 
  $B(0,r)$.

We have shown: each component of 
  $\Sigma_g\cap \partial B(0,r)$
other than $\Gamma_g^0$ bounds a connected component of  
  $\Sigma_g\cap B(0,r)$.
Hence the same is also true for $\Gamma_g^0$.
Now those $m$ components together converge
weakly to the disk $\overline{D(0,r)}$
with multiplicity $m$.
Thus each one converges weakly to
$\overline{D(0,r)}$ with multiplicity~$1$.
By the Allard Regularity Theorem, the convergence is smooth.
\end{proof}

We can now complete the proof of
 Theorem~\ref{limits-theorem}.

{\bf Case 1}: $\Sigma^+$ is empty.
Then (as a varifold) $\Sigma$ is the plane
 $\{z=0\}$ with some odd multiplicity $m=2c+1$,
 by Claims~\ref{odd-claim}
 and~\ref{orthogonal-claim}.

If $\Sigma$ were a multiplicity~$1$ plane,
then $\Sigma_g$ would be a multiplicity~$1$ plane
for all sufficiently large~$g$ 
(by Lemma~\ref{standard-reg-lemma}), contradicting
the assumption that $\Sigma_g$ has positive genus.
Thus $m>1$.

Let 
\[
  R \in (0,\infty)
  \setminus \{r_\infty, R_\infty\}
\]
be such that $\Sigma_g\cap B(0,R)$
is connected for all sufficiently large
 $g\in \Gg$.
 (Such an $R$ exists by Lemma~\ref{connectivity-lemma}.)
Note that $\Sigma_g\cap \partial B(0,R)$
consists of $m$ simple closed curves, $c$
of which lie in $\{z>0\}$ and wind around $Z$.
Also, by the smooth convergence of $\Sigma_g$ to $\Sigma$ 
at the origin (Claim~\ref{smooth-claim}), $\Sigma_g\cap Z$
has $m$ points, $c$ of which lie in $Z^+$.

By Lemma~\ref{topology-lemma}, 
\[
  \genus(\Sigma_g\cap B(0,R)) = 2cg.
\]
Since $c>0$ and $\genus(\Sigma_g)\le 2g$, we see
that $c=1$ and that $\genus(\Sigma_g)=2g$.

It remains (in Case 1) only to show
that $R_\infty=r_\infty$ (so that the convergence
is smooth except along a single circle.)
Suppose to the contrary that $r_\infty< R_\infty$.   
Let $r_\infty< R < R_\infty$.
Thus we may assume that
\[
  r_g < R < R_g
\]
for all $g\in \Gg$.

For all sufficiently large $g$, we have
shown that, if $\Sigma_g\cap B(0,R)$ is connected,
then it has genus~$2g$, which is impossible
since $R< R_g$.  Thus we may assume, for all $g$, that $\Sigma_g\cap B(0,r)$ is not connected.
Now it has at least $2$ components and at most $3$ (since $\Sigma_g\cap\partial B(0,R)$ has
$3$ components.)
The number of components is odd (by symmetry) and thus it is $3$.  But now (exactly as in the proof of Claim~\ref{smooth-claim}) each of those components converges smoothly to the disk $\overline{D(0,R)}$,
so $\Sigma_g\cap B(0,R)$ has genus~$0$, contradicting $r_g<R$.
This completes the proof in Case 1.

{\bf Case 2}: $\Sigma^+$ is nonempty.
By Claim~\ref{top-claim}, its boundary is a circle $C(\rho)$.
By Claim~\ref{orthogonal-claim}, $\Sigma^+$ meets $\{z=0\}$ orthogonally,
so the surface
$$
   \widetilde \Sigma:= \Sigma^+ \cup \Sigma^-\cup C(\rho)$$
is a smoothly embedded, rotationally 
invariant shrinker.  Thus $\widetilde \Sigma$ is a sphere or a cylinder.

Now choose $R>0$ such that 
   $R$ is not equal to $r_\infty$ or $R_\infty$, and large enough
   that $\Sigma_g\cap B(0,R)$ is connected 
   for all $g\in\Gg$.
(Such an $R$ exists  
  by Lemma~\ref{connectivity-lemma}.)

If $\widetilde \Sigma$ is a sphere, then (for large $g$)
 $\Sigma_g\cap \partial B(0,R)$ consists of 
  $m=2c+1$ simple closed curves, $c$ of which lie in $\{z>0\}$.
  Also, $\Sigma_g\cap Z^+$ consists $c+1$
  points, $c$ of which are close to the origin
  and one of which is close to $\Sigma^+\cap Z$.
  Thus 
  \[ 
    \genus(\Sigma_g\cap B(0,R)) = (2c+1)g
\]
If, on the other hand, $\widetilde \Sigma$ is a cylinder,
then (for large $g$) $\Sigma_g\cap\partial B(0,R)$
consists of $m+2$ simple closed curves, 
 $m$ of which lie near the circle $C(R)$
 and two of which are near the 
 circles of intersection of the cylinder 
   $\widetilde \Sigma$ and the sphere $\partial B(0,R)$.
 Note that $c+1$ of those curves lie
 in $\{z>0\}$.
 Furthermore, $\Sigma_g\cap Z$ consist of $m$
 points, $c$ of which lie in $Z^+$.
 
Thus, again,
\begin{equation}\label{almost-done}
  \genus(\Sigma_g\cap B(0,R)) = (2c+1)g.
\end{equation}
We have shown that~\eqref{almost-done} holds whether $\widetilde \Sigma$
is a sphere or a cylinder.
Since $\Sigma_g$ has genus~$\le 2g$, we see that $c=0$.
This completes the proof
of Theorem~\ref{limits-theorem} in Case~2.
\end{proof}

\begin{proposition}\label{3-ends-proposition}
Suppose, in Theorem~\ref{limits-theorem}, that the $\Sigma_g$
converge to the plane $\{z=0\}$ with multiplicity~$3$.
Then all sufficiently large~$g$, $\Sigma_g$ has 
 exactly $3$~ends.
\end{proposition}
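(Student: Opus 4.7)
My plan is to count the ends of $\Sigma_g$ by combining the topological information from Lemma~\ref{topology-lemma} with the smooth multiplicity-$3$ convergence. First, fix $R > r_\infty + 1$. The smooth multiplicity-$3$ convergence on the compact set $\partial B(0, R)$ implies that, for all sufficiently large $g$ (depending on $R$), $\Sigma_g \cap \partial B(0, R)$ consists of exactly three simple closed curves, each winding once around $Z$: an ``upper'' curve in $\{z > 0\}$, a ``lower'' curve in $\{z < 0\}$, and a ``middle'' curve passing through the $2(g+1)$ points of $Q_g \cap \partial B(0, R)$. By Corollary~\ref{r-R-corollary}, after enlarging $R = R_g$ (depending on $g$), I may assume that $\Sigma_g \setminus B(0, R_g)$ has genus $0$ while $\Sigma_g \cap B(0, R_g)$ has genus $2g$.

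Next, I apply additivity of Euler characteristic. Writing $k$ for the number of ends of $\Sigma_g$, one has $\chi(\Sigma_g) = 2 - 4g - k$. The interior piece $\Sigma_g \cap \overline{B(0, R_g)}$ is a connected surface of genus $2g$ with $3$ boundary circles, so $\chi = -1-4g$. If $\Sigma_g \setminus B(0, R_g)$ has $n_o$ components --- each of genus $0$, with $3$ boundary circles and $k$ ends distributed among them --- then its Euler characteristic is $2n_o - 3 - k$. Additivity forces $n_o = 3$; and since $\Sigma_g$ has no closed components, each of these three components contains exactly one boundary circle, giving $k \ge 3$ and each outer component being a planar surface with one boundary circle and $e_j \geq 1$ ends, where $\sum_j e_j = k$.

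The remaining task, and the main obstacle, is the upper bound $k \le 3$, i.e., that $e_j = 1$ for each $j$. The same Euler-characteristic argument applied at a larger transverse radius $R'$ (still with genus $2g$ captured inside and no genus outside) yields $n'_o = b'$, where $b'$ is the number of circles in $\Sigma_g \cap \partial B(0, R')$; and for fixed $g$, $n'_o \to k$ as $R' \to \infty$. A diagonal argument, choosing for each $g$ a radius $R'_g \to \infty$ slowly enough that smooth multiplicity-$3$ convergence on $\partial B(0, R'_g)$ still yields $b' = 3$, would force $k = 3$. The hard step is justifying this diagonal: one must ensure that the stabilization radius of the end structure of $\Sigma_g$ does not outgrow the radius on which smooth convergence holds. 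I expect this to follow either from the asymptotic classification of ends of smooth properly embedded shrinkers of finite entropy (each end is asymptotic to a smooth cone, with the asymptotic cones constrained by $G_g$-symmetry and the multiplicity-$3$ limit to be close to pieces of $\{z=0\}$ for $g$ large), or from a result analogous to Lemma~\ref{ends-lemma} from the appendix, which is used to handle the analogous end-count of one in the proof of Theorem~\ref{desing-limit-theorem}.
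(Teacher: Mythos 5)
Your outline tracks the paper's own argument closely, and the ingredient you were unsure about is exactly the one the paper uses: Lemma~\ref{ends-lemma}. Specifically, the paper fixes a single $R > r$ (no $g$-dependent enlargement is needed, since in the multiplicity-$3$ case of Theorem~\ref{limits-theorem} one has $r_\infty = R_\infty = r$ and the limits-theorem proof already shows that $\Sigma_g \cap B(0,R)$ is connected, of genus $2g$, with three boundary circles for large $g$), then says ``by elementary topology'' $\Sigma_g\setminus B(0,R)$ has three components $E_g^1,E_g^2,E_g^3$ --- your Euler-characteristic bookkeeping is a perfectly good way of making that remark precise --- and finally observes that each $E_g^i$ converges smoothly with multiplicity one to the annulus $\{z=0\}\setminus B(0,R)$, so by Lemma~\ref{ends-lemma} each $E_g^i$ is itself an annulus for large~$g$.

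So the ``hard step'' you flag and the ``diagonal'' you propose are in fact what the Ends Lemma was designed to deliver: it upgrades smooth convergence on compact subsets of $\RR^3\setminus\overline{B(0,R)}$ to uniform control out to infinity (after rescaling by any $\lambda_g\to 0$, the surfaces converge to the asymptotic cone $\{z=0\}$). This rules out extra topology escaping to radii that outrun the region of smooth convergence, which is precisely your worry about the ``stabilization radius of the end structure.'' You should invoke Lemma~\ref{ends-lemma} directly rather than hedging: it needs no adaptation, since each $E_g^i$ is a shrinker-with-boundary in $B(0,R)$ converging with multiplicity one to a shrinker-with-boundary smoothly asymptotic to the cone $\{z=0\}$, so the hypotheses are met verbatim. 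With that, each outer component is an annulus, hence has exactly one end, and $k=3$. Apart from leaving the last step conjectural, your argument is correct and essentially identical to the paper's.
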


\begin{proof}
Let $R>r$, where $C(r)$ is the circle away from which
the convergence $\Sigma_g\to \Sigma$ is smooth.
In the proof of Theorem~\ref{limits-theorem},
 we showed that 
for large $g$, $\Sigma_g\cap B(0,R)$ is connected, has
genus~$2g$, and three boundary components.

Since $\Sigma_g$ is also connected and has genus~$2g$, it
it follows (by elementary topology) that 
  $\Sigma_g\setminus B(0,R)$
has three components, $E_g^1$, $E_g^2$, and $E_g^3$.
Note that (by hypothesis) $E_g^i$ converges smoothly (with multiplicity~one)
to $\{z=0\}\setminus B(0,R)$.
It follows (see Lemma~\ref{ends-lemma}) that (for large $g$), each $E_g^i$
is topologically an annulus.
\end{proof}

The following is well-known, but we
include a proof for the reader's convenience.

\begin{lemma}\label{standard-reg-lemma}
Suppose $\Sigma_i$ are shrinkers that converge weakly 
to a multiplicity $1$ plane $\Sigma$.
Then $\Sigma_i$ is a multiplicity~$1$
 plane for all sufficiently large $i$.
\end{lemma}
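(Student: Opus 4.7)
The plan is to combine the upper-semicontinuity of the Gauss density with an entropy-gap rigidity statement for shrinkers. For each $i$, form the associated self-similar ancient Brakke flow $M_i(t)=|t|^{1/2}\Sigma_i$ for $t<0$, together with the static Brakke flow $M(t)=\Sigma$ defined for all $t\in\RR$. The static flow is a valid mean curvature flow because $\Sigma$, being a plane that is also a shrinker, must pass through the origin and is thereby fixed by every parabolic rescaling about $(O,0)$. Since $\Sigma_i\to\Sigma$ weakly and dilations act continuously on Radon measures, we obtain the weak convergence of Brakke flows $M_i(\cdot)\to M(\cdot)$ on $\RR^3\times(-\infty,0]$.

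Because $(O,0)$ is a regular point of the limit flow $M(\cdot)$, the Gauss density of $M(\cdot)$ there is equal to $1$. On the other hand, the self-similarity of the shrinker flow, combined with a direct change of variables in Huisken's monotonicity formula, identifies the Gauss density of $M_i(\cdot)$ at $(O,0)$ with the entropy of $\Sigma_i$. Upper-semicontinuity of the Gauss density under weak convergence of Brakke flows then yields
\[
\limsup_{i\to\infty}\entropy(\Sigma_i)\le 1.
\]
Since $\entropy(\Sigma')\ge 1$ for every non-empty properly embedded surface $\Sigma'\subset\RR^3$ (a standard consequence of Huisken monotonicity), we conclude $\entropy(\Sigma_i)\to 1$.

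The conclusion now follows from the entropy-gap rigidity theorem of Bernstein--Wang: there exists a universal $\delta>0$ such that any properly embedded shrinker in $\RR^3$ that is not a multiplicity-one plane has entropy at least $1+\delta$. Hence $\Sigma_i$ is a multiplicity-one plane for all sufficiently large $i$. The main obstacle is this final rigidity step, which invokes a deep classification input; the identification of the Gauss density at $(O,0)$ with the shrinker's entropy, and its upper-semicontinuity, are standard applications of Huisken's monotonicity formula. As an alternative, one could first apply the Local Regularity Theorem (Theorem~\ref{local-theorem}) to the flows $M_i(\cdot)\to M(\cdot)$ to upgrade the weak convergence to smooth convergence on compact subsets of $\RR^3\times(-\infty,0)$, but closing the argument without some rigidity input still seems to require Bernstein--Wang, since purely local $C^k$-closeness of a shrinker to a plane does not, on its own, force global planarity.
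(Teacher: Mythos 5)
Your argument is correct, but it takes a substantially heavier route than the paper and, in your closing paragraph, you actually overlook the point that makes the paper's more elementary route work.

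The paper also considers the shrinker flows
\[
t\in(-\infty,0]\mapsto M_i(t)=|t|^{1/2}\Sigma_i \quad\text{and}\quad M(t)\equiv\Sigma,
\]
and applies the Local Regularity Theorem (Theorem~\ref{local-theorem}) to the convergence $M_i(\cdot)\to M(\cdot)$. The crucial feature you miss is that Theorem~\ref{local-theorem} gives smooth convergence on $U\times[a',b]$ \emph{including the terminal time} $t=b$. Taking $b=0$, one concludes that for large $i$ the flow $M_i(\cdot)$ is smooth (multiplicity one) in a full spacetime neighborhood of $(O,0)$. But if $\Sigma_i$ is not a plane, pick $p$ with $|A(\Sigma_i,p)|\ne 0$; then the point $|t|^{1/2}p\in M_i(t)$ tends to $O$ as $t\uparrow 0$ while $|A(M_i(t),|t|^{1/2}p)|=|t|^{-1/2}|A(\Sigma_i,p)|\to\infty$, contradicting smoothness at $(O,0)$. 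No rigidity theorem is needed: the self-similar scaling of the second fundamental form does all the work. Your remark that ``purely local $C^k$-closeness of a shrinker to a plane does not, on its own, force global planarity'' is true in isolation, but it is not the mechanism the paper uses; the paper uses smoothness \emph{at the singular time} of the self-similar flow, which is a much stronger conclusion than local $C^k$-closeness of the time slices.

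Your alternative via upper-semicontinuity of Gauss density and the entropy-gap rigidity theorem is valid (the Gauss density of the self-similar flow at $(O,0)$ is indeed $F(\Sigma_i)=\entropy(\Sigma_i)$, and upper-semicontinuity forces $\entropy(\Sigma_i)\to 1$), but it invokes a deep classification input (the entropy gap for non-planar shrinkers) where the Local Regularity Theorem alone suffices. The trade-off is that your argument is perhaps more transparent as a statement about entropy, while the paper's is self-contained and purely local-in-spacetime.
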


\begin{proof}
The flows
\begin{equation}\label{sigma-i-flow}
  t\in (-\infty,0]\mapsto |t|^{1/2}\Sigma_i
\tag{*}
\end{equation}
converge weakly to the flow
\[
  t\in (-\infty,0] \mapsto |t|^{1/2}\Sigma = \Sigma.
\]
The latter flow is smooth and multiplicity $1$,
so the convergence
is smooth on compact subsets of spacetime
 (by the Local Regularity
  Theorem, Theorem~\ref{local-theorem}).
In particular, for all sufficiently large $i$,
the flow~\eqref{sigma-i-flow} is smooth near the spacetime origin.

But, for each $i$,  (trivially) the curvature of~\eqref{sigma-i-flow}
 blows up at the spacetime
origin unless $\Sigma_i$ is a plane.
\end{proof}

\begin{lemma}[Ends Lemma]\label{ends-lemma}
Suppose $0<R<\infty$.
Suppose that $C$ is a cone that is smooth
except at the origin, and suppose that $M$
is a smooth shrinker with boundary in $B(0,R)$ and
that $M$
is smoothly asymptotic (with multiplicity~one)
to $C$.
Suppose for each $i$ that $M_i$ is a smooth
shrinker with boundary in $B(0,R)$.
Suppose that $M_i$ converges smoothly (and
with multiplicity one) to $M$
on compact subsets of $\RR^3\setminus \overline{B(0,R)}$.
Then the convergence $M_i\to M$ is uniform at infinity
in the following sense:
if $\lambda_i\to 0$, then $\lambda_i M_i$
converges smoothly (with multiplicity $1$)
on compact subsets of 
 $\RR^3\setminus\{0\}$ to $C\setminus\{0\}$.
\end{lemma}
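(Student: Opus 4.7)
The plan is to interpret the shrinkers as self-similar mean curvature flows and apply the Local Regularity Theorem (Theorem~\ref{local-theorem}). Define the flows
\[
\Sigma(t) := |t|^{1/2} M, \qquad \Sigma_i(t) := |t|^{1/2} M_i \qquad (t<0).
\]
Since $\lambda_i M_i = \Sigma_i(-\lambda_i^2)$, the lemma reduces to showing that $\Sigma_i(t_i)\to C$ smoothly on compact subsets of $\RR^3\setminus\{0\}$ whenever $t_i\to 0^-$.

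The key observation is that a smooth cone $C$ in $\RR^3\setminus\{0\}$ is a minimal surface: along each ray of $C$ the position vector $X$ is tangent to $C$, so $\langle X,\nu_C\rangle\equiv 0$, and the shrinker equation $H=\langle X,\nu\rangle/2$ satisfied by $C$ reduces to $H\equiv 0$. Setting $\Sigma(t):=C$ for $t\ge 0$ therefore extends $\Sigma(\cdot)$ to a classical (hence unit-regular Brakke) mean curvature flow on $(\RR^3\setminus\{0\})\times(-1,\epsilon]$ for some $\epsilon>0$; smoothness across $t=0$ follows from the hypothesis that $M$ is smoothly asymptotic to $C$.

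To apply Theorem~\ref{local-theorem}, I would extend each $\Sigma_i(\cdot)$ past $t=0$ by setting $\Sigma_i(t):=C_i$ for $t\ge 0$, where $C_i$ is the (smooth, minimal) asymptotic cone of $M_i$. For $t\in(-1,0)$ and a fixed compact $K\subset\RR^3\setminus\{0\}$ with $|t|^{1/2}R<\min_K|x|$, the smooth convergence $M_i\to M$ on compact subsets of $\{|x|>R\}$ rescales (by $|t|^{1/2}$) to smooth convergence $\Sigma_i(t)\to\Sigma(t)$ on $K$. Combined with weak convergence $C_i\to C$ at $t=0$, this gives weak convergence of $\Sigma_i\to\Sigma$ as Brakke flows on $(\RR^3\setminus\{0\})\times(-1,\epsilon]$. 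Theorem~\ref{local-theorem} then yields smooth convergence $\Sigma_i\to\Sigma$ on $K\times[a',\epsilon]$ for any $a'\in(-1,\epsilon)$; in particular, $\Sigma_i(t_i)=\lambda_i M_i\to\Sigma(0)=C$ smoothly on $K$ for any sequence $t_i=-\lambda_i^2\to 0^-$, as desired.

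The main obstacle is the extension step: producing, for each large $i$, a smooth asymptotic cone $C_i$ of $M_i$ and verifying $C_i\to C$ smoothly. This requires standard elliptic theory for shrinker ends. Writing $M$ as a normal graph over $C$ outside a large ball with graph function $u$ satisfying $|\nabla^k u|\lesssim |x|^{-1-k}$ (the standard decay derived from the shrinker equation, which becomes uniformly elliptic after rescaling by $|x|^{-1}$ at each scale and converges to the Jacobi operator of $C$), the smooth convergence $M_i\to M$ realizes $M_i$ as a graph $\graph_C(u_i)$ with $u_i\to u$ in $C^\infty_{\text{loc}}$. Schauder estimates on the rescaled shrinker equation yield uniform-in-$i$ decay for $u_i$, from which the $C_i$ are obtained as the smooth limits $C_i=\lim_{\lambda\to 0}\lambda M_i=\graph_C\bigl(\lim_{\lambda\to 0}\lambda u_i(\cdot/\lambda)\bigr)$, and $C_i\to C$ follows by passing to the limit in $i$.
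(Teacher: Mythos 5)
Your main idea --- interpreting the shrinkers as self-similar flows $t\mapsto |t|^{1/2}M$, $t\mapsto |t|^{1/2}M_i$ and applying the Local Regularity Theorem --- is the same as the paper's, as is your observation that $|t|^{1/2}M$ extends smoothly across $t=0$ away from the origin (with $M(0)=C$) because $M$ is smoothly asymptotic to the cone $C$.

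However, the step you flag yourself as "the main obstacle" is a genuine gap, and it is also unnecessary. You propose to extend each $\Sigma_i(\cdot)$ past $t=0$ by setting $\Sigma_i(t)=C_i$, where $C_i$ is a smooth asymptotic cone of $M_i$. But the hypothesis of the lemma only states that the $M_i$ are smooth shrinkers with boundary in $B(0,R)$; nothing says their ends are asymptotically conical. Shrinker ends can instead be asymptotically cylindrical (this is precisely the obstruction discussed around the Lee--Zhao result earlier in the paper), in which case no such $C_i$ exists. The subsequent uniform-decay/Schauder argument you sketch to produce $C_i\to C$ therefore cannot get off the ground in general, and even where it could, it imports substantial elliptic machinery that the lemma does not need.

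The reason the extension is unnecessary: Theorem~\ref{local-theorem} is stated for flows on a half-open interval $(a,b]$, and its hypothesis requires only that $M_n(t)\rightharpoonup M(t)$ for a.e.\ $t$ in the \emph{open} interval $(a,b)$, while its conclusion gives smooth convergence on $[a',b]$ up to and including the right endpoint $b$. Taking $b=0$, you only need weak convergence for $t<0$, which you already have from the hypothesis $M_i\to M$ smoothly on compact subsets of $\RR^3\setminus\overline{B(0,R)}$ (restrict to a compact spacetime region away from $Z\times\{0\}$ so that the moving boundary $|t|^{1/2}\partial M_i$ stays away). The theorem then gives smooth convergence $M_i(t)\cap U\to M(t)\cap U$ uniformly for $t\in[a',0]$, and the lemma follows from $\lambda_iM_i=M_i(-\lambda_i^2)$. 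You should delete the extension past $t=0$ entirely: it adds hypotheses that aren't available and the statement already follows from applying the Local Regularity Theorem at the right endpoint $b=0$.
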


\begin{proof}
Consider the following open subset of spacetime:
\[
  \Omega = \{(x,y,z,t): x^2+y^2\ne 0\}.
\]
Consider also the following mean curvature flows (with
moving boundary):
\begin{align*}
&t\in (-\infty,0]\mapsto M_i(t)= |t|^{1/2}M_i, \\
&t\in (-\infty,0]\mapsto M(t)= |t|^{1/2}M.
\end{align*}

Note that the flow $M(\cdot)$ is smooth 
except at the spacetime origin.

By hypothesis, the flows $M_i(\cdot)$ converge
smoothly to the flows $M(\cdot)$ on compact 
(spacetime) subsets of $\RR^3\times (-\infty,0)$.

Since the flow $M(\cdot)$ is smooth away
from the spacetime origin, it follows
(by the Local Regularity 
 Theorem, Theorem~\ref{local-theorem})
that the convergence $M_i(\cdot)$ to $M(\cdot)$
is smooth in a spacetime neighborhood of
each point $(p,0)$ with $p\ne O$.
The result follows immediately, since
\[
  \lambda_i M_i = M_i(-\lambda_i^2).
\]
\end{proof}

\section{Connectivity}

\begin{lemma}\label{connectivity-lemma}
Given $\Lambda<\infty$, there exist
$\eps>0$ and $R<\infty$ 
with the following property.
  If $\Sigma$ is an embedded shrinker in $\RR^3$ 
 with $\entropy(\Sigma)\le \Lambda$, and 
 if $0<r\le \eps$, then 
\[
   \Sigma\setminus B(0,r)
\]
is connected, and if $r>R$, then
\[
   \Sigma\cap B(0,r)
\]
is connected.
\end{lemma}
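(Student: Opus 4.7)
The plan is to prove both parts by contradiction, using Brakke--Ilmanen compactness for self-shrinkers of bounded entropy together with an elementary topological observation; I sketch Part~1 (small $r$), as Part~2 is symmetric.  Suppose Part~1 fails: there exist smoothly embedded connected shrinkers $\Sigma_n$ with $\entropy(\Sigma_n)\le \Lambda$ and radii $r_n\downarrow 0$ such that $\Sigma_n\setminus B(0,r_n)$ decomposes into non-empty disjoint closed subsets $A_n$ and $B_n$.  I would choose separating points $p_n\in A_n\cap\partial B(0,r_n)$ and $q_n\in B_n\cap\partial B(0,r_n)$.  By the compactness theorem for self-shrinkers of bounded entropy, after passing to a subsequence the $\Sigma_n$ converge as integral varifolds (equivalently, as self-similar Brakke flows) to an integral stationary varifold~$\Sigma_\infty$ in the Gaussian metric, still of entropy $\le\Lambda$, with smooth convergence of integer multiplicity~$m$ on compact subsets of $\RR^3\setminus K$ for some closed singular set~$K$.

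The key topological observation is the following: if $M\subset\RR^3$ is a connected embedded surface for which $M\cap B(0,\rho)$ is a finite disjoint union of smooth embedded disks, each meeting $\partial B(0,\rho)$ in a single circle, then $M\setminus B(0,r)$ is connected for every $0<r<\rho$, because each disk minus an interior sub-disk is an annulus still attached to $M\setminus B(0,\rho)$.  When the origin is a smooth point of $\Sigma_\infty$, Allard regularity combined with the smooth convergence on a uniform ball $B(0,\rho_0)$ gives exactly this disk-sheet structure for $\Sigma_n$ with large~$n$, immediately contradicting the assumed disconnection.  When the origin is a singular point of $\Sigma_\infty$, I would perform an iterated parabolic blow-up: choose $\lambda_n\to\infty$ so that $\lambda_n\Sigma_n$ (a time-slice of the self-similar MCF $t\mapsto|t|^{1/2}\Sigma_n$ at time $t=-\lambda_n^{-2}$) has normalized second fundamental form, and re-apply compactness; the entropy bound persists, and the classification of stationary Gaussian cones in $\RR^3$ of entropy $\le\Lambda$ (unions of at most $\lfloor\Lambda\rfloor$ planes through the origin) forces, after finitely many iterations, a reduction to the regular-origin case.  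The separating pair $(p_n,q_n)$ is tracked through each rescaling to preserve the disconnection and produce the final contradiction.

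Part~2 is handled by the mirror argument: rescaling $R_n^{-1}\Sigma_n$ to examine behaviour at infinity, one obtains convergence to the asymptotic cone of the $\Sigma_n$, which by the same compactness and entropy bound is a stationary Gaussian cone of entropy $\le\Lambda$; the topological observation, applied now to the complement of large balls and the asymptotic disk-sheet structure outside $\partial B(0,R)$, yields the contradiction.  The main obstacle throughout is the singular case, because shrinker flows are self-similar and naive blow-ups merely reproduce the same flow; one must arrange the rescalings carefully so that the disconnection survives and the local Gaussian density strictly decreases until the classification of tangent cones can be applied.
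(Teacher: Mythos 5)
Your proposal diverges fundamentally from the paper's argument, and it contains a gap that I do not think can be repaired along the lines you sketch.

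The paper's proof does \emph{not} take a direct limit of the $\Sigma_n$. Instead, given the hypothetical components $C_n$, $C_n'$ of $\Sigma_n\setminus B(0,r_n)$, it constructs an auxiliary surface $\Sigma_n'\subset\RR^3\setminus B(0,r_n)$ of \emph{least} shrinker area separating $C_n$ from $C_n'$. The whole point of this step is to convert the problem into one about \emph{stable} surfaces of bounded Gaussian area. Stability gives the curvature estimates that make the compactness robust: $\Sigma_n'\to\Sigma'$ smoothly on $\RR^3\setminus\{0\}$, and then the Gulliver--Lawson structure theorem for stable minimal hypersurfaces near an isolated singularity shows $\Sigma'$ extends smoothly across the origin. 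But a smooth, complete stable shrinker cannot exist, so one gets a contradiction. Stability is the engine of the proof, and it is exactly what lets one avoid analyzing arbitrary limits of the $\Sigma_n$.

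Your approach instead takes a direct varifold limit $\Sigma_\infty$ of the $\Sigma_n$ and tries to propagate the disconnection through (possibly iterated) blow-ups. Several steps do not go through. First, your classification ``stationary Gaussian cones in $\RR^3$ of entropy $\le\Lambda$ are unions of at most $\lfloor\Lambda\rfloor$ planes'' is false: stationary integral varifold cones in $\RR^3$ are cones over geodesic nets in $\SS^2$, and these include non-planar cones such as the Y-cone. Without stability or a minimizing property you cannot rule those out. Second, even at a point where $\Sigma_\infty$ is a smooth surface, the convergence $\Sigma_n\to\Sigma_\infty$ may occur with multiplicity $m>1$, in which case $\Sigma_n\cap B(0,\rho)$ need not decompose into disjoint disks and Allard regularity does not apply; your topological observation then says nothing. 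Third, and most seriously, the iterated blow-up has no termination mechanism. Because the $\Sigma_n$ are shrinkers, a parabolic rescaling of the associated self-similar flow merely reproduces a time-slice of the same flow; for a \emph{fixed} $n$ the blow-up at the origin is either a plane (if $0\in\Sigma_n$) or empty, and a diagonal choice $\lambda_n\to\infty$ depends delicately on the degeneration scale in a way you do not control. You flag this difficulty yourself, but ``arrange the rescalings carefully so that the Gaussian density strictly decreases'' is precisely the content that is missing, and I do not see how to supply it without reintroducing some stability or minimality, at which point you have rediscovered the paper's argument in a more complicated form.

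In short: the paper sidesteps every one of these difficulties by replacing $\Sigma_n$ with a minimizer $\Sigma_n'$, so that the only input needed is the compactness theory for stable minimal surfaces plus the nonexistence of stable shrinkers. Your proposal attempts a head-on analysis of the limit of the $\Sigma_n$ themselves, which is considerably harder and is not carried out.
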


\begin{proof}
Suppose there is no such $\eps$.  Then there is a sequence 
a sequence $\Sigma_n$ of embedded shrinkers
and a sequence $r_n\to 0$ such that 
\[
 \entropy(\Sigma_n)\le \Lambda
\]
and such that $\Sigma_n\setminus B(0,r_n)$
contains two connected components $C_n$
and $C_n'$. (It may have more components.)
Let $\Sigma_n'$ be a surface in
 $\RR^3\setminus B(0,r_n)$  of least shrinker area
 that separates $C_n$ and $C_n'$.
Thus $\Sigma_n'$ is a stable minimal surface
with respect to the shrinker metric.

Now $\Sigma_n'$ is a surface whose
boundary is in $\partial B(0,r_n)$.
It is smoothly embedded away from the boundary, and its shrinker area is 
at most
\[
\area_s(\Sigma_n)
=
\entropy(\Sigma_n) \le \Lambda.
\]
Since the $\Sigma_n'$ are stable, 
they converge 
 smoothly 
 (after passing to a subsequence)
 in $\RR^3\setminus\{0\}$ to a stable shrinker $\Sigma'$ (possibly with
multiplicity).

By~\cite{gulliver-lawson}, $\Sigma'$ is smoothly embedded
everywhere.
But then $\Sigma'$ is unstable, 
a contradiction.
(Every smooth shrinker without
boundary is an unstable minimal surface with respect to the shrinker metric.)

The proof for $R$ is essentially the same.
\end{proof}

\appendix
\section{Genus of Shrinkers}
\label{ap:genus}

Chodosh, Choi,
and Schulze proved that
  that all shrinkers 
at a spacetime point have the same 
 genus~\cite{cms}*{Proposition~H.8}.
Here, we give a different proof that 
is perhaps somewhat simpler.

\begin{lemma}
\label{morse-estimate}
Suppose that $T\le \Tfat$ and that $p\in M(T)$.
Let $\Ss$ be the collection of all shrinkers at $(p,T)$, i.e., the
 set of all subsequential limits of 
\[
   (T-t)^{1/2}( M(t) - p)
\]
as $t\uparrow T$.

There is an $R>0$ with the following property.
If $\Sigma\in \Ss$, then the function
\[
   q\in \Sigma\setminus \overline{B(0,R)} \mapsto |q|
\]
is a Morse function, and the only critical points are local maxima.

(In other words, at each critical point $q$, both principal curvatures
with respect to the inward pointing normal are $>1/|q|$.)
\end{lemma}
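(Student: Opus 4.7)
The plan is to prove, by contradiction, that for $R$ sufficiently large no $\Sigma\in\Ss$ admits a critical point of $q\mapsto|q|$ lying in $\Sigma\setminus\overline{B(0,R)}$. This trivially implies both the Morse condition and the local-maximum condition of the lemma.

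The first ingredient is a simple identity at critical points. If $q_0\in\Sigma$ is a critical point of $|x|^2|_\Sigma$, then the position vector is normal: $x^\perp(q_0)=q_0$. The shrinker equation $\vec{H}=-x^\perp/2$ then yields $|\vec{H}(q_0)|=|q_0|/2$, and since $|\vec{H}|\le\sqrt{2}\,|A|$ on any $2$-dimensional surface,
\[
|A_\Sigma(q_0)|\cdot|q_0|\;\ge\;|q_0|^2/(2\sqrt{2}).
\]
In particular, a critical point with $|q_0|$ large produces a point where the scale-invariant quantity $|A||x|$ is very large.

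The second, and harder, ingredient is a uniform asymptotic curvature estimate: there exists a constant $C$, depending only on an entropy bound for $\Ss$, so that $|A_\Sigma(x)|\cdot|x|\le C$ for every $\Sigma\in\Ss$ and every $|x|$ sufficiently large. The uniform entropy bound comes from Huisken monotonicity, since every $\Sigma\in\Ss$ satisfies $\entropy(\Sigma)\le\Theta_{M(\cdot)}(p,T)\le\entropy(M(0))$. For the curvature decay, I would exploit the fact that the blowdowns $\lambda\Sigma$ as $\lambda\to 0$ subsequentially converge (as varifolds) to stationary Euclidean cones, since the shrinker equation degenerates to the minimal-surface equation under blowdown; then Bamler-Kleiner smooth compactness together with Allard-type regularity at the smooth points of the limit cone upgrade this to smooth convergence on compact subsets of the regular part of the cone, giving $|A_\Sigma(x)|\cdot|x|\to 0$ along any such subsequence. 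A further compactness argument directly on the family $\Ss$, which is precompact in $C^\infty_{\mathrm{loc}}$ by the same tools, yields the uniformity of $C$ over $\Sigma\in\Ss$.

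Combining the two ingredients produces the contradiction. A sequence of counterexamples $\Sigma_n\in\Ss$ with critical points $q_n$ satisfying $|q_n|\to\infty$ would simultaneously force $|A_{\Sigma_n}(q_n)|\cdot|q_n|\ge|q_n|^2/(2\sqrt{2})\to\infty$ and $|A_{\Sigma_n}(q_n)|\cdot|q_n|\le C$, which is impossible. Hence any $R>(2\sqrt{2}\,C)^{1/2}$ satisfies the conclusion. The main obstacle is the second ingredient: the uniform curvature decay rests on the delicate interaction between the shrinker equation, the entropy bound, and the asymptotic cone structure at infinity, and securing it uniformly across $\Ss$ (rather than for a single fixed shrinker) is where essentially all of the work lies.
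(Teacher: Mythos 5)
Your ingredient (1) is correct and is precisely the paper's starting observation: at a critical point $q_0$ of $|x|$ on a shrinker, the position vector is normal, so $\vec H(q_0)=-q_0/2$, forcing $|A(q_0)|\gtrsim |q_0|$.

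Your ingredient (2), however, is false, and this is a genuine gap. The estimate $|A_\Sigma(x)|\cdot|x|\le C$ for $|x|$ large fails for any shrinker with a cylindrical end: on the round cylinder $\sqrt2\,\SS^1\times\RR$ one has $|A|\equiv 1/\sqrt2$, so $|A|\cdot|x|\to\infty$ along the axis. Your justification also breaks down there, because the blowdown of a cylindrical end is \emph{not} a nontrivial stationary cone (as a varifold, the blowdown of the cylinder is zero), so there is no "smooth regular part of the limit cone" to which you can apply Allard regularity. Since the class $\Ss$ may well contain shrinkers with cylindrical ends (the lemma is invoked in the paper precisely to handle \emph{all} shrinkers at a spacetime point, with no asymptotic-cone hypothesis), your second ingredient cannot be repaired simply by tightening constants. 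What would suffice is a uniform curvature \emph{bound} (not decay) at spatial infinity together with your ingredient (1), but establishing that bound uniformly over $\Ss$ is itself the hard part and essentially requires the kind of blowup analysis you are trying to avoid.

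The paper's proof proceeds differently and parabolically: given alleged bad critical points $q_i\in\Sigma_i$ with $R_i=|q_i|\to\infty$, it spacetime-translates and parabolically rescales the tangent flows $t\mapsto|t|^{1/2}\Sigma_i$ so that $R_i(\Sigma_i-q_i)$ converges smoothly (via Bamler--Kleiner) to a surface $\Sigma'$ that is a translating soliton with velocity $-q$. It then takes the tangent flow at $t\to-\infty$ of this translator, finds that the resulting shrinker $\Sigma''$ must have $q$ tangent everywhere, hence $\Sigma''$ is a cylinder, and invokes Choi--Haslhofer--Hershkovits to conclude $\Sigma'$ is the bowl soliton. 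But the bowl soliton is strictly convex, whereas the rescaled critical points carry (in the limit) a nonpositive principal curvature at the origin, which is the contradiction. That blowup directly around the offending points is what replaces the missing uniform curvature estimate in your sketch.
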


\begin{proof}
Suppose not.  Then there exist $\Sigma_i\in \Ss$, and critical points $q_i\in \Sigma_i$
such that $R_i:=|q_i|\to \infty$ and such 
that one of the principal
  curvatures~$\kappa_i$ at $q_i$
(with respect to the inward pointing normal) satisfies:
\begin{equation}\label{tsiamis}
   \kappa_i \le 1/|q_i|.
\end{equation}

Consider the tangent flow:
\begin{equation}\label{the-tangent-flow-1}
  t\in (-\infty, 0) \mapsto |t|^{1/2}\Sigma_i.
  \tag{*}
\end{equation}
The mean curvature  of $\Sigma_i$ at $q_i$ is $-q_i/2$.
Thus the mean curvature of $R_i\Sigma_i$ at $R_iq_i$ is $-q_i/(2|q_i|)$.

Now translate the flow~\eqref{the-tangent-flow-1}
 in spacetime by $(-R_iq_i, R_i^2)$
to get
\begin{align*}
t\in (-\infty, R_i^2) \mapsto     
&|R_i^2 - t|^{1/2}\Sigma_i - R_iq_i
\\
&= |1 - (t/R_i^2)|^{1/2}R_i\Sigma_i - R_iq_i 
\\
&= |(1 - (t/R_i^2)|^{1/2}R_i(\Sigma_i - q_i) + |(1-(t/R_i^2)|^{1/2}R_iq_i  - R_i q_i
\\
&=
   |(1 - (t/R_i^2)|^{1/2}R_i(\Sigma_i - q_i) - 
   \frac12(t + o(t))\frac{q_i}{|q_i|} 
\end{align*}

By~\cite{bamler-kleiner}, this converges (perhaps after passing to a subsequence)
to a flow that is regular at almost all times, and the convergence is smooth
for almost all $t$.
Thus we see that $R_i(\Sigma_i-q_i)$ converges smoothly to a limit $\Sigma'$ such that
\begin{equation}\label{translator}
  t \in \RR\mapsto \Sigma' - tq
\end{equation}
is  a mean curvature flow, 
i.e., such that $\Sigma'$ is a translator with velocity $-q$.
Here $-q$ is the limit of $q_i/(2|q_i|)$.
Thus the mean curvature at each point of $\Sigma'$
is $-q^\perp$.

Let 
\[
  t\in (-\infty,0)\mapsto |t|^{-1/2}\Sigma''
\]
be a tangent flow at $\infty$ to the 
  flow~\eqref{translator}.  Thus, there are $t_n\to-\infty$ such that
  \[
    \Sigma'_n:= |t_n|^{-1/2}\Sigma'(t_n)
              = |t_n|^{-1/2}(\Sigma' - t_nq)
  \]
  converges to $\Sigma''$.
By~\cite{bamler-kleiner}, the convergence is
 smooth with multiplicity one.
Note that the mean curvature vector $H_n$ at each point
of $\Sigma'_n$ is $(-|t_n|^{1/2}q)^\perp$.  Thus 
\[
  |H_n| = |\nu\cdot H_n| = |t_n|^{1/2}\,|q^\perp|,
\]
or
\[
    |q^\perp| = |t_n|^{-1/2}|H_n|.
\]
Letting $n\to\infty$, we see that at all points of $\Sigma''$,
$|q^\perp|=0$.  That is, $\Sigma''$ is translation-invariant
in the $q$ direction.
Thus $\Sigma''$ is the shrinking cylinder,
and therefore $\Sigma'$ is the bowl soliton~\cite{chh}.

But that is a contradiction, since one of the principal curvatures of $\Sigma'$
at $0$ with respect to the direction $-q$
is $\le 0$, by~\eqref{tsiamis}.
\end{proof}

\begin{corollary}\label{padinsky}
If $\Sigma\in \Ss$, if $r>R$, and if $\partial B(0,r)$
intersects $\Sigma$ transversely, then $\genus(\Sigma \cap B(0,r))=\genus (\Sigma)$.
\end{corollary}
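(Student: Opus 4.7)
The plan is to apply Morse theory to the radial function $f(q) := |q|$ on $\Sigma$, leveraging the preceding lemma: on $\Sigma \setminus \overline{B(0,R)}$, $f$ is Morse with every critical point a local maximum of index~$2$. Since $r > R$ and $\partial B(0,r)$ is transverse to $\Sigma$, each connected component $E$ of $\Sigma \setminus B(0,r)$ is a smooth surface-with-boundary on which $f$ is proper, attains its minimum value $r$ exactly on $\partial E$, and has all interior critical points being local maxima. The structural claim is that every such $E$ is either a disk (if compact) or a half-cylinder $S^1 \times [0,\infty)$ (if non-compact), both of genus~$0$. Granting this, $\Sigma$ is recovered from $\Sigma \cap \overline{B(0,r)}$ by attaching disks and half-cylinders along the boundary circles $\Sigma \cap \partial B(0,r)$, and neither operation alters the genus, yielding $\genus(\Sigma) = \genus(\Sigma \cap B(0,r))$.

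To prove the structural claim, let $E$ have $b$ boundary circles and $k$ interior critical points. A handle decomposition, with $\partial E$ as the minimum level set and each index-$2$ critical point attaching a $2$-cell that caps off one circle of $\{f = s\} \cap E$ as $s$ crosses the critical value, shows that $E$ is homotopy equivalent to $\partial E$ with $k$ disks attached, giving $\chi(E) = k$, and that the number of ends of $E$ equals $b - k$ (one end per circle of $\{f = s\} \cap E$ surviving to $s = \infty$). Combining $\chi(E) = 2 - 2\genus(E) - b - (b - k) = k$ forces $\genus(E) = 1 - b$. Since $\genus(E) \ge 0$ and $b \ge 1$---the latter because $\Sigma$ is connected, $f|_\Sigma$ is proper and must attain its global minimum at an index-$0$ critical point which, by the preceding lemma, lies in $\overline{B(0,R)} \subset B(0,r)$, so $\Sigma \cap B(0,r) \ne \emptyset$ and every component of $\Sigma \setminus B(0,r)$ meets $\partial B(0,r)$---we conclude $b = 1$ and $\genus(E) = 0$. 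In the compact case ($b - k = 0$) this forces $k = 1$, so $E$ is a disk with a single interior maximum; in the non-compact case ($b - k \ge 1$) it forces $k = 0$, so $E$ has no interior critical points and is diffeomorphic to $S^1 \times [0,\infty)$.

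The main technical subtlety is the Morse-theoretic Euler-characteristic identity on non-compact $E$, which is handled by properness of $f|_E$: writing $E$ as the increasing union of the compact sublevel sets $E \cap \{f \le s\}$ for $s$ eventually above every critical value, the exterior $E \cap \{f > s\}$ is a disjoint union of product collars $S^1 \times (s,\infty)$ with no critical points, so all Euler-characteristic computations reduce to the compact sublevel case where classical Morse theory on a surface-with-boundary applies directly.
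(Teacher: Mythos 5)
Your proof is correct and takes essentially the same route as the paper: Morse theory applied to the radial function $|q|$, using the preceding lemma that all critical points on $\Sigma\setminus\overline{B(0,R)}$ are index-$2$ local maxima. The paper compresses this into the single sentence ``by standard Morse Theory, each component of $\Sigma\setminus B(0,r)$ is diffeomorphic to a closed disk or to $S^1\times[0,1)$''; your Euler-characteristic bookkeeping ($\chi(E)=k$ from the handle decomposition, $\chi(E)=2-2\genus(E)-b-e$ from the topology, and the resulting forced $b=1$, $\genus(E)=0$) is exactly the content of that appeal, made explicit.
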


\begin{proof}
By Lemma~\ref{morse-estimate} and standard Morse Theory, each component of $M\setminus B(0,r)$
is diffeomorphic to a closed disk  or to $S^1\times [0,1)$.
(Here $B(0,r)$ is the open ball.)  
\end{proof}

\begin{theorem}\label{shrinker-genus}
All shrinkers at $(p,T)$ have the same genus.
\end{theorem}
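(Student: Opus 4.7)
The plan is to fix a large radius $r$ larger than the constant $R$ from the preceding Lemma, and to track the function $F(\tau) := \genus(S_\tau \cap B(0,r))$, where $S_\tau := (T-\tau)^{-1/2}(M(\tau) - p)$. For any $\Sigma \in \Ss$ obtained as a smooth limit $S_{\tau_n} \to \Sigma$ along a sequence $\tau_n \uparrow T$, provided $\partial B(0,r)$ is transverse to $\Sigma$, smooth convergence gives $F(\tau_n) = \genus(\Sigma \cap B(0,r))$ for large $n$, and by Corollary~\ref{padinsky} this equals $\genus(\Sigma)$. Hence if $F$ is eventually constant as $\tau \uparrow T$, all such $\Sigma$ share a common genus; since any given shrinker is transverse to $\partial B(0,r')$ for a dense set of $r' > R$, and Corollary~\ref{padinsky} makes the value of $\genus(\Sigma)$ independent of $r'$, every $\Sigma \in \Ss$ shares this common value. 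The rest of the proof is devoted to showing that $F$ is constant on some interval $(T-\delta, T)$.

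\textbf{Step 1 (smoothness in the shrinking ball).} I would first establish that there exists $\delta > 0$ such that, for every $\tau \in (T-\delta, T)$, the flow $M(\cdot)$ has no singular spacetime point $(q, \tau)$ with $q \in B(p, r(T-\tau)^{1/2})$; equivalently, $S_\tau \cap \overline{B(0, r+1)}$ is smooth and varies smoothly with $\tau$. If this failed, there would exist singular spacetime points $(q_n, \tau_n)$ with $\tau_n \uparrow T$ and $|q_n - p| \leq r(T-\tau_n)^{1/2}$. Parabolically rescaling about $(p,T)$ by $\lambda_n := (T-\tau_n)^{-1/2}$, one obtains flows $M_{\lambda_n}(\cdot)$ with singular points at $(\hat{q}_n, -1)$, where $\hat{q}_n := \lambda_n(q_n - p) \in \overline{B(0, r)}$. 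Passing to a subsequence, $\hat{q}_n \to \hat{q}_\infty$ and, by~\cite{bamler-kleiner}, $M_{\lambda_n}(\cdot)$ converges to a smooth shrinking flow $t \mapsto |t|^{1/2}\Sigma$ for some $\Sigma \in \Ss$. This limit is regular at $(\hat{q}_\infty, -1)$, so by the Local Regularity Theorem (Theorem~\ref{local-theorem}), the $M_{\lambda_n}(\cdot)$ are regular at $(\hat{q}_n, -1)$ for large $n$, contradicting their singularity.

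\textbf{Step 2 (constancy of $F$).} On the dense open set of $\tau \in (T-\delta, T)$ where $\partial B(0,r)$ is transverse to $S_\tau$, smooth variation of $S_\tau$ immediately gives that $F$ is locally constant. To handle tangencies, note that by~\cite{bamler-kleiner}, for $\tau$ sufficiently close to $T$, $S_\tau$ is $C^\infty$-close on $\overline{B(0, r+1)}$ to some $\Sigma \in \Ss$. The preceding Lemma gives that critical points of $|q|$ on $\Sigma$ in the annulus $R < |q| \leq r+1$ are local maxima, so by $C^2$ continuity the same holds for $S_\tau$ once $\tau$ is close enough to $T$. Hence any tangency of $\partial B(0, r)$ with $S_\tau$ occurs at a local maximum of $|q|$, and crossing it as $\tau$ varies either attaches or removes a $2$-disk (cap) to $S_\tau \cap B(0,r)$, preserving genus. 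Thus $F$ is constant throughout $(T-\delta, T)$, completing the proof.

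The main obstacle is Step~1, ruling out singular points of $M(\cdot)$ accumulating at the parabolic scale about $(p, T)$; this relies crucially on~\cite{bamler-kleiner} to guarantee that the tangent flow is a smooth multiplicity-one shrinking flow, so that any accumulating singularity would persist in the rescaled limit and contradict the smoothness of $\Sigma$ away from the spacetime origin.
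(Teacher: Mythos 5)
Your proof is correct in substance and reaches the same conclusion via a somewhat different, more explicit route. The paper's argument is terse: it asserts that the set $\Ss$ of all shrinkers at $(p,T)$ is connected in the topology of smooth convergence, and then shows genus is locally constant on $\Ss$ by choosing $r>R$ transverse to a given $\Sigma$ and invoking Corollary~\ref{padinsky} together with smooth convergence. You instead parametrize by time, tracking $F(\tau)=\genus(S_\tau\cap B(0,r))$ and showing it stabilizes; this effectively unpacks the continuity of $\tau\mapsto S_\tau$ near $T$ that underlies the paper's connectedness claim, and your Step~1 (no singularities in the backward parabolic cone at scale $r$) supplies exactly that continuity. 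What your approach buys is concreteness and self-containment: the connectedness of $\Ss$ is not proved in the paper, and your argument does not need it. What it costs is the extra work in Step~2 of controlling tangencies as $\tau$ varies. There, one small inaccuracy: the claim that transverse $\tau$ form a \emph{dense} subset of $(T-\delta,T)$ for the fixed radius $r$ is not automatic (a critical value of $|q|$ on $S_\tau$ could in principle sit at $r$ on a whole interval of $\tau$). The clean fix, which you essentially already have the ingredients for, is to observe that once all critical points of $|q|$ on $S_\tau$ in $\overline{B(0,r+1)}\setminus B(0,R)$ are nondegenerate local maxima, Morse theory gives $\genus(S_\tau\cap B(0,r'))$ is the same for every regular value $r'\in(R,r+1]$; calling this common value $G(\tau)$, local constancy of $G$ in $\tau$ follows by picking, near each $\tau_0$, an $r'$ that is regular for $S_{\tau_0}$ (hence for nearby $S_\tau$), and then $G\equiv\genus(\Sigma)$ for every $\Sigma\in\Ss$ follows exactly as you conclude. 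With that rephrasing the argument is complete and correct; both proofs rely on the same key Lemma and Corollary~\ref{padinsky}.
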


\begin{proof}
The set $\Ss$ is connected (with respect to smooth convergence),
so it suffices to show that if $\Sigma_i\in \Ss$ converges
smoothly to $\Sigma\in \Ss$, then
\[
  \genus(\Sigma_i) = \genus(\Sigma)
\]
for all sufficiently large $i$.
Let $R$ be as in Lemma~\ref{morse-estimate}, and let $r>R$ be such that $\partial B(0,r)$
intersects $\Sigma$ transversely.

Thus $\partial B(0,r)$ intersects
$\Sigma_i$ transversely for all sufficiently large $i$, say $i\ge I$.

Also, by smooth convergence,
\[
   \genus(M_i\cap B(0,r)) = \genus(M\cap B(0,r))
\]
for all sufficently large $i$.

The result follows immediately, since,
 by Corollary~\ref{padinsky}, 
 $\genus(M\cap B(0,r))=\genus(M)$
and $\genus(\Sigma_i\cap B(0,r)) = \genus(M_i)$ for all $i\ge I$.
\end{proof}\vskip 1.3 cm

\noindent{\bf Conflict of interest statement:} On behalf of all authors, the corresponding author states that there is no conflict of interest.

\noindent{\bf Data availability statement:} This manuscript has no associated data.

\begin{bibdiv}
\begin{biblist}

\bib{angenent}{article}{
   author={Angenent, Sigurd B.},
   title={Shrinking doughnuts},
   conference={
      title={Nonlinear diffusion equations and their equilibrium states, 3},
      address={Gregynog},
      date={1989},
   },
   book={
      series={Progr. Nonlinear Differential Equations Appl.},
      volume={7},
      publisher={Birkh\"auser Boston, Boston, MA},
   },
   isbn={0-8176-3531-9},
   date={1992},
   pages={21--38},
   review={\MR{1167827}},
}

\bib{bamler-kleiner}{article}{
      title={On the Multiplicity One Conjecture for Mean Curvature Flows of surfaces}, 
      author={Bamler, Richard},
      author={Kleiner, Bruce},
      year={2023},
      pages={1--58},
      eprint={arxiv:2312.02106},
      doi={10.48550/arXiv.2312.02106},
}

\bib{brendle}{article}{
   author={Brendle, Simon},
   title={Embedded self-similar shrinkers of genus 0},
   journal={Ann. of Math. (2)},
   volume={183},
   date={2016},
   number={2},
   pages={715--728},
   issn={0003-486X},
   review={\MR{3450486}},
   doi={10.4007/annals.2016.183.2.6},
}

\bib{buzano}{article}{
   title={Noncompact self-shrinkers for mean curvature flow with arbitrary genus},
   author={Buzano, Reto},
   author={Nguyen, Huy The},
   author={Schulz, Mario B.},
   journal={J. Reine Angew. Math.},
   date={2024},
   pages={1--18},
   doi={},
   eprint={https://doi.org/10.1515/crelle-2024-0073},
   doi={10.1515/crelle-2024-0073},
}

\bib{ccms}{article}{
   author={Chodosh, Otis},
   author={Choi, Kyeongsu},
   author={Mantoulidis, Christos},
   author={Schulze, Felix},
   title={Mean curvature flow with generic low-entropy initial data},
   journal={Duke Math. J.},
   volume={173},
   date={2024},
   number={7},
   pages={1269--1290},
   issn={0012-7094},
   review={\MR{4757533}},
   doi={10.1215/00127094-2023-0034},
}

\bib{ccs}{article}{
      title={Mean curvature flow with generic initial data II}, 
      author={Chodosh,Otis},
      author={Choi, Kyeongsu}, 
      author={Schulze, Felix},
      year={2023},
      eprint={2302.08409},
      url={https://arxiv.org/abs/2302.08409}, 
}

\bib{cms}{article}{
      title={Mean curvature flow with generic low-entropy initial data II}, 
      author={Chodosh, Otis},
      author={Mantoulidis, Christos}, 
      author={Schulze, Felix},
      journal={Duke Math. J., to appear},
      year={2024},
      eprint={https://arxiv.org/abs/2309.03856}, 
}

\bib{chodosh-schulze}{article}{
   author={Chodosh, Otis},
   author={Schulze, Felix},
   title={Uniqueness of asymptotically conical tangent flows},
   journal={Duke Math. J.},
   volume={170},
   date={2021},
   number={16},
   pages={3601--3657},
   issn={0012-7094},
   review={\MR{4332673}},
   doi={10.1215/00127094-2020-0098},
}

\bib{chh}{article}{
   author={Choi, Kyeongsu},
   author={Haslhofer, Robert},
   author={Hershkovits, Or},
   title={Ancient low-entropy flows, mean-convex neighborhoods, and
   uniqueness},
   journal={Acta Math.},
   volume={228},
   date={2022},
   number={2},
   pages={217--301},
   issn={0001-5962},
   review={\MR{4448681}},
   doi={10.4310/acta.2022.v228.n2.a1},
}

\bib{chopp}{article}{
   author={Chopp, David L.},
   title={Computation of self-similar solutions for mean curvature flow},
   journal={Experiment. Math.},
   volume={3},
   date={1994},
   number={1},
   pages={1--15},
   issn={1058-6458},
   review={\MR{1302814}},
}

\bib{chu-sun}{article}{
      title={Genus one singularities in mean curvature flow}, 
      author={Chu, Adrian Chun-Pong},
      author={Sun, Ao},
      date={2025},
      pages={1--37},
      eprint={arxiv:2308.05923},
      url={https://arxiv.org/abs/2308.05923}, 
}

\bib{colding-m}{article}{
   author={Colding, Tobias Holck},
   author={Minicozzi, William P., II},
   title={Uniqueness of blowups and \L ojasiewicz inequalities},
   journal={Ann. of Math. (2)},
   volume={182},
   date={2015},
   number={1},
   pages={221--285},
   issn={0003-486X},
   review={\MR{3374960}},
   doi={10.4007/annals.2015.182.1.5},
}

\bib{drugan-nguyen}{article}{
   author={Drugan, Gregory},
   author={Nguyen, Xuan Hien},
   title={Shrinking doughnuts via variational methods},
   journal={J. Geom. Anal.},
   volume={28},
   date={2018},
   number={4},
   pages={3725--3746},
   issn={1050-6926},
   review={\MR{3881988}},
   doi={10.1007/s12220-017-9976-z},
}

\bib{gulliver-lawson}{article}{
   author={Gulliver, Robert},
   author={Lawson, H. Blaine, Jr.},
   title={The structure of stable minimal hypersurfaces near a singularity},
   conference={
      title={Geometric measure theory and the calculus of variations},
      address={Arcata, Calif.},
      date={1984},
   },
   book={
      series={Proc. Sympos. Pure Math.},
      volume={44},
      publisher={Amer. Math. Soc., Providence, RI},
   },
   isbn={0-8218-1470-2},
   date={1986},
   pages={213--237},
   review={\MR{0840275}},
   doi={10.1090/pspum/044/840275},
}

\bib{hatcher}{book}{
author={Hatcher, Allen}, title={Notes on Basic 3-Manifold Topology},
note={Available on the website: https://pi.math.cornell.edu/~hatcher/3M/3Mfds.pdf},
}

\bib{hw}{article}{
   author={Hershkovits, Or},
   author={White, Brian},
   title={Nonfattening of mean curvature flow at singularities of mean
   convex type},
   journal={Comm. Pure Appl. Math.},
   volume={73},
   date={2020},
   number={3},
   pages={558--580},
   issn={0010-3640},
   review={\MR{4057901}},
   doi={10.1002/cpa.21852},
}

\bib{huisken}{article}{
   author={Huisken, Gerhard},
   title={Flow by mean curvature of convex surfaces into spheres},
   journal={J. Differential Geom.},
   volume={20},
   date={1984},
   number={1},
   pages={237--266},
   issn={0022-040X},
   review={\MR{0772132}},
}

\bib{iw-fat}{article}{
      title={Fattening in mean curvature flow}, 
      author={Ilmanen, Tom},
      author={White, Brian},
      year={2024},
      pages={1--28},
      eprint={arxiv:2406.18703},
      doi={10.48550/arXiv.2406.18703}, 
}

\bib{kkm}{article}{
   author={Kapouleas, Nikolaos},
   author={Kleene, Stephen James},
   author={M\o ller, Niels Martin},
   title={Mean curvature self-shrinkers of high genus: non-compact examples},
   journal={J. Reine Angew. Math.},
   volume={739},
   date={2018},
   pages={1--39},
   issn={0075-4102},
   review={\MR{3808256}},
   doi={10.1515/crelle-2015-0050},
}

\bib{kapouleas-doubling}{article}{
author={Kapouleas, Nikolaos},
   author={McGrath, Peter},
   title={Generalizing the linearized doubling approach, I: General theory
   and new minimal surfaces and self-shrinkers},
   journal={Camb. J. Math.},
   volume={11},
   date={2023},
   number={2},
   pages={299--439},
   issn={2168-0930},
   review={\MR{4600544}},
   doi={10.4310/cjm.2023.v11.n2.a1},
}
\bib{ketover-platonic}{article}{
      title={Self-shrinking Platonic solids}, 
      author={Ketover, Daniel},
      year={2016},
      pages={1--17},
      eprint={arxiv:1602.07271},
      url={https://arxiv.org/abs/1602.07271}, 
      doi={10.48550/arXiv.1602.07271},
}

\bib{ketover-fat}{article}{
      title={Self-shrinkers whose asymptotic cones fatten}, 
      author={Ketover, Daniel},
      year={2024},
      pages={1--47},
      eprint={arxiv:2407.01240},
      url={https://arxiv.org/abs/2407.01240}, 
      doi={10.48550/arXiv.2407.01240},
}

\bib{kleene-moller}{article}{
   author={Kleene, Stephen},
   author={M\o ller, Niels Martin},
   title={Self-shrinkers with a rotational symmetry},
   journal={Trans. Amer. Math. Soc.},
   volume={366},
   date={2014},
   number={8},
   pages={3943--3963},
   issn={0002-9947},
   review={\MR{3206448}},
   doi={10.1090/S0002-9947-2014-05721-8},
}

\bib{lee-zhao}{article}{
      title={Closed mean curvature flows with asymptotically conical singularities}, 
      author={Lee, Tang-Kai},
      author={Zhao, Xinrui},
      date={2024},
      eprint={2405.15577},
      url={https://arxiv.org/abs/2405.15577}, 
}

\bib{martin-weber}{article}{
   author={Mart\'in, Francisco},
   author={Weber, Matthias},
   title={On properly embedded minimal surfaces with three ends},
   journal={Duke Math. J.},
   volume={107},
   date={2001},
   number={3},
   pages={533--559},
   issn={0012-7094},
   review={\MR{1828301}},
   doi={10.1215/S0012-7094-01-10735-7},
}

\bib{structure}{article}{
   author={Meeks, William H., III},
   author={P\'erez, Joaqu\'in},
   author={Ros, Antonio},
   title={Structure theorems for singular minimal laminations},
   journal={J. Reine Angew. Math.},
   volume={763},
   date={2020},
   pages={271--312},
   issn={0075-4102},
   review={\MR{4104285}},
   doi={10.1515/crelle-2018-0036},
}

\bib{morgan}{article}{
   author={Morgan, Frank},
   title={On finiteness of the number of stable minimal hypersurfaces with a
   fixed boundary},
   journal={Indiana Univ. Math. J.},
   volume={35},
   date={1986},
   number={4},
   pages={779--833},
   issn={0022-2518},
   review={\MR{0865429}},
   doi={10.1512/iumj.1986.35.35042},
}

\bib{nguyen}{article}{
   author={Nguyen, Xuan Hien},
   title={Construction of complete embedded self-similar surfaces under mean
   curvature flow, Part III},
   journal={Duke Math. J.},
   volume={163},
   date={2014},
   number={11},
   pages={2023--2056},
   issn={0012-7094},
   review={\MR{3263027}},
   doi={10.1215/00127094-2795108},
}

\bib{sw-local}{article}{
   author={Schulze, Felix},
   author={White, Brian},
   title={A local regularity theorem for mean curvature flow with triple
   edges},
   journal={J. Reine Angew. Math.},
   volume={758},
   date={2020},
   pages={281--305},
   issn={0075-4102},
   review={\MR{4048449}},
   doi={10.1515/crelle-2017-0044},
}

\bib{sw}{article}{
   author={Solomon, Bruce},
   author={White, Brian},
   title={A strong maximum principle for varifolds that are stationary with
   respect to even parametric elliptic functionals},
   journal={Indiana Univ. Math. J.},
   volume={38},
   date={1989},
   number={3},
   pages={683--691},
   issn={0022-2518},
   review={\MR{1017330}},
   doi={10.1512/iumj.1989.38.38032},
}

\bib{white-95}{article}{
   author={White, Brian},
   title={The topology of hypersurfaces moving by mean curvature},
   journal={Comm. Anal. Geom.},
   volume={3},
   date={1995},
   number={1-2},
   pages={317--333},
   issn={1019-8385},
   review={\MR{1362655}},
   doi={10.4310/CAG.1995.v3.n2.a5},
}

\bib{white-local}{article}{
   author={White, Brian},
   title={A local regularity theorem for mean curvature flow},
   journal={Ann. of Math. (2)},
   volume={161},
   date={2005},
   number={3},
   pages={1487--1519},
   issn={0003-486X},
   review={\MR{2180405}},
   doi={10.4007/annals.2005.161.1487},
}

\bib{white-flat}{article}{
   author={White, Brian},
   title={Currents and flat chains associated to varifolds, with an
   application to mean curvature flow},
   journal={Duke Math. J.},
   volume={148},
   date={2009},
   number={1},
   pages={41--62},
   issn={0012-7094},
   review={\MR{2515099}},
   doi={10.1215/00127094-2009-019},
}

\bib{white-compact}{article}{
   author={White, Brian},
   title={On the compactness theorem for embedded minimal surfaces in
   3-manifolds with locally bounded area and genus},
   journal={Comm. Anal. Geom.},
   volume={26},
   date={2018},
   number={3},
   pages={659--678},
   issn={1019-8385},
   review={\MR{3844118}},
   doi={10.4310/CAG.2018.v26.n3.a7},
}

\bib{white-boundary}{article}{
   author={White, Brian},
   title={Mean curvature flow with boundary},
   journal={Ars Inven. Anal.},
   date={2021},
   pages={Paper No. 4, 43},
   review={\MR{4462472}},
}

\bib{white-genus}{article}{
    author={White, Brian},
    title={The Genus-Decreasing Property of Mean Curvature Flow, I},
    date={2026-01-20},
    pages={1--15},
    eprint={arxiv:2601.13787},
    doi={10.48550/arXiv.2601.13787},
}

\bib{zhu}{article}{
   author={Zhu, Jonathan J.},
   title={\L ojasiewicz inequalities for mean convex self-shrinkers},
   journal={Int. Math. Res. Not. IMRN},
   date={2023},
   number={2},
   pages={1236--1254},
   issn={1073-7928},
   review={\MR{4537324}},
   doi={10.1093/imrn/rnab287},
}

\end{biblist}

\end{bibdiv}

\end{document}